\newtheorem{theorem}{Theorem}
\newtheorem{assumption}[theorem]{Assumption}
\newtheorem{proposition}[theorem]{Proposition}
\newtheorem{lemma}[theorem]{Lemma}
\newtheorem{definition}[theorem]{Definition}
\newtheorem*{theorem*}{Theorem}
\def\Xint#1{\mathchoice
{\XXint\displaystyle\textstyle{#1}}%
{\XXint\textstyle\scriptstyle{#1}}%
{\XXint\scriptstyle\scriptscriptstyle{#1}}%
{\XXint\scriptscriptstyle\scriptscriptstyle{#1}}%
\!\int}
\def\XXint#1#2#3{{\setbox0=\hbox{$#1{#2#3}{\int}$ }
\vcenter{\hbox{$#2#3$ }}\kern-.6\wd0}}
\def\dashint{\Xint-}
\newcommand{\Id}{\operatorname{Id}}
\newcommand{\dist}{\operatorname{dist}}
\newcommand{\RVE}{{\operatorname{RVE}}}
\newcommand{\SQS}{{\operatorname{sel-RVE}}}
\newcommand{\Var}{{\operatorname{Var}~}}
\newcommand{\Cov}{\operatorname{Cov}}
\newcommand{\osc}{{\operatorname{osc}}}
\newcommand{\Varaideal}{{\Var a^\RVE_{ij}|_{\operatorname{unexpl}}}}
\newcommand{\shom}{{\mathsf{hom}}}
\newcommand{\per}{{\operatorname{per}}}
\newcommand{\domain}{{[0,L]^d}}
\definecolor{Yellow}{rgb}{0.95,0.9,0.0} 
\definecolor{Red}{rgb}{0.8,0.1,0.1}
\definecolor{Green}{rgb}{0.1,0.65,0.2}
\definecolor{Blue}{rgb}{0.1,0.1,0.8}
\definecolor{Purple}{rgb}{0.7,0.1,0.7}
\definecolor{Grey}{rgb}{0.6,0.6,0.6}
\begin{document}

\title[The choice of representative volumes for random materials]{The choice of representative volumes in the approximation of effective properties of random materials}

\author{Julian Fischer}
\address{Institute of Science and Technology Austria (IST Austria),
Am Campus 1, 3400 Klosterneuburg, Austria, E-Mail:
julian.fischer@ist.ac.at}

\begin{abstract}
The effective large-scale properties of materials with random heterogeneities on a small scale are typically determined by the method of representative volumes: A sample of the random material is chosen -- the representative volume -- and its effective properties are computed by the cell formula.
Intuitively, for a fixed sample size it should be possible to increase the accuracy of the method  by choosing a material sample which captures the statistical properties of the material particularly well: For example, for a composite material consisting of two constituents, one would select a representative volume in which the volume fraction of the constituents matches closely with their volume fraction in the overall material. Inspired by similar attempts in material science, Le~Bris, Legoll, and Minvielle have designed a selection approach for representative volumes which performs remarkably well in numerical examples of linear materials with moderate contrast.
In the present work, we provide a rigorous analysis of this selection approach for representative volumes in the context of stochastic homogenization of linear elliptic equations. In particular, we prove that the method essentially never performs worse than a random selection of the material sample and may perform much better if the selection criterion for the material samples is chosen suitably.
\end{abstract}

\keywords{random material, representative volume, stochastic homogenization, elliptic equation, numerical homogenization}

\thanks{This project was initiated while the author enjoyed the hospitality of the Hausdorff Research Institute for Mathematics, Bonn, as a participant of the Trimester Program ``Multiscale Problems: Algorithms, Numerical Analysis and Computation''. The author would like to thank Sergio Conti, Mitia Duerinckx, Antoine Gloria, Claude Le~Bris, Fr\'ed\'eric Legoll, and Ben Schweizer for interesting discussions on the manuscript.}

\maketitle

\section{Introduction}

The most widely employed method for determining the effective large-scale properties of a material with random heterogeneities on a small scale is the method of representative volumes. It basically proceeds by taking a small sample of the material -- a ``representative volume element'' (RVE) -- and determining the properties of the sample by the cell formula. The criteria for the choice of the representative volume have been the subject of an ongoing debate; while in principle increasing the size of the material sample increases the accuracy of the approximation of the material properties, this comes at a correspondingly larger computational cost. It has been conjectured that for a fixed size of the material sample, selecting a material sample which captures certain statistical properties of the material in a particularly good way may be beneficial: For example, for a composite material consisting of two constituent materials, one would try to select a material sample for which the volume fraction of each constituent material within the sample matches the overall volume fraction of this constituent in the composite as closely as possible (see Figure~\ref{FigureExplanationLeBrisLegollMinvielle}). Alternatively, for linear materials one might try to match the averaged material coefficient in the sample with the average taken over the full material.
There have been efforts in material science and mechanics towards replicating further statistical properties of the material in a representative volume, an approach called ``special quasirandom structures'' \cite{PhysRevB.81.094203,PhysRevB.42.9622,PhysRevLett.65.353} or ``statistically similar representative volume elements'' \cite{BalzaniBrandsSchroeder,BalzaniBrandsSchroederCarstensen2010,
BalzaniScheunemannBrandsSchroeder,BalzaniSchroeder2009,
BrandsBalzaniScheunemannSchroederRichterRaabe,Schroeder2011}.
A particularly successful approach in this direction has been developed for linear materials by Le~Bris, Legoll, and Minvielle \cite{LeBrisLegollMinvielle}; their method proceeds by considering a large number of material samples, evaluating one or more cheaply computable statistical quantities of the samples (like, for example, the spatial average of the coefficient), and then choosing the sample as the representative volume that is most representative for the material as measured by these quantities. In the present work, in the context of stochastic homogenization of linear elliptic PDEs we provide the first rigorous justification of these approaches\footnote{Note that for one-dimensional linear elliptic PDEs -- a case in which homogenization is linear in the inverse of the coefficient and thus independent of the geometry of the material -- an analysis has directly been provided in \cite{LeBrisLegollMinvielle}.}.

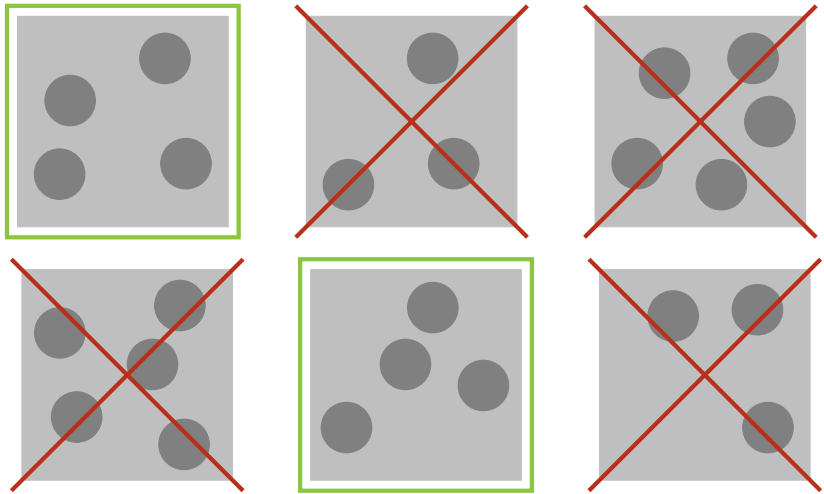
\begin{figure}[t]
\begin{tikzpicture}[scale=0.28]
\draw[color=white,ultra thick] (-0.5,-0.5) -- (10.5,10.5);
\draw[color=white,ultra thick] (-0.5,10.5) -- (10.5,-0.5);
\draw[color=LimeGreen,ultra thick] (-0.5,-0.5) -- (-0.5,10.5) -- (10.5,10.5) -- (10.5,-0.5) -- cycle;
\draw[fill=lightgray,color=lightgray] (0,0) rectangle (10,10);
\clip (0,0) rectangle (10,10);
\foreach \x in {(2.5,6.0),(7,8),(8,3),(2,2.5)}
\draw[fill=gray,color=gray] \x circle (1.2);
\end{tikzpicture}
$~~~~$
\begin{tikzpicture}[scale=0.28]
\draw[color=white,ultra thick] (-0.5,-0.5) -- (-0.5,10.5) -- (10.5,10.5) -- (10.5,-0.5) -- cycle;
\draw[fill=lightgray,color=lightgray] (0,0) rectangle (10,10);
\foreach \x in {(2,2),(6,8),(7,3)}
\draw[fill=gray,color=gray] \x circle (1.2);
\draw[ultra thick,color=BrickRed] (-0.5,-0.5) -- (10.5,10.5);
\draw[ultra thick,color=BrickRed] (-0.5,10.5) -- (10.5,-0.5);
\end{tikzpicture}
$~~~~$
\begin{tikzpicture}[scale=0.28]
\draw[color=white,ultra thick] (-0.5,-0.5) -- (-0.5,10.5) -- (10.5,10.5) -- (10.5,-0.5) -- cycle;
\draw[fill=lightgray,color=lightgray] (0,0) rectangle (10,10);
\foreach \x in {(2,3),(3.3,7.3),(7.5,8),(8.3,5),(6,2)}
\draw[fill=gray,color=gray] \x circle (1.2);
\draw[ultra thick,color=BrickRed] (-0.5,-0.5) -- (10.5,10.5);
\draw[ultra thick,color=BrickRed] (-0.5,10.5) -- (10.5,-0.5);
\end{tikzpicture}
\vspace{2mm}
\\
\begin{tikzpicture}[scale=0.28]
\draw[color=white,ultra thick] (-0.5,-0.5) -- (-0.5,10.5) -- (10.5,10.5) -- (10.5,-0.5) -- cycle;
\draw[fill=lightgray,color=lightgray] (0,0) rectangle (10,10);
\foreach \x in {(2.6,3),(1.8,7),(6.2,5.5),(7.7,1.7),(7.5,8.3)}
\draw[fill=gray,color=gray] \x circle (1.2);
\draw[ultra thick,color=BrickRed] (-0.5,-0.5) -- (10.5,10.5);
\draw[ultra thick,color=BrickRed] (-0.5,10.5) -- (10.5,-0.5);
\end{tikzpicture}
$~~~~$
\begin{tikzpicture}[scale=0.28]
\draw[color=white,ultra thick] (-0.5,-0.5) -- (10.5,10.5);
\draw[color=white,ultra thick] (-0.5,10.5) -- (10.5,-0.5);
\draw[color=LimeGreen,ultra thick] (-0.5,-0.5) -- (-0.5,10.5) -- (10.5,10.5) -- (10.5,-0.5) -- cycle;
\draw[fill=lightgray,color=lightgray] (0,0) rectangle (10,10);
\clip (0,0) rectangle (10,10);
\foreach \x in {(1.7,2.5),(5.8,8.2),(8.2,4.5),(4.5,5.5)}
\draw[fill=gray,color=gray] \x circle (1.2);
\end{tikzpicture}
$~~~~$
\begin{tikzpicture}[scale=0.28]
\draw[color=white,ultra thick] (-0.5,-0.5) -- (-0.5,10.5) -- (10.5,10.5) -- (10.5,-0.5) -- cycle;
\draw[fill=lightgray,color=lightgray] (0,0) rectangle (10,10);
\foreach \x in {(8,2.5),(7.5,8.1),(3.5,7.8)}
\draw[fill=gray,color=gray] \x circle (1.2);
\draw[ultra thick,color=BrickRed] (-0.5,-0.5) -- (10.5,10.5);
\draw[ultra thick,color=BrickRed] (-0.5,10.5) -- (10.5,-0.5);
\end{tikzpicture}
\caption{Among the six depicted material samples, the method of Le~Bris, Legoll, and Minvielle in its simplest realization would choose either the first sample or the fifth sample as the representative volume element and discard the others, as the volume fraction of the inclusions in the first and the fifth sample is closest to the overall material average. Note that in the depicted material samples the volume fraction of the inclusions is proportional to the number of inclusions, as all inclusions are of equal size. For a better illustration of the method, both the size and the number of the depicted samples have been chosen much smaller than in actual computations. \label{FigureExplanationLeBrisLegollMinvielle}}
\end{figure}

For materials with random heterogeneities on small scales, the approximation of the effective material coefficient by the method of representative volumes is a random quantity itself, as the outcome depends on the sample of the material. In the setting of linear elliptic PDEs with random coefficient fields -- which corresponds to the setting of heat conduction, electrical currents, or electrostatics in a material with random microstructure -- , Gloria and Otto \cite{GloriaOtto,GloriaOtto2,GloriaNumerical} have investigated the structure of the error of the approximation of the effective material coefficient by the method of representative volumes: The leading-order contribution to the error (with respect to the size of the RVE) consists of random fluctuations; in expectation the approximation of effective coefficients by the method of representative volumes is accurate to higher order, i.\,e.\ the systematic error of the RVE method is of higher order\footnote{At least if a suitable \emph{periodization} of the probability distribution of the coefficient field is available, see below for an explanation of this concept.}. For a given size of the RVE -- which corresponds to a fixed computational effort -- , the accuracy of the RVE method may therefore be increased significantly by reducing the variance of the approximations of the effective coefficient. It is precisely such a reduction of the variance by which the selection approach for representative volumes of Le~Bris, Legoll, and Minvielle \cite{LeBrisLegollMinvielle} achieves its gain in accuracy.

For linear elliptic PDEs with random coefficients and moderate ellipticity contrast, the reduction of the variance by the ansatz of Le~Bris, Legoll, and Minvielle \cite{LeBrisLegollMinvielle} is particularly remarkable: By selecting the representative volume according to the criterion that the averaged coefficient in the RVE should be particularly close to the averaged coefficient in the overall material, in numerical examples with ellipticity contrast $\sim 5$ they observed a variance reduction by a factor of $\sim 10$. Going beyond this simple selection criterion, they devised a criterion based on an expansion of the effective coefficient in the regime of small ellipticity contrast, which numerically achieves a remarkable variance reduction factor of $\sim 60$ even for a moderate ellipticity contrast $\sim 5$. Note that this basically corresponds to the gain of about one order of magnitude in accuracy for a negligible additional computational cost and implementation effort.

However, the analysis of the selection approach for representative volumes has been restricted to the one-dimensional setting \cite{LeBrisLegollMinvielle}, in which the homogenization of linear elliptic PDEs is linear in the inverse coefficient and therefore independent of the geometry of the material.
Besides the highly nonlinear dependence of the effective coefficient on the heterogeneous coefficient field in dimensions $d\geq 2$, one of the main challenges in the analysis of the selection method for representative volumes is the fact that it is only expected to increase the accuracy by a (though often very large) constant factor, at least for a fixed set of statistical quantities by which the selection is performed. At the same time, the available error estimates for the representative volume element method in stochastic homogenization are only optimal up to constant factors. For this reason, the analysis of the selection approach for representative volumes necessitates a fine-grained analysis of the structure of fluctuations in stochastic homogenization.

\subsection{Stochastic homogenization of linear elliptic PDEs: A brief outline}

The subject of the present contribution
is the rigorous justification of the selection method for representative volumes by Le~Bris, Legoll, and Minvielle \cite{LeBrisLegollMinvielle} in the context of linear elliptic equations
\begin{align}
\label{Equation}
-\nabla \cdot (a\nabla u)=f
\end{align}
with random coefficient fields $a$ on $\mathbb{R}^d$ for arbitrary spatial dimension $d$. Note that this setting describes e.\,g.\ heat conduction or electrostatics in a random material. Our assumptions on the probability distribution of the coefficient field $a$ are standard in the theory of stochastic homogenization: We assume just uniform ellipticity and boundedness, stationarity, and finite range of dependence (see conditions (A1)-(A3) below). In particular, our analysis includes the case of a two-material composite with random non-overlapping inclusions as depicted in Figure~\ref{FigureExplanationLeBrisLegollMinvielle}.

The theory of stochastic homogenization of linear elliptic PDEs predicts that for coefficient fields with only short-range correlations on a scale $\varepsilon\ll 1$ the solution $u$ to the equation with random coefficient field \eqref{Equation} may be approximated by the solution $u_\shom$ of an effective equation of the form
\begin{align}
\label{EffectiveEquation}
-\nabla \cdot (a_\shom \nabla u_\shom)=f,
\end{align}
where $a_\shom \in \mathbb{R}^{d\times d}$ is a constant effective coefficient which describes the effective behavior of the material. In this context of linear materials, the method of representative volumes is employed to compute the effective coefficient $a_\shom$.

Let us describe the method of representative volumes for the approximation of the effective material coefficient $a_\shom$ in more detail. It proceeds by choosing a sample of the material, say, a cube with side length $L\varepsilon$ for some $L\gg 1$, uniformly at random. Roughly speaking -- for the moment passing silently over the question of boundary conditions -- , by solving the equation for the homogenization corrector $\phi_i$ associated with the $i$-th coordinate direction on the representative volume
\begin{align}
\label{EquationCorrector}
-\nabla \cdot (a(e_i+\nabla \phi_i))=0
\quad\quad\text{ on }[0,L\varepsilon]^d
\end{align}
($e_i\in \mathbb{R}^d$ denoting the $i$-th vector of the standard basis)
one may obtain an approximation $a^\RVE$ for the effective coefficient $a_\shom$ in terms of the averaged fluxes
\begin{align}
\label{EquationRVE}
a^{\RVE}e_i :=\dashint_{[0,L\varepsilon]^d} a(e_i+\nabla \phi_i) \,dx.
\end{align}
This expression is also known in homogenization as the \emph{cell formula}.
As already mentioned before, the approximation $a^{\RVE}$ for the effective material coefficient $a_\shom$ is a random variable itself, as it depends on the realization of the random coefficient field $a$ on the sample volume $[0,L\varepsilon]^d$. It has been proven by Gloria and Otto \cite{GloriaOtto2,GloriaOttoNew} and also observed in numerical computations that the main contribution to the error of the RVE method is caused by the random fluctuations of the approximation $a^\RVE$, while the systematic error is of higher order:
For spatial dimensions $d\geq 1$ one has
\begin{align}
\label{VarianceBound}
\sqrt{\Var a^\RVE} \lesssim L^{-d/2}
\end{align}
but
\begin{align}
\label{SystematicErrorBound}
\big|\mathbb{E}[a^\RVE]-a_\shom \big|
\lesssim L^{-d} |\log L|^d.
\end{align}
As a consequence, a reduction of the fluctuations of the approximations $a^\RVE$ would lead to an increase in accuracy of the approximation for the effective coefficient $a_\shom$. It has been observed numerically by Le~Bris, Legoll, and Minvielle \cite{LeBrisLegollMinvielle} and shall be proven below rigorously that the selection approach for representative volumes achieves its gain in accuracy precisely by reducing the fluctuations of the approximations for the effective coefficients.

\subsection{Informal summary of our main results}

In the present work, we prove that in the setting of stochastic homogenization of linear elliptic equations the selection approach for representative volumes by Le~Bris, Legoll, and Minvielle \cite{LeBrisLegollMinvielle}
\begin{itemize}
\item essentially never performs worse than a completely random selection of the representative volume element, but may perform much better for suitable selection criteria,
\item basically maintains the order of the systematic error of the approximation for the effective coefficient, and
\item reduces also the error in the approximation for the effective coefficient that may occur with a given low probability, i.\,e.\ reduces also the ``outliers'' of the approximation for the effective coefficient.
\end{itemize}
As mentioned before, in the setting of linear elliptic PDEs the method of representative volumes is employed to obtain an approximation $a^\RVE$ for the effective (homogenized) coefficient $a_\shom$. The role of ``material samples'' is assumed by realizations of the random coefficient field $a:[0,L\varepsilon]^d\rightarrow\mathbb{R}^{d\times d}$, on which the computation of the approximations $a^\RVE$ is based.

The selection approach for representative volumes proposed in \cite{LeBrisLegollMinvielle} then proceeds as follows: At first, one or more statistical quantities $\mathcal{F}$ are chosen which assign a real number $\mathcal{F}(a)\in \mathbb{R}$ to any realization $a:[0,L\varepsilon]^d \rightarrow \mathbb{R}^{d\times d}$. Note that the simplest statistical quantity proposed in \cite{LeBrisLegollMinvielle} is the spatial average $\mathcal{F}(a):=\dashint_{[0,L\varepsilon]^d} a \,dx$. Next, one considers a sequence of independent samples of the random coefficient field until a sample meets the selection criterion
\begin{align}
\label{SelectionCondition}
\big|\mathcal{F}(a)-\mathbb{E}[\mathcal{F}(a)]\big|\leq \delta ~\sqrt{\Var \mathcal{F}(a)}
\end{align}
for some chosen parameter $\delta$ with $CL^{-d/2} |\log L|^C \leq \delta\leq 1$. Finally, the approximation for the effective coefficient is computed by solving the equation for the homogenization corrector \eqref{EquationCorrector} and using the cell formula \eqref{EquationRVE} for this sample of the random coefficient field.

To give a flavor of our main result, let us formulate it informally in the case of a single statistical quantity $\mathcal{F}(a)$.
We denote the approximation for the effective coefficient by the standard representative volume element method (without selection of material samples) by $a^\RVE$ and the approximation for the effective coefficient by the selection approach for representative volumes by $a^\SQS$.
In this case, our main theorems Theorem~\ref{TheoremSQS} and Theorem~\ref{TheoremSQSModerateDeviations} may be summarized as follows:
\begin{itemize}
\item The systematic error of the approximation $a^\SQS$ is essentially (up to powers of $\log L$ and some prefactors) of the same order as the systematic error of the standard representative volume element method $a^\RVE$: We have
\begin{align*}
\big|\mathbb{E}\big[a^\SQS\big]-a_\shom \big| \leq \frac{C\kappa^{3/2}}{\delta} L^{-d} |\log L|^C.
\end{align*}
The quantity $\kappa$ will be discussed below.
\item The fluctuations of the approximation $a^\SQS$ are reduced by the fraction of the variance of $a^\RVE$ that is explained by $\mathcal{F}(a)$: More precisely, we derive the estimate
\begin{align*}
~~~~~~~~~
\frac{\Var a^\SQS}{\Var a^\RVE} \leq & 1-(1-\delta^2)|\rho_{\mathcal{F}(a),a^\RVE}|^2
+\frac{C \kappa^{3/2} r_{\operatorname{Var}}}{\delta} L^{-d/2} |\log L|^C
\end{align*}
where $\rho_{\mathcal{F}(a),a^\RVE} \in [-1,1]$ denotes the correlation coefficient of $\mathcal{F}(a)$ and $a^\RVE$, given by
\begin{align*}
\rho_{\mathcal{F}(a),a^\RVE}:=
\frac{\Cov [a^\RVE,\mathcal{F}(a)]}{\sqrt{\Var \mathcal{F}(a) \Var a^\RVE}},
\end{align*}
and where $r_{\operatorname{Var}}:=\frac{L^{-d}}{\Var a^\RVE}$ denotes the ratio between the expected order of fluctuations of $a^\RVE$ and the actual magnitude of fluctuations. Note that the last term in the estimate on $\Var a^\SQS$ converges to zero as the size $L$ of the representative volume increases.
\item The probability of ``outliers'' is reduced by the selection method just as suggested by the variance reduction, at least in an ``intermediate'' region between the ``bulk'' and the ``outer tail'' of the probability distribution: One has a moderate-deviations-type estimate of the form
\begin{align*}
&~~~~~~~~~~~~~~
\mathbb{P}\Bigg[\frac{\big|a^\SQS_{ij}-a_{\shom,ij}\big|}{\sqrt{\big(1-|\rho_{\mathcal{F}(a),a^\RVE}|^2+\delta^2\big)\Var a^\RVE_{ij}+L^{-d/2-\beta}}}\geq s\Bigg]
\\&~~~~~~~~~~~~~~~~~~~~~~~~~
\leq \Big(1+\frac{C\delta}{\sqrt{1-|\rho|^2} s}+\frac{C}{\delta L^\beta}\Big)\mathbb{P}\big[|\mathcal{N}_1|\geq s\big]+\frac{C}{\delta}\exp(-L^{2\beta})
\end{align*}
for any $s\geq C\max\{(1-|\rho|^2)^{1/2} \delta^{-1},\delta (1-|\rho|^2)^{-1/2}\}$ and some $\beta=\beta(d)>0$, where $\mathcal{N}_1$ denotes the centered normal distribution with unit variance.
\item In the above bounds, $\kappa:=(1-|\rho_{\mathcal{F}(a),a^\RVE}|^2)^{-1}$ denotes (essentially) the condition number of the covariance matrix $\Var (a^\RVE,\mathcal{F}(a))$. For the case that the correlation $|\rho_{\mathcal{F}(a),a^\RVE}|$ is close to one, we derive bounds which are independent of $\kappa$ but come at the cost of a lower rate of convergence in $L$, namely
\begin{align*}
\big|\mathbb{E}\big[a^\SQS\big]-a_\shom \big| \leq \frac{C}{\delta} L^{-d/2-d/8} |\log L|^C
\end{align*}
and
\begin{align*}
~~~~~~~
\frac{\Var a^\SQS}{\Var a^\RVE} \leq & 1-(1-\delta^2)\big|\rho_{\mathcal{F}(a),a^\RVE}\big|^2
+\frac{C r_{\operatorname{Var}}}{\delta} L^{-d/8} |\log L|^C.
\end{align*}
\end{itemize}
Our estimate on the variance reduction achieved by the selection approach for representative volumes is implicit in the sense that it is determined by the correlation coefficient
\begin{align*}
\rho_{\mathcal{F}(a),a^\RVE}:=
\frac{\Cov [a^\RVE,\mathcal{F}(a)]}{\sqrt{\Var \mathcal{F}(a) \Var a^\RVE}}.
\end{align*}
In fact, the failure of the correlation coefficient $\rho_{\mathcal{F}(a),a^\RVE}$ to be nonzero also implies the failure of gaining accuracy by the selection approach for the representative volumes (see Theorem~\ref{TheoremFailureVarianceReduction}): In such a case of vanishing correlation, the method of Le~Bris, Legoll, and Minvielle \cite{LeBrisLegollMinvielle} is not superior (but essentially also not inferior) to the standard method of choosing a representative volume randomly.

This raises the question whether such a degeneracy of the correlation coefficient can occur for ``natural'' choices of the statistical quantity $\mathcal{F}(a)$. In Theorem~\ref{TheoremFailureVarianceReduction}, we shall prove that even for a ``natural'' choice like $\mathcal{F}(a):=\dashint_{[0,\varepsilon L]^d} a \,dx$ there is \emph{a~priori} no guarantee that there is a nonzero correlation between $a^\RVE$ and $\mathcal{F}(a)$: We construct an example of a probability distribution of $a$ for which the covariance of $a^\RVE$ and the average of the coefficient field $\dashint a$ in fact vanishes, while the variances $\Var \dashint_{[0,\varepsilon L]^d} a \,dx$ and $\Var a^\RVE$ are nondegenerate.

However, the failure of the variance reduction approaches to effectively reduce the variance is presumably limited to rather artificial examples: We prove that the covariance of $a^\RVE$ and the average of the coefficient field $\dashint a$ is positive for coefficient fields which are obtained from iid random variables by applying a ``monotone'' function, see Proposition~\ref{PropositionLowerBoundOnCorrelation}.

\subsection{Outline of our strategy}

\begin{figure}
\includegraphics[scale=0.1]{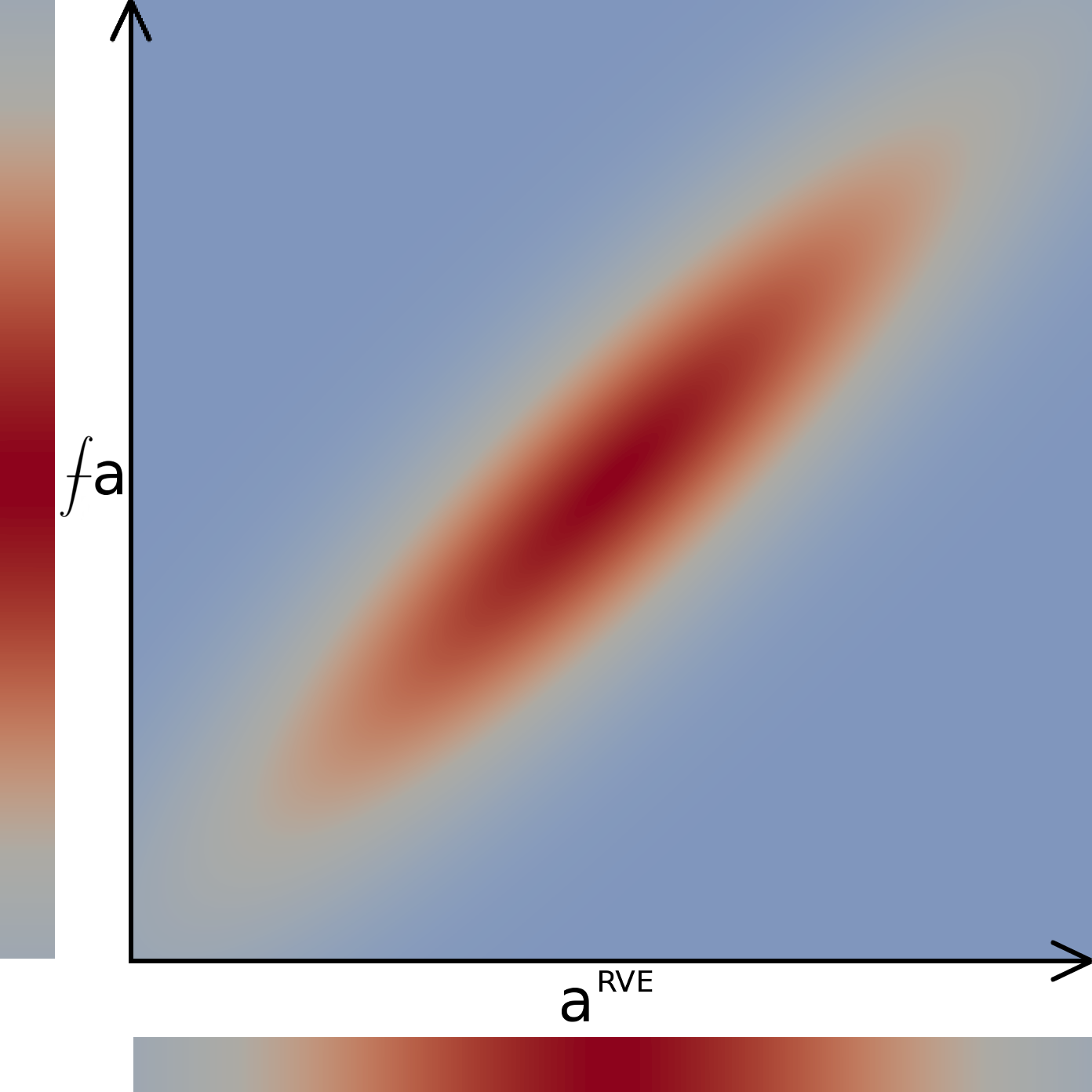}
~~~
\includegraphics[scale=0.1]{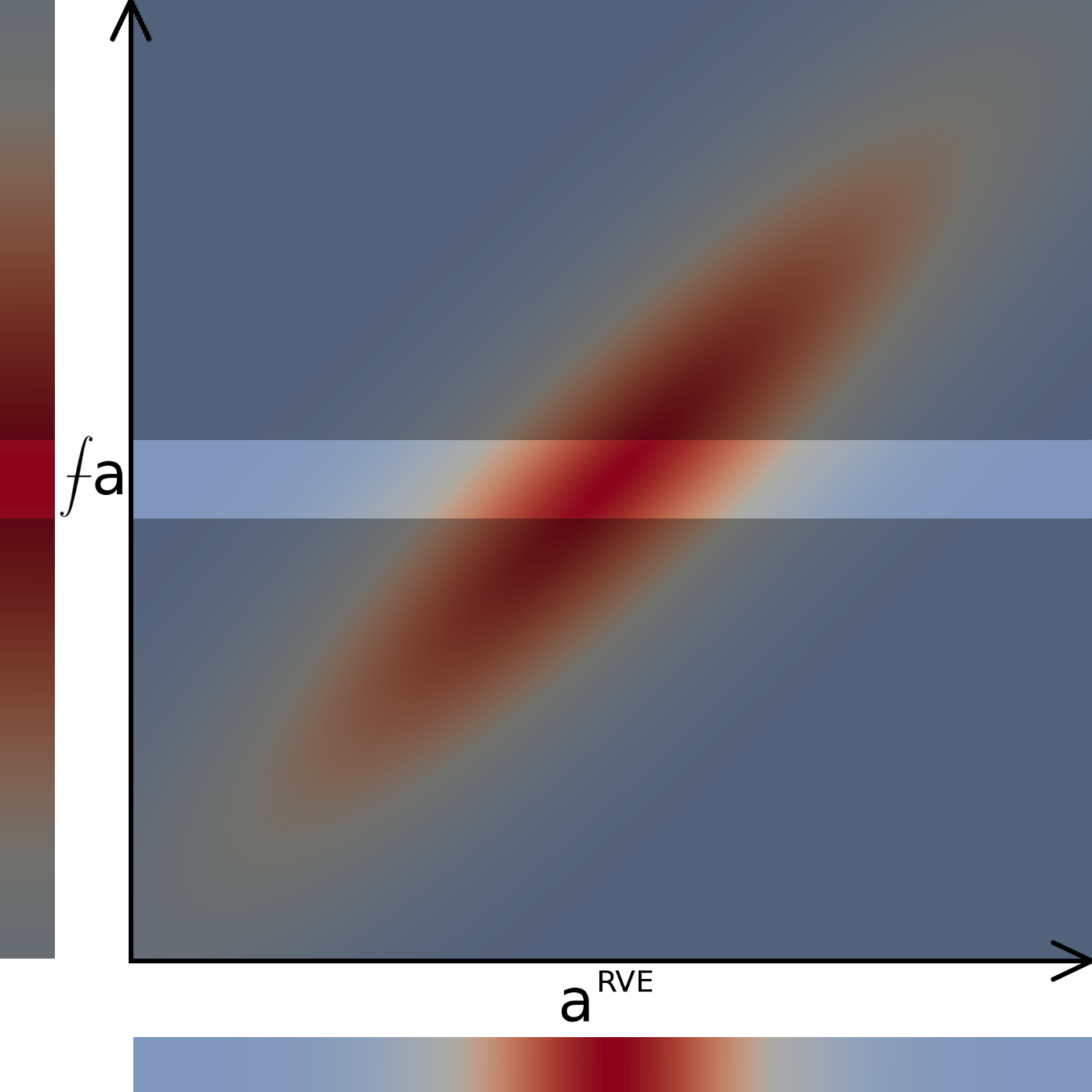}
\caption{For a multivariate Gaussian probability distribution, conditioning on the event of one variable being close to its expectation reduces the variance of the other variable, provided that the two random variables are nontrivially correlated. In our setting, conditioning on the event ``spatial average of coefficient field is close to its expectation'' reduces the variance of the random variable ``approximation for the effective conductivity'' $a^\RVE$, as their joint probability distribution is close to a multivariate Gaussian.\label{FigureConditioning}}
\end{figure}
The basic idea underlying our analysis of the selection approach for representative volumes is the observation that the joint probability distribution of the approximation for the effective coefficient $a^\RVE$ and one or more statistical quantities $\mathcal{F}(a)$ like the average of the coefficient field $\mathcal{F}(a):=\dashint_{[0,L\varepsilon]^d} a$ is close to a multivariate Gaussian, up to an error of the order $L^{-d} |\log L|^C$ in a suitable notion of distance between probability measures. The selection of representative volumes by the criterion \eqref{SelectionCondition} -- which amounts to conditioning on the event $|\mathcal{F}(a)-\mathbb{E}[\mathcal{F}(a)]|\leq \delta \sqrt{\Var \mathcal{F}(a)}$ -- then reduces the variance of the probability distribution of $a^\RVE$ by the variance explained by the statistical quantity $\mathcal{F}(a)$, up to error terms due to the deviation of the probability distribution from a multivariate Gaussian and the non-perfectness of the conditioning $\delta>0$, see Figure~\ref{FigureConditioning}. Note that for an ideal multivariate Gaussian distribution, the expected value of the approximation $a^\RVE$ would be left unchanged under conditioning since the criterion \eqref{SelectionCondition} is symmetric around $\mathbb{E}[\mathcal{F}(a)]$, i.\,e.\ the conditioning would not introduce a bias. As a consequence, for our approximate multivariate Gaussian $(a^\RVE,\mathcal{F}(a))$ the expectation of $a^\RVE$ is changed under conditioning only by the distance of our probability distribution to a multivariate Gaussian, which is a higher-order term. Note that both the reduction of the variance by conditioning and the estimate on the bias introduced by the conditioning rely crucially on the fact that our probability distribution is close to a multivariate Gaussian (and not another probability distribution): It is obvious from the picture in Figure~\ref{FigureConditioning} that a probability distribution other than a multivariate Gaussian could introduce a large bias under conditioning and even an \emph{increase} in variance.
Our analysis of the selection approach for representative volumes by Le~Bris, Legoll, and Minvielle \cite{LeBrisLegollMinvielle} is a first practical application of the beautiful theory of fluctuations in stochastic homogenization, which has been developed in recent years and which our work both draws ideas from and contributes to.

The underlying reason for the convergence of the joint probability distribution of $a^\RVE$ and one or more functionals $\mathcal{F}(a)$ towards a multivariate Gaussian is a central limit theorem for suitable collections of vector-valued random variables: We show that the approximation $a^\RVE$ for the effective coefficient $a_\shom$  -- and also the functionals $\mathcal{F}(a)$ that are used in the work of Le~Bris, Legoll, and Minvielle \cite{LeBrisLegollMinvielle} -- may be written as a sum of random variables with a local dependence structure with multiple levels, see Definition~\ref{ConditionRandomVariable} and Proposition~\ref{PropositionApproximabilityByMultilevel}.
For such sums of vector-valued random variables with multilevel local dependence, a proof of quantitative normal approximation is provided in the companion article \cite{FischerMultilevelLocalDependence} (see also Theorem~\ref{TheoremNormalApproximationMultilevelLocalDependence} below).
To the best of our knowledge such quantitative normal approximation results were previously known only for sums of random variables with local dependence structure \cite{ChenGoldsteinShao,ChenShao,RinottRotar} (corresponding more or less to just the lowest level of random variables in Figure~\ref{FigureMultilevel} below), a framework into which the approximation for the effective coefficient $a^\RVE$ does not fit.
Note that the sharp boundaries of the region defined by the selection criterion \eqref{SelectionCondition} (see also the sharp boundaries in Figure~\ref{FigureConditioning}) necessitate the use of a rather strong (though standard) distance between probability measures for our quantitative normal approximation result (see Definition~\ref{DefinitionDistance}); in particular, a stronger notion of distance between probability measures than the $1$-Wasserstein distance must be used.

As a by-product, our work also provides a proof of quantitative normal approximation for $a^\RVE$ in a different setting than available in the literature so far: To the best of our knowledge, the results on quantitative normal approximation for $a^\RVE$ in the literature always rely on an assumption that the coefficient field $a$ is obtained as a function of iid random variables \cite{DuerinckxGloriaOtto,GloriaNolen,Nolen} or that the probability distribution of $a$ is subject to a second-order Poincar\'e inequality like in \cite{DuerinckxGloriaPoincare}. In contrast, our result holds under the assumption of \emph{finite range of dependence}, in which to the best of our knowledge only a qualitative normal approximation result had been known \cite{ArmstrongKuusiMourratBook}.

The companion article \cite{FischerMultilevelLocalDependence} also provides a result on moderate deviations in the sense of Kramers for sums of random variables with multilevel local dependence structure, see Theorem~\ref{TheoremModerateDeviations}. Our result on the reduction of the error by the selection approach for representative volumes in the case of unlikely events (Theorem~\ref{TheoremSQSModerateDeviations}) is based on this moderate deviations theorem.

Our counterexample for the variance reduction -- which shows that even ``natural'' statistical quantities like the spatial average $\mathcal{F}(a) := \dashint_{[0,L\varepsilon]^d} a \,dx$ do not necessarily explain a positive fraction of the variance of $a^\RVE$ -- is based on the nonlinear dependence of the effective coefficient in periodic homogenization on the underlying coefficient field: More precisely, our counterexample consists of an interpolation between a standard random checkerboard and a random checkerboard with two types of tiles, one tile type being a constant coefficient field and one tile type being a second-order laminate microstructure. See Section~\ref{SectionCounterexample} for details of the construction.

\subsection{Computation of effective properties of random materials: A more detailed look}

In the homogenization of periodic linear materials -- i.\,e.\ in the homogenization of the linear elliptic PDE \eqref{Equation} with periodic coefficient field $a$ in the sense $a(x)=a(x+\varepsilon k)$ for all $k\in \mathbb{Z}^d$ -- it is possible to compute the effective coefficient $a_\shom$ by exploiting the periodicity of the coefficient field, basically reducing the problem to solving a PDE -- the PDE for the homogenization corrector -- on a single periodicity cell: For a period of length $\varepsilon$, the effective coefficient is given by the cell formula
\begin{align*}
a_\shom e_i\cdot e_j :=
\dashint_{[0,\varepsilon]^d} a (e_i+\nabla \phi_i)
\cdot e_j \,dx
\end{align*}
with the homogenization corrector $\phi_i$ defined as the unique $\varepsilon$-periodic solution with zero average to the PDE
\begin{align*}
-\nabla \cdot (a(e_i+\nabla \phi_i))&=0.
\end{align*}
As a consequence, in periodic homogenization the numerical computation of the effective coefficient $a_\shom$ typically requires only modest effort.

In contrast, in stochastic homogenization this simplification is no longer possible due to the absence of a periodic structure in the random coefficient field $a^{\mathbb{R}^{d}}:\mathbb{R}^{d}\rightarrow \mathbb{R}^{d\times d}$ and the computation of the effective coefficient becomes a computationally costly problem: The effective coefficient in stochastic homogenization is given by the infinite volume limit cell formula\footnote{This limit is to be read in an almost sure sense: By ergodicity, for almost every realization of $a$ this limit exists and is equal to a matrix which is independent of the realization.}
\begin{align*}
a_\shom e_i\cdot e_j :=
\lim_{L\rightarrow\infty} \dashint_{[0,L\varepsilon]^d} a^{\mathbb{R}^d} (e_i+\nabla \phi_i^{\operatorname{L,Dir}})
\cdot e_j \,dx
\end{align*}
with $\phi_i^{\operatorname{L,Dir}}$ denoting the solution to the corrector problem with Dirichlet boundary conditions
\begin{align*}
-\nabla \cdot (a^{\mathbb{R}^d}(e_i+\nabla \phi_i^{\operatorname{L,Dir}}))&=0&&\text{in }[0,L\varepsilon]^d,
\\
\phi_i^{\operatorname{L,Dir}}&\equiv 0&&\text{on }\partial [0,L\varepsilon]^d.
\end{align*}
In practice, in order to approximate the effective coefficient $a_\shom$ a representative volume $[0,L\varepsilon]^d$ of finite size must be chosen. However, the approximation of the effective coefficient by the standard cell formula with Dirichlet boundary conditions for the corrector
\begin{align*}
a_\shom e_i\cdot e_j \approx a^\RVE_{\operatorname{Dir}} e_i \cdot e_j:=
\dashint_{[0,L\varepsilon]^d} a^{\mathbb{R}^d} (e_i+\nabla \phi_i^{\operatorname{L,Dir}})
\cdot e_j \,dx
\end{align*}
is only of first-order accuracy $\mathbb{E}[|a^\RVE_{\operatorname{Dir}}-a_\shom|^2]^{1/2}\lesssim L^{-1}$ due to the presence of a boundary layer: The artificial Dirichlet boundary condition leads to the creation of a boundary layer in an $O(\varepsilon)$-neighborhood of the boundary $\partial [0,L\varepsilon]^d$. The limitation to first-order accuracy is present even in the systematic error $\mathbb{E}[a^\RVE]-a_\shom$. Note that while replacing the volume average in the cell formula by an average taken strictly in the interior of the representative volume typically increases the accuracy \cite{YuE}, for general probability distributions it does not increase the order of convergence due to global effects of the boundary layer. To achieve the convergence rates $|\mathbb{E}[a^\RVE]-a_\shom|\lesssim L^{-d}|\log L|^d$ and $\mathbb{E}[|a^\RVE-a_\shom|^2]^{1/2} \lesssim L^{-d/2}$ stated in \eqref{SystematicErrorBound} and \eqref{VarianceBound}, the boundary layer phenomenon must necessarily be addressed by the use of a more careful approximation technique than the method of correctors with Dirichlet boundary data.

One possibility of avoiding the creation of boundary layers is the use of a so-called ``periodization'' of the probability distribution: Given a probability distribution of coefficient fields $a^{\mathbb{R}^d}$, one first fixes the size $L\varepsilon$ of the desired representative volume and then attempts to construct a probability distribution of $L\varepsilon$-periodic coefficient fields $a$ such that the law of $a|_{x+[0,\frac{1}{2} L\varepsilon]^d}$ (i.\,e\ the law of $a$ restricted to some box of half the size of the representative volume) coincides with the law of $a^{\mathbb{R}^d}|_{x+[0,\frac{1}{2} L\varepsilon]^d}$ for any $x\in \mathbb{R}^d$. For one realization of the periodized probability distribution of coefficient fields $a$ one may then solve the corrector equation $-\nabla \cdot (a(e_i+\nabla \phi_i))=0$ with periodic boundary conditions on $\partial [0,L\varepsilon]^d$ and define the approximation $a^\RVE$ for the effective coefficient $a_\shom$ as
\begin{align}
\label{PeriodicaRVE}
a^\RVE e_i := \dashint_{[0,L\varepsilon]^d} a (e_i+\nabla \phi_i) \,dx.
\end{align}
This approximation $a^\RVE$ then has the desired approximation properties \eqref{VarianceBound} and \eqref{SystematicErrorBound}.
Note that this construction requires the knowledge of the probability distribution of $a^{\mathbb{R}^{d}}$ and must be done in a case-by-case basis; it is therefore not feasible in all practical situations.

To give an example, random non-overlapping inclusions like in Figure~\ref{FigureExplanationLeBrisLegollMinvielle} may be constructed by considering a Poisson point process on $\mathbb{R}^d\times [0,1]$, ordering the points $(x_k,y_k)\in \mathbb{R}^d\times [0,1]$ with respect to their last coordinate $y_k$, and then successively placing inclusions in $\mathbb{R}^d$ centered at the $x_k$ and with diameter $\varepsilon$ if the ``previous'' points $x_l$, $l<k$, have a distance of at least $\varepsilon$ from $x_k$ (i.\,e.\ $|x_l-x_k|\geq \varepsilon$). The result of such a construction is shown in Figure~\ref{PoissonPoint}a. For this probability distribution, one may define a periodization in a natural way by considering a Poisson point process on $[0,L\varepsilon)^d\times [0,1]$ and defining an $L\varepsilon$-periodic coefficient field with non-overlapping inclusions in the obvious way, replacing the Euclidean distance $|x_l-x_k|$ by the periodicity-adjusted distance $|x_l-x_k|_{\per}:= \inf_{z\in \mathbb{Z}^d} |x_l-x_k+L\varepsilon z|$. A sample from the periodized probability distribution is shown in Figure~\ref{PoissonPoint}b.

\begin{figure}
\begin{tikzpicture}[scale=0.35]
\draw (5,-1.5) node {(a)};
\draw[color=white,ultra thick] (-0.5,-0.5) -- (-0.5,10.5) -- (10.5,10.5) -- (10.5,-0.5) -- cycle;
\draw[fill=lightgray,color=lightgray] (0,0) rectangle (10,10);
\clip (0,0) rectangle (10,10);
\foreach \x in {(2.3,2.5),(1.2,5.6),(5.8,8.2),(1.5,8.2),(8.2,4.5),(8.5,6.9),(5.5,1.4),(8.6,1.8),(4.1,6.5),(6.1,4.6),(-0.2,0.1),(10.2,9.2),(7.6,10.2),(0.2,10.6),(3.1,-0.3),(-0.6,2.7),(3.8,9.5),(7.0,-0.6)}
\draw[fill=gray,color=gray] \x circle (0.9);
\end{tikzpicture}
$~~~~~$
\begin{tikzpicture}[scale=0.35]
\draw (5,-1.5) node {(b)};
\draw[color=white,ultra thick] (-0.5,-0.5) -- (-0.5,10.5) -- (10.5,10.5) -- (10.5,-0.5) -- cycle;
\draw[fill=lightgray,color=lightgray] (0,0) rectangle (10,10);
\clip (0,0) rectangle (10,10);
\foreach \ox in {0,-3.5,-7,3.5,7}
{
\foreach \oy in {0,-3.5,-7,3.5,7}
{
\foreach \x in {(0.5*2.3,0.5*2.5),(0.5*8.2,0.5*4.5),(0.5*8.5,0.5*6.9),(0.5*5.5,0.5*1.4),(0.5*4.1,0.5*6.5),(0.5*6.1,0.5*4.6)}
{
\draw[fill=gray,color=gray] \x+(\ox,\oy) circle (0.5*0.9);
}
}
}
\draw[color=black] (3.25,0) -- (3.25,10);
\draw[color=black] (6.75,0) -- (6.75,10);
\draw[color=black] (0,3.25) -- (10,3.25);
\draw[color=black] (0,6.75) -- (10,6.75);
\end{tikzpicture}
\caption{\label{PoissonPoint}(a) An example of random spherical inclusions distributed according to a Poisson point process, with overlapping inclusions removed. (b) A sample from the corresponding periodization of the probability distribution (rescaled); the periodicity cell is indicated by black lines.}
\end{figure}
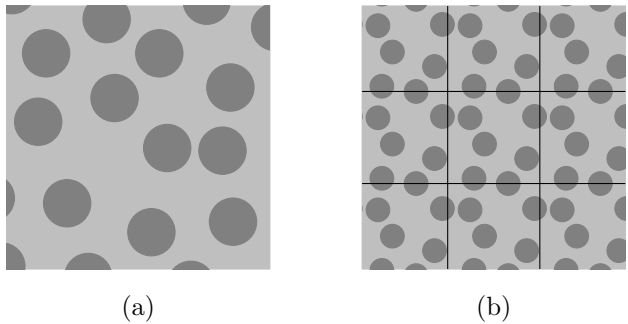

If no periodization of the probability distribution is available -- for example if only samples from the probability distribution are available and the underlying probability distribution is not known, like in applications where one has access to samples of the materials -- , one has to resort to an alternative means of increasing the rate of convergence of the method of representative volumes. One feasible option is to ``screen'' the effect of the boundary by introducing a ``massive'' term in the PDE for the homogenization corrector \cite{BlancLeBris,GloriaResonanceError,GloriaOtto2}: Fixing a scale $\sqrt{T} \sim \frac{L}{\log L}$, one replaces the equation for the homogenization corrector by the PDE
\begin{align*}
-\nabla \cdot (a^{\mathbb{R}^d}(e_i+\nabla \phi_i^{L,T})) + \frac{1}{T} \phi_i^{L,T} &=0&&\text{in }[0,L\varepsilon]^d,
\\
\phi_i^{L,T}&\equiv 0&&\text{on }\partial [0,L\varepsilon]^d
\end{align*}
and approximates the effective coefficient $a_\shom$ by
\begin{align*}
a_\shom e_i
\approx
a^\RVE e_i :=
\frac{1}{\int_{[0,L\varepsilon]^d} \eta \,dx}
\int_{[0,L\varepsilon]^d} \eta \, a^{\mathbb{R}^d} (e_i+\nabla \phi_i^{L,T}) \,dx,
\end{align*}
where $\eta$ is a smooth nonnegative weight supported in the slightly smaller box $[\frac{1}{8} L\varepsilon,(1-\frac{1}{8})L\varepsilon]^d$. In up to four spatial dimensions $d\leq 4$, this approximation also admits error estimates of the form
\begin{align*}
\sqrt{\Var a^\RVE} \lesssim L^{-d/2}
\end{align*}
and
\begin{align*}
\big|\mathbb{E}[a^\RVE]-a_\shom\big| \lesssim L^{-d} (\log L)^C.
\end{align*}

Due to the already substantial length of the present paper, we shall limit ourselves to the analysis of the selection approach for representative volumes in the context of periodizations of the probability distribution and defer the analysis of the screening approach to a future work.

Generally speaking, in the method of representative volumes the equation for the homogenization corrector may be solved by any numerical algorithm that is feasible for the given size of the representative volume: For example, standard finite element methods may be employed for representative volumes of moderate size, while for very large representative volumes one may use appropriate instances of modern computational homogenization methods like the multiscale finite element method, heterogeneous multiscale methods, and related approaches (see e.\,g.\ \cite{Abdulle,BabuskaCalozOsborn,BrezziEtAl,EEngquist,HughesEtAl,HouWu,
MatacheSchwab}) or the local orthogonal decomposition method by M\r{a}lqvist and Peterseim \cite{MalquistPeterseim}.

Note that besides the modern numerical homogenization methods -- which are in principle applicable to any elliptic PDE involving a heterogeneous coefficient field -- , there have been numerous numerical works on the more specific problem of the approximation of effective coefficients in stochastic homogenization, see for example \cite{AyoulGuilmardNouyBinetruy,CancesEhrlacherLegollStamm,
EfendievKronsbeinLegoll,EigelPeterseim,Khoromskij,MourratNumerical,
PeterseimCarstensen}.

\subsection{The selection approach for representative volumes by Le~Bris, Legoll, and Minvielle}
\label{SectionVarianceReductionApproaches}

Let us describe the selection approach for representative volumes by Le~Bris, Legoll, and Minvielle \cite{LeBrisLegollMinvielle} in more detail. The selection approach for representative volumes achieves its gain in accuracy of approximations $a^\RVE$ for the effective coefficient $a_\shom$ (as compared to the standard representative volume element method with completely random choice of the material sample) by selecting only those realizations of the random coefficient field $a|_{[0,L\varepsilon]^d}$ which capture some important statistical properties of the coefficient field $a$ in an exceptionally good way: For example, in the simplest setting Le~Bris, Legoll, and Minvielle \cite{LeBrisLegollMinvielle} propose to restrict one's attention to realizations of the coefficient field $a$ for which the average on $[0,L\varepsilon]^d$ is exceptionally close to its expected value
in the sense
\begin{align}
\label{ConditionSQSFirst}
\left| \dashint_{[0,L\varepsilon]^d} a \,dx - \mathbb{E}\bigg[\dashint_{[0,L\varepsilon]^d} a \,dx\bigg] \right| \leq \delta L^{-d/2}
\end{align}
for some $\delta\ll 1$. Note that for generic realizations of $a$ only
\begin{align*}
\bigg|\dashint_{[0,L\varepsilon]^d} a\,dx-\mathbb{E}\bigg[\dashint_{[0,L\varepsilon]^d} a\,dx\bigg]\bigg|\sim L^{-d/2}
\end{align*}
is true by the central limit theorem for the averages $\dashint_{[0,L\varepsilon]^d} a \,dx$ and the finite range of dependence $\varepsilon$.

On a numerical level, such a selection approach typically provides an increase in computational efficiency if the accuracy is indeed increased by conditioning on the event \eqref{ConditionSQSFirst}: Usually, the most expensive step in the computation of the approximations $a^\RVE$ is the computation of the homogenization corrector as the solution to the PDE \eqref{EquationCorrector}. In contrast, the generation of random coefficient fields $a$ and the evaluation of the average of $a$ is typically cheap. Therefore it is often worth generating about $\frac{1}{\delta}$ independent realizations of $a$ to obtain on average one realization of $a$ which satisfies \eqref{ConditionSQSFirst}; for this single realization, the corrector equation \eqref{EquationCorrector} is solved numerically and the approximation $a^\RVE$ for the effective coefficient is computed. This strategy is also applicable to situations in which the probability distribution of the coefficient field is not known, but one has only access to a large number of samples of the coefficient field, like in applications in which one has access to data from actual material samples.

The selection criterion \eqref{ConditionSQSFirst} based on the average of the coefficient field in the material sample is the first out of two selection criteria proposed by Le~Bris, Legoll, and Minvielle \cite{LeBrisLegollMinvielle}.
In order to reduce the variance of $a^\RVE$ further, they propose to consider several such statistical quantities at the same time, for example in addition to the spatial average
\begin{align*}
\mathcal{F}_{avg}(a):=\dashint_{[0,L\varepsilon]^d} a(x) \,dx
\end{align*}
the quantities
\begin{align}
\label{SecondOrderQuantity}
(\mathcal{F}_{2-point})_{i,j}(a):=
\dashint_{[0,L\varepsilon]^d} a\nabla v_i \cdot e_j \,dx \,dy
\end{align}
for some (approximation of the) solution $v_i$ to the constant-coefficient equation
\begin{align*}
-\Delta v_i = \nabla \cdot (ae_i),
\end{align*}
and require that all of these statistical quantities be close to their expectation at the same time. The quantities \eqref{SecondOrderQuantity} arise as a second-order correction to the effective conductivity $a^\RVE$ in the expansion in the regime of small ellipticity contrast: Expanding the homogenization corrector $\phi_i$ and the approximate effective conductivity $a^\RVE$ as a power series in $\nu$ for the family of coefficient fields
\begin{align*}
a=\operatorname{Id}+\nu \hat a,
\end{align*}
we deduce
\begin{align*}
\phi_i = \phi_i^0 + \nu \phi_i^1 + \nu^2 \phi_i^2 + O(\nu^3)
\end{align*}
with $\phi_i^0\equiv 0$, $\phi_i^1=v_i$, and $\phi_i^2$ defined as the solution to another PDE. As a consequence, for the approximation of the effective conductivity we obtain
\begin{align*}
a^\RVE e_i &= \dashint_{[0,L\varepsilon]^d} a e_i + \nu a \nabla v_i + \nu^2 \Id \nabla \phi_i^2 + O(\nu^3) \,dx
\\&
= \dashint_{[0,L\varepsilon]^d} a e_i + \nu a \nabla v_i \,dx + O(\nu^3)
\end{align*}
where in the last step we have used the periodicity of $\phi_i^2$. To see that the contribution of $v_i$ is actually of second order in $\nu$, one uses again $a=\Id+\nu \hat a$ and the periodicity of $v_i$.

By selecting the representative volumes by the \emph{two} criteria \eqref{ConditionSQSFirst} and
\begin{align}
\label{ConditionSQSSecond}
\Big|\mathcal{F}_{2-point} -\mathbb{E}\big[\mathcal{F}_{2-point}\big]\Big|
\leq \tilde \delta L^{-d/2}
\end{align}
at the same time, in the model problem of the random checkerboard with an ellipticity ratio of $5$ Le~Bris, Legoll, and Minvielle were able to reduce the variance of the approximations $a^\SQS$ for the effective conductivity by a factor of $50$, compared to the approximations $a^\RVE$ by the standard representative volume element method.

Another remarkable feature of the selection approach for representative volumes by Le~Bris, Legoll, and Minvielle is its compatibility with the vast majority of numerical homogenization methods: As the selection approach for representative volumes operates at the level of the choice of the coefficient field $a$, it may be combined with essentially any numerical discretization method for the corrector problem \eqref{CorrectorEquationRepeat}. Note that there exist many numerical homogenization methods that are particularly well-adapted to certain geometries of the microstructure; the selection approach for representative volumes may be employed in most of these methods to achieve a further speedup.

The selection approach for representative volumes is only one out of several variance reduction concepts in the context of stochastic homogenization: Blanc, Costaouec, Le~Bris, and Legoll \cite{BlancCostaouecLeBrisLegoll2,BlancCostaouecLeBrisLegoll,BlancLeBrisLegoll} have succeeded in reducing the variance by the method of antithetic variables; note that however for this approach the achievable variance reduction factor is much more limited.
The method of control variates has also been demonstrated to be successful in the context of the computation of effective coefficients in stochastic homogenization \cite{BlancLeBrisLegoll,LegollMinvielle}.

\subsection{A brief overview of quantitative stochastic homogenization}

For the sake of completeness, let us give a short overview of the tremendous progress that has been achieved in the quantitative theory of stochastic homogenization in recent years. The earliest (non-optimal) quantitative homogenization results for linear elliptic equations are due to Yurinski\u{\i} \cite{Yurinskii}. A decade later, Naddaf and Spencer \cite{NaddafSpencer} introduced the use of spectral gap inequalities in stochastic homogenization and derived optimal fluctuation estimates in the regime of small ellipticity contrast $||a-\Id||_{L^\infty} \ll 1$, i.\,e.\ in a perturbative setting. Another decade later, Caffarelli and Souganidis derived the first -- though only logarithmic -- rates of convergence for nonlinear stochastic homogenization problems \cite{CaffarelliSouganidis}. Gloria and Otto \cite{GloriaOtto,GloriaOtto2} and Gloria, Neukamm, and Otto \cite{GloriaNeukammOttoTwoScale} succeeded in the derivation of optimal homogenization rates for discrete linear elliptic equations with i.\,i.\,d.\ random conductances. Subsequently, these results were generalized to elliptic equations on $\mathbb{R}^d$ and correlated probability distributions by Gloria, Neukamm and Otto \cite{GloriaNeukammOtto,GloriaNeukammOttoInventiones}. For coefficient fields $a$ whose correlations decay quickly on scales larger than $\varepsilon>0$, these quantitative estimates for the homogenization error -- that is, for the difference between the solutions to the PDE with the random coefficient field \eqref{Equation} and its homogenized approximation \eqref{EffectiveEquation} -- read
\begin{align}
\label{TwoScaleError}
||u-u_\shom||_{L^p} \leq
\begin{cases}
\mathcal{C}(a) ||f||_{L^2} \varepsilon \sqrt{|\log \varepsilon|}&\text{for }d=2,
\\
\mathcal{C}(a) ||f||_{L^2} \varepsilon &\text{for }d\geq 3,
\end{cases}
\end{align}
with $\mathcal{C}(a)$ satisfying stretched exponential moment bounds and for suitable $p=p(d)$. Armstrong and Smart \cite{ArmstrongSmart} were the first to obtain power-law rates of convergence for nonlinear equations, deriving and employing an Avellanda-Lin type regularity estimate \cite{AvellanedaLin}; see also Armstrong and Mourrat \cite{ArmstrongMourrat}. Their estimates also come with optimal -- almost Gaussian -- stochastic moment bounds. Recently, the progress in stochastic homogenization culminated in the derivation of the optimal homogenization rates with optimal stochastic moment bounds by Armstrong, Kuusi, and Mourrat \cite{ArmstrongKuusiMourrat} and Gloria and Otto \cite{GloriaOttoNew}: For finite range of dependence $\varepsilon$, a quantitative error bound for the homogenization error of the form \eqref{TwoScaleError} holds true with a random constant $\mathcal{C}(a)$ with almost Gaussian moments $\mathbb{E}[\exp(\mathcal{C}(a)^{2-\delta}/C(\delta))]\leq 2$ for any $\delta>0$.

Higher-order approximation results in terms of homogenized problems have been derived in \cite{BellaFehrmanFischerOtto,BellaGiuntiOtto,BenoitGloria,Gu,LuOtto}, relying on the concept of higher-order correctors which was first used in the stochastic homogenization context in \cite{FischerOtto} to establish Liouville principles of arbitrary order in the spirit of Avellaneda and Lin's result in periodic homogenization \cite{AvellanedaLin2}. Further works in quantitative stochastic homogenization include the analysis of nondivergence form equations \cite{ArmstrongLin}, a regularity theory up to the boundary \cite{FischerRaithel}, denerate elliptic equations \cite{AndresNeukamm,GiuntiMourrat}, and the homogenization of parabolic equations \cite{ArmstrongBordasMourrat,LinSmart}.
Recently, Armstrong and Dario \cite{ArmstrongDario} and Dario \cite{Dario} succeeded in establishing quantitative homogenization for supercritical Bernoulli bond percolation on the standard lattice.
 
The fluctuations of the mathematical objects arising in the stochastic homogenization of linear elliptic PDEs have been the subject of a beatiful series of works, starting with the work of Nolen \cite{Nolen} and a subsequent work of Gloria and Nolen \cite{GloriaNolen} on quantitative normal approximation for (a single component of) the approximation of the effective conductivity $a^\RVE$ and a work of Mourrat and Otto \cite{MourratOtto} on the correlation structure of fluctuations in the homogenization corrector $\phi_i$. Mourrat and Nolen \cite{MourratNolen} have shown a quantitative normal approximation result for the fluctuations of the corrector.  Gu and Mourrat \cite{GuMourrat} have derived a description of fluctuations in the solutions to the equation with random coefficient field \eqref{Equation}. Recently, a pathwise description of fluctuations of the solutions to the equation with random coefficient field \eqref{Equation} -- namely, in terms of deterministic linear functionals of the so-called \emph{homogenization commutator} $\Xi:=(a-a_\shom)(e_i+\nabla \phi_i)$, a random field converging (for $\varepsilon\rightarrow 0$) towards white noise, -- was developed by Duerinckx, Gloria, and Otto \cite{DuerinckxGloriaOtto}. As far as quantitative normal approximation results are concerned, all of these works work under the assumption of i.i.d.\ coefficients (in the discrete setting) or second-order Poincar\'e inequalities. To the best of our knowledge, the present work provides the first quantitative description of fluctuations (though so far limited to the approximation of the effective conductivity $a^\RVE$) when the decorrelation in the coefficient field is quantified by the assumption of finite range of dependence instead of functional inequalities.

Note that despite its long history \cite{DalMasoModicaStochasticHomogenization,Kozlov,LionsSouganidis, PapanicolaouVaradhan}, the qualitative theory of stochastic homogenization has also been a very active area of research in the past years, see e.\,g.\  \cite{ArmstrongSouganidis,BraidesCicaleseRuf,HeidaSchweizerPlasticity, HornungPawelcykVelcic}; however, due to the substantial length of the present manuscript we shall not provide a more detailed discussion and refer the reader to these references instead.

{\bf Notation.}
Throughout the paper, we shall use standard notation for Sobolev spaces and weak derivatives; for a space-time function $v(x,s)$, we denote by $\nabla v$ its spatial gradient (in the weak sense) and by $\partial_s v$ its (weak) time derivative. The notation $\dashint_{B} f\,dx:=\frac{\int_B f \,dx}{\int_B 1 \,dx}$ is used for the average integral over a set $B$ of positive but finite Lebesgue measure. The space of measurable functions $f$ with $||f||_{L^p}:=(\int_{\mathbb{R}^d} |f|^p \,dx)^{1/p}<\infty$ will be denoted by $L^p$. By $L^p_{loc}$ we denote the space of functions $f$ with $f\chi_{\{|x|\leq R\}}\in L^p$ for all $R<\infty$. We shall also use the weighted space $L^p_{h}$ of functions with $||f||_{L^p_h}:=(\int_{\mathbb{R}^d} |f(x)|^p h(x) \,dx)^{1/p}<\infty$ for a nonnegative measurable weight function $h$. By $H^1(\mathbb{R}^d)$ we denote as usual the Sobolev space of functions $v\in L^2(\mathbb{R}^d)$ with $\nabla v\in L^2(\mathbb{R}^d)$; similarly, $H^1_{loc}(\mathbb{R}^d)$ is the space of functions $v$ with $v\in L^2_{loc}(\mathbb{R}^d)$ and $\nabla v\in L^2_{loc}(\mathbb{R}^d)$. For a Banach space $X$ we denote by $L^p([0,T];X)$ the usual Lebesgue-Bochner space.

As usual, we shall denote by $C$ and $c$ constants whose value may change from occurrence to occurrence. We are going to use the notation $\mathcal{C}(a)$ and similar expressions to denote a random constant subject to suitable moment bounds; again, the precise value of $\mathcal{C}(a)$ may change from occurrence to occurrence.

For a vector $v\in \mathbb{R}^m$ we denote by $|v|$ its Euclidean norm. We denote the identity matrix in $\mathbb{R}^{N\times N}$ by $\Id$ or $\Id_N$. For a matrix $A\in \mathbb{R}^{m\times m}$ we shall denote by $|A|$ its natural norm $|A|:=\max_{v,w\in \mathbb{R}^m,|v|=|w|=1} |v\cdot A w|$ and by $A^*$ its transpose (as all our matrices are real). For $x\in \mathbb{R}^d$ we denote by $|x|_\infty=\max_i |x_i|$ its supremum norm. By $|x-y|_\per$ respectively (for sets) $\dist_\per(U,V)$, we denote the periodicity-adjusted distance (in the context of the torus $[0,L\varepsilon]^d$). By $|x-y|_\infty^{\per}$ and $\dist^\per_\infty(x,y)$, we denote the corresponding distances associated with the maximum norm. For a positive definite matrix $A$, we denote by $\kappa(A)$ its condition number.

Given a positive definite symmetric matrix $\Lambda\in \mathbb{R}^{N\times N}$, we denote the Gaussian with covariance matrix $\Lambda$ by
\begin{align*}
\mathcal{N}_{\Lambda}(x):=\frac{1}{(2\pi)^{N/2}\sqrt{\det \Lambda}}
\exp\bigg(-\frac{1}{2}\Lambda^{-1} x \cdot x\bigg).
\end{align*}
For $\gamma>0$, we equip the space of random variables $X$ with stretched exponential moment $\mathbb{E}[\exp(|X|^\gamma/a)]<\infty$ for some $a=a(X)>0$ with the norm $||X||_{\exp^\gamma}:=\sup_{p\geq 1} p^{-1/\gamma} \mathbb{E}[|X|^p]^{1/p}$. For a discussion of this choice of norm, see Appendix~\ref{AppendixStretchedExponential}.

For a map $f:\mathbb{R}^N\rightarrow V$ into a normed vector space $V$, we denote for any $r>0$ by $\osc_r f(x_0):=\sup_{x,y\in \{|x-x_0|\leq r\}} |f(x)-f(y)|_V$ its oscillation in the ball of radius $r$ around $x_0$.

The conditional expectation of a random variable $X$ given $Y$ is denoted by $\mathbb{E}[X|Y]$.

\section{Main Results}

In the present work, we establish a rigorous justification of the selection approach for representative volumes by Le~Bris, Legoll, and Minvielle \cite{LeBrisLegollMinvielle} in the context of stochastic homogenization of linear elliptic PDEs for quite general probability distributions of the coefficient field $a^{\mathbb{R}^d}$: Our only assumptions on the probability distribution of the coefficient field $a^{\mathbb{R}^d}:\mathbb{R}^d\rightarrow \mathbb{R}^{d\times d}$  are uniform ellipticity and boundedness, stationarity, and finite range of dependence, which is a standard set of assumptions in stochastic homogenization \cite{ArmstrongSmart,GloriaOttoNew} (note that we equip the space of uniformly elliptic and bounded coefficient fields with the topology of Murat and Tartar's $H$-convergence \cite{MuratTartar}).
Let us remark that all of our results and proofs are also valid in the case of strongly elliptic systems, upon adapting the notation in the obvious way.
\begin{itemize}
\item[(A1)] \emph{Uniform ellipticity} of a coefficient field $a$ as usual means that there exists a positive real number $\lambda>0$ such that almost surely we have $a(x)v\cdot v \geq \lambda |v|^2$ for a.\,e.\ $x\in \mathbb{R}^d$ and every $v\in \mathbb{R}^d$. Furthermore we assume \emph{uniform boundedness} in the sense that almost surely $|a(x)v|\leq \frac{1}{\lambda}|v|$ holds for a.\,e.\ $x\in \mathbb{R}^d$ and every $v\in \mathbb{R}^d$.
\item[(A2)] \emph{Stationarity} means that the law of the shifted coefficient field $a(\cdot+x)$ must coincide with the law of $a(\cdot)$ for every $x\in \mathbb{R}^d$. On a heuristic level, this means that ``the probability distribution of $a$ is everywhere the same'' or, in other words, that the material is spatially statistically homogeneous.
\item[(A3)] \emph{Finite range of dependence} $\varepsilon$ means that for any two Borel sets $A,B\subset \mathbb{R}^d$ with $\dist(A,B)\geq \varepsilon$ the restrictions $a|_A$ and $a|_B$ must be stochastically independent. In particular, this assumption restricts the correlations in the coefficient field to the scale $\varepsilon\ll 1$.
\end{itemize}
Note that these assumptions include e.\,g.\ the case of a two-material composite with random (either overlapping or non-overlapping) inclusions of diameter $\varepsilon$, the centers distributed according to a Poisson point process (up to removal in case of overlap); see Figure~\ref{PoissonPoint}a. Further examples include coefficient fields $a^{\mathbb{R}^d}(x):=\xi(\tilde a(x))$ that arise by pointwise application of a nonlinear function $\xi:\mathbb{R}^{d\times d}\rightarrow \mathbb{R}^{d\times d}$ to a (tensor-valued) stationary Gaussian random field $\tilde a$ with finite range of dependence $\varepsilon$ and integrable correlations, provided that the function $\xi$ is Lipschitz and takes values in the set of uniformly elliptic and bounded matrices.

For the approximation of the effective coefficient $a_\shom$, it is of advantage to work with a so-called \emph{periodization} of the stationary ensemble of random coefficient fields $a^{\mathbb{R}^d}$ (employing terminology from statistical mechanics, a probability measure on the space of coefficient fields shall also be called an \emph{ensemble} of coefficient fields). By a periodization of an ensemble of coefficient fields $a^{\mathbb{R}^d}$ we understand an ensemble of coefficient fields $a$ which are almost surely $L\varepsilon \mathbb{Z}^d$-periodic for some $L\gg 1$ and for which the probability distribution of $a$ on each cube of size of half the period $\frac{L\varepsilon}{2}$ coincides with the probability distribution of the original coefficient field $a^{\mathbb{R}^d}$, i.\,e.\ for which the probability distribution of $a|_{x+[0,L\varepsilon/2]^d}$ coincides with the distribution of $a^{\mathbb{R}^d}|_{x+[0,L\varepsilon/2]^d}$ for all $x\in \mathbb{R}^d$. For such a periodization, the condition (A3) is replaced by the following conditions (A3$_a$), (A3$_b$), (A3$_c$):
\begin{itemize}
\item[(A3$_a$)] The coefficient field $a$ is almost surely $L \varepsilon \mathbb{Z}^d$-periodic.
\item[(A3$_b$)] There exists a \emph{finite range of dependence} $\varepsilon>0$ such that for any two measurable $L \varepsilon \mathbb{Z}^d$-periodic sets $A,B\subset \mathbb{R}^d$ with $\dist(A,B)\geq \varepsilon$ the restrictions $a|_A$ and $a|_B$ are stochastically independent.
\item[(A3$_c$)] For any $x_0\in \mathbb{R}^d$ the law of the restriction $a|_{x_0+[-\frac{L\varepsilon}{4},\frac{L\varepsilon}{4}]^d}$ coincides with the corresponding law for some (non-periodic) ensemble of coefficient fields $a^{\mathbb{R}^d}$ satisfying (A1)-(A3).
\end{itemize}
Furthermore, to include examples like the random checkerboard in our analysis, we need the following notion of discrete stationarity.
\begin{itemize}
\item[(A2')] We say that our probability distribution of coefficient fields $a$ satisfies \emph{discrete stationarity} if the law of the shifted coefficient field $a(\cdot+x)$ coincides with the law of $a(\cdot)$ for every shift $x\in \varepsilon \mathbb{Z}^d$.
\end{itemize}

Our main assumptions stated in Assumption~\ref{AssumptionsNotation} below consist of two parts: First, we assume that the probability distribution of coefficient fields $a^{\mathbb{R}^d}$ satisfies the standard assumptions from stochastic homogenization and that there exists a suitable periodization $a$ of the probability distribution. Second, we require the statistical quantities $\mathcal{F}(a)$ to admit a ``multilevel local dependence structure decomposition'' as introduced in Definition~\ref{ConditionRandomVariable} below.
Let us remark that both the spatial average
\begin{align*}
\mathcal{F}_{avg}(a):=\dashint_{[0,L\varepsilon]^d} a \,dx
\end{align*}
and the higher-order quantity $\mathcal{F}_{2-point}(a)$ considered by Le~Bris, Legoll, and Minvielle \cite{LeBrisLegollMinvielle} as defined in \eqref{SecondOrderQuantity} satisfy the conditions in Definition~\ref{ConditionRandomVariable}; a proof of this fact is provided in Proposition~\ref{PropositionApproximabilityByMultilevel} below. As a consequence, both the spatial average $\mathcal{F}_{avg}(a)$ and the higher-order quantity $\mathcal{F}_{2-point}(a)$ may be chosen as the statistical quantities by which the selection of representative volumes is performed in our main theorems Theorem~\ref{TheoremSQS} and Theorem~\ref{TheoremSQSModerateDeviations}.

\begin{assumption}[Assumptions and Notation]
\label{AssumptionsNotation}
Consider a probability distribution of random coefficient fields $a^{\mathbb{R}^d}$ on $\mathbb{R}^d$, $d\geq 1$, which satisfies the conditions of ellipticity, stationarity, and finite range of dependence (A1)-(A3). Let $L\geq 2$ and suppose that there exists an $L\varepsilon$-periodization $a$ of the probability distribution of $a^{\mathbb{R}^d}$ subject to (A1), (A2), (A3$_a$) - (A3$_c$). Denote by $a^\RVE$ the approximation for the effective coefficient $a_\shom$ by the standard representative volume element method with a material sample of size $[0,L\varepsilon]^d$, i.\,e.\ set
\begin{align*}
a^\RVE e_i := \dashint_{[0,L\varepsilon]^d} a(e_i+\nabla \phi_i) \,dx
\end{align*}
with $\phi_i$ being the unique $L\varepsilon$-periodic solution with vanishing average to the corrector equation
\begin{align*}
-\nabla \cdot (a(e_i+\nabla \phi_i))=0.
\end{align*}
Let $\mathcal{F}(a)=(\mathcal{F}_1(a),\ldots,\mathcal{F}_N(a))$ be a collection of statistical quantities of the coefficient field $a$ which are subject to the conditions of Definition~\ref{ConditionRandomVariable} with $K\leq C_0$, $B\leq C_0 |\log L|^{C_0}$, and $\gamma\geq c_0$ for some $0<c_0,C_0<\infty$. Suppose that the covariance matrix of $\mathcal{F}(a)$ is nondegenerate and bounded in the natural scaling in the sense
\begin{align}
\label{FNondegenerate}
L^{-d} \Id \leq \Var \mathcal{F}(a) \leq C_0 L^{-d} \Id.
\end{align}
For any $1\leq i,j\leq d$ introduce the condition number $\kappa_{ij}$ of the covariance matrix of $(a^\RVE_{ij},\mathcal{F}(a))$
\begin{align*}
\kappa_{ij}:=\kappa\big(\Var (a^\RVE_{ij},\mathcal{F}(a))\big)
\end{align*}
and the ratio $r_{\operatorname{Var},ij}$ between the expected order of fluctuations and the actual fluctuations of the approximation $a^\RVE_{ij}$
\begin{align*}
r_{\operatorname{Var},ij}:= \frac{L^{-d}}{\Var a^\RVE_{ij}}.
\end{align*}
Denote by $C$ a constant depending on $d$, $\lambda$, $\gamma$, $N$, and $C_0$.
\end{assumption}

Under the above assumptions, the selection approach for representative volumes to capture certain statistical properties of the material in the representative volume particularly well -- as proposed by Le~Bris, Legoll, and Minvielle \cite{LeBrisLegollMinvielle} -- leads to the following increase in accuracy of the computed material coefficients. 
\begin{theorem}[Justification of the Selection Approach for Representative Volumes]
\label{TheoremSQS}
Let the assumptions and notations of Assumption~\ref{AssumptionsNotation} be in place. Denote by $a^\SQS$ the approximation for the effective coefficient $a_\shom$ by the selection approach for representative volumes introduced by Le~Bris, Legoll, and Minvielle \cite{LeBrisLegollMinvielle} in the case of a representative volume of size $L\varepsilon$. Suppose that the representative volumes $a|_{[0,L\varepsilon]^d}$ are selected from the periodized probability distribution according to the criterion
\begin{align}
\label{SQSConditioning}
\left|\mathcal{F}(a)-\mathbb{E}\big[\mathcal{F}(a)\big]\right|
\leq
\delta L^{-d/2}
\end{align}
for some $\delta\in (0,1]$. Let the selection criterion be chosen not too strict in the sense that $\delta^N \geq C L^{-d/2} |\log L|^{C(d,\gamma,C_0)}$.
Then the selection approach for representative volumes is subject to the following error analysis:
\newline
a) The systematic error of the approximation $a^\SQS$ satisfies the estimate
\begin{align}
\label{SystematicErrorSQS}
\big|\mathbb{E}\big[a^\SQS\big]-a_\shom \big| \leq \frac{C \kappa_{ij}^{3/2}}{\delta^N} L^{-d} |\log L|^{C(d,\gamma)}.
\end{align}
b) The variance of the approximation $a^\SQS$ is estimated from above by
\begin{align}
\label{VarianceReductionSQS}
\frac{\Var a^\SQS_{ij}}{\Var a^\RVE_{ij}} \leq 1-(1-\delta^2) |\rho|^2 + \frac{C \kappa_{ij}^{3/2}r_{\operatorname{Var},ij}}{\delta^N} L^{-d/2} |\log L|^{C(d,\gamma)}
\end{align}
where $|\rho|^2$ is the fraction of the variance of $a^\RVE_{ij}$ explained by the $\mathcal{F}(a)$, that is, $|\rho|^2$ is the maximum of the squared correlation coefficient between $a^\RVE_{ij}$ and any linear combination of the $\mathcal{F}_n(a)$. The explained fraction of  the variance is given by the formula
\begin{align}
\label{FormulaRho}
|\rho|^2 := \frac{\Cov[a^\RVE_{ij},\mathcal{F}(a)] \cdot (\Var \mathcal{F}(a))^{-1} \Cov[\mathcal{F}(a),a^\RVE_{ij}]}{\Var a^\RVE_{ij}}.
\end{align}
c) The probability that a randomly chosen coefficient field $a$ satisfies the selection criterion \eqref{SQSConditioning} is at least
\begin{align}
\label{ProbabilitySQS}
\mathbb{P}\big[|\mathcal{F}(a)|\leq \delta L^{-d/2}\big] \geq c(N) \delta^N.
\end{align}
d) The systematic error and the variance of $a^\SQS$ may be estimated independently of $\kappa_{ij}$ at the price of lower rate of convergence in $L$
\begin{align}
\label{SystematicErrorSQS2}
\big|\mathbb{E}\big[a^\SQS\big]-a_\shom \big| \leq \frac{C}{\delta^N} L^{-d/2-d/8} |\log L|^{C(d,\gamma)}
\end{align}
and
\begin{align}
\label{VarianceReductionSQS2}
\frac{\Var a^\SQS_{ij}}{\Var a^\RVE_{ij}} \leq 1-(1-\delta^2) |\rho|^2 + \frac{Cr_{\operatorname{Var},ij}}{\delta^N} L^{-d/8} |\log L|^{C(d,\gamma)}.
\end{align}
\end{theorem}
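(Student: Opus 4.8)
The plan is to reduce all four parts to one soft fact about nondegenerate multivariate Gaussians --- that conditioning on the event that one coordinate block lies in a small box centered at its mean leaves the mean of the remaining coordinate unchanged and multiplies its variance by (essentially) the unexplained fraction $1-|\rho|^2$ --- and to pay for the reduction with the distance between the law of $(a^\RVE_{ij},\mathcal{F}(a))$ and the Gaussian with the same covariance. First I would invoke Proposition~\ref{PropositionApproximabilityByMultilevel} to represent $a^\RVE_{ij}$ and each $\mathcal{F}_n(a)$ as sums of random variables with the multilevel local dependence structure of Definition~\ref{ConditionRandomVariable}, the parameters being controlled as in Assumption~\ref{AssumptionsNotation} ($K\leq C_0$, $B\leq C_0|\log L|^{C_0}$, $\gamma\geq c_0$). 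Applying the quantitative normal approximation result Theorem~\ref{TheoremNormalApproximationMultilevelLocalDependence} to the $(N+1)$-vector $(a^\RVE_{ij},\mathcal{F}(a))$ then yields that its centered law lies within distance $C\kappa_{ij}^{3/2}L^{-d}|\log L|^{C}$, in the strong metric of Definition~\ref{DefinitionDistance}, of $\mathcal{N}_\Lambda$ with $\Lambda:=\Var(a^\RVE_{ij},\mathcal{F}(a))$; the exponent $3/2$ of the condition number reflects the third-moment nature of the Berry--Esseen bound after whitening $\Lambda$ by $\Lambda^{-1/2}$, the scale $L^{-d}$ of $\Lambda$ being otherwise pinned down by \eqref{FNondegenerate}.

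Next I would carry out the conditioning computation for the \emph{ideal} Gaussian. Writing $a^\RVE_{ij}=\mathbb{E}[a^\RVE_{ij}]+(\text{linear functional of }\mathcal{F}(a)-\mathbb{E}\mathcal{F}(a))+Z$ with $Z$ the Gaussian residual, independent of $\mathcal{F}(a)$ and with $\Var Z=(1-|\rho|^2)\Var a^\RVE_{ij}$, conditioning on the box $\{|\mathcal{F}(a)-\mathbb{E}\mathcal{F}(a)|\leq\delta L^{-d/2}\}$ preserves $\mathbb{E}[a^\RVE_{ij}\mid\cdot]=\mathbb{E}[a^\RVE_{ij}]$ by the symmetry of the box around $\mathbb{E}\mathcal{F}(a)$, while the conditional variance is $\Var Z$ plus the residual variation of the conditional mean over the box, which is at most $\delta^2$ times the explained variance $|\rho|^2\Var a^\RVE_{ij}$ by \eqref{FNondegenerate} (which controls $(\Var\mathcal{F}(a))^{-1}$) and \eqref{FormulaRho}; altogether $(1-(1-\delta^2)|\rho|^2)\Var a^\RVE_{ij}$, which is exactly the leading term in \eqref{VarianceReductionSQS}, and vanishing bias, which underlies \eqref{SystematicErrorSQS}. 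The same Gaussian, via \eqref{FNondegenerate}, gives $\mathbb{P}[|\mathcal{F}(a)-\mathbb{E}\mathcal{F}(a)|\leq\delta L^{-d/2}]\geq c(N)\delta^N$ up to the (negligible, since $\delta^N\geq CL^{-d/2}|\log L|^{C}$) normal-approximation error; this is part c), and simultaneously the lower bound on the conditioning probability that is needed to divide by it.

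It then remains to transfer the moment identities from $\mathcal{N}_\Lambda$ to the true law. The conditional expectation and variance of $a^\RVE_{ij}$ are obtained by testing against functions of the form (smoothed indicator of the $\delta L^{-d/2}$-box in the $\mathcal{F}$-variables) times (a polynomial of degree $\leq 2$ in the $a^\RVE_{ij}$-variable, truncated at scale $\sqrt{\Var a^\RVE_{ij}}\,|\log L|$, the truncation error being super-polynomially small by the stretched-exponential moments of $a^\RVE_{ij}$), and then dividing by the conditioning probability $\gtrsim\delta^N$. Since the box indicator is not Lipschitz, this is precisely where the strong metric of Definition~\ref{DefinitionDistance}, rather than a Wasserstein-type distance, is indispensable; the box corners must be smoothed on a scale that is a negligible fraction of $\delta L^{-d/2}$ so that smoothing adds nothing of leading order. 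The normal-approximation error $C\kappa_{ij}^{3/2}L^{-d}|\log L|^{C}$ divided by $\delta^N$, combined with the standard systematic error $|\mathbb{E}[a^\RVE]-a_\shom|\lesssim L^{-d}|\log L|^d$ (and $\kappa_{ij}\geq 1$, $\delta\leq 1$), yields \eqref{SystematicErrorSQS}; the same error, after dividing the \emph{absolute} variance error by $\Var a^\RVE_{ij}$ --- whence the factor $r_{\operatorname{Var},ij}=L^{-d}/\Var a^\RVE_{ij}$ --- yields \eqref{VarianceReductionSQS}. For part d), I would replace the whitening step --- the only place a power of $\kappa_{ij}$ is incurred --- by adding to $a^\RVE_{ij}$ an independent Gaussian of a small variance $\sigma^2\ll L^{-d}$ (equivalently, conditioning on a slightly fattened region), which forces the regularized covariance to have condition number bounded by a power of $L$; applying the normal approximation to the regularized vector, then removing the regularization, and optimizing $\sigma$ so as to balance the inflated approximation error against the $O(\sigma^2/\Var a^\RVE_{ij})$ perturbation of the variance produces the $\kappa$-free bounds \eqref{SystematicErrorSQS2}--\eqref{VarianceReductionSQS2} with the weaker exponent $d/8$.

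The main obstacle will be the bookkeeping in the transfer step: one must track how the normal-approximation error in Theorem~\ref{TheoremNormalApproximationMultilevelLocalDependence} depends jointly on $\kappa_{ij}$, on the degree and truncation level of the polynomial test functions, and on the corner-smoothing scale, and then verify that after dividing by the small conditioning probability $\sim\delta^N$ everything stays of the claimed order under the standing hypothesis $\delta^N\geq CL^{-d/2}|\log L|^{C}$. The delicate interplay is between the box size $\delta L^{-d/2}$, the smoothing scale, and the Gaussian approximation error --- and it is exactly this interplay that dictates the use of the strong metric of Definition~\ref{DefinitionDistance} and that makes the lower bound $\delta^N\geq CL^{-d/2}|\log L|^{C}$ on the strictness of the selection criterion necessary.
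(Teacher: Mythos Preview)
Your proposal is correct and follows essentially the same route as the paper: normal approximation of the joint law of $(a^\RVE_{ij},\mathcal{F}(a))$ via Proposition~\ref{PropositionApproximabilityByMultilevel} and Theorem~\ref{TheoremNormalApproximationMultilevelLocalDependence}, an exact Gaussian conditioning computation (Schur complement) giving the ideal mean and variance, and transfer by testing against truncated-polynomial-times-box-indicator functions in the class $\Phi_\Lambda$, with the division by $\delta^N$ accounting for the small conditioning probability. The only noteworthy technical deviation is in part~d): the paper does not add an independent Gaussian but instead invokes the alternative bound \eqref{NormalApproximationMultilevelDegenerate} of Theorem~\ref{TheoremNormalApproximationMultilevelLocalDependence} with the inflated covariance $\Lambda:=\Var(a^\RVE_{ij},\mathcal{F}(a))+L^{-d/2-d/8}\Id$, which directly caps the condition number at $CL^{d/8}$ and pays the extra $|\Lambda-\Var X|^{1/2}$ term; your noise-injection idea is the probabilistic counterpart of this and would work equally well.
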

The previous theorem states that the approximation of effective coefficients by the selection approach for representative volumes is essentially at least as accurate as a random selection of samples (except for a possible additional relative error of the order $C L^{-d/2} |\log L|^C$, which however converges to zero quickly as $L$ increases), at least when measuring the mean-square error. If the selection is based on a statistical quantity $\mathcal{F}(a)$ which is capable of explaining a large part of the variance of $a^\RVE_{ij}$, the selection approach achieves a much better accuracy than a random selection of samples (namely, by a factor of about $\sqrt{1-|\rho|^2}$).

However, the previous theorem only provides a statement about the reduction of the mean-square error by the selection approach for representative volumes. A natural question is whether this reduction of the error also applies to rare events: More precisely, if we fix a small probability $p>0$, is the bound on the error $|a^\SQS_{ij}-a_{\shom,ij}|$ which holds with probability $1-p$ also improved as suggested by the variance reduction estimate \eqref{VarianceReductionSQS}? The following theorem shows that this is in fact true for ``moderate deviations'', i.\,e.\ basically for probabilities $p\gtrsim \exp(-L^\beta)$ for some $\beta>0$. More precisely, the theorem is to be read as follows: Up to error terms that converge to zero as $L\rightarrow \infty$ and $s\rightarrow \infty$, the probability of $a^\SQS_{ij}$ deviating from $a_{\shom,ij}$ by more than $s$ times the ideally reduced standard deviation $\sqrt{(1-|\rho|^2)\Var a^\RVE_{ij}}$ behaves like the probability of a normal distribution deviating from its mean by more than $s$ standard deviations, at least in some regime $s\leq L^{\beta/3}$.
\begin{theorem}
\label{TheoremSQSModerateDeviations}
Let the assumptions and notations of Theorem~\ref{TheoremSQS} be in place. Suppose in addition $L\geq C$. Then the selection approach for representative volumes leads to a reduction of the ``outliers'' of the probability distribution of $a^\SQS$ in the sense of the moderate-deviations-type bound
\begin{align}
\label{ModerateDeviationsSQS}
&\mathbb{P}\left[\frac{\big|a^\SQS_{ij}-a_{\shom,ij}\big|}{\sqrt{(1+\frac{C\delta}{\sqrt{1-|\rho|^2}s})(1-|\rho|^2)\Var a_{ij}^\RVE + C L^{-d-\beta}}}
\geq s\right]
\\&~~~~~~~~~~~~
\nonumber
\leq \bigg(1+\frac{C}{\delta^N L^\beta}+\frac{C\delta}{\sqrt{1-|\rho|^2}s}\bigg)\mathbb{P}\big[|\mathcal{N}_1|\geq s\big]
+ \frac{C}{\delta^N} \exp\big(-L^{\beta}\big)
\end{align}
for any $s\geq \max\big\{1,\frac{\delta}{\sqrt{1-|\rho|^2}}\big\}$ and some $\beta=\beta(d)>0$.
\end{theorem}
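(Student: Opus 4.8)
The plan is to run the same argument as in the proof of Theorem~\ref{TheoremSQS}, but with the quantitative normal approximation in the strong distance of Definition~\ref{DefinitionDistance} replaced, for the ``tail'' part, by the Cram\'er-type moderate deviations estimate of Theorem~\ref{TheoremModerateDeviations}; this replacement is forced upon us because the event $|a^\SQS_{ij}-a_{\shom,ij}|\geq s\sqrt{(1-|\rho|^2)\Var a^\RVE_{ij}}$ may have probability far below the resolution $L^{-d}|\log L|^{C}$ of the normal approximation as soon as $s$ is moderately large. \emph{Step 1 (representation; reduction to a ratio).} By Proposition~\ref{PropositionApproximabilityByMultilevel}, the rescaled centered vector $Z:=L^{d/2}\big(a^\RVE_{ij}-\mathbb{E}[a^\RVE_{ij}],\,\mathcal{F}(a)-\mathbb{E}[\mathcal{F}(a)]\big)\in\mathbb{R}^{N+1}$ is, up to an error that is stretched-exponentially small in a power of $L$, a normalized sum of random variables with a multilevel local dependence structure meeting the hypotheses of Theorem~\ref{TheoremNormalApproximationMultilevelLocalDependence} and Theorem~\ref{TheoremModerateDeviations}; its covariance $\Sigma$ has $\mathcal{F}$-block between $c\,\Id_N$ and $C\,\Id_N$ by \eqref{FNondegenerate}, and $(0,0)$-entry $L^{d}\Var a^\RVE_{ij}$. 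Writing $\mathbb{P}\big[|a^\SQS_{ij}-a_{\shom,ij}|\geq t\big]$ as the quotient of $\mathrm{Num}:=\mathbb{P}\big[|a^\RVE_{ij}-a_{\shom,ij}|\geq t,\ |\mathcal{F}(a)-\mathbb{E}\mathcal{F}(a)|\leq\delta L^{-d/2}\big]$ by $\mathrm{Den}:=\mathbb{P}\big[|\mathcal{F}(a)-\mathbb{E}\mathcal{F}(a)|\leq\delta L^{-d/2}\big]$, I bound $\mathrm{Den}$ from below by $c(N)\delta^{N}$ (part c) of Theorem~\ref{TheoremSQS}) and from above by $C\delta^{N}$ (compare with the Gaussian slab probability through the strong distance, using the hypothesis $\delta^{N}\geq CL^{-d/2}|\log L|^{C}$).

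\emph{Step 2 (orthogonalization; removal of the bias).} Decompose $a^\RVE_{ij}-\mathbb{E}[a^\RVE_{ij}]=R+P$ into the $L^{2}$-projection $P:=\Cov[a^\RVE_{ij},\mathcal{F}(a)]\,(\Var\mathcal{F}(a))^{-1}(\mathcal{F}(a)-\mathbb{E}\mathcal{F}(a))$ onto $\operatorname{span}(\mathcal{F})$ and its residual $R$, so that $R$ is uncorrelated with $\mathcal{F}(a)$ and $\Var R=(1-|\rho|^{2})\Var a^\RVE_{ij}$ by \eqref{FormulaRho}, and on the selection event one has $|P|\leq C\delta\sqrt{\Var a^\RVE_{ij}}$ by \eqref{FNondegenerate}. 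Combined with the systematic-error bound \eqref{SystematicErrorBound} for $|\mathbb{E}[a^\RVE_{ij}]-a_{\shom,ij}|$, this yields $\mathrm{Num}\leq\mathbb{P}\big[|R|\geq\tau,\ |\mathcal{F}(a)-\mathbb{E}\mathcal{F}(a)|\leq\delta L^{-d/2}\big]$ with $\tau:=t-CL^{-d}|\log L|^{C}-C\delta\sqrt{\Var a^\RVE_{ij}}$. For the normalization of $s$ asserted in the theorem, choosing the constant inside $\big(1+\tfrac{C\delta}{\sqrt{1-|\rho|^{2}}s}\big)$ larger than the one in the bound on $|P|$, an elementary computation gives $\tau/\sqrt{\Var R}\geq s$ with slack proportional to $\delta/\sqrt{1-|\rho|^{2}}$; the additive floor $CL^{-d-\beta}$ inside the normalization is exactly what keeps this argument valid in the near-degenerate regime $1-|\rho|^{2}\approx0$, where $\Var R$ may be tiny.

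\emph{Step 3 (the joint moderate-deviations estimate).} It remains to bound $\mathbb{P}\big[|R|\geq\tau,\ |\mathcal{F}(a)-\mathbb{E}\mathcal{F}(a)|\leq\delta L^{-d/2}\big]$ by $\asymp\delta^{N}$ times the Gaussian tail $\mathbb{P}\big[|\mathcal{N}_{1}|\geq\tau/\sqrt{\Var R}\big]$ — crucially \emph{not} dropping the slab constraint, which would lose the factor $\delta^{N}$ that has to cancel against $\mathrm{Den}$. I split on $s$. For $s\leq L^{\beta/3}$ with $\beta=\beta(d)>0$ chosen suitably small, I reduce the $\mathcal{F}$-box to a single worst coordinate, so that the event becomes the preimage under $Z$ of a convex region lying at moderate-deviation distance; Theorem~\ref{TheoremModerateDeviations}, together with the fact that for the limiting Gaussian the residual $R$ and $\mathcal{F}(a)$ are \emph{independent} (so the Gaussian probability of the convex region factorizes as $\mathbb{P}[|\mathcal{N}_{1}|\geq\tau/\sqrt{\Var R}]$ times the Gaussian box probability $\asymp\delta^{N}$), gives $\mathrm{Num}\leq\big(1+CL^{-\beta}\big)\,\mathbb{P}\big[|\mathcal{N}_{1}|\geq\tau/\sqrt{\Var R}\big]\cdot C\delta^{N}$; dividing by $\mathrm{Den}$ and using the elementary bound $\mathbb{P}[|\mathcal{N}_{1}|\geq\tau/\sqrt{\Var R}]\leq\big(1+\tfrac{C\delta}{\sqrt{1-|\rho|^{2}}s}\big)\mathbb{P}[|\mathcal{N}_{1}|\geq s]$ produces the main term. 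For $s> L^{\beta/3}$ I instead invoke the crude a~priori stretched-exponential bound on $a^\RVE_{ij}$ coming from the multilevel structure, $\mathbb{P}[|R|\geq\tau]\leq\exp(-cL^{\beta'})$ for some $\beta'=\beta'(d,\gamma)>0$, which after division by $\mathrm{Den}\geq c(N)\delta^{N}$ is absorbed into $\tfrac{C}{\delta^{N}}\exp(-L^{\beta})$ upon shrinking $\beta$. Collecting the two regimes gives \eqref{ModerateDeviationsSQS}.

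\emph{Main obstacle.} The delicate point is Step 3 in the moderate regime: one needs a tail estimate for the \emph{joint} event ``$a^\RVE_{ij}$ far out \emph{and} $\mathcal{F}(a)$ in a small box'' with leading constant essentially $1$ rather than a large constant, yet conditioning on the box destroys the multilevel local dependence structure on which Theorem~\ref{TheoremModerateDeviations} relies. The way around this is to realize the joint event as a convex region in $\mathbb{R}^{N+1}$ and to combine the moderate-deviations control of linear functionals of $Z$ (reducing the convex region to a bounded number of half-spaces, exploiting the Gaussian factorization $R\perp\mathcal{F}$) with the strong normal approximation of Definition~\ref{DefinitionDistance} for the $O(\delta^{N})$-probability ``box'' factor; the remaining bookkeeping — tracking the $\kappa_{ij}$-dependence through the conditioning of the covariance, fixing $\beta$, and matching the resulting error terms to the stated form — is routine.
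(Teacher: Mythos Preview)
Your strategy is correct and coincides with the paper's: write the conditional probability as a ratio, apply Theorem~\ref{TheoremModerateDeviations} to the joint event in the numerator, factor the resulting Gaussian integral via the orthogonality of the residual and $\mathcal{F}(a)$, and cancel the $\delta^N$ box factor against the denominator.

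Two simplifications relative to your sketch. First, Theorem~\ref{TheoremModerateDeviations} applies to \emph{arbitrary} measurable $A\subset\mathbb{R}^{N+1}$, so the paper takes the product set $A=\{|x|\geq\tilde S+CL^{-d/2-\beta}\}\times\{|y|\leq\delta L^{-d/2}\}$ directly in \eqref{ConsequenceGeneralModerateDeviationsTheorem}; there is no reduction to half-spaces, no convexity argument, and no separate appeal to the $\mathcal{D}$-distance for the box factor --- your ``Main obstacle'' paragraph overstates the difficulty. Second, the additive error $C\exp(-cL^{2\beta})$ built into Theorem~\ref{TheoremModerateDeviations} already handles all $s$ uniformly and becomes the $\frac{C}{\delta^N}\exp(-L^\beta)$ term after division by the denominator, so no split on $s$ is needed.

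The one genuine (but equivalent) difference in bookkeeping: the paper performs the orthogonalization at the Gaussian-integral level, applying the Schur factorization \eqref{FactorGaussian} to $\mathcal{N}_{\tilde\Lambda}$ and controlling the resulting shift $\tilde B\tilde D^{-1}y$ by \eqref{EstimateBDy}, rather than at the random-variable level as in your Step~2. The quantity $C\delta|\rho|\sqrt{\Var a^\RVE_{ij}}$ appearing in \eqref{EstimateBDy} is exactly your bound on $|P|$, so the two routes are the same computation in different coordinates. Either way one must track the discrepancy $|\tilde\Lambda-\Var(a^\RVE_{ij},\mathcal{F})|\leq CL^{-d-2\beta}$ through the factorization (the paper does this in \eqref{ErrorTildeD}--\eqref{ErrorTildeT}); this is where the additive floor $CL^{-d-\beta}$ in the normalization originates.
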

We have shown in the preceding two theorems that the selection approach for representative volumes by Le~Bris et al.\ essentially does not increase the error; it succeeds in reducing the fluctuations of the approximations as soon as the functionals $\mathcal{F}(a)$ and the approximation $a^\RVE$ have a nonzero covariance.

However, as we shall show in the next theorem there exist cases in which the selection approach for representative volumes in fact fails to reduce the variance significantly, even for a ``natural'' statistical quantity like the average of the coefficient field
\begin{align*}
\mathcal{F}(a)
:=\dashint_{[0,L\varepsilon]^d} a \,dx.
\end{align*}

\begin{theorem}[Possible Failure of the Reduction of the Variance]
\label{TheoremFailureVarianceReduction}
Suppose that the assumptions of Theorem~\ref{TheoremSQS} hold. Then the estimate \eqref{VarianceReductionSQS} on the reduction of the variance is sharp in the sense
\begin{align}
\label{FailureVarianceReduction}
\frac{\Var a^\SQS_{ij}}{\Var a^\RVE_{ij}} \geq 1-|\rho|^2 - \frac{C \kappa^{3/2}_{ij}r_{\operatorname{Var},ij}}{\delta^N} L^{-d/2} |\log L|^{C(d,\gamma)}.
\end{align}
Furthermore, for $d\geq 2$ there exist $L\varepsilon$-periodic probability distributions of coefficient fields $a$ which satisfy the conditions of ellipticity, discrete stationarity, and finite range of dependence (A1), (A2'), (A3$_a$) - (A3$_c$) with the following property: The covariance of $a^\RVE$ and the spatial average $\dashint a$ vanishes
\begin{align}
\label{FailureVarianceReductionCov}
\Cov\left[a^\RVE ~,~~\dashint_{[0,L\varepsilon]^d} a \,dx\right]
=0,
\end{align}
while the fluctuations of $a^\RVE$ and $\dashint_{[0,L\varepsilon]^d} a$ are nondegenerate in the sense
\begin{align*}
\Var a^\RVE &\geq c L^{-d} \Id\otimes \Id,
\\
\Var \dashint_{[0,L\varepsilon]^d} a \,dx &\geq c L^{-d} \Id \otimes \Id,
\end{align*}
for some universal constant $c$.
These coefficient fields may be chosen to be of the form $a(x)=\tilde a(x)\Id$ for some scalar random field $\tilde a$.

As a consequence, for these probability distributions of coefficient fields the selection approach for representative volumes based on the spatial average $\dashint a$ fails to efficiently reduce the variance in the sense
\begin{align}
\label{FailureVarianceReduction2}
\frac{\Var a^\SQS_{ij}}{\Var a^\RVE_{ij}}
\geq 1-\frac{C\kappa^{3/2}_{ij}r_{\operatorname{Var},ij}}{\delta^N} L^{-d/2} |\log L|^{C(d,\gamma)}.
\end{align}
\end{theorem}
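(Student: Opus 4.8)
The three assertions of the theorem are essentially independent, and I would treat them in turn.

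\emph{The sharpness bound \eqref{FailureVarianceReduction}.} This is the lower bound matching \eqref{VarianceReductionSQS}, and it is obtained by the same normal-approximation argument that underlies Theorem~\ref{TheoremSQS}(b). For an exact bivariate Gaussian $(X,Y)$, conditioning on the symmetric slab $\{|Y-\mathbb{E}Y|\le\delta\sqrt{\Var Y}\}$ leaves the conditional variance of $X$ at least $(1-\rho^2)\Var X$: by the law of total variance it equals $(1-\rho^2)\Var X$ plus $\rho^2\,\Var X$ times the (nonnegative) variance of a standard Gaussian conditioned to the symmetric slab of half-width $\delta$. Since the proof of Theorem~\ref{TheoremSQS}(b) already controls the distance between the law of $(a^\RVE_{ij},\mathcal{F}(a))$ conditioned on \eqref{SQSConditioning} and the correspondingly conditioned Gaussian with covariance $\Var(a^\RVE_{ij},\mathcal{F}(a))$, retaining the lower rather than the upper bound in that comparison yields \eqref{FailureVarianceReduction} with the identical error term.

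\emph{The counterexample.} I would construct the ensemble by interpolating between two $L\varepsilon$-periodic random checkerboards on the partition of $[0,L\varepsilon]^d$ into $\varepsilon$-cubes, the tiles being drawn independently. In ensemble $a^{(0)}$ each tile equals $\beta_1\Id$ or $\beta_2\Id$ with probability $\tfrac12$; being a monotone function of i.i.d.\ variables it has $\Cov[a^{\RVE,(0)},\dashint a^{(0)}]>0$ by Proposition~\ref{PropositionLowerBoundOnCorrelation}. In ensemble $a^{(1)}$ each tile equals, with probability $\tfrac12$, the constant $\gamma\Id$ (type C), and with probability $\tfrac12$ a fixed fine-scale microstructure (type D): a \emph{second-rank} laminate of two scalar phases $\mu_1<\mu_2$ with sub-$\varepsilon$ lamination scale, the lamination parameters chosen so that its effective tensor is \emph{isotropic}, equal to $\beta_D\Id$, with $\beta_D$ strictly below the volume average $\alpha_D$ of the coefficient over the tile; one fixes $\gamma\in(\beta_D,\alpha_D)$. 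For $\theta\in[0,1]$ let $a^{(\theta)}$ have i.i.d.\ tiles with per-tile law $(1-\theta)(\text{law of an }a^{(0)}\text{-tile})+\theta(\text{law of an }a^{(1)}\text{-tile})$. Since the period carries only finitely many configurations, $\theta\mapsto\Cov[a^{\RVE,(\theta)},\dashint a^{(\theta)}]$ is a polynomial in $\theta$, hence continuous, so once the endpoints have opposite signs the intermediate value theorem produces $\theta^\ast$ with $\Cov[a^{\RVE,(\theta^\ast)},\dashint a^{(\theta^\ast)}]=0$; this ensemble is of the scalar form $\tilde a(x)\Id$ and satisfies (A1), (A2'), (A3$_a$)--(A3$_c$) by construction.

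\emph{The sign, the nondegeneracy, and the consequence.} The crux is the sign of the covariance at $\theta=1$. Because $\dashint a$ is linear in the tiles, independence gives the exact identity $\Cov[a^\RVE,\dashint a]=L^{-d}\sum_z\Cov\big[\,\mathbb{E}[a^\RVE\,|\,\omega_z],\,\overline a(\omega_z)\,\big]$, where $\overline a(\omega_z)$ denotes the coefficient average over tile $z$ (the higher-order terms in the tile-wise decomposition of $a^\RVE$ being orthogonal to $\dashint a$). Passing from a type-C to a type-D tile increases $\overline a$ (from $\gamma$ to $\alpha_D>\gamma$) and \emph{decreases} $\mathbb{E}[a^\RVE\,|\,\omega_z]$ in every component: since the type-D lamination scale is chosen small relative to $\varepsilon$ (hence to $L\varepsilon$), the cell formula on $[0,L\varepsilon]^d$ for any configuration differs by an arbitrarily small amount from the cell formula for the configuration with each D-tile replaced by its constant effective tensor $\beta_D\Id$, and for the latter the Loewner-monotonicity of $a\mapsto a^\RVE$ together with $\beta_D\Id\prec\gamma\Id$ yields a strict decrease of order $L^{-d}$, which dominates the two-scale error once the lamination scale is small enough (permissible, as $L$ is fixed). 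Hence each summand is a product of oppositely signed quantities, $\Cov<0$ at $\theta=1$, and $\theta^\ast<1$. The same strict single-tile monotonicity, uniform in $\theta$, gives $\Var(\mathbb{E}[a^\RVE\,|\,\omega_z])\gtrsim L^{-2d}$, whence $\Var a^{\RVE,(\theta)}\ge\sum_z\Var(\mathbb{E}[a^\RVE\,|\,\omega_z])\gtrsim L^{-d}$ uniformly in $\theta$, while $\Var\dashint a^{(\theta)}=L^{-2d}\sum_z\Var\overline a(\omega_z)\gtrsim L^{-d}$ for $\theta^\ast<1$ since $\beta_1\neq\beta_2$. I expect this two-scale sign estimate --- together with the explicit construction of an isotropic second-rank laminate --- to be the main obstacle; the remaining estimates are routine. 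Finally, for the ensemble at $\theta^\ast$ one has $\rho=0$ by \eqref{FailureVarianceReductionCov}, so $\kappa_{ij}$ is bounded (the covariance matrix of $(a^\RVE_{ij},\mathcal{F}(a))$ is block diagonal with both blocks comparable to $L^{-d}$ by \eqref{FNondegenerate} and the lower bound on $\Var a^\RVE$), and \eqref{FailureVarianceReduction} collapses to \eqref{FailureVarianceReduction2}.
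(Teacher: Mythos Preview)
Your treatment of \eqref{FailureVarianceReduction} matches the paper's: it is obtained from the same normal-approximation machinery as Theorem~\ref{TheoremSQS}(b), keeping the lower bound $\int(x-\mathbb{E}[a^\RVE_{ij}])^2\mathcal{M}^\delta(x)\,dx\geq(1-|\rho|^2)\Var a^\RVE_{ij}$ (which comes directly from \eqref{LimitDistribution}) in place of the upper bound, together with the control \eqref{SQSErrorVariance} and \eqref{DifferenceSystematicError}.

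For the counterexample, your construction is close in spirit to the paper's, but there is a real gap. The covariance $\Cov[a^\RVE,\dashint a]$ you need to kill is not a scalar: since $a=\tilde a\,\Id$, it amounts to the full matrix $(\Cov[a^\RVE_{ij},\dashint\tilde a])_{ij}$. Your intermediate-value argument is one-dimensional in $\theta$ and can zero at most one scalar. With a \emph{fixed} second-rank laminate as your type-D tile, the law of $a$ is not invariant under exchange of coordinate axes (a rank-two laminate is inherently hierarchical), so there is no reason for $\Cov[a^\RVE_{11},\dashint\tilde a]$ and $\Cov[a^\RVE_{22},\dashint\tilde a]$ to coincide; they will typically differ by an amount of the order of the two-scale error, and will therefore cross zero at different values of $\theta$. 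Your Loewner-monotonicity argument only controls the diagonal entries anyway (Loewner order does not give componentwise order off the diagonal). The paper closes this gap by \emph{randomly orienting} each microstructure tile uniformly over all $8$ reflections and rotations; the resulting law is then invariant under all coordinate reflections and exchanges, and Lemma~\ref{CovScalarLemma} forces the covariance to be a scalar multiple of $\Id\otimes\Id$. With that reduction in hand the intermediate-value argument is legitimate. You should insert the random orientation into your type-D tiles and invoke the analogue of Lemma~\ref{CovScalarLemma}.

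As a side remark, your interpolation differs from the paper's: you mix the per-tile \emph{law} with parameter $\theta$, whereas the paper forms the convex combination $a_{\tau,\kappa}=(1-\kappa)a+\kappa a_\tau$ of two \emph{independent} coefficient fields on a product probability space. Either mechanism gives the needed continuity, and your observation that the covariance is a polynomial in $\theta$ is a nice simplification over the paper's continuity argument; but it does not help with the tensor-versus-scalar issue above. Finally, note that the counterexample only satisfies discrete stationarity (A2'), so to deduce \eqref{FailureVarianceReduction2} from \eqref{FailureVarianceReduction} you also need to observe (as the paper does) that the normal-approximation results behind Theorem~\ref{TheoremSQS} go through under (A2') in place of (A2).
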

Let us note that it is presumably not too difficult to replace the random checkerboard in our construction of the counterexample featuring \eqref{FailureVarianceReductionCov} by random spherical inclusions distributed according to a Poisson point process (with overlaps of the inclusions). This would yield a counterexample subject to the continuous stationarity (A2).

The next theorem suggests that the failure of effective variance reduction is atypical and may be limited to rather artificial examples: For a large class of random coefficient fields -- namely for coefficient fields that are obtained from a collection of iid random variables $\xi_{k}$, $k\in \varepsilon \mathbb{Z}^d$, by applying a stationary monotone map with finite range of dependence -- the correlation coefficient between $a^\RVE$ and the average $\mathcal{F}(a):=\dashint a$ is bounded from below by a positive number. Therefore, for such (ensembles of) coefficient fields both the method of special quasirandom structures and the method of control variates in fact reduce the variance by some factor $\tau<1$ when applied with the choice $\mathcal{F}(a):=\dashint a$.
\begin{proposition}[Reduction of the Variance for a Large Class of Coefficient Fields]
\label{PropositionLowerBoundOnCorrelation}
Let $\varepsilon>0$ and let $L\geq 2$ be an integer and let $V$ denote some measure space.
Let $(\Gamma_k)$, $k\in \varepsilon\mathbb{Z}^d\cap [0,L\varepsilon)^d$, be a collection of independent identically distributed $V$-valued random variables, and denote by $(\tilde \Gamma_k)$ an independent copy. Extend $\Gamma_k$ to $k\in \varepsilon\mathbb{Z}^d$ by $L\varepsilon$-periodicity. For $k\in \varepsilon \mathbb{Z}^d$ and $z\in V$, denote by $\Delta_{k,z} \Gamma$ the collection $(\tilde \Gamma_k)$ obtained by setting $\tilde \Gamma_k:=z$ and $\tilde \Gamma_j=\Gamma_j$ for all $j\neq k$.

Let $a=a(x,\Gamma)$ be a measurable map into the uniformly elliptic $L\varepsilon$-periodic symmetric coefficient fields with the property that $a(x,\Gamma)$ depends only on the $\Gamma_k$ with $|x-k|_\per \leq K\varepsilon$ for some $K\geq 1$ (in a measurable way). Suppose that the map is stationary in the sense that $a(x+y,\Gamma)=a(x,\Gamma_{\cdot+y})$ for any $y\in \varepsilon\mathbb{Z}^d$.

Suppose that the dependence of $a$ on $\Gamma$ is monotone in the sense that for every $k\in \varepsilon \mathbb{Z}^d$ and every pair $z_1,z_2 \in V$ either for all $x$ the inequality
\begin{align*}
a(x,\Delta_{k,z_1}\Gamma)\geq a(x,\Delta_{k,z_2} \Gamma)
\end{align*}
holds or for all $x$ the reverse inequality
\begin{align*}
a(x,\Delta_{k,z_1}\Gamma)\leq a(x,\Delta_{k,z_2} \Gamma)
\end{align*}
holds. Suppose furthermore that there exists $\nu>0$ such that we have the quantified monotonicity
\begin{align}
\label{QuantifiedMonotonicity}
&\mathbb{E}
\Bigg[
\sum_{k\in \varepsilon \mathbb{Z}^d \cap [0,L\varepsilon)^d}
\sqrt{\varepsilon^{-d}\int_{[0,L\varepsilon]^d} \big|(a-a(\Delta_{k,\tilde \Gamma_k}\Gamma))\xi\cdot \xi\big| \,dx}
\big(a(x,\Gamma)-a(x,\Delta_{k,\tilde \Gamma_k}\Gamma)\big)_+^{1/2}
~\Bigg|~\Gamma
\Bigg]
\\&\nonumber
\geq \nu \Id
\end{align}
for all $x\in [0,L \varepsilon)^d$ and all $\Gamma$, where $\big(a(x,\Gamma)-a(x,\Delta_{k,\tilde \Gamma_k}\Gamma)\big)_+^{1/2}$ denotes the matrix square root and where $\tilde \Gamma$ denotes an independent copy of $\Gamma$.

Then the probability distribution of $a=a(x,\Gamma)$ satisfies the conditions of ellipticity, periodicity, and finite range of dependence (A1), (A3$_a$), and (A3$_b$) (with $\varepsilon$ replaced by $4K\varepsilon$), as well as the discrete stationarity (A2'). Furthermore, for such coefficient fields $a$ the correlation between $\xi \cdot a^\RVE\xi$ (where $\xi\in \mathbb{R}^d$ is any nonzero vector) and the average
\begin{align*}
\mathcal{F}_{avg}(a):=\dashint_{[0,L\varepsilon]^d} \xi \cdot a\xi \,dx
\end{align*}
is bounded from below by a positive number in the sense
\begin{align*}
\rho = \frac{\Cov[a^\RVE_{ij},\mathcal{F}_{avg}(a)]}{\sqrt{\Var a^\RVE_{ij} ~ \Var \mathcal{F}_{avg}(a)}}
\geq
\frac{\nu^2}{C(d,\lambda,K)}.
\end{align*}
\end{proposition}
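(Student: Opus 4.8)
The plan is to write $X:=\xi\cdot a^\RVE\xi$ and $Y:=\mathcal{F}_{avg}(a)=\dashint_{[0,L\varepsilon]^d}\xi\cdot a\,\xi\,dx$, to regard both as functions of the i.i.d.\ family $\Gamma=(\Gamma_k)$, and to establish: (i) both $X$ and $Y$ are monotone in every $\Gamma_k$; (ii) $\Var X\leq C(d,\lambda)L^{-d}$ and $\Var Y\leq C(d,\lambda)L^{-d}$; (iii) $\Cov[X,Y]\geq c(d,\lambda,K)\,\nu^2L^{-d}$. Granted (i)--(iii), the claim follows at once from $\rho=\Cov[X,Y]/\sqrt{\Var X\,\Var Y}$. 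The verification that the law of $a$ satisfies (A1), (A3$_a$), (A3$_b$) (with $\varepsilon$ replaced by $4K\varepsilon$) and (A2') is immediate: ellipticity and $L\varepsilon$-periodicity are assumed, $a(x+y,\Gamma)=a(x,\Gamma_{\cdot+y})$ together with the i.i.d.\ property gives discrete stationarity, and the stated locality ($a|_A$ is measurable with respect to the $\Gamma_k$ with $\dist_\per(k,A)\leq K\varepsilon$) gives the finite range of dependence.

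For (i): the assumed dichotomy on the dependence of $a$ on each $\Gamma_k$ means that the coefficient fields $\{a(\cdot,\Delta_{k,z}\Gamma):z\in V\}$ form a totally ordered chain, independently of $\Gamma$; after reparametrizing $\Gamma_k$ along this chain we may assume that $a(x,\Gamma)$ is nondecreasing in each $\Gamma_k$ for every $x$. Then $Y$ is manifestly nondecreasing, and so is $X$, because for symmetric $a$ one has the variational representation
\[
\xi\cdot a^\RVE\xi=\min_{\psi}\,\dashint_{[0,L\varepsilon]^d}(\xi+\nabla\psi)\cdot a\,(\xi+\nabla\psi)\,dx,
\]
the infimum over $L\varepsilon$-periodic $\psi$ of vanishing average being attained at $\psi=\phi_\xi:=\sum_i\xi_i\phi_i$: a minimum of functions each nondecreasing in $a$ is nondecreasing in $a$, hence in every $\Gamma_k$. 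In particular $X$ and $Y$ are comonotone in each single coordinate, which supplies the correct sign in step (iii). The bound (ii) for $Y$ is elementary (boundedness of $a$, finite range of dependence $4K\varepsilon$), and the bound for $X$ is the standard fluctuation estimate \eqref{VarianceBound} for the periodized representative volume element method (it can alternatively be re-derived by an Efron--Stein estimate from the sensitivity bound of step (iii) and the energy estimate $\dashint_{[0,L\varepsilon]^d}|\xi+\nabla\phi_\xi|^2\,dx\leq\lambda^{-2}|\xi|^2$).

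Step (iii) is the heart of the matter. I would run the martingale (Efron--Stein) decomposition along an arbitrary enumeration of the $L^d$ sites, $\Cov[X,Y]=\sum_k\mathbb{E}[D_kX\,D_kY]$ with $D_kZ:=\mathbb{E}[Z\mid\mathcal G_k]-\mathbb{E}[Z\mid\mathcal G_{k-1}]=\mathbb{E}[Z(\Gamma)-Z(\Delta_{k,\tilde\Gamma_k}\Gamma)\mid\mathcal G_k]$ for an independent copy $\tilde\Gamma_k$, and bound each term (nonnegative, by conditional monotonicity) from below in terms of the single-site increments $\partial_kZ:=Z(\Gamma)-Z(\Delta_{k,\tilde\Gamma_k}\Gamma)$, which by (i) satisfy $\partial_kX,\partial_kY\geq0$ on the event $E_k^+:=\{a(\cdot,\Gamma)\geq a(\cdot,\Delta_{k,\tilde\Gamma_k}\Gamma)\}$ -- precisely the event on which the truncation $(\cdot)_+$ in \eqref{QuantifiedMonotonicity} is active -- and $\leq 0$ off it. Testing $\psi=\phi_\xi$ in the cell formula for $a$ against $\psi=\phi_\xi(\Delta_{k,\tilde\Gamma_k}\Gamma)$ in the cell formula for $a(\Delta_{k,\tilde\Gamma_k}\Gamma)$ yields, on $E_k^+$,
\[
\partial_kX\ \geq\ \dashint_{[0,L\varepsilon]^d}(\xi+\nabla\phi_\xi)\cdot\big(a(\cdot,\Gamma)-a(\cdot,\Delta_{k,\tilde\Gamma_k}\Gamma)\big)_+\,(\xi+\nabla\phi_\xi)\,dx,
\]
an integrand supported in the $K\varepsilon$-neighbourhood of $k$; together with $(\partial_kY)_+=\dashint\xi\cdot(a(\cdot,\Gamma)-a(\cdot,\Delta_{k,\tilde\Gamma_k}\Gamma))_+\xi\,dx$ and the identity $\varepsilon^{-d}\int|(a(\cdot,\Gamma)-a(\cdot,\Delta_{k,\tilde\Gamma_k}\Gamma))\xi\cdot\xi|\,dx=L^d(\partial_kY)_+$ on $E_k^+$, the quantified monotonicity \eqref{QuantifiedMonotonicity} -- evaluated against the vector $\xi+\nabla\phi_\xi(x)$ (permissible since $\phi_\xi$ is $\sigma(\Gamma)$-measurable and \eqref{QuantifiedMonotonicity} is conditioned on $\Gamma$), then averaged over $x\in[0,L\varepsilon)^d$ and combined with the Jensen inequality $\dashint|\xi+\nabla\phi_\xi|^2\,dx\geq|\dashint(\xi+\nabla\phi_\xi)\,dx|^2=|\xi|^2$ -- turns into a lower bound of order $\nu|\xi|L^{-d/2}$ for $\mathbb{E}\big[\sum_k\sqrt{(\partial_kX)_+(\partial_kY)_+}\big]$. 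A Cauchy--Schwarz step, together with the elementary estimate $\sum_k\mathbb{E}[(\partial_kY)_+]\leq C(d,\lambda)K^d|\xi|^2$ and the conversion of these single-site increments into the martingale increments $D_kX,D_kY$, then produces the required bound $\Cov[X,Y]\geq c(d,\lambda,K)\nu^2L^{-d}$.

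The main obstacle lies in this last conversion, and it is there that the particular shape of \eqref{QuantifiedMonotonicity} is exploited: the corrector gradient $\xi+\nabla\phi_\xi$ equals $\xi$ only on spatial average, so the $X$-sensitivity at a single site cannot be bounded below by the $a$-sensitivity there alone, which is why \eqref{QuantifiedMonotonicity} pairs the ``volume'' factor $(\varepsilon^{-d}\int|(a-a(\Delta_{k,\tilde\Gamma_k}\Gamma))\xi\cdot\xi|)^{1/2}$ with the ``pointwise'' factor $((a-a(\Delta_{k,\tilde\Gamma_k}\Gamma))_+)^{1/2}$ under a square root and a $(\cdot)_+$ truncation. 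The delicate bookkeeping consists in carrying out the Cauchy--Schwarz inequalities while tracking exponents, so that the quadratic gain $\nu^2$ (rather than $\nu$ or $\nu^4$) and the correct power of $L$ emerge, and -- since single-coordinate comonotonicity of $X$ and $Y$ does not by itself yield a lower bound on the conditioned covariances $\mathbb{E}[D_kX\,D_kY]$ -- in organizing the decomposition so that the non-local dependence of $X$ on the coefficient field (through the corrector) is handled; this is the step I expect to be the most technically involved.
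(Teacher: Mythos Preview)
Your overall strategy matches the paper's: establish monotonicity of $X=\xi\cdot a^\RVE\xi$ and $Y=\mathcal F_{avg}(a)$ in each $\Gamma_k$, derive a pointwise sensitivity lower bound for $\partial_kX$ in terms of $\dashint(a-a(\Delta_{k,\tilde\Gamma_k}\Gamma))(\xi+\nabla\phi_\xi)\cdot(\xi+\nabla\phi_\xi)$, feed the quantified monotonicity \eqref{QuantifiedMonotonicity} through this, and combine with upper bounds on $\Var X$, $\Var Y$. Your variational one-liner for the sensitivity of $X$ is in fact slightly cleaner than the paper's route, which interpolates $(1-s)a+s\tilde a$, differentiates the cell formula in $s$, and controls $\nabla\phi^{L,(1-s)a+s\tilde a}_\xi-\nabla\phi^{L,a}_\xi$ via an energy estimate to extract a factor $\lambda^2/8$.

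The difference lies exactly where you anticipate it: the conversion of single-site increments into a covariance lower bound. You work with the martingale decomposition $\Cov[X,Y]=\sum_k\mathbb E[D_kX\,D_kY]$ and correctly note that comonotonicity alone does not give a quantitative lower bound on $\mathbb E[D_kX\,D_kY]$ in terms of $\partial_kX,\partial_kY$. The paper circumvents this by proving a separate covariance lemma (Lemma~\ref{CovarianceEstimateFromBelow}): for monotone $f,g$ of i.i.d.\ variables one has
\[
\Cov[f,g]\ \geq\ \frac{1}{2N}\,\Big(\mathbb E\Big[\sum_n\sqrt{|\partial_nf|\,|\partial_ng|}\Big]\Big)^2,
\]
derived not from the martingale telescoping but from the ``hybrid'' telescoping $\mathbb E[f(X)g(X)]-\mathbb E[f(X)g(Y)]=\sum_n\big(\mathbb E[f(X)g(Y_{<n},X_{\geq n})]-\mathbb E[f(X)g(Y_{\leq n},X_{>n})]\big)$, which after symmetrizing in $(X_n,Y_n)$ and using monotonicity gives each increment as $\tfrac12\mathbb E\big[\mathbb E_{x_{<n}}|\partial_nf|\cdot\mathbb E_{y_{<n}}|\partial_ng|\big]$, whence Cauchy--Schwarz on the inner expectation and Jensen on the outer produce the squared form above. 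This is precisely the ``delicate bookkeeping'' you flag, and it is what makes the square-root structure of \eqref{QuantifiedMonotonicity} mesh with the covariance: plugging your sensitivity bound into this lemma gives $\Cov[X,Y]\geq cL^{-d}\big(\mathbb E[\sum_k\sqrt{|\partial_kX||\partial_kY|}]\big)^2$, and your application of \eqref{QuantifiedMonotonicity} (conditioning on $\Gamma$, applying the matrix bound to the $\Gamma$-measurable vector $\xi+\nabla\phi_\xi(x)$, then Jensen on $\dashint|\xi+\nabla\phi_\xi|\geq|\xi|$) finishes the job with the correct power $\nu^2$. So your plan is sound; the missing ingredient is this lemma, and the hybrid rather than martingale telescoping is what makes it go through cleanly.
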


In the statements of our main theorems, we have made use of the following notion of ``multilevel local dependence decomposition''; this structure will also be at the heart of the proof of our main results. An illustration of this decomposition is provided in Figure~\ref{FigureMultilevel}.

\begin{definition}[Sums of Random Variables with Multilevel Local Dependence Structure]
\label{ConditionRandomVariable}
Let $d\geq 1$, $N\in \mathbb{N}$, $\varepsilon>0$, and $L\geq 2$. Consider a probability distribution of coefficient fields $a$ on $\mathbb{R}^d$ subject to the assumptions of ellipticity and boundedness, stationarity, and finite range of dependence $\varepsilon$ (A1), (A2), and (A3), or the periodization of such an ensemble subject to the conditions (A1), (A2), and (A3$_a$) - (A3$_c$). Let $X=X(a)$ be an $\mathbb{R}^N$-valued random variable of the periodized ensemble.

We then say that $X$ is a sum of random variables with multilevel local dependence if there exist random variables $X_y^m=X_y^m(a)$, $0\leq m\leq 1+\log_2 L$ and $y\in 2^m \varepsilon\mathbb{Z}^d\cap [0,L\varepsilon)^d$, and constants $K\geq 1$, $\gamma\in (0,2]$, and $B\geq 1$ with the following properties:
\begin{itemize}
\item The random variable $X_y^m(a)$ only depends on $a|_{y+K \log L \, [-2^m \varepsilon,2^m \varepsilon]^d}$. More precisely, $X_y^m(a)$ is a measurable function of $a|_{y+K \log L \, [-2^m \varepsilon,2^m \varepsilon]^d}$ equipped with the topology of $H$-convergence.
\item We have
\begin{align*}
X=\sum_{m=0}^{1+\log_2 L} \sum_{y\in 2^m \varepsilon \mathbb{Z}^d\cap [0,L\varepsilon)^d} X_y^m.
\end{align*}
\item The random variables $X_y^m$ satisfy the bound
\begin{align}
\label{BoundMultilevelDependenceStructure}
||X_y^m||_{\exp^\gamma} \leq B L^{-d}.
\end{align}
\end{itemize}
\end{definition}

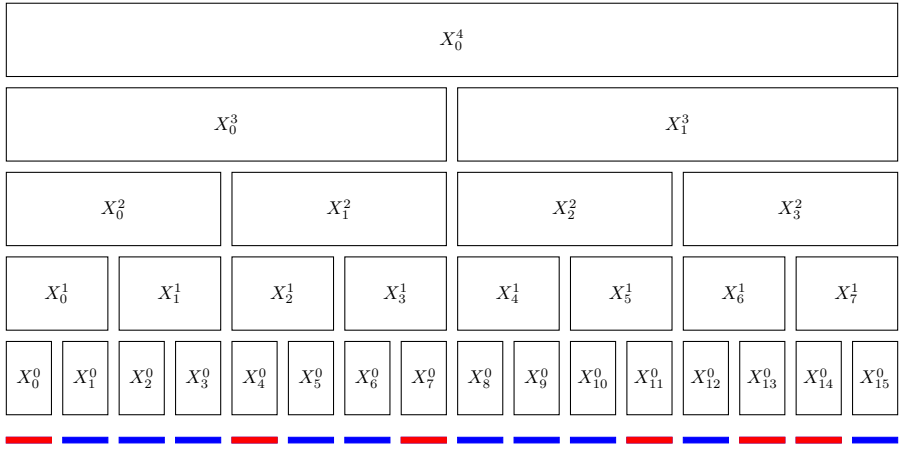
\begin{figure}
\begin{tikzpicture}[scale=0.75]
\foreach \x in {0}
   \draw (16*\x+0.1,4*1.5+0.1) rectangle (16*\x+15.9,4*1.5+1.4);
\foreach \x in {0,1}
   \draw (8*\x+0.1,3*1.5+0.1) rectangle (8*\x+7.9,3*1.5+1.4);
\foreach \x in {0,1,2,3}
   \draw (4*\x+0.1,2*1.5+0.1) rectangle (4*\x+3.9,2*1.5+1.4);
\foreach \x in {0,1,2,3,4,5,6,7}
   \draw (2*\x+0.1,1*1.5+0.1) rectangle (2*\x+1.9,1*1.5+1.4);
\foreach \x in {0,1,2,3,4,5,6,7,8,9,10,11,12,13,14,15}
   \draw (\x+0.1,0*1.5+0.1) rectangle (\x+0.9,0*1.5+1.4);
\foreach \x in {0,1,2,3,4,5,6,7,8,9,10,11,12,13,14,15}
   \draw[draw=blue,fill=blue] (\x+0.1,-0.5+0.1) rectangle (\x+0.9,-0.5+0.2);
\foreach \x in {0,4,7,11,13,14}
   \draw[draw=red,fill=red] (\x+0.1,-0.5+0.1) rectangle (\x+0.9,-0.5+0.2);
\foreach \x in {0,1,2,3,4,5,6,7,8,9,10,11,12,13,14,15}
		\node[scale=0.7] at (\x+0.5,0*1.5+0.75){$X_{\x}^0$};
\foreach \x in {0,1,2,3,4,5,6,7}
		\node[scale=0.7] at (2*\x+1.0,1*1.5+0.75){$X_{\x}^1$};
\foreach \x in {0,1,2,3}
		\node[scale=0.7] at (4*\x+2.0,2*1.5+0.75){$X_{\x}^2$};
\foreach \x in {0,1}
		\node[scale=0.7] at (8*\x+4.0,3*1.5+0.75){$X_{\x}^3$};
\foreach \x in {0}
		\node[scale=0.7] at (16*\x+8.0,4*1.5+0.75){$X_{\x}^4$};
\end{tikzpicture}
\caption{An illustration of the ``multilevel local dependence structure'' introduced in Definition~\ref{ConditionRandomVariable} (in a one-dimensional setting). At the bottom, a sample of the random coefficient field $a$ is depicted; the $X_y^k$ may depend not only on the values of the coefficient field directly below their box, but on the coefficient field in a region that is wider by a factor of $K \log L$.\label{FigureMultilevel}}
\end{figure}

The following proposition shows that the approximation $a^\RVE$ of the effective coefficient by the method of representative volumes may indeed be rewritten as a sum of random variables with a multilevel local dependence structure. We establish the same result for the spatial average of the coefficient field $\mathcal{F}_{avg}(a):=\dashint_{[0,L\varepsilon]^d} a \,dx$ and the second-order term $\mathcal{F}_{2-point}(a)$ in the low ellipticity contrast expansion of $a^\RVE$ given by \eqref{SecondOrderQuantity}.

Furthermore, the last result of the next proposition shows that the fraction of the variance of $a^\RVE$ that is explained by the statistical quantities $\mathcal{F}_{avg}(a)$ and $\mathcal{F}_{2-point}(a)$ -- that is, the gain in accuracy achieved by the selection approach for representative volumes when employing these statistical quantities -- stabilizes as the size $L$ of the representative volume increases; more precisely, it converges to some limit with rate $L^{-d/2}|\log L|^C$.

\begin{proposition}
\label{PropositionApproximabilityByMultilevel}
Let the assumptions (A1), (A2), (A3$_a$) - (A3$_c$) be satisfied, that is consider the periodization of a stationary ensemble of random coefficient fields. For any coefficient field $a$, denote by $\phi_i$ the unique (up to additions of constants) periodic solution to the corrector equation
\begin{align*}
-\nabla \cdot (a(e_i+\nabla \phi_i))=0.
\end{align*}
Then the approximation $a^\RVE$ of the effective coefficient $a_\shom$ by the representative volume element method, given by
\begin{align*}
a^\RVE e_i:=\dashint_{[0,L\varepsilon]^d} a(e_i+\nabla \phi_i) \,dx,
\end{align*}
is a sum of a family of random variables with multilevel local dependence. More precisely, $a^\RVE$ satisfies the criteria of Definition~\ref{ConditionRandomVariable} for any $\gamma<1$ with $K:=C(d,\lambda)$ and $B:=C(d,\gamma,\lambda) |\log L|^{C(d,\gamma)}$.

Furthermore, the spatial average
\begin{align*}
\mathcal{F}_{avg}(a):=\dashint_{[0,L\varepsilon]^d} a \,dx
\end{align*}
is also a sum of a family of random variables with multilevel local dependence. The criteria of Definition~\ref{ConditionRandomVariable} are satisfied by $\mathcal{F}_{avg}(a)$ for any $\gamma<\infty$ with $K:=C(d)$ and $B:=C(d,\gamma)$.

Additionally, the second-order correction to the effective conductivity in the setting of small ellipticity contrast $\mathcal{F}_{2-point}$, given by
\begin{align}
\label{F2}
\mathcal{F}_{2-point}(a):=-\dashint_{[0,L\varepsilon]^d} a \nabla v_i \cdot e_j \,dx
\end{align}
with $v_i$ denoting the solution to
\begin{align}
\label{F2Equation}
-\Delta v_i &= \nabla \cdot (a\nabla e_i),
\end{align}
is a sum of random variables with multilevel local dependence structure: The random variable $\mathcal{F}_{2-point}(a)$ satisfies the criteria of Definition~\ref{ConditionRandomVariable} for any $\gamma<1$ with $K:=C(d,\lambda)$ and $B:=C(d,\gamma,\lambda) |\log L|^{C(d,\gamma)}$.

Finally, the rescaled variances and covariances of $a^\RVE$ and the statistical quantities $\mathcal{F}_{avg}(a)$ and $\mathcal{F}_{2-point}(a)$ converge as $L\rightarrow \infty$: There exist positive semidefinite matrices $V_{\RVE}$, $V_{avg}$, $V_{2-point}$ and matrices $V_{c,\RVE,avg}$, $V_{c,\RVE,2-point}$, $V_{c,avg,2-point}$ independent of $L$ such that the estimates
\begin{align*}
|L^d \Var a^\RVE - V_{\RVE}|
\leq C L^{-d/2} (\log L)^C,
\\
|L^d \Var \mathcal{F}_{avg}(a) - V_{avg}|
\leq C L^{-d/2} (\log L)^C,
\\
|L^d \Var \mathcal{F}_{2-point}(a) - V_{2-point}|
\leq C L^{-d/2} (\log L)^C,
\end{align*}
and
\begin{align*}
|L^d \Cov[a^\RVE,\mathcal{F}_{avg}(a)] - V_{c,\RVE,avg}|
\leq C L^{-d/2} (\log L)^C,
\\
|L^d \Cov[a^\RVE,\mathcal{F}_{2-point}(a)] - V_{c,\RVE,2-point}|
\leq C L^{-d/2} (\log L)^C,
\\
|L^d \Cov[\mathcal{F}_{avg}(a),\mathcal{F}_{2-point}(a)] - V_{c,avg,2-point}|
\leq C L^{-d/2} (\log L)^C,
\end{align*}
hold true.
\end{proposition}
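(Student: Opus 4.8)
The proposition splits into two independent tasks: first, producing for each of $a^\RVE$, $\mathcal{F}_{avg}(a)$ and $\mathcal{F}_{2-point}(a)$ an explicit decomposition meeting Definition~\ref{ConditionRandomVariable}; second, upgrading these decompositions — together with the discrete stationarity and the finite range of dependence of the periodized ensemble — to the stated convergence of the rescaled (co)variances. Throughout I would normalise $\varepsilon=1$ and rely on the near-optimal quantitative corrector theory for periodizations of finite-range ensembles (Armstrong--Kuusi--Mourrat \cite{ArmstrongKuusiMourrat,ArmstrongKuusiMourratBook}, Gloria--Otto \cite{GloriaOttoNew}): it provides stretched-exponential moment bounds with rate approaching the Gaussian exponent $2$ on $\nabla\phi_i$, on the gradient $\nabla(\phi_i-\psi_i)$ of the error between $\phi_i$ and a finite-volume (Dirichlet) approximant $\psi_i$ computed on a window of size $R$ (with squared $L^2(\Omega)$-norm $\sim R^{-d}|\log R|^{C}$), and on the residual of the approximate corrector equation produced by a partition-of-unity gluing of such local approximants. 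The spatial average needs no work: setting $X_y^0:=L^{-d}\dashint_{y+[0,1]^d}a\,dx$ for $y\in\mathbb{Z}^d\cap[0,L)^d$ and $X_y^m:=0$ for $m\geq1$, we get $\sum_{m,y}X_y^m=\mathcal{F}_{avg}(a)$, each $X_y^0$ depends only on $a|_{y+[0,1]^d}\subseteq a|_{y+K\log L[-1,1]^d}$, and $\|X_y^0\|_{\exp^\gamma}\leq CL^{-d}$ by (A1), so Definition~\ref{ConditionRandomVariable} holds with $K=C(d)$, $B=C(d,\gamma)$ and any $\gamma<\infty$.

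For $a^\RVE$ the plan is a telescoping multiscale localization of the corrector, built around the fact that its long-range contributions are of higher order. I would first rewrite the cell formula in symmetric form, $a^\RVE e_i\cdot e_j=\dashint_{[0,L]^d}(e_j+\nabla\phi_j)\cdot a(e_i+\nabla\phi_i)\,dx$, using the corrector equation. For dyadic scales $R_m:=2^m$, $0\leq m\leq M:=1+\lceil\log_2L\rceil$, I would attach to each scale $m<M$ a globally defined measurable field $\psi_i^{(m)}$ obtained by gluing, with a smooth partition of unity subordinate to the cover $\{z+R_m[0,1]^d:z\in R_m\mathbb{Z}^d\}$, the finite-volume correctors $\psi_{i,z}^{(m)}$ (homogeneous Dirichlet data on $z+cR_m\log L\,[-1,1]^d$, using only $a$ on that window), and set $\psi_i^{(M)}:=\phi_i$. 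By construction any average over a box $y+R_m[0,1]^d$ of an expression in $a$ and $\nabla\psi_i^{(m)}$ depends only on $a|_{y+K\log L[-R_m,R_m]^d}$. Writing $a^{(m)}$ for the symmetric cell formula with $\psi_i^{(m)},\psi_j^{(m)}$ in place of $\phi_i,\phi_j$, we have $a^\RVE=a^{(0)}+\sum_{m=1}^M(a^{(m)}-a^{(m-1)})$, and distributing each integral over the scale-$R_m$ grid yields $a^\RVE=\sum_{m,y}X_y^m$ with each $X_y^m$ exactly localized to $y+K\log L[-R_m,R_m]^d$. The crucial estimate is the uniform per-box bound $\|X_y^m\|_{\exp^\gamma}\leq CL^{-d}|\log L|^{C(d,\gamma)}$ for every $\gamma<1$. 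For $m=0$ it is immediate from the moment bound on $\nabla\phi_i$ and H\"older. For $m\geq1$ one expands the symmetric cell formula and integrates by parts on the torus: the would-be linear terms pair the corrector error $\nabla(\phi_i-\psi_i^{(m)})$ against the residual of the glued approximate corrector equation — itself of corrector-error size, the partition-of-unity gradient contributing a factor $R_m^{-1}$ — so that the scale-$m$ increment $a^{(m)}-a^{(m-1)}$ has, per unit volume, the size of $\|\nabla(\phi_i-\psi_i^{(m-1)})\|^2\sim L^{-d}R_m^{-d}|\log L|^{C}$, with spatial decorrelation length $\sim R_m\log L$. A box of volume $R_m^d$ then carries $\|X_y^m\|_{\exp^\gamma}\lesssim L^{-d}R_m^d\cdot R_m^{-d}|\log L|^{C}=L^{-d}|\log L|^{C}$; squaring the near-Gaussian rate in this quadratic estimate is exactly what caps $\gamma$ at $1$ rather than $2$. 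As consistency checks, $\sum_{m,y}\|X_y^m\|_{\exp^\gamma}\lesssim|\log L|^{C}$ (in line with $a^\RVE=O(1)$) and the decomposition produces $\Var a^\RVE\lesssim L^{-d}\sum_m 2^{-md}|\log L|^C\lesssim L^{-d}|\log L|^C$, matching \eqref{VarianceBound} up to logarithms.

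The second-order functional follows the same template but with a purely constant-coefficient PDE: by the equation for $v_i$, $\mathcal{F}_{2-point}(a)$ equals (after symmetrizing $a$) the quadratic form $\dashint_{[0,L]^d}\nabla v_i\cdot\nabla v_j\,dx$, and $\nabla v_i$ is the Calder\'on--Zygmund transform of $ae_i$ against the periodic kernel of $\nabla\nabla(-\Delta)^{-1}$, whose off-diagonal part decays like $|x|^{-d}$. Localizing this kernel dyadically ($R_m\leq|x-x'|_\per<2R_m$), truncating $v_i$ accordingly and telescoping the resulting quadratic forms exactly as above yields the multilevel decomposition; exact localization is built in, and the per-box bound $\|X_y^m\|_{\exp^\gamma}\leq CL^{-d}|\log L|^{C}$ for any $\gamma<1$ follows from the cancellation of the kernel over annuli, the independence of $a$ across unit cells, and the John--Nirenberg exponential integrability of Calder\'on--Zygmund transforms of bounded functions (which is what caps $\gamma$ at $1$); no corrector estimates enter here, only (A1), (A2') and (A3$_a$)--(A3$_c$). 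For the convergence of the rescaled (co)variances, note that given the decompositions $L^d\Var a^\RVE$ and $L^d\Cov[\cdot,\cdot]$ are finite sums over pairs of scales and base points which, by discrete stationarity on the coarser grid and by finite range of dependence combined with $K\log L$-locality, depend on $L$ only through clean powers of $L$ and through finite-size wraparound effects that activate only at the top $O(\log\log L)$ scales. Comparing the $L$-periodization with the $2L$-periodization — which by (A3$_c$) share their local laws on boxes of size $L/2$ — I would show $L^d\Var a^\RVE_L$ is Cauchy along $L\in2^{\mathbb{N}}$ with geometrically decaying increments dominated by the top-scale contributions; e.g.\ the single top-scale box contributes $\lesssim L^d\,\|X_0^M\|_{L^2}\,\|\sum_{y}X_y^0\|_{L^2}\lesssim L^d\cdot L^{-d}|\log L|^C\cdot L^{-d/2}=L^{-d/2}|\log L|^C$, which sets the stated rate. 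The limits $V_{\RVE},V_{avg},\dots$ are then identified by comparing the periodic correctors with the whole-space corrector (whose $L^2(\Omega)$-cost on a box of the period size is $O(L^{-d/2}(\log L)^C)$, see \cite{GloriaOttoNew}) and the decay of the whole-space correlation structure, giving $|L^d\Var a^\RVE-V_{\RVE}|\leq CL^{-d/2}(\log L)^C$ and the analogous covariance estimates.

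The technical heart — and the place where things could go wrong — is the per-box estimate for $a^\RVE$: one must simultaneously keep the building blocks \emph{exactly} localized through the gluing, control the residual of the glued approximate corrector equation and the attendant linear pairing terms so that the scale-$m$ increment is genuinely of the quadratic size $R_m^{-d}|\log L|^C$ per unit volume in every dimension $d$, and do all this with stretched-exponential stochastic integrability at near-Gaussian rate uniformly over the $O(\log L)$ scales. This is precisely where the near-optimal quantitative homogenization estimates of \cite{ArmstrongKuusiMourrat,GloriaOttoNew}, and their interplay with large-scale $C^{0,1}$-regularity, are indispensable; a secondary but still nontrivial point is the periodic-versus-whole-space corrector comparison underlying the identification of the limits and the convergence rate.
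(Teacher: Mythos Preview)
Your treatment of $\mathcal{F}_{avg}$ matches the paper exactly. For $a^\RVE$, $\mathcal{F}_{2-point}$ and the variance limits your route is genuinely different. The paper does not glue finite-volume Dirichlet correctors. Instead it writes $\phi_i=\int_0^\infty u_i(\cdot,s)\,ds$ with $\partial_s u_i=\nabla\cdot(a\nabla u_i)$, $u_i(\cdot,0)=\nabla\cdot(ae_i)$, splits the time integral as $\int_0^1+\sum_k\int_{4^k}^{4^{k+1}}+\int_{T_L}^\infty$, symmetrizes with the adjoint corrector $\phi_j^*$, and performs one global integration by parts using $\nabla\cdot(a(e_i+\nabla\int_0^t u_i))=u_i(\cdot,t)$. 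Each scale-$k$ contribution then becomes a spatial integral of products like $u_i(\cdot,4^k)\,u_j^*(\cdot,4^k)$ and $a\nabla\!\int_{4^k}^{4^{k+1}}u_i\cdot\nabla\!\int_{4^k}^{4^{k+1}}u_j^*$. These are localized by replacing $u_i$ on each box $x_0+[0,2^k]^d$ by the solution $u_{i,k,x_0}$ with coefficient frozen to $\Id$ outside $\{|x-x_0|\le\sqrt{K\log L}\,2^{k-1}\}$; Gaussian propagation for parabolic equations makes this cost $L^{-cK}$, and the per-box size bound comes directly from the Gloria--Otto semigroup decay $(\dashint_{|x|\le\sqrt{T}}|u_i|^2)^{1/2}\lesssim\mathcal{C}(a)T^{-1/2-d/4}$, transferred to $u_{i,k,x_0}$ via (A3$_c$). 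For $\mathcal{F}_{2-point}$ the same scheme is run with the heat semigroup for $\Delta$ and a Mourrat-type time-symmetrization identity. For the variance limits, the paper compares the localized $X_y^k$ directly with full-space versions $X_y^{k,\mathbb{R}^d}$ built from $u_i^{\mathbb{R}^d}$ (same law by (A3$_c$)), and identifies $V_\RVE$ as an explicit infinite sum of their covariances; it does not compare periodizations of different sizes.

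Your approach carries a structural tension at the step you yourself call the technical heart, and you do not say how it is resolved. To obtain the quadratic per-unit-volume size $R_m^{-d}$ you integrate by parts on the torus, which pairs the corrector error $\phi_i-\psi_i^{(m)}$ against a residual. But $\phi_i$ is global, so the integrand after integration by parts is \emph{not} a function of $a|_{y+K\log L[-R_m,R_m]^d}$, and you cannot take $X_y^m$ to be its box integral. If instead you take $X_y^m$ to be the box integral of the original, genuinely localized integrand $(e_j+\nabla\psi_j^{(m)})\cdot a(e_i+\nabla\psi_i^{(m)})-(\text{same at }m{-}1)$, that integrand is only of order $|\nabla(\psi_i^{(m)}-\psi_i^{(m-1)})|\sim R_m^{-d/2}$ per unit volume, giving $\|X_y^m\|\lesssim L^{-d}R_m^{d/2}$, which is far too large. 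Localizing the integration by parts with a cutoff produces boundary terms of the same bad order. The semigroup approach sidesteps this cleanly: the global integration by parts is done \emph{first}, and the resulting integrand is a product of factors each of which is, at time $\sim4^k$, simultaneously small (by the optimal semigroup decay) and localizable to scale $2^k$ (by parabolic Gaussian bounds), so locality and smallness never compete.
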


\section{Strategy of the proof and intermediate results}

Our main result relies on a quantitative normal approximation result for the joint probability distribution of the approximation of the effective conductivity $a^\RVE$ and auxiliary random variables $\mathcal{F}(a)$ like the spatial average $\dashint_{[0,L\varepsilon]^d} a\,dx$. The distance of the probability distribution to a multivariate Gaussian will be quantified through the following notion of distance between probability measures. Note that this distance is a standard choice in the theory of multivariate normal approximation, see e.\,g.\ \cite{ChenGoldsteinShao} and the references therein.

\begin{definition}
\label{DefinitionDistance}
Given a symmetric positive definite matrix $\Lambda\in \mathbb{R}^{N\times N}$ and some $\bar L<\infty$, we consider the classes $\Phi_{\Lambda}^{\bar L}$ of functions $\phi:\mathbb{R}^N\rightarrow \mathbb{R}$ subject to the following properties:
\begin{itemize}
\item $\phi$ is smooth and its first derivative is bounded in the sense $|\nabla \phi(x)| \leq \bar L$ for all $x\in \mathbb{R}^N$.
\item For any $r>0$ and any $x_0\in \mathbb{R}^N$, we have
\begin{align}
\label{DefinitionClassPhi}
\int_{\mathbb{R}^N} \osc_r \phi(x) ~\mathcal{N}_{\Lambda}(x-x_0) \,dx \leq r,
\end{align}
where $\osc_r \phi (x)$ is the oscillation of $\phi$ defined as
\begin{align*}
\osc_r\phi(x):=\sup_{|z|\leq r}\phi(x+z)-\inf_{|z|\leq r} \phi(x+z)
\end{align*}
and where 
\begin{align*}
\mathcal{N}_{\Lambda}(x):=\frac{1}{(2\pi)^{N/2}\sqrt{\det \Lambda}}
\exp\bigg(-\frac{1}{2}\Lambda^{-1} x \cdot x\bigg).
\end{align*}
\end{itemize}
The class $\Phi_\Lambda$ is defined as
\begin{align*}
\Phi_\Lambda:=\bigcup_{\bar L>0} \Phi_\Lambda^{\bar L}.
\end{align*}

Furthermore, we introduce the distance $\mathcal{D}$ between the law of an $\mathbb{R}^N$-valued random variable $X$ and the $N$-variate Gaussian $\mathcal{N}_\Lambda$ as
\begin{align}
\label{DefinitionD}
\mathcal{D}(X,\mathcal{N}_\Lambda) := \sup_{\phi\in \Phi_\Lambda} \bigg(\mathbb{E}[\phi(X)]-\int_{\mathbb{R}^N} \phi(x) \mathcal{N}_\Lambda(x)\,dx \bigg).
\end{align}
\end{definition}
Note that defining the distance $\mathcal{D}$ with the class of functions $\Phi_\Lambda^1$ instead of $\Phi_\Lambda$ would lead to the $1$-Wasserstein distance. The distance $\mathcal{D}$ is a stronger distance than the $1$-Wasserstein distance: The $1$-Wasserstein distance is defined by taking the supremum in \eqref{DefinitionD} only over all functions $\phi$ which are $1$-Lipschitz. In contrast, the condition \eqref{DefinitionClassPhi} corresponds more or less to a slightly stronger condition than an $L^1_{loc}$-type bound for $\nabla \phi$: It in particular implies by letting $r\rightarrow 0$
\begin{align}
\label{ClassPhiDifferentialBound}
\int_{\mathbb{R}^N} |\nabla \phi|(x) \mathcal{N}_{\Lambda}(x-x_0) \,dx \leq 1
\end{align}
for any $x_0\in \mathbb{R}^N$.

It is well-known that Stein's method of normal approximation allows to establish a quantitative result on normal approximation for sums of random variables with local dependence structure, see e.\,g.\ \cite{ChenGoldsteinShao,ChenShao,RinottRotar} and the references therein. However, the approximation of the effective coefficient $a^\RVE$ -- that is, the random variable $a^\RVE$ as defined by \eqref{EquationRVE} -- features global dependencies.
It is shown in Proposition~\ref{PropositionApproximabilityByMultilevel} that $a^\RVE$ may nevertheless be approximated by a sum of random variables with a \emph{multilevel local dependence} structure.
We then employ the following quantitative central limit theorem for sums of vector-valued random variables with a multilevel local dependence structure, which is not covered by the normal approximation results for sums of random variables with a given dependency graph in the literature and which is established in the companion article \cite{FischerMultilevelLocalDependence}.

\begin{theorem}[{\cite[Theorem~4]{FischerMultilevelLocalDependence}}]
\label{TheoremNormalApproximationMultilevelLocalDependence}
Consider a probability distribution of uniformly elliptic and bounded coefficient fields $a$ on $\mathbb{R}^d$ or a periodization of such a probability distribution, and suppose that assumptions (A1)-(A3) respectively (A1), (A2), (A3$_a$)-(A3$_c$) are satisfied. Let $X=X(a)$ be a random variable that is a sum of random variables with multilevel local dependence in the sense of Definition~\ref{ConditionRandomVariable}. Then the law of the random variable $X$ is close to a multivariate Gaussian in the sense
\begin{align}
\label{NormalApproximationMultilevel}
&\mathcal{D}(X-\mathbb{E}[X],\mathcal{N}_\Lambda)
\leq
C(d,\gamma,N,K) B^3 (\log L)^{C(d,\gamma)} \big(L^{-d} |\Lambda^{1/2}| |\Lambda^{-1/2}|^3\big) L^{-d},
\end{align}
where $\Lambda:=\Var X$ and where the constant $C(d,\gamma,N,K)$ depends in a polynomial way on $d$, $N$, and $K$.

Furthermore, we have for any symmetric positive definite $\Lambda\in \mathbb{R}^{d\times d}$ with $\Lambda\geq \Var X$ and $|\Lambda-\Var X|\leq L^{-d}$
\begin{align}
\label{NormalApproximationMultilevelDegenerate}
\mathcal{D}(X-\mathbb{E}[X],\mathcal{N}_\Lambda)
\leq&
C(d,\gamma,N,K) B^3 (\log L)^{C(d,\gamma)} \big(L^{-d} |\Lambda^{1/2}| |\Lambda^{-1/2}|^3\big) L^{-d}
\\&
\nonumber
+C(d,N) (\log L)^{C(d,\gamma)} |\Lambda-\Var X|^{1/2},
\end{align}
providing a better bound in the case of degenerate covariance matrices $\Var X$.
\end{theorem}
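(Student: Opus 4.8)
\emph{Overall approach and the Stein equation.} The plan is to prove the non-degenerate bound \eqref{NormalApproximationMultilevel} by Stein's method for multivariate normal approximation in the distance $\mathcal{D}$, tailored to the multilevel dependence structure, and then to obtain the degenerate bound \eqref{NormalApproximationMultilevelDegenerate} by a covariance-fattening perturbation. Set $W:=X-\mathbb{E}[X]$ and $\Lambda:=\Var W$. For a test function $\phi\in\Phi_\Lambda$ one solves the Stein equation $\Lambda:\nabla^2 f(x)-x\cdot\nabla f(x)=\phi(x)-\int\phi\,d\mathcal{N}_\Lambda$ via the Ornstein--Uhlenbeck representation $f(x)=\int_0^1\frac{1}{2s}\big(\int\phi\,d\mathcal{N}_\Lambda-\mathbb{E}[\phi(\sqrt{s}\,x+\sqrt{1-s}\,Z)]\big)\,ds$ with $Z\sim\mathcal{N}_\Lambda$. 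After rescaling by $\Lambda^{-1/2}$ to reduce to the standard Gaussian, the oscillation--integrability condition \eqref{DefinitionClassPhi} is precisely what is needed to tame the singular $s\to 1$ behaviour of the third derivative, producing $\mathcal{N}_\Lambda$-averaged bounds of the schematic form $\sup_{x_0}\int|\nabla^3 f(x)|\,\mathcal{N}_\Lambda(x-x_0)\,dx\lesssim|\Lambda^{1/2}|\,|\Lambda^{-1/2}|^3$ (up to constants and powers of $\log L$), and milder bounds on $\nabla^2 f$. As is standard for this distance (compare \cite{ChenGoldsteinShao,RinottRotar}), the fact that these are averaged rather than sup-norm bounds --- so that one cannot directly insert the law of $W$ --- is handled by first mollifying $\phi$ and then closing a self-improving inequality for $\mathcal{D}(W,\mathcal{N}_\Lambda)$.

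\emph{The dependence expansion.} Write $W=\sum_{m,y}\xi_y^m$ with $\xi_y^m:=X_y^m-\mathbb{E}[X_y^m]$, which still satisfies $||\xi_y^m||_{\exp^\gamma}\le CBL^{-d}$ and hence $||\xi_y^m||_{L^3}\le C(\gamma)BL^{-d}$. The finite range of dependence (A3)/(A3$_b$) guarantees that $\xi_y^m$ is independent of all $\xi_{y'}^{m'}$ whose localization box $y'+K\log L\,[-2^{m'}\varepsilon,2^{m'}\varepsilon]^d$ is disjoint from that of $\xi_y^m$; denote by $N_y^m$ the remaining (``neighbouring'') index set and set $\eta_y^m:=\sum_{(m',y')\in N_y^m}\xi_{y'}^{m'}$. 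Taylor-expanding $\mathbb{E}[W\cdot\nabla f(W)]=\sum_{m,y}\mathbb{E}[\xi_y^m\cdot\nabla f(W)]$ around the leave-out field $W-\eta_y^m$, using $\mathbb{E}[\xi_y^m]=0$ and independence to kill the zeroth and first order, and matching $\Lambda=\sum_{m,y}\Cov(\xi_y^m,\eta_y^m)$ against $\mathbb{E}[\Lambda:\nabla^2 f(W)]$, one is left with a finite collection of third-order remainder terms of the schematic types $\mathbb{E}\big[|\xi_y^m|\,|\eta_y^m|^2\big]$, $\Cov(\xi_y^m,\eta_y^m)\,\mathbb{E}\big[\eta_y^m\cdot\nabla^3 f(\cdots)\big]$, and their analogues involving the second-order neighbourhood $\bigcup_{(m',y')\in N_y^m}N_{y'}^{m'}$, each weighted by the $\mathcal{N}_\Lambda$-averaged derivative bounds from the previous step.

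\emph{Multilevel bookkeeping --- the main obstacle.} This is the point where the single-level local-dependence estimates of \cite{ChenGoldsteinShao,ChenShao,RinottRotar} are insufficient: a block at level $m$ has a neighbourhood $N_y^m$ of cardinality $\sim 2^{md}(K\log L)^d$ (dominated by the many lower-level blocks it straddles), and the top-level block sees essentially the entire coefficient field, so the crude bounds $\mathbb{E}|\eta_y^m|\lesssim|N_y^m|\,BL^{-d}$ and $\sum_{m,y}|N_y^m|^2\sim L^{2d}(\log L)^C$ overshoot by a factor $L^d$. The remedy is to exploit the local dependence \emph{within each level}: restricting $\eta_y^m$ to a single level $m'$, the contributing summands fall into $O((K\log L)^d)$-sized independent clusters, so an $\ell^2$/variance estimate gives a partial-sum bound that beats the trivial count by a square root; summing the geometric series in $m'$ yields $||\eta_y^m||_{L^2}\lesssim 2^{md/2}(K\log L)^d BL^{-d}$, and the same order for $||\eta_y^m||_{L^3}$ since $BL^{-d}$ is small, while for the second-order neighbourhoods one simply bounds the partial sum in $L^2$ by $||W||_{L^2}\lesssim|\Lambda^{1/2}|$, which removes the top-block obstruction altogether. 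Substituting these bounds into the third-order remainders, the decisive weighted sum becomes $\sum_{m,y}||\xi_y^m||_{L^3}\,||\eta_y^m||_{L^3}^2\lesssim B^3 L^{-3d}(K\log L)^{2d}\sum_m(L/2^m)^d 2^{md}\lesssim B^3 L^{-2d}(\log L)^C$, the key being that $\sum_m(L/2^m)^d 2^{md}=(2+\log_2 L)L^d$ rather than $L^{2d}$. Multiplying by the Stein derivative bound $|\Lambda^{1/2}||\Lambda^{-1/2}|^3$ reproduces the right-hand side of \eqref{NormalApproximationMultilevel}, with the claimed polynomial dependence of the constant on $d$, $N$, and $K$.

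\emph{The degenerate case and provenance.} For \eqref{NormalApproximationMultilevelDegenerate} one runs the same Stein argument but with the enlarged target $\mathcal{N}_\Lambda$, $\Lambda\ge\Var X$; the dependence expansion now leaves the additional covariance-mismatch term $(\Lambda-\Var X):\mathbb{E}[\nabla^2 f(W)]$, which the oscillation condition \eqref{DefinitionClassPhi} controls by $C(\log L)^C|\Lambda-\Var X|^{1/2}$ (equivalently, one may add to $X$ an independent centered Gaussian of covariance $\Lambda-\Var X$, split into $\sim L^d$ i.i.d.\ increments of $\exp^2$-norm $\lesssim L^{-d}$ so that the multilevel structure and its parameters are preserved up to constants, apply the non-degenerate bound, and estimate the regularization cost in $\mathcal{D}$). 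This accounts for the extra summand in \eqref{NormalApproximationMultilevelDegenerate}. Since the assertion is exactly \cite[Theorem~4]{FischerMultilevelLocalDependence}, in the present paper it is used as an external input and the foregoing merely indicates the architecture of its proof; the most delicate part, as emphasized above, is the multilevel combinatorial accounting that keeps the third-order Stein remainder at the optimal order relative to the natural fluctuation scale.
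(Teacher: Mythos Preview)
The paper does not contain a proof of this theorem at all: it is quoted verbatim as \cite[Theorem~4]{FischerMultilevelLocalDependence} and used as a black-box input, with the surrounding text explicitly stating that the result ``is established in the companion article \cite{FischerMultilevelLocalDependence}.'' You correctly flag this in your final paragraph, so there is nothing to compare against within the present paper.

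That said, as a standalone sketch of the companion-paper argument your outline is architecturally sound. The Stein-equation setup with the Ornstein--Uhlenbeck representation, the role of the oscillation condition \eqref{DefinitionClassPhi} in controlling the $s\to 1$ singularity of $\nabla^3 f$, the leave-one-neighbourhood-out Taylor expansion, and above all the identification of the multilevel bookkeeping as the crux (single-level bounds overshoot because the top block sees everything, and the fix is to exploit independence within each level to gain a square root on $\|\eta_y^m\|$) --- these are exactly the right ingredients and the right hierarchy of difficulty. Your key computation $\sum_m (L/2^m)^d\, 2^{md} = (2+\log_2 L)\,L^d$ is the correct mechanism by which the remainder lands at order $L^{-2d}$ rather than $L^{-d}$. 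For the degenerate case, both routes you mention (running Stein with the inflated $\Lambda$ and picking up the mismatch $(\Lambda-\Var X):\nabla^2 f$, or adding an independent Gaussian of covariance $\Lambda-\Var X$) are viable and lead to the stated $|\Lambda-\Var X|^{1/2}$ correction.
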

Our result on moderate deviations of the probability distribution of $a^\SQS$ is based on the following simple general moderate deviations result for sums of random variables with multilevel local dependence structure.
\begin{theorem}[{\cite[Theorem~5]{FischerMultilevelLocalDependence}}]
\label{TheoremModerateDeviations}
Consider an ensemble of coefficient fields $a$ on $\mathbb{R}^d$, $d\geq 1$, or its periodization for some $L\geq 1$, subject to the conditions (A1)-(A3) respectively (A1), (A2), and (A3$_a$)-(A3$_c$). Let $X=X(a)$ be a random variable that may be written as a sum of random variables with multilevel local dependence structure $X=\sum_{m=0}^{1+\log_2 L} \sum_{i\in 2^m \varepsilon \mathbb{Z}^d \cap [0,L\varepsilon)^d} X_i^m$ in the sense of Definition~\ref{ConditionRandomVariable}.

Then there exists $\beta=\beta(d,\gamma)>0$ and a positive definite symmetric matrix $\Lambda\in \mathbb{R}^{N\times N}$ with $|\Lambda-\Var X|\leq C(d,\gamma,N,K) B^2 L^{-2\beta} L^{-d}$ such that for any measurable $A\subset \mathbb{R}^N$ we have the estimate
\begin{align*}
\mathbb{P}\big[X\in A\big]
\leq \int_{\{x\in \mathbb{R}^N:\dist(x,A)\leq L^{-\beta} L^{-d/2}\}} \mathcal{N}_{\Lambda}(x) \,dx + C(d,\gamma,N,K) \exp\Big(-\frac{c}{B^C} L^{2\beta}\Big).
\end{align*}
\end{theorem}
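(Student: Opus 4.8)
One might first hope to deduce the estimate by simply combining the quantitative normal approximation of Theorem~\ref{TheoremNormalApproximationMultilevelLocalDependence} with an exponential tail bound for $X$; this is not enough, since the error in Theorem~\ref{TheoremNormalApproximationMultilevelLocalDependence} is only of polynomial order in $L$, whereas the asserted additive error $C\exp(-cB^{-C}L^{2\beta})$ must decay faster than any power of $L$, and covering a general measurable $A$ by balls only inflates a polynomial error. The plan is therefore to run a direct, Cram\'er-type local argument \emph{inside} the multilevel decomposition. First I would fix a threshold level $M$ with $2^{M}\sim L^{1/2+\theta}$ for a small $\theta=\theta(d,\gamma)>0$ and split $X=X^{\leq M}+X^{>M}$ along the levels of Definition~\ref{ConditionRandomVariable}. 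By the triangle inequality for $\|\cdot\|_{\exp^{\gamma}}$ and \eqref{BoundMultilevelDependenceStructure}, $\|X^{>M}\|_{\exp^{\gamma}}\leq\sum_{m>M}(L/2^{m})^{d}BL^{-d}\leq CB\,2^{-Md}$, which for $M$ as above is smaller than $L^{-\beta}L^{-d/2}$ by a power of $L$ (this fixes a candidate $\beta=\beta(d,\gamma)>0$). Hence $\mathbb{P}[|X^{>M}|>\tfrac12 L^{-\beta}L^{-d/2}]\leq C\exp(-cB^{-C}L^{2\beta})$, so it suffices to treat $X^{\leq M}$ with $A$ replaced by its $\tfrac12 L^{-\beta}L^{-d/2}$-neighbourhood; the same smallness of $X^{>M}$ (using $\gamma\leq 2$) gives $|\Var X-\Var X^{\leq M}|\leq CB^{2}L^{-2\beta}L^{-d}$, and I would take $\Lambda$ to be $\Var X^{\leq M}$ up to a tiny, permissible enlargement.

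Next I would colour, for each $m\leq M$, the lattice $2^{m}\varepsilon\mathbb{Z}^{d}\cap[0,L\varepsilon)^{d}$ with $q_{m}\leq C(\log L)^{d}$ colours so that two same-colour sites are at $\ell^{\infty}$-distance larger than $2K\log L$; by the locality of the $X_{y}^{m}$ and the finite range of dependence, the summands of one colour are then independent, and $X^{\leq M}-\mathbb{E}[X^{\leq M}]$ becomes a sum of at most $C(\log L)^{d+1}$ (mutually dependent) blocks, each an independent sum of centred variables of $\exp^{\gamma}$-norm $\leq BL^{-d}$. A Bernstein-type union bound over the blocks already yields $\mathbb{P}[|X^{\leq M}-\mathbb{E}[X^{\leq M}]|\geq L^{\beta}\sqrt{\Var X}]\leq C\exp(-cB^{-C}L^{2\beta})$. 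More substantially, by processing the levels from coarse to fine and conditioning — and after a harmless truncation of the increments at a scale making the truncation error $\exp(-cL^{2\beta})$-small — I would establish the moment generating function bound
\[
\log\mathbb{E}\big[e^{\xi\cdot(X^{\leq M}-\mathbb{E}[X^{\leq M}])}\big]\leq \tfrac12\,\xi\cdot(\Var X^{\leq M})\xi+g(\xi),\qquad |g(\xi)|\leq C(\log L)^{C}L^{3\beta-d/2},
\]
uniformly for $|\xi|\leq cL^{\beta}/\sqrt{\Var X}$, with $g$ collecting the third and higher cumulants of the blocks; a cumulant of order $k$ contributes at most $L^{d}(BL^{-d})^{k}|\xi|^{k}$ in total, which is what forces $3\beta<d/2$.

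Then I would cover the ball of radius $L^{\beta}\sqrt{\Var X}$ about $\mathbb{E}[X^{\leq M}]$ by at most $C(L^{2\beta}B)^{N}$ cubes $Q$ of side $\rho:=L^{-\beta}L^{-d/2}$, and for a fixed such cube, writing $y_{Q}:=x_{Q}-\mathbb{E}[X^{\leq M}]$ for its recentred centre, tilt by $\xi_{Q}:=(\Var X^{\leq M})^{-1}y_{Q}$, which lies in the admissible range. This gives $\mathbb{P}[X^{\leq M}\in Q]\leq e^{-\frac12(\Var X^{\leq M})^{-1}y_{Q}\cdot y_{Q}+g(\xi_{Q})}\,\sup_{x}\mathbb{P}_{\xi_{Q}}[X^{\leq M}\in x+Q]$, and the exponential prefactor equals $(2\pi)^{N/2}\sqrt{\det\Var X^{\leq M}}\,\mathcal{N}_{\Var X^{\leq M}}(y_{Q})\,e^{g(\xi_{Q})}$. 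The tilted law $\mathbb{P}_{\xi_{Q}}$ is again an exponential reweighting of the same sum of locally dependent blocks, and a characteristic-function/anti-concentration estimate for it yields $\sup_{x}\mathbb{P}_{\xi_{Q}}[X^{\leq M}\in x+Q]\leq C_{N}(\rho/\sqrt{\Var X})^{N}$; since $\sqrt{\det\Var X^{\leq M}}$ is comparable to $(\sqrt{\Var X})^{N}$ and $e^{g(\xi_{Q})}=1+o(1)$, we arrive at $\mathbb{P}[X^{\leq M}\in Q]\leq (1+o(1))\mathcal{N}_{\Lambda}(y_{Q})\rho^{N}$, absorbing the $o(1)$ and the cube overlap into an enlargement of $\Lambda$ of the allowed size. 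Summing over the cubes meeting the fattened $A$, adding the $\exp(-cB^{-C}L^{2\beta})$-terms from above (which survive multiplication by the polynomial factor $(L^{2\beta}B)^{N}$), and noting that $|\mathbb{E}[X]-\mathbb{E}[X^{\leq M}]|\leq CB2^{-Md}$ is absorbed into the fattening, yields the claimed bound.

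I expect the genuine difficulty to be concentrated in the second and third steps: proving, for a sum with a truly \emph{multilevel} (not one-level) local dependence structure, a moment generating function bound with sub-leading remainder $g$ uniformly over the whole moderate range $|\xi|\lesssim L^{\beta}/\sqrt{\Var X}$ of tilts, together with the matching anti-concentration bound for all the tilted measures. The coarse-to-fine conditioning must be arranged so that the dependence among the $C(\log L)^{d+1}$ blocks does not spoil the cubic smallness of $g$, and the value $\beta=\beta(d,\gamma)$ is ultimately pinned down by the competition between $3\beta<d/2$, the requirement that $2^{-Md}\lesssim L^{-\beta-d/2}\cdot L^{-2\beta/\gamma}$ with $M$ a legitimate level, and the absorbability of the $(\log L)^{d}$-overhead of the colourings.
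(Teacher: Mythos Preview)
The paper does not contain a proof of this statement: Theorem~\ref{TheoremModerateDeviations} is quoted verbatim from the companion article \cite{FischerMultilevelLocalDependence} (as Theorem~5 there) and is used here as a black box in the proof of Theorem~\ref{TheoremSQSModerateDeviations}. There is therefore no ``paper's own proof'' against which to compare your proposal.

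That said, your outline is a reasonable Cram\'er-type strategy and captures the right coarse architecture --- a level cutoff to isolate a piece with good concentration, a colouring to manufacture independence within levels, and an exponential tilt plus local small-ball bound to compare with the Gaussian on small cubes. The points you yourself flag as uncertain are indeed the substantive ones: (i) controlling the cumulant remainder $g(\xi)$ uniformly over the tilt range when the blocks across different levels and colours are \emph{not} independent of each other (your ``coarse-to-fine conditioning'' needs to be made precise so that cross-block dependence does not reinstate a second-order contribution that competes with $\tfrac12\xi\cdot(\Var X^{\leq M})\xi$), and (ii) the anti-concentration bound $\sup_x\mathbb{P}_{\xi}[X^{\leq M}\in x+Q]\leq C_N(\rho/\sqrt{\Var X})^N$ uniformly over all tilts $\xi$, which requires a nondegeneracy input on $\Var X^{\leq M}$ that the hypotheses do not supply directly (the theorem only asserts the existence of \emph{some} $\Lambda$ close to $\Var X$, not that $\Var X$ itself is well-conditioned). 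Your step summing over $C(L^{2\beta}B)^N$ cubes also introduces a polynomial prefactor that must be absorbed either into the exponential error or into the $(1+o(1))$; as written this is asserted rather than justified. None of these are obviously fatal, but they are genuine gaps that a full proof would have to close, and since the actual argument lives in \cite{FischerMultilevelLocalDependence} you would need to consult that paper to see how they are handled there.
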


\section{Justification of the selection approach for representative volumes}

We now provide the proof of our main result -- the error estimates for the selection approach for representative volumes by Le~Bris, Legoll, and Minvielle \cite{LeBrisLegollMinvielle} -- which is stated in Theorem~\ref{TheoremSQS} and Theorem~\ref{TheoremSQSModerateDeviations}.

The idea for the proof of all statements of Theorem~\ref{TheoremSQS} is the following: Theorem~\ref{TheoremNormalApproximationMultilevelLocalDependence} enables us in conjunction with Proposition~\ref{PropositionApproximabilityByMultilevel} to approximate the joint probability distribution of $a^\RVE$ and $\mathcal{F}(a)$ by a multivariate Gaussian with the same covariance matrix. The probability distribution of $a^\SQS$ arises as the probability distribution of $a^\RVE$ conditioned on the event \eqref{SQSConditioning}. As a consequence, the probability distribution of $a^\SQS$ may be approximated by the marginal of the conditional probability distribution of an ideal multivariate Gaussian. The results of Theorem~\ref{TheoremSQS} on the probability distribution of $a^\SQS$ are then a consequence of corresponding properties of multivariate normal distributions.

\begin{proof}[Proof of Theorem~\ref{TheoremSQS}]
For the proof of the theorem we may assume without loss of generality that $\mathbb{E}[\mathcal{F}(a)]=0$. Throughout the proof, the constants $c$ and $C$ may depend on $d$, $\lambda$, $N$, $\gamma$, $c_0$, and $C_0$, if not otherwise stated.

Recall that the probability distribution of $a^\SQS$ is given by the probability distribution of $a^\RVE$ conditioned on the event \eqref{SQSConditioning}. Theorem~\ref{TheoremNormalApproximationMultilevelLocalDependence} and Proposition~\ref{PropositionApproximabilityByMultilevel} entail that the joint probability distribution of any component $a^\RVE_{ij}$ of $a^\RVE$ and $\mathcal{F}(a)$ is close to a multivariate Gaussian $\mathcal{N}_{\Var (a^\RVE_{ij},\mathcal{F}(a))}(\cdot\,-\mathbb{E}[a^\RVE_{ij}],\cdot)$. As a consequence of this result, the probability distribution of $a^\SQS_{ij}$ may be approximated in a quantitative sense by the first-variable marginal of the conditional distribution of $\mathcal{N}_{\Var (a^\RVE_{ij},\mathcal{F}(a))}(\cdot\,-\mathbb{E}[a^\RVE_{ij}],\cdot)$ given the event $|\mathcal{F}(a)|\leq \delta L^{-d/2}$. As we shall show below, the latter marginal probability distribution has the density
\begin{align}
\label{LimitDistribution}
\mathcal{M}^\delta(x)&:=
\frac{1}{p} \int_{\mathbb{R}^{N}} \mathcal{N}_{\Varaideal}\big(x-\Cov[a^\RVE_{ij},\mathcal{F}(a)](\Var \mathcal{F}(a))^{-1} y-\mathbb{E}[a^\RVE_{ij}]\big)
\\&~~~~~~~~~~~~~~~~~~~
\nonumber
\times \chi_{\{|y|\leq \delta L^{-d/2}\}} \mathcal{N}_{\Var \mathcal{F}(a)}(y) \,dy
\end{align}
where the renormalization factor $p$ is given by
\begin{align*}
p=\int_{\mathbb{R}} \int_{\mathbb{R}^{N}} \chi_{\{|y|\leq \delta L^{-d/2}\}}(y) \mathcal{N}_{\Var (a^\RVE_{ij},\mathcal{F}(a))}(x,y) \,dy \,dx
\end{align*}
and where the unexplained variance $\Varaideal$ (i.\,e.\ the variance of $a^\RVE_{ij}$ which is not explained by the $\mathcal{F}_n(a)$) is given by
\begin{align*}
\Varaideal = \Var a^\RVE_{ij} - \Cov[a^\RVE_{ij},\mathcal{F}(a)] (\Var \mathcal{F}(a))^{-1} \Cov[\mathcal{F}(a),a^\RVE_{ij}].
\end{align*}
The assertions \eqref{SystematicErrorSQS} and \eqref{VarianceReductionSQS} on the systematic error and the variance reduction in Theorem~\ref{TheoremSQS} will be a consequence of the lower bound \eqref{ProbabilitySQS} on the probability of a random coefficient field satisfying the selection criterion, the related lower bound
\begin{align}
\label{ProbabilitySQSNormal}
\int_{\mathbb{R}^N} \int_{\mathbb{R}} \chi_{\{|y|\leq \delta L^{-d/2}\}} \mathcal{N}_{\Var (a^\RVE_{ij},\mathcal{F}(a))}(x,y) \,dx \,dy \geq c(N) C_0^{-N/2} \delta^N,
\end{align}
the stretched exponential moment bounds for any $\gamma<1/2$
\begin{subequations}
\begin{align}
\label{aRVEStretchedExponentialBound}
||a^\RVE-\mathbb{E}[a^\RVE]||_{\exp^\gamma} &\leq C(d,\lambda,\gamma) L^{-d/2} |\log L|^{C},
\\
\label{NStretchedExponentialBound}
||\mathcal{N}_{\Var (a^\RVE,\mathcal{F}(a))}||_{\exp^\gamma} &
\leq C(d,\lambda,\gamma,C_0) L^{-d/2} |\log L|^{C},
\end{align}
\end{subequations}
and the approximation result of the distribution of $a^\SQS_{ij}$ by $\mathcal{M}^\delta$
\begin{align}
\label{SQSNormalApproximation}
\bigg|\mathbb{E}\big[\tilde \phi(a^\SQS_{ij})\big]-\int_{\mathbb{R}} \tilde \phi(x) \mathcal{M}^\delta(x) \,dx\bigg|
\leq
\frac{C \kappa_{ij}^{3/2}}{\delta^N} L^{-d} |\log L|^{C(d,\gamma)}
\end{align}
for any continuous $\tilde \phi:\mathbb{R}\rightarrow \mathbb{R}$ satisfying
\begin{subequations}
\label{BoundTildePhi}
\begin{align}
\label{BoundTildePhia}
|\tilde \phi|\leq L^{-d/2}
\end{align}
and
\begin{align}
\label{BoundTildePhib}
\int_{\mathbb{R}}\osc_r \tilde \phi (x) \mathcal{N}_{\Varaideal} (x-x_0) \,dx
\leq r
\end{align}
for all $r>0$ and all $x_0\in \mathbb{R}$.
\end{subequations}
To obtain the $\kappa$-independent estimates \eqref{SystematicErrorSQS2} and \eqref{VarianceReductionSQS2}, the bound \eqref{SQSNormalApproximation} is replaced by
\begin{align}
\label{SQSNormalApproximation2}
\bigg|\mathbb{E}\big[\tilde \phi(a^\SQS_{ij})\big]-\int_{\mathbb{R}} \tilde \phi(x) \mathcal{M}^\delta(x) \,dx\bigg|
\leq
\frac{C}{\delta^N} L^{-d/2-d/8} |\log L|^{C(d,\gamma)}.
\end{align}

We defer the proof of \eqref{ProbabilitySQS} and \eqref{SQSNormalApproximation} (as well as \eqref{SQSNormalApproximation2}) to the last step and first demonstrate that these estimates entail the assertions \eqref{SystematicErrorSQS} and \eqref{VarianceReductionSQS} of our theorem.

{\bf Step 1: Estimate on the systematic error.}
In order to derive the estimate on the systematic error \eqref{SystematicErrorSQS}, we first use the formula \eqref{LimitDistribution} and Fubini's theorem to see that
\begin{align}
\label{FormulaExpectationIdealGaussian}
&\int x ~ \mathcal{M}^\delta(x) \,dx
\\&
\nonumber
= \frac{1}{p} \int_{\mathbb{R}^{N}} \big(\mathbb{E}[a^\RVE_{ij}] + \Cov[a^\RVE_{ij},\mathcal{F}(a)](\Var \mathcal{F}(a))^{-1} y \big)
\\&
\nonumber
~~~~~~~~~~~~~\times\chi_{\{|y|\leq \delta L^{-d/2}\}}
 \mathcal{N}_{\Var \mathcal{F}(a)}(y) \,dy
\\&
\nonumber
=\mathbb{E}[a^\RVE_{ij}],
\end{align}
where in the second step we have used the symmetry of the Gaussian $\mathcal{N}_{\Var \mathcal{F}(a)}$. In other words, if the probability distribution of $(a^\RVE,\mathcal{F}(a))$ were an ideal multivariate Gaussian, we would have the perfect equality $\mathbb{E}[a^\SQS]=\mathbb{E}[a^\RVE]$.

We would now like to transfer the property \eqref{FormulaExpectationIdealGaussian} (up to an error) from $\mathcal{M}^\delta$ to our actual probability distribution $a^\SQS$ by choosing $\tilde \phi(x):=x$ in the estimate \eqref{SQSNormalApproximation}. However, this choice is not possible due to the upper bound on $\tilde \phi$ in \eqref{BoundTildePhia}. Instead, for some cutoff factor $B_c\geq 1$ we consider the function $\tilde \phi(x) = \min\{\max\{x-\mathbb{E}[a^\RVE_{ij}],-B_c L^{-d/2}\},B_c L^{-d/2}\}$. Note that for this choice of $\tilde \phi$ we have $|\nabla \tilde \phi|\leq 1$ and $|\tilde \phi|\leq B_c L^{-d/2}$. As a consequence, $\frac{1}{B_c}\tilde \phi$ satisfies \eqref{BoundTildePhi} and hence is an admissible choice in \eqref{SQSNormalApproximation}, which gives by \eqref{FormulaExpectationIdealGaussian}
\begin{align*}
&\big| \mathbb{E}[a^\SQS_{ij}]-\mathbb{E}[a^\RVE_{ij}] \big|
\\&
=\bigg|\mathbb{E}[a^\SQS_{ij}-\mathbb{E}[a^\RVE_{ij}]]
-\int_{\mathbb{R}} (x-\mathbb{E}[a^\RVE_{ij}]) ~ \mathcal{M}^\delta(x) \,dx
\bigg|
\\&
\leq
\mathbb{E}\big[\big|(a^\SQS_{ij}-\mathbb{E}[a^\RVE_{ij}])-\tilde \phi(a^\SQS_{ij})\big|\big]
\\&~~~
+\int_{\mathbb{R}} |(x-\mathbb{E}[a^\RVE_{ij}])-\tilde \phi(x)| ~ \mathcal{M}^\delta(x) \,dx
\\&~~~
+\bigg|\mathbb{E}[\tilde \phi(a^\SQS_{ij})]
-\int_{\mathbb{R}} \tilde \phi(x) ~ \mathcal{M}^\delta(x) \,dx
\bigg|
\\&
\stackrel{\eqref{SQSNormalApproximation}}{\leq}
\mathbb{E}\big[\big(|a^\SQS_{ij}-\mathbb{E}[a^\RVE_{ij}]|-B_c L^{-d/2}\big)_+\big]
\\&~~~
+\int_{\mathbb{R}} \big(|x-\mathbb{E}[a^\RVE_{ij}]|-B_c L^{-d/2}\big)_+ ~ \mathcal{M}^\delta(x) \,dx
\\&~~~
+B_c\frac{C\kappa_{ij}^{3/2}}{\delta^N} L^{-d} |\log L|^{C(d,\gamma)}.
\end{align*}
Using first the lower bounds \eqref{ProbabilitySQS} and \eqref{ProbabilitySQSNormal} and the representation \eqref{LimitDistributionDirect} and then in the next step H\"older's inequality, the previous estimate implies
\begin{align*}
&\big|\mathbb{E}[a^\SQS_{ij}]
-\mathbb{E}[a^\RVE_{ij}]
\big|
\\&
\leq
\frac{C}{\delta^N}
\mathbb{E}\big[\big(|a^\RVE_{ij}-\mathbb{E}[a^\RVE_{ij}]|-B_c L^{-d/2}\big)_+\big]
\\&~~~
+\frac{C(N)}{\delta^N}
\int_{\mathbb{R}} \int_{\mathbb{R}^N} \big(|x-\mathbb{E}[a^\RVE_{ij}]|-B_c L^{-d/2}\big)_+
\\&~~~~~~~~~~~~~~~~~~~~~~~~~~~~~~\times
\mathcal{N}_{\Var (a^\RVE_{ij},\mathcal{F}(a))}(x-\mathbb{E}[a^\RVE_{ij}],y) \,dy \,dx
\\&~~~
+B_c \frac{C\kappa^{3/2}_{ij}}{\delta^N} L^{-d} |\log L|^{C(d,\gamma)}
\\&
\leq
\frac{C}{\delta^N}
\mathbb{E}\big[|a^\RVE_{ij}-\mathbb{E}[a^\RVE_{ij}]|^2\big]^{1/2}
\mathbb{P}\big[|a^\RVE_{ij}-\mathbb{E}[a^\RVE_{ij}]|\geq B_c L^{-d/2}\big]^{1/2}
\\&~~~
+\frac{C(N)}{\delta^N}
\mathbb{E}\big[|\mathcal{N}_{\Var a^\RVE_{ij}}|^2\big]^{1/2} \mathbb{P}\big[|\mathcal{N}_{\Var a^\RVE_{ij}}|\geq B_c L^{-d/2}\big]^{1/2}
\\&~~~
+B_c \frac{C\kappa^{3/2}_{ij}}{\delta^N} L^{-d} |\log L|^{C(d,\gamma)}.
\end{align*}
This yields by Lemma~\ref{CalculusStretchedExponential}b and the bounds \eqref{aRVEStretchedExponentialBound} and \eqref{NStretchedExponentialBound}
\begin{align*}
&\big|\mathbb{E}[a^\SQS_{ij}]
-\mathbb{E}[a^\RVE_{ij}]
\big|
\\&~~~~~~~~~~
\leq
\frac{C(N)}{\delta^N} \exp\bigg(-c\bigg(\frac{B_c}{|\log L|^C}\bigg)^\gamma\bigg)
+B_c \frac{C\kappa_{ij}^{3/2}}{\delta^N} L^{-d} |\log L|^{C(d,\gamma)}.
\end{align*}
Choosing $B_c:=C|\log L|^{C(\gamma)}$,
we deduce
\begin{align}
\label{DifferenceSystematicError}
\big|\mathbb{E}[a^\SQS_{ij}]
-\mathbb{E}[a^\RVE_{ij}]
\big|
\leq \frac{C\kappa_{ij}^{3/2}}{\delta^N} L^{-d} |\log L|^{C(d,\gamma)}.
\end{align}
Plugging in the bound for the systematic error of the standard representative volume element method $|\mathbb{E}[a^\RVE]-a_\shom|\leq C L^{-d} |\log L|^C$ from \cite{GloriaOttoNew} (note that this estimate for the systematic error of the standard representative volume element method may also be derived by slightly modifying the proof of our Proposition~\ref{PropositionApproximabilityByMultilevel}), we obtain \eqref{SystematicErrorSQS}. Repeating the previous proof but replacing the use of the estimate \eqref{SQSNormalApproximation} by \eqref{SQSNormalApproximation2}, we obtain \eqref{SystematicErrorSQS2}.

{\bf Step 2: Proof of the variance reduction estimate.}
To prove the variance estimate \eqref{VarianceReductionSQS}, we proceed similarly and define for a cutoff factor $B_c\geq 1$ the function $\phi(x):=\min\{(x-\mathbb{E}[a^\RVE_{ij}])^2, B_c^2 L^{-d}\}$. Note that this function satisfies the global bounds $|\nabla \phi|\leq 2B_c L^{-d/2}$ and $|\phi|\leq B_c^2 L^{-d}$. Thus, $\frac{1}{2 B_c^2 L^{-d/2}} \phi$ satisfies \eqref{BoundTildePhi} and is therefore an admissible choice in \eqref{SQSNormalApproximation}, yielding
\begin{align}
\label{VarianceEstimateSQSIntermediate}
&\bigg|\mathbb{E}\big[\min\{(a^\SQS_{ij}-\mathbb{E}[a^\RVE_{ij}])^2,B_c^2 L^{-d} \}\big]
\\&~~~~
\nonumber
-\int_{\mathbb{R}} \min\{(x-\mathbb{E}[a^\RVE_{ij}])^2, B_c^2 L^{-d} \} \mathcal{M}^\delta(x) \,dx\bigg|
\\&
\nonumber
\leq
2B_c^2 L^{-d/2} \cdot \frac{C\kappa^{3/2}_{ij}}{\delta^N} L^{-d} |\log L|^{C(d,\gamma)}.
\end{align}
The tails (subject to truncation in our choice of $\phi$) can be estimated by
\begin{align*}
&\mathbb{E}\big[\big|(a^\SQS_{ij}-\mathbb{E}[a^\RVE_{ij}])^2-\phi(a^\SQS_{ij})\big|\big]
\\&
+\int_{\mathbb{R}} |(x-\mathbb{E}[a^\RVE_{ij}])^2-\phi(x)| ~ \mathcal{M}^\delta(x) \,dx
\\&
\leq
\mathbb{E}\big[\big(|a^\SQS_{ij}-\mathbb{E}[a^\RVE_{ij}]|^2-B_c^2 L^{-d}\big)_+\big]
\\&~~~
+\int_{\mathbb{R}} \big(|x-\mathbb{E}[a^\RVE_{ij}]|^2-B_c^2 L^{-d}\big)_+ ~ \mathcal{M}^\delta(x) \,dx
\\&
\leq
\frac{C}{\delta^N} \mathbb{E}\big[\big(|a^\RVE_{ij}-\mathbb{E}[a^\RVE_{ij}]|^2-B_c^2 L^{-d}\big)_+\big]
\\&~~~
+\frac{C}{\delta^N} \int_{\mathbb{R}} \int_{\mathbb{R}^N} \big(|x-\mathbb{E}[a^\RVE_{ij}]|^2-B_c^2 L^{-d}\big)_+ ~ \mathcal{N}_{\Var (a^\RVE_{ij},\mathcal{F}(a))}(x,y) \,dy \,dx,
\end{align*}
where in the last step we have used \eqref{ProbabilitySQS}, \eqref{ProbabilitySQSNormal}, and \eqref{LimitDistributionDirect}. Applying H\"older's inequality, we obtain
\begin{align*}
&\mathbb{E}\big[\big|(a^\SQS_{ij}-\mathbb{E}[a^\RVE_{ij}])^2-\phi(a^\SQS_{ij})\big|\big]
\\&
+\int_{\mathbb{R}} |(x-\mathbb{E}[a^\RVE_{ij}])^2-\phi(x)| ~ \mathcal{M}^\delta(x) \,dx
\\&
\leq
\frac{C}{\delta^N} \mathbb{E}\big[|a^\RVE_{ij}-\mathbb{E}[a^\RVE_{ij}]|^4\big]^{1/2}
\mathbb{P}\big[|a^\RVE_{ij}-\mathbb{E}[a^\RVE_{ij}]|\geq B_c L^{-d/2}\big]^{1/2}
\\&~~~
+\frac{C}{\delta^N} \mathbb{E}[|\mathcal{N}_{\Var a^\RVE_{ij}}|^4]^{1/2} \cdot \mathbb{P}\big[|\mathcal{N}_{\Var a^\RVE_{ij}}|\geq B_c L^{-d/2}\big]^{1/2}
\\&
\leq
\frac{C}{\delta^N} \exp\bigg(-c\bigg(\frac{B_c}{|\log L|^C}\bigg)^\gamma\bigg),
\end{align*}
where in the last step we have used Lemma~\ref{CalculusStretchedExponential}b and the bounds \eqref{aRVEStretchedExponentialBound} and \eqref{NStretchedExponentialBound}.

Combining this estimate with \eqref{VarianceEstimateSQSIntermediate} and choosing $B_c:=C|\log L|^{C(d,\gamma)}$, we infer
\begin{align}
\label{SQSErrorVariance}
&\bigg|\mathbb{E}\big[(a^\SQS_{ij}-\mathbb{E}[a^\RVE_{ij}])^2\big]
-\int (x-\mathbb{E}[a^\RVE_{ij}])^2 \mathcal{M}^\delta(x) \,dx\bigg|
\\&~~~
\nonumber
\leq \frac{C\kappa^{3/2}_{ij}}{\delta^N} L^{-3d/2} |\log L|^{C(d,\gamma)}.
\end{align}
In other words, the variance of $a^\SQS_{ij}$ is determined up to an error by the variance of the probability distribution $\mathcal{M}^\delta$.
To estimate the latter, a straightforward computation yields
\begin{align*}
&\int (x-\mathbb{E}[a^\RVE_{ij}])^2 \mathcal{M}^\delta(x) \,dx
\\&
\stackrel{\eqref{LimitDistribution}}{=}\frac{1}{p} \int_{\mathbb{R}^{N}} \int_{\mathbb{R}}  \mathcal{N}_{\Varaideal}\big(x-\Cov[a^\RVE_{ij},\mathcal{F}(a)](\Var \mathcal{F}(a))^{-1} y-\mathbb{E}[a^\RVE_{ij}]\big)
\\&~~~~~~~~~~~~~~~~~~~
\nonumber
\times \chi_{\{|y|\leq \delta L^{-d/2}\}} \mathcal{N}_{\Var \mathcal{F}(a)}(y) \cdot (x-\mathbb{E}[a^\RVE_{ij}])^2 \,dx \,dy
\\&
=\frac{1}{p} \int_{\mathbb{R}^{N}} \int_{\mathbb{R}} \big(\tilde x + \Cov[a^\RVE_{ij},\mathcal{F}(a)](\Var \mathcal{F}(a))^{-1} y\big)^2 \mathcal{N}_{\Varaideal}(\tilde x) \,d\tilde x
\\&~~~~~~~~~~~~~~~~~~~
\nonumber
\times \chi_{\{|y|\leq \delta L^{-d/2}\}} \mathcal{N}_{\Var \mathcal{F}(a)}(y) \,dy.
\end{align*}
By the symmetry of the set $\{|y|\leq \delta L^{-d/2}\}$ and the probability density $\mathcal{N}_{\Var \mathcal{F}(a)}(y)$ we have $\int_{\mathbb{R}^N} y \chi_{\{|y|\leq \delta L^{-d/2}\}} \mathcal{N}_{\Var \mathcal{F}(a)}(y) \,dy =0$. As a consequence, we get
\begin{align*}
&\int (x-\mathbb{E}[a^\RVE_{ij}])^2 \mathcal{M}^\delta(x) \,dx
\\&
=\frac{1}{p} \int_{\mathbb{R}^{N}} \big(\Varaideal + \big(\Cov[a^\RVE_{ij},\mathcal{F}(a)](\Var \mathcal{F}(a))^{-1} y\big)^2\big)
\\&~~~~~~~~~~~~~~~~~~~
\nonumber
\times \chi_{\{|y|\leq \delta L^{-d/2}\}} \mathcal{N}_{\Var \mathcal{F}(a)}(y) \,dy
\\&
\stackrel{\eqref{FNondegenerate}}{\leq} \Big(\Varaideal + \delta^2 \Cov[a^\RVE_{ij},\mathcal{F}(a)] (\Var \mathcal{F}(a))^{-1} \Cov[\mathcal{F}(a),a^\RVE_{ij}]\Big)
\\&~~~
\times \frac{1}{p} \int_{\mathbb{R}^{N}} \chi_{\{|y|\leq \delta L^{-d/2}\}} \mathcal{N}_{\Var \mathcal{F}(a)}(y) \,dy
\\&
=\Big(\Varaideal + \delta^2 \Cov[a^\RVE_{ij},\mathcal{F}(a)] (\Var \mathcal{F}(a))^{-1} \Cov[\mathcal{F}(a),a^\RVE_{ij}]\Big)
\\&
=\big(1-(1-\delta^2)|\rho|^2\big)\Var a^\RVE_{ij}.
\end{align*}
Together with \eqref{SQSErrorVariance}, this entails \eqref{VarianceReductionSQS}.
To prove \eqref{VarianceReductionSQS2}, we repeat the proof of \eqref{SQSErrorVariance} and just replace the use of \eqref{SQSNormalApproximation} in the proof of \eqref{SQSErrorVariance} by \eqref{SQSNormalApproximation2}.

Note that the lower bound \eqref{FailureVarianceReduction} on the variance given in Theorem~\ref{TheoremFailureVarianceReduction} follows also from the estimates \eqref{SQSErrorVariance} and \eqref{SystematicErrorSQS} and the lower bound $\int (x-\mathbb{E}[a^\RVE_{ij}])^2 \mathcal{M}^\delta(x) \,dx\geq (1-|\rho|^2)\Var a^\RVE_{ij}$, the latter of which is derived analogously to the upper bound $\int (x-\mathbb{E}[a^\RVE_{ij}])^2 \mathcal{M}^\delta(x) \,dx\leq (1-(1-\delta^2)|\rho|^2)\Var a^\RVE_{ij}$.

{\bf Step 3: The probability density of the reference distribution.}
For the purpose of this subsection, introduce the abbreviation for the covariance matrix
\begin{align*}
\Lambda:=\Var (a^\RVE_{ij},\mathcal{F}(a))=
\begin{pmatrix}
\Var a^\RVE_{ij} & \Cov[a^\RVE_{ij},\mathcal{F}(a)]
\\
\Cov[\mathcal{F}(a),a^\RVE_{ij}] & \Var \mathcal{F}(a)
\end{pmatrix}.
\end{align*}
The probability density $\mathcal{M}^\delta$ of the first-variable marginal of the corresponding multivariate Gaussian conditioned on $|\mathcal{F}(a)|\leq \delta L^{-d/2}$, which is the probability distribution by which we approximate the distribution of $a^\SQS_{ij}$, is given by
\begin{align}
\label{LimitDistributionDirect}
\mathcal{M}^\delta(x)
=\frac{1} {\int_{\mathbb{R}} \int_{\mathbb{R}^N} \chi_{\{|y|\leq \delta L^{-d/2}\}} \mathcal{N}_\Lambda(\tilde x,y) \,dy \,d\tilde x}
\int_{\mathbb{R}^N} &\chi_{\{|y|\leq \delta L^{-d/2}\}}
\\&\nonumber
\times
\mathcal{N}_\Lambda(x-\mathbb{E}[a^\RVE_{ij}],y)  \,dy.
\end{align}
Our goal is to show that this probability density $\mathcal{M}^\delta$ may be rewritten in the form \eqref{LimitDistribution}. To this aim, we recall some basic linear algebra: The Schur complement of the symmetric block matrix
\begin{align*}
M:=
\begin{pmatrix}
A&B\\B^T&D
\end{pmatrix}
\end{align*}
(with $A^T=A$ and $D^T=D$) is given by $T:=A-BD^{-1}B^T$ and the inverse of the matrix may be written as
\begin{align*}
\begin{pmatrix}
A&B\\B^T&D
\end{pmatrix}^{-1}
=\begin{pmatrix}
T^{-1}&-T^{-1}BD^{-1}
\\
-D^{-1}B^T T^{-1}&D^{-1}+D^{-1}B^T T^{-1}BD^{-1}
\end{pmatrix}.
\end{align*}
The determinant may be expressed as $\det M =\det T \cdot \det D$. The Schur complement allows us to rewrite the quadratic form defined by $M^{-1}$ as
\begin{align*}
M^{-1} (x,y) \cdot (x,y) = T^{-1} (x-BD^{-1} y) \cdot (x-BD^{-1} y) + D^{-1} y \cdot y.
\end{align*}
As a consequence, we get for $M:=\Lambda$ that
\begin{align*}
T&=\Var a^\RVE_{ij}-\Cov[a^\RVE_{ij},\mathcal{F}(a)] (\Var \mathcal{F}(a))^{-1} \Cov[\mathcal{F}(a),a^\RVE_{ij}]
\\&
=\Varaideal
\end{align*}
and
\begin{align}
\label{FactorGaussian}
\mathcal{N}_\Lambda(x,y&)= \frac{1}{(2\pi)^{(N+1)/2} \sqrt{\det \Lambda}}
\exp\bigg(-\frac{1}{2} \Lambda^{-1}(x,y) \cdot (x,y)\bigg)
\\&
\nonumber
=\mathcal{N}_{\Varaideal} \big(x-\Cov[a^\RVE_{ij},\mathcal{F}(a)](\Var \mathcal{F}(a))^{-1} y\big) \mathcal{N}_{\Var \mathcal{F}(a)}(y).
\end{align}
Now, \eqref{LimitDistribution} and \eqref{LimitDistributionDirect} are seen to be equivalent.

{\bf Step 4: Proof of the normal approximation estimate and the lower bound on the probability of the event $|\mathcal{F}(a)|\leq \delta L^{-d/2}$.}
First, let us show the lower bound \eqref{ProbabilitySQSNormal}. We have
\begin{align*}
&\int_{\mathbb{R}^N} \int_{\mathbb{R}} \chi_{\{|y|\leq \delta L^{-d/2}\}} \mathcal{N}_{\Var (a^\RVE_{ij},\mathcal{F}(a))}(x,y) \,dx \,dy
\\&
=\int_{\mathbb{R}^N} \chi_{\{|y|\leq \delta L^{-d/2}\}} \mathcal{N}_{\Var \mathcal{F}(a)}(y) \,dy
\\&
\stackrel{\eqref{FNondegenerate}}{\geq} \int_{\mathbb{R}^N} \chi_{\{|y|\leq \delta L^{-d/2}\}} \mathcal{N}_{C_0 L^{-d}}(y) \,dy
\\&
\geq \int_{\mathbb{R}^N} \chi_{\{|y|\leq \delta L^{-d/2}\}} \frac{1}{(2\pi C_0 L^{-d})^{N/2}} \exp(-\delta^2) \,dy
\\&
\geq c(N) C_0^{-N/2} \delta^{N},
\end{align*}
establishing \eqref{ProbabilitySQSNormal}.

The estimate \eqref{NStretchedExponentialBound} is a consequence of the estimate on $\Var (a^\RVE,\mathcal{F}(a))$ which follows from \eqref{aRVEStretchedExponentialBound}, \eqref{FNondegenerate}, and the exponential moment bounds for Gaussians. The bound \eqref{aRVEStretchedExponentialBound} is a consequence of Lemma~\ref{MultilevelVariableStretchedExponentialBound} (note that by Proposition~\ref{PropositionApproximabilityByMultilevel}, Lemma~\ref{MultilevelVariableStretchedExponentialBound} is indeed applicable).

Our next goal is to show \eqref{SQSNormalApproximation} and \eqref{SQSNormalApproximation2}.
Let $\tilde \phi:\mathbb{R}\rightarrow \mathbb{R}$ satisfy \eqref{BoundTildePhi}
and suppose that we would like to estimate the error
\begin{align*}
\mathbb{E}\big[\tilde \phi\big(a^\SQS_{ij}\big)\big]
-\int_{\mathbb{R}} \tilde \phi(x) \mathcal{M}^\delta(x) \,dx.
\end{align*}
As the distribution of $a^\SQS_{ij}$ is obtained from the distribution of $a^\RVE_{ij}$ by conditioning on the event $|\mathcal{F}(a)|\leq \delta L^{-d/2}$, by \eqref{LimitDistribution} and \eqref{LimitDistributionDirect} this error expression is equal to
\begin{align}
\nonumber
&\mathbb{E}\big[\tilde \phi\big(a^\SQS_{ij}\big)\big]
-\int_{\mathbb{R}} \tilde \phi(x) \mathcal{M}^\delta(x) \,dx
\\&
\nonumber
=\frac{1}{\mathbb{P}[|\mathcal{F}(a)|\leq \delta L^{-d/2}]}
\bigg(
\mathbb{E}\big[\chi_{\{|\mathcal{F}(a)|\leq \delta L^{-d/2}\}} \tilde \phi\big(a^\RVE_{ij}\big)\big]
\\&~~~~~~~~~~~~~~~~~~~~~~~~~~~~~~~
\nonumber
-\int_{\mathbb{R}\times \mathbb{R}^{N}} \tilde \phi(x) \chi_{\{|y|\leq \delta L^{-d/2}\}} \mathcal{N}_\Lambda(x-\mathbb{E}[a^\RVE_{ij}],y) \,d(x,y)
\bigg)
\\&~~~~
\label{ErrorReformulated}
+\int_{\mathbb{R}} \tilde \phi(x) \mathcal{M}^\delta(x) \,dx
\Bigg(\frac{\int_{\mathbb{R} \times \mathbb{R}^{N}} \chi_{\{|y|\leq \delta L^{-d/2}\}} \mathcal{N}_\Lambda(x,y) \,d(x,y)}{\mathbb{P}[|\mathcal{F}(a)|\leq \delta L^{-d/2}]}
-1\Bigg).
\end{align}
Up to the normalizing factor $1/\mathbb{P}\big[|\mathcal{F}(a)|\leq \delta L^{-d/2}\big]$, the first term on the right-hand side is given by
\begin{align*}
\mathbb{E}\big[\phi\big(a^\RVE-\mathbb{E}[a^\RVE_{ij}]\big)\big]
-\int_{\mathbb{R}\times \mathbb{R}^N} \phi(x,y) \mathcal{N}_\Lambda(x,y) \,d(x,y),
\end{align*}
where $\phi:\mathbb{R} \times \mathbb{R}^{N} \rightarrow \mathbb{R}$ is defined as
\begin{align}
\label{DefinitionSmallPhi}
\phi(x,y) :=
\begin{cases}
\tilde \phi(x+\mathbb{E}[a^\RVE_{ij}])&\text{for }|y|\leq \delta L^{-d/2},
\\
0&\text{for }|y|> \delta L^{-d/2}.
\end{cases}
\end{align}
We would now like to show that (a suitable multiple of) the function $\phi$ is admissible in the error bound \eqref{NormalApproximationMultilevelDegenerate}.
By the estimate
\begin{align*}
\osc_r \phi (x,y)
\leq&
\chi_{\{|y|\leq \delta L^{-d/2}+r\}} \osc_r \tilde \phi(x+\mathbb{E}[a^\RVE_{ij}])
\\&~~~
+\chi_{|y|\in [\delta L^{-d/2}-r,\delta L^{-d/2}+r]} |\tilde \phi(x+\mathbb{E}[a^\RVE_{ij}])|,
\end{align*}
we obtain for any $z_0=(x_0-\mathbb{E}[a^\RVE_{ij}],y_0)\in \mathbb{R}\times \mathbb{R}^N$, making also use of the abbreviation $Q:=\Cov[a^\RVE,\mathcal{F}(a)](\Var \mathcal{F}(a))^{-1}$,
\begin{align*}
&\int_{\mathbb{R}\times \mathbb{R}^N} \osc_r \phi (z) \mathcal{N}_{\Lambda} (z-z_0) \,dz
\\&
\leq
\int_{\mathbb{R}^{N}}
\int_{\mathbb{R}} (\osc_r \tilde \phi) (x) \chi_{\{|y|\leq \delta L^{-d/2}+r\}} \mathcal{N}_{\Lambda} (x-x_0,y-y_0) \,dx \,dy
\\&~~~~
+\int_{\mathbb{R}^{N}}
\int_{\mathbb{R}} |\tilde \phi| (x) \chi_{|y|\in [\delta L^{-d/2}-r,\delta L^{-d/2}+r]} \mathcal{N}_{\Lambda} (x-x_0,y-y_0) \,dx \,dy
\\&
\stackrel{\eqref{FactorGaussian}}{\leq}
\int_{\mathbb{R}^{N}} \chi_{\{|y|\leq \delta L^{-d/2}+r\}}
\mathcal{N}_{\Var \mathcal{F}(a)}(y-y_0)
\\&~~~~~~~~~~~~~~~~\times
\int_{\mathbb{R}} (\osc_r \tilde \phi) (x)  \mathcal{N}_{\Varaideal} (x-x_0-Q(y-y_0)) \,dx \,dy
\\&~~~~
+\int_{\mathbb{R}^{N}} \chi_{|y|\in [\delta L^{-d/2}-r,\delta L^{-d/2}+r]} \mathcal{N}_{\Var \mathcal{F}(a)}(y-y_0)
\\&~~~~~~~~~~~~~~~~\times
\int_{\mathbb{R}} |\tilde \phi| (x) \mathcal{N}_{\Varaideal} (x-x_0-Q(y-y_0)) \,dx \,dy
\\&
\stackrel{\eqref{BoundTildePhi}}{\leq}
\int_{\mathbb{R}^{N}} \chi_{\{|y|\leq \delta L^{-d/2}+r\}}
\mathcal{N}_{\Var \mathcal{F}(a)}(y-y_0) \cdot r \,dy
\\&~~~~
+\int_{\mathbb{R}^{N}} \chi_{|y|\in [\delta L^{-d/2}-r,\delta L^{-d/2}+r]} \mathcal{N}_{\Var \mathcal{F}(a)}(y-y_0) \cdot L^{-d/2} \,dy
\end{align*}
and therefore
\begin{align*}
&\int_{\mathbb{R}\times \mathbb{R}^N} \osc_r \phi (z) \mathcal{N}_{\Lambda} (z-z_0) \,dz
\\&
\leq
r+L^{-d/2} \int_{\mathbb{R}^{N}} \chi_{|y+y_0|\in [\delta L^{-d/2}-r,\delta L^{-d/2}+r]} \mathcal{N}_{\Var \mathcal{F}(a)}(y) \,dy
\\&
\leq
r+L^{-d/2}\sup_{|W| \leq |B_1^N| \big((\delta L^{-d/2}+r)^{N}-(\delta L^{-d/2}-r)_+^{N}\big)}\int_{W} \mathcal{N}_{\Var \mathcal{F}(a)}(y) \,dy
\\&
\leq r + L^{-d/2} \min\bigg\{1,|B_1^N| \cdot \frac{(\delta L^{-d/2}+r)^{N}-(\delta L^{-d/2}-r)_+^{N}}{(2\pi)^{N/2} \sqrt{\det \Var \mathcal{F}(a)}}\bigg\}
\\&
\leq r + L^{-d/2} \min\bigg\{1,|B_1^N| \frac{N(\delta L^{-d/2}+r)^{N-1} \cdot 2r}{(2\pi)^{N/2} \sqrt{\det \Var \mathcal{F}(a)}}\bigg\}
\\&
\leq r + L^{-d/2}\min\bigg\{1,\frac{C(N)r^N + C(N)  (\delta L^{-d/2})^{N-1} r}{\sqrt{\det \Var \mathcal{F}(a)}}\bigg\}
\\&
\stackrel{\eqref{FNondegenerate}}{\leq} r + \frac{C(N)L^{-d/2}}{\sqrt{\det \Var \mathcal{F}(a)}^{1/N}} r
+ \frac{C(N) \delta^{N-1} L^{-d/2}}{\sqrt{\det \Var \mathcal{F}(a)}^{1/N}} r.
\end{align*}
By our assumption \eqref{FNondegenerate}, this yields for any $z_0\in \mathbb{R}\times \mathbb{R}^N$
\begin{align*}
&\int_{\mathbb{R}\times \mathbb{R}^N} \osc_r \phi (z) \mathcal{N}_{\Lambda} (z-z_0) \,dz
\leq C r.
\end{align*}
Looking at Definition~\ref{DefinitionDistance}, we would have $\frac{1}{C}\phi\in \Phi_\Lambda$ if it were not for the \emph{qualitative} Lipschitz continuity condition for functions in $\Phi_\Lambda$. However, for a standard family of mollifiers $\rho_\varepsilon$ supported in $\{|x|^2+|y|^2\leq \varepsilon\}$ the approximations $\phi_\varepsilon(x,y):= (\rho_\varepsilon \ast \phi) (x,(1-2\delta^{-1}L^{d/2}\varepsilon)y)$ satisfy $\frac{1}{C}\phi_\varepsilon\in \Phi_\Lambda$ for any $\varepsilon\in (0,\frac{1}{4}\delta L^{-d/2}]$ (see Definition~\ref{DefinitionDistance}) for some constant $C$.
Furthermore, the $\phi_\varepsilon$ converge poinwise to $\phi$ for $\varepsilon\rightarrow 0$ (by \eqref{DefinitionSmallPhi} and the continuity assumption on $\tilde \phi$; it is here that we need the dilation factor $(1-2\delta^{-1}L^{d/2}\varepsilon)$ in the second variable due to the discontinuity in the definition \eqref{DefinitionSmallPhi}) and satisfy a uniform bound of the form $|\phi_\varepsilon(x,y)|\leq L^{-d/2}$ (by \eqref{DefinitionSmallPhi} and \eqref{BoundTildePhia}).
Choosing the functions $\frac{1}{C} \phi_\varepsilon$ in the definition of the distance $\mathcal{D}$ and passing to the limit $\varepsilon\rightarrow 0$, we infer
\begin{align*}
&\bigg|\mathbb{E}\big[\chi_{\{|\mathcal{F}(a)|\leq \delta L^{-d/2}\}} \tilde \phi\big(a^\RVE_{ij}\big)\big]
-\int_{\mathbb{R}\times \mathbb{R}^{N}} \tilde \phi(x) \chi_{\{|y|\leq \delta L^{-d/2}\}} \mathcal{N}_\Lambda(x-\mathbb{E}[a^\RVE_{ij}],y) \,d(x,y)
\bigg|
\\&
\nonumber
\stackrel{\eqref{DefinitionSmallPhi},\eqref{DefinitionD}}{\leq} C \mathcal{D}((a^\RVE_{ij}-\mathbb{E}[a^\RVE_{ij}],\mathcal{F}(a)),\mathcal{N}_\Lambda).
\end{align*}
Theorem~\ref{TheoremNormalApproximationMultilevelLocalDependence} is applicable to the random variable $X:=(a^\RVE_{ij},\mathcal{F}(a))$ by our assumptions on $\mathcal{F}(a)$ (see Assumption~\ref{AssumptionsNotation}) and by the multilevel decomposition of $a^\RVE_{ij}$ provided by Proposition~\ref{PropositionApproximabilityByMultilevel}. In total, with the notation $\Lambda:=\Var (a^\RVE_{ij},\mathcal{F}(a))$ the application of Theorem~\ref{TheoremNormalApproximationMultilevelLocalDependence} to $(a^\RVE_{ij},\mathcal{F}(a))$ yields
\begin{align}
\label{ErrorInConditionedExpectation}
&\bigg|\mathbb{E}\big[\chi_{\{|\mathcal{F}(a)|\leq \delta L^{-d/2}\}} \tilde \phi\big(a^\RVE_{ij}\big)\big]
-\int_{\mathbb{R}\times \mathbb{R}^{N}} \tilde \phi(x) \chi_{\{|y|\leq \delta L^{-d/2}\}} \mathcal{N}_\Lambda(x-\mathbb{E}[a^\RVE_{ij}],y) \,d(x,y)
\bigg|
\\&
\nonumber
\leq C(d,\gamma,N,K) B^3 |\log L|^{C(d,\gamma)} (L^{-d} |\Lambda|^{1/2} |\Lambda^{-1/2}|^3) L^{-d}
\\&
\nonumber
\leq C(d,\lambda,\gamma,N,C_0) \kappa_{ij}^{3/2} L^{-d} |\log L|^{C(d,\gamma)},
\end{align}
where in the last step we have used \eqref{FNondegenerate} (which entails $L^{-d} \leq |\Lambda^{1/2}|^2$) and the definition of $\kappa_{ij}$.

Applying a similar line of argument to the random variable $\mathcal{F}(a)$ and the function
\begin{align*}
\phi(y) :=
\begin{cases}
1&\text{for }|y|\leq \delta L^{-d/2},
\\
0&\text{for }|y|> \delta L^{-d/2},
\end{cases}
\end{align*}
we obtain
\begin{align}
\label{ErrorInProbability}
&\bigg|\mathbb{E}\big[\chi_{\{|\mathcal{F}(a)|\leq \delta L^{-d/2}\}}\big]
-\int_{\mathbb{R}^{N}} \chi_{\{|y|\leq \delta L^{-d/2}\}} \mathcal{N}_{\Var \mathcal{F}(a)}(y) \,dy
\bigg|
\\&
\nonumber
\leq C(d,\gamma,N,C_0) C_0^{3N/2} L^{-d/2} |\log L|^{C(d,\gamma,C_0)}
\end{align}
where we have estimated $\kappa(\Var \mathcal{F}(a))$ by \eqref{FNondegenerate}.
Together with the lower bound \eqref{ProbabilitySQSNormal} and our assumption $\delta^N \geq CL^{-d/2} |\log L|^{C(d,\gamma,C_0)}$, this estimate implies \eqref{ProbabilitySQS}.

Plugging in the estimate \eqref{ErrorInConditionedExpectation}, the lower bound \eqref{ProbabilitySQS}, and the estimate \eqref{ErrorInProbability} as well as the assumption \eqref{BoundTildePhia} into \eqref{ErrorReformulated}, we deduce \eqref{SQSNormalApproximation}.
The estimate \eqref{SQSNormalApproximation2} follows by repeating the above steps, but appealing in the proof of \eqref{ErrorInConditionedExpectation} to the bound \eqref{NormalApproximationMultilevelDegenerate} instead of \eqref{NormalApproximationMultilevel} and choosing $\Lambda:=\Var (a^\RVE_{ij},\mathcal{F}(a))+L^{-d/2-d/8}\Id$ (which ensures by \eqref{FNondegenerate} that $\kappa(\Lambda)\leq CL^{d/8}$).
\end{proof}
We now turn to the proof of the moderate-deviations-type result for the selection approach for representative volumes stated in Theorem~\ref{TheoremSQSModerateDeviations}.
\begin{proof}[Proof of Theorem~\ref{TheoremSQSModerateDeviations}]
Fix $\tilde S\geq CL^{-d/2-\beta/2}$. 
Our goal is to estimate the probability
\begin{align}
\nonumber
&\mathbb{P}\big[|a^\SQS_{ij}-\mathbb{E}[a^\RVE_{ij}]|\geq \tilde S\big]
\\&
\nonumber
=\mathbb{P}\Big[|a^\RVE_{ij}-\mathbb{E}[a^\RVE_{ij}]|\geq \tilde S ~\Big|~
|\mathcal{F}(a)|\leq \delta L^{-d/2}\Big]
\\&
\label{ModerateDeviationsConditionalProbabilityRewritten}
=\frac{\mathbb{P}\big[|a^\RVE_{ij}-\mathbb{E}[a^\RVE_{ij}]|\geq \tilde S \text{ and }|\mathcal{F}(a)|\leq \delta L^{-d/2}\big]}{\mathbb{P}[|\mathcal{F}(a)|\leq \delta L^{-d/2}]}.
\end{align}
The main task is the derivation of a suitable estimate for the numerator.
To this aim, we apply the moderate deviations estimate from Theorem~\ref{TheoremModerateDeviations} to the random variable $(a^\RVE_{ij}-\mathbb{E}[a^\RVE_{ij}],\mathcal{F}(a))$ and the set $A:=A_1\times A_2$ with
\begin{align*}
A_1&:=\big\{x\in \mathbb{R}:|x|\geq \tilde S+CL^{-d/2-\beta} \big\},
\\
A_2&:=\big\{y\in \mathbb{R}^N:|y|\leq \delta L^{-d/2}\big\}.
\end{align*}
By Proposition~\ref{PropositionApproximabilityByMultilevel} and our assumptions, the application of Theorem~\ref{TheoremModerateDeviations} is possible, resulting in the estimate
\begin{align}
\nonumber
&\mathbb{P}\big[(a^\RVE_{ij}-\mathbb{E}[a^\RVE_{ij}],\mathcal{F}(a))\in A\big]
\\&
\nonumber
\leq \int_{\{(x,y)\in \mathbb{R}\times \mathbb{R}^{N}:\dist((x,y),A)\leq C L^{-\beta} L^{-d/2}\}} \mathcal{N}_{\tilde \Lambda}(x,y) \,d(x,y)
+ C \exp(-cL^{2\beta} |\log L|^{-C})
\\&
\label{ConsequenceGeneralModerateDeviationsTheorem}
\leq \int_{\mathbb{R}^{N}} \chi_{\{|y|\leq (\delta+CL^{-\beta}) L^{-d/2}\}}  \int_{\mathbb{R}\setminus [-\tilde S,\tilde S]} \mathcal{N}_{\tilde \Lambda}(x,y) \,dx \,dy+ C \exp(-L^{\beta})
\end{align}
for some positive definite matrix $\tilde \Lambda$ with
\begin{align}
\label{TildeLambdaError}
|\tilde \Lambda-\Var (a^\RVE_{ij},\mathcal{F}(a))|\leq C(d,\gamma,N,K) |\log L|^C L^{-2\beta} L^{-d}.
\end{align}
We intend to apply the factorization property \eqref{FactorGaussian} to the matrix $\tilde \Lambda$ with the notation
\begin{align*}
\tilde \Lambda =
\begin{pmatrix}
\tilde A&\tilde B
\\
\tilde B^T&\tilde D
\end{pmatrix}.
\end{align*}
By \eqref{TildeLambdaError} and the bounds $L^{-d}\Id\leq \Var \mathcal{F}(a) \leq CL^{-d}\Id$ (see \eqref{FNondegenerate}) and $\Var a^\RVE_{ij} \leq CL^{-d} |\log L|^d$ (see \eqref{aRVEStretchedExponentialBound}), we deduce
\begin{align}
\label{ErrorTildeD}
|\tilde D^{-1}-(\Var \mathcal{F}(a))^{-1}|\leq C L^d L^{-2\beta}
\end{align}
and
\begin{align}
\label{ErrorTildeBT}
|\tilde B \tilde D^{-1}-\Cov[a_{ij}^\RVE,\mathcal{F}(a)](\Var \mathcal{F}(a))^{-1}|\leq C L^{-2\beta} |\log L|^d.
\end{align}
As a consequence of these estimates and \eqref{TildeLambdaError}, the formula \eqref{FormulaRho} for $|\rho|^2$ implies for $\tilde T:=\tilde A-\tilde B \tilde D^{-1} \tilde B^T$
\begin{align}
\label{ErrorTildeT}
\big|\tilde T-(1-|\rho|^2)\Var a^\RVE_{ij}\big| \leq C L^{-d-2\beta} |\log L|^d.
\end{align}
Using the bounds $\Var a^\RVE_{ij}\leq C L^{-d} |\log L|^d$ and $|\rho|\leq 1$ as well as \eqref{ErrorTildeBT}, \eqref{FormulaRho}, and \eqref{FNondegenerate}, we obtain for any $|y|\leq (\delta+CL^{-\beta})L^{-d/2}$ that
\begin{align}
\label{EstimateBDy}
|\tilde B \tilde D^{-1} y| \leq C\delta |\rho| \sqrt{\Var a^\RVE_{ij}}+ C L^{-d/2-\beta}.
\end{align}
Applying the factorization property \eqref{FactorGaussian} to the first term on the right-hand side of \eqref{ConsequenceGeneralModerateDeviationsTheorem}, we obtain
\begin{align*}
&\int_{\mathbb{R}^{N}} \chi_{\{|y|\leq (\delta+CL^{-\beta}) L^{-d/2}\}}  \int_{\mathbb{R}\setminus [-\tilde S,\tilde S]} \mathcal{N}_{\tilde \Lambda}(x,y) \,dx \,dy
\\&
=\int_{\mathbb{R}^{N}} \int_{\mathbb{R}\setminus [-\tilde S,\tilde S]} 
\mathcal{N}_{\tilde T}\big(x-\tilde B \tilde D^{-1} y\big)
\chi_{\{|y|\leq (\delta+CL^{-\beta}) L^{-d/2}\}} \mathcal{N}_{\tilde D}(y)  \,dx \,dy
\\&
\leq \int_{\mathbb{R}^{N}}
\int_{\mathbb{R}\setminus [-\tilde S,\tilde S]} 
\mathcal{N}_{\tilde T}\big(x\big)
\cdot \exp\big(\tilde T^{-1} x \cdot \tilde B \tilde D^{-1} y\big)
\\&~~~~~~~~~~~~~~~~~~~~~~~~~~~~~~~~~~~~
\times \chi_{\{|y|\leq (\delta+CL^{-\beta}) L^{-d/2}\}} \mathcal{N}_{\tilde D}(y)  \,dx \,dy
\\&
\stackrel{\eqref{EstimateBDy}}{\leq} \int_{\mathbb{R}^{N}}
\int_{\mathbb{R}\setminus [-\tilde S,\tilde S]} 
\frac{1}{\sqrt{2\pi \tilde T}} \exp\bigg(\frac{-x^2 + C\delta |\rho|\sqrt{\Var a^\RVE_{ij}} |x|+C|x|L^{-d/2-\beta}}{2\tilde T}\bigg)
\\&~~~~~~~~~~~~~~~~~~~~~~~~~~~~~~~~~~~~
\times \chi_{\{|y|\leq (\delta+CL^{-\beta}) L^{-d/2}\}} \mathcal{N}_{\tilde D}(y)  \,dx \,dy.
\end{align*}
Assuming that $\tilde S\geq C L^{-d/2-\beta/2}$, we deduce
\begin{align*}
&\int_{\mathbb{R}^{N}} \chi_{\{|y|\leq (\delta+CL^{-\beta}) L^{-d/2}\}}  \int_{\mathbb{R}\setminus [-\tilde S,\tilde S]} \mathcal{N}_{\tilde \Lambda}(x,y) \,dx \,dy
\\&
\leq
\int_{\mathbb{R}^{N}}
\int_{\mathbb{R}\setminus [-\tilde S,\tilde S]} 
\frac{1}{\sqrt{2\pi \tilde T}} \exp\bigg(\frac{-\big(1-\frac{C\delta |\rho|\sqrt{\Var a^\RVE_{ij}}}{\tilde S}-L^{-\beta/2}\big)x^2}{2\tilde T}\bigg)
\\&~~~~~~~~~~~~~~~~~~~~~~~~~~~~~~~~~~~~
\times \chi_{\{|y|\leq (\delta+CL^{-\beta}) L^{-d/2}\}} \mathcal{N}_{\tilde D}(y)  \,dx \,dy
\\&
\leq 
\int_{\mathbb{R}\setminus [-\tilde S,\tilde S]} 
\frac{1}{1-\frac{C\delta |\rho| \sqrt{\Var a_{ij}^\RVE}}{\tilde S}-L^{-\beta/2}}
\mathcal{N}_{V}(x)  \,dx
\\&~~~~~~~~
\times 
\int_{\mathbb{R}^{N}} \chi_{\{|y|\leq (\delta+CL^{-\beta}) L^{-d/2}\}} \mathcal{N}_{\tilde D}(y) \,dy
\end{align*}
with
\begin{align}
\label{DefinitionV}
V:=\frac{\tilde T}{1-C\delta |\rho|\sqrt{\Var a^\RVE_{ij}} \tilde S^{-1}-L^{-\beta/2}}.
\end{align}
Using \eqref{ErrorTildeD} to estimate the last factor in this estimate and assuming for the moment $\tilde S \geq C\delta |\rho| \sqrt{\Var a^\RVE_{ij}}$ as well as $L\geq C(\beta)$ to estimate the quotient in the first factor, we get
\begin{align*}
&\int_{\mathbb{R}^{N}} \chi_{\{|y|\leq (\delta+CL^{-\beta}) L^{-d/2}\}}  \int_{\mathbb{R}\setminus [-\tilde S,\tilde S]} \mathcal{N}_{\tilde \Lambda}(x,y) \,dx \,dy
\\&
\leq 
\bigg(1+\frac{C\delta |\rho|\sqrt{\Var a^\RVE_{ij}}}{\tilde S}+L^{-\beta/2}\bigg)
\int_{\mathbb{R}\setminus [-\tilde S,\tilde S]} 
\mathcal{N}_{V}(x)  \,dx
\\&~~~~~~~~~~~
\times
\int_{\mathbb{R}^{N}} \chi_{\{|y|\leq (\delta+CL^{-\beta}) L^{-d/2}\}} \mathcal{N}_{\Var \mathcal{F}(a)-CL^{-d-2\beta}\Id}(y) \,dy.
\end{align*}
Using the bound $L^{-d}\Id \leq \Var \mathcal{F}(a)$ from \eqref{FNondegenerate} and assuming $L^{-2\beta}\leq c$, we get
\begin{align*}
&\int_{\mathbb{R}^{N}} \chi_{\{|y|\leq (\delta+CL^{-\beta}) L^{-d/2}\}}  \int_{\mathbb{R}\setminus [-\tilde S,\tilde S]} \mathcal{N}_{\tilde \Lambda}(x,y) \,dx \,dy
\\&
\leq 
\bigg(1+\frac{C\delta |\rho|\sqrt{\Var a^\RVE_{ij}}}{\tilde S}+L^{-\beta/2}\bigg)
\int_{\mathbb{R}\setminus [-\tilde S,\tilde S]} 
\mathcal{N}_{V}(x)  \,dx
\\&~~~~~~~~~~~
\times
\int_{\mathbb{R}^{N}} \chi_{\{|y|\leq (\delta+2CL^{-\beta}) L^{-d/2}\}} \mathcal{N}_{\Var \mathcal{F}(a)}(y) \,dy
\end{align*}
and therefore by the upper bound $|\mathcal{N}_{\Var \mathcal{F}(a)}|\leq C (L^{-d/2})^{-d}$ and the estimate on the volume $|\{\delta L^{-d/2}\leq |y|\leq (\delta+2CL^{-\beta}) L^{-d/2}\}|\leq C (L^{-d/2})^{d-1} L^{-d/2-\beta}$
\begin{align*}
&\int_{\mathbb{R}^{N}} \chi_{\{|y|\leq (\delta+CL^{-\beta}) L^{-d/2}\}}  \int_{\mathbb{R}\setminus [-\tilde S,\tilde S]} \mathcal{N}_{\tilde \Lambda}(x,y) \,dx \,dy
\\&
\leq 
\bigg(1+\frac{C\delta |\rho|\sqrt{\Var a^\RVE_{ij}}}{\tilde S}+L^{-\beta/2}\bigg)
\int_{\mathbb{R}\setminus [-\tilde S,\tilde S]} 
\mathcal{N}_{V}(x)  \,dx
\\&~~~~~~~~~~~
\times
\bigg(\int_{\mathbb{R}^{N}} \chi_{\{|y|\leq \delta L^{-d/2}\}} \mathcal{N}_{\Var \mathcal{F}(a)}(y) \,dy+C L^{-\beta}\bigg).
\end{align*}

By $\tilde T\leq (1-|\rho|^2)\Var a_{ij}^\RVE+CL^{-d-\beta}\Id$ (which follows from \eqref{ErrorTildeT}) and $\Var a_{ij}^\RVE \leq C L^{-d} \Id$, we deduce from \eqref{DefinitionV} under the assumptions $\tilde S \geq C\delta |\rho| \sqrt{\Var a^\RVE_{ij}}$ and $L\geq C(\beta)$
\begin{align}
\label{DefinitionTildeV}
V\leq \tilde V:=\bigg(1+\frac{C\delta |\rho|\sqrt{\Var a^\RVE_{ij}}}{\tilde S} \bigg)(1-|\rho|^2) \Var a_{ij}^\RVE + C L^{-d-\beta/2}.
\end{align}
As a consequence, we obtain
\begin{align*}
&\int_{\mathbb{R}^{N}} \chi_{\{|y|\leq (\delta+CL^{-\beta}) L^{-d/2}\}}  \int_{\mathbb{R}\setminus [-\tilde S,\tilde S]} \mathcal{N}_{\tilde \Lambda}(x,y) \,dx \,dy
\\&
\leq
\bigg(1+\frac{C\delta |\rho|\sqrt{\Var a^\RVE_{ij}}}{\tilde S}+L^{-\beta/2}\bigg)
\int_{\mathbb{R}\setminus [-\tilde S,\tilde S]} 
\mathcal{N}_{\tilde V}(x)  \,dx
\\&~~~~~~~~~
\times
\bigg(\int_{\mathbb{R}^{N}} \chi_{\{|y|\leq \delta L^{-d/2}\}} \mathcal{N}_{\Var \mathcal{F}(a)}(y) \,dy+C L^{-\beta}\bigg).
\end{align*}
Plugging in this bound into \eqref{ConsequenceGeneralModerateDeviationsTheorem}, we obtain
\begin{align*}
&\mathbb{P}\big[|a^\RVE_{ij}-\mathbb{E}[a^\RVE_{ij}]|\geq \tilde S+CL^{-d/2-\beta} \text{ and }|\mathcal{F}(a)|\leq \delta L^{-d/2}\big]
\\&
\leq
\bigg(1+\frac{C\delta |\rho|\sqrt{\Var a^\RVE_{ij}}}{\tilde S}+L^{-\beta/2}\bigg)
\int_{\mathbb{R}\setminus [-\tilde S,\tilde S]} 
\mathcal{N}_{\tilde V}(x)  \,dx
\\&~~~~~~~~~
\times
\bigg(\int_{\mathbb{R}^{N}} \chi_{\{|y|\leq \delta L^{-d/2}\}} \mathcal{N}_{\Var \mathcal{F}(a)}(y) \,dy+C L^{-\beta}\bigg)
\\&~~~
+C\exp(-L^{\beta}).
\end{align*}
Inserting the previous estimate into \eqref{ModerateDeviationsConditionalProbabilityRewritten} and using  \eqref{ErrorInProbability}, \eqref{ProbabilitySQSNormal}, and \eqref{ProbabilitySQS} as well as the assumption $\delta^N \geq C L^{-d/2}$ to estimate the denominator, we get
\begin{align*}
&\mathbb{P}\big[|a^\SQS_{ij}-\mathbb{E}[a^\RVE_{ij}]|\geq \tilde S+CL^{-d/2-\beta}\big]
\\&
\leq
\bigg(1+\frac{C\delta \sqrt{\Var a^\RVE_{ij}}}{\tilde S}+\frac{C}{\delta^N} L^{-\beta/2}\bigg) \int_{\mathbb{R}\setminus [-\tilde S,\tilde S]}
\mathcal{N}_{\tilde V}(x)  \,dx
+\frac{C}{\delta^N}\exp(-L^{\beta}).
\end{align*}
Note that we have the estimate $|\mathbb{E}[a^\RVE_{ij}]-a_{\shom,ij}|\leq C L^{-d} |\log L|^C$. By redefining $\tilde S$ (and possibly increasing the constant in \eqref{DefinitionTildeV}; recall that $\tilde S\geq L^{-d/2-\beta/2}$), we obtain
\begin{align*}
&\mathbb{P}\big[|a^\SQS_{ij}-a_{\shom,ij}|\geq \tilde S\big]
\\&
\leq
\bigg(1+\frac{C\delta \sqrt{\Var a^\RVE_{ij}}}{\tilde S}+\frac{C}{\delta^N} L^{-\beta/2}\bigg) \int_{\mathbb{R}\setminus [-\tilde S,\tilde S]}
\mathcal{N}_{\tilde V}(x)  \,dx
+\frac{C}{\delta^N}\exp(-L^{\beta}).
\end{align*}
Finally, we set $\tilde S := \sqrt{(1+\frac{C\delta}{\sqrt{1-|\rho|^2}s})(1-|\rho|^2)\Var a_{ij}^\RVE+L^{-d-\beta/2}} \cdot s$. Upon redefining $\beta$, this yields
the desired estimate \eqref{ModerateDeviationsSQS}.
\end{proof}

\section{The multilevel local dependence structure of the approximation for the effective conductivity}

We now prove that the approximation $a^\RVE$ for the effective conductivity obtained by the representative volume element method may indeed be written as a sum of a family of random variables with multilevel local dependence structure in the sense of Definition~\ref{ConditionRandomVariable}. Furthermore, we show that the same is true for the spatial average of the coefficient field $\mathcal{F}_{avg}(a):=\dashint_{[0,L\varepsilon]^d} a \,dx$ and also for the second-order correction $\mathcal{F}_{2-point}(a)$ to $a^\RVE$ in the setting of small ellipticity contrast.

\begin{proof}[Proof of Proposition~\ref{PropositionApproximabilityByMultilevel}]
{\bf Part 1: The spatial average of the coefficient.}
First, let us show that the average $\mathcal{F}_{avg}(a):=\dashint_{[0,L\varepsilon]^d} a\,dx$ is approximately the sum of a family of random variables with multilevel local dependence structure. Decomposing
\begin{align*}
\mathcal{F}_{avg}(a)=\dashint_{[0,L \varepsilon]^d} a \,dx=\sum_{y\in \varepsilon \mathbb{Z}^d \cap [0,L \varepsilon)^d} \underbrace{\frac{1}{L^d}\dashint_{y+[0,\varepsilon]^d} a \,dx}_{=:X_y^0},
\end{align*}
defining the $X_y^0$ as indicated in this formula, and setting $X_y^m:=0$ for $m\geq 1$, we immediately observe that the average $\mathcal{F}_{avg}(a)$ is the sum of a family of random variables with multilevel local dependence structure with $K:=1$. The bound \eqref{BoundMultilevelDependenceStructure} follows immediately from the uniform bound on $a$ (with $B:=||a||_{L^\infty}$ and arbitrary $\gamma>0$).

{\bf Part 2: The approximation $a^\RVE$ for the effective coefficient.}
Next, let us show that $a^\RVE$ is approximately the sum of a family of random variables with multilevel local dependence structure. For simplicity of notation, let us assume that $\varepsilon=1$.

Recall that the corrector $\phi_i$ associated with the periodized ensemble is the unique $L$-periodic solution to the equation
\begin{align}
\label{CorrectorEquationRepeat}
\nabla \cdot (a(e_i+\nabla \phi_i))=0
\end{align}
with vanishing average $\dashint_{[0,L]^d} \phi_i \,dx=0$. We shall use the decomposition of the ($L$-periodic) corrector $\phi_i$ according to
\begin{align}
\label{Decomposition}
\phi_i(\cdot)=\int_0^\infty u_i(\cdot,s)\,ds
\end{align}
where $u_i=u_i(x,s)$ is the ($L$-periodic) solution to the parabolic PDE
\begin{align*}
\frac{d}{ds} u_i &= \nabla \cdot (a\nabla u_i) &&\text{in }\domain\times [0,\infty),
\\
u_i(\cdot,0) &= \nabla \cdot (ae_i)&&\text{in }\domain.
\end{align*}
Observe that the parabolic PDE directly entails
\begin{align}
\label{PartialDecomposition}
\nabla \cdot \bigg(a\bigg(e_i+\nabla \int_0^t u_i(\cdot,s)\,ds\bigg)\bigg) = u_i(\cdot,t).
\end{align}
Thus, decay of $u_i$ for $t\rightarrow \infty$ implies that $\phi_i$ may indeed be decomposed as $\int_0^\infty u_i(\cdot,s) \,ds$. Note that exponential decay of $u_i$ (with an $L$-dependent constant) is immediate by the standard energy estimate, the vanishing average of $u_i(\cdot,s)$ for any $s\geq 0$ (as the average of the initial conditions on $[0,L]^d$ vanishes), and the Poincar\'e inequality.

Recall the key result from \cite{GloriaOttoNew} which states that under the assumptions of ellipticity, stationarity, and finite range of dependence (A1)-(A3) the full-space variant $u_i^{\mathbb{R}^d}(\cdot,s)$ -- that is, the solution to the equation
\begin{align*}
\frac{d}{ds} u_i^{\mathbb{R}^d} &= \nabla \cdot (a^{\mathbb{R}^d}\nabla u_i^{\mathbb{R}^d}) &&\text{in }\mathbb{R}^d\times [0,\infty),
\\
u_i^{\mathbb{R}^d}(\cdot,0) &= \nabla \cdot (a^{\mathbb{R}^d}e_i)&&\text{in }\mathbb{R}^d,
\end{align*}
with $a^{\mathbb{R}^d}$ denoting a coefficient field from the original (non-periodic) ensemble of coefficient fields -- actually decays like $s^{-(1+d/2)/2}$ in suitable norms:
\begin{theorem}[\cite{GloriaOttoNew}, Corollary~4]
\label{TheoremSemigroupDecay}
Consider an ensemble of random coefficient fields $a^{\mathbb{R}^d}$ subject to the assumptions (A1)-(A3) with range of dependence $\varepsilon:=1$.
Then for any $T>0$ we have the estimate
\begin{subequations}
\label{SemigroupDecay}
\begin{align}
\label{SemigroupDecay1}
\left(\dashint_{\{|x|\leq \sqrt{T}\}} |\nabla u_i^{\mathbb{R}^d}(\cdot,T)|^2 \,dx\right)^{1/2}
&\leq \mathcal{C}(a^{\mathbb{R}^d},T) \, T^{-1-d/4},
\\
\label{SemigroupDecay2}
\left(\dashint_{\{|x|\leq \sqrt{T}\}} |u_i^{\mathbb{R}^d}(\cdot,T)|^2 \,dx\right)^{1/2}
&\leq \mathcal{C}(a^{\mathbb{R}^d},T) \, T^{-1/2-d/4},
\end{align}
\end{subequations}
where the random constant $\mathcal{C}(a^{\mathbb{R}^d},T)$ satisfies for any $\delta>0$ a bound of the form
\begin{align*}
\mathbb{E}\bigg[\exp\bigg(\frac{\mathcal{C}(a^{\mathbb{R}^d},T)^{2-\delta}}{C(d,\lambda,\delta)}\bigg)\bigg] \leq 2.
\end{align*}
\end{theorem}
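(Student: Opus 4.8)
This estimate is the quantitative heart of \cite{GloriaOttoNew}; here I only outline the route one would take. The guiding principle is that $u_i^{\mathbb{R}^d}(\cdot,T)$ decays for two distinct reasons. The first is purely \emph{deterministic}: parabolic smoothing of the semigroup applied to the divergence-form initial datum $\nabla\cdot(a^{\mathbb{R}^d}e_i)$ gives, on a ball of radius $\sqrt T$, only the non-optimal rates $\big(\dashint_{\{|x|\le\sqrt T\}}|u_i^{\mathbb{R}^d}(\cdot,T)|^2\big)^{1/2}\lesssim T^{-1/2}$ and $\big(\dashint_{\{|x|\le\sqrt T\}}|\nabla u_i^{\mathbb{R}^d}(\cdot,T)|^2\big)^{1/2}\lesssim T^{-1}$. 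The second reason is a \emph{stochastic cancellation} of central-limit-theorem type, which produces the extra factor $T^{-d/4}$ once one averages over a ball of radius $\sqrt T$ and exploits the finite range of dependence (A3). The plan is to establish \eqref{SemigroupDecay1} and \eqref{SemigroupDecay2} through a self-improving hierarchy of \emph{annealed} (i.e.\ $L^p(\Omega)$) estimates for the spatial averages $\dashint_{\{|x|\le r\}}|u_i^{\mathbb{R}^d}(\cdot,T)|^2$ and $\dashint_{\{|x|\le r\}}|\nabla u_i^{\mathbb{R}^d}(\cdot,T)|^2$ (by stationarity it suffices to center the balls at the origin), and then to obtain the stretched-exponential moment bound on $\mathcal{C}(a^{\mathbb{R}^d},T)$ by propagating the estimate through all dyadic time scales $2^k\le T$ and summing the contributions.

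First I would assemble the deterministic ingredients. These are the localized energy estimate for $\partial_s-\nabla\cdot(a^{\mathbb{R}^d}\nabla\cdot)$, the Caccioppoli inequality on a parabolic cylinder, and the Nash--Aronson Gaussian upper bound for the associated heat kernel $G_a(s;x,y)$ (all valid for an arbitrary uniformly elliptic and bounded coefficient field). Together these control $u_i^{\mathbb{R}^d}(\cdot,T)$ and $\nabla u_i^{\mathbb{R}^d}(\cdot,T)$ on $\{|x|\le\sqrt T\}$ by the initial datum, and — this is the part needed for the stochastic step — they give a \emph{sensitivity} estimate: changing $a^{\mathbb{R}^d}$ inside a unit cube $Q_z$, $z\in\mathbb{Z}^d$, perturbs $u_i^{\mathbb{R}^d}(\cdot,T)$ and its spatial averages by an amount bounded by a convolution of heat kernels that decays (Gaussian in $|z|/\sqrt T$) away from $Q_z$, and whose square, summed over $z$, is controlled by the energy of $u_i^{\mathbb{R}^d}$ itself.

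The stochastic step then proceeds as follows. Combining the sensitivity estimate with a concentration inequality available under (A3) — an approximate/multiscale spectral-gap inequality bounding the $p$-th moment of $F-\mathbb{E}[F]$ by the $p$-th moment of $\big(\sum_z(\operatorname{osc}_{Q_z}F)^2\big)^{1/2}$ — one bounds the \emph{fluctuations} of $\dashint_{\{|x|\le\sqrt T\}}u_i^{\mathbb{R}^d}(\cdot,T)$ with the right $T^{-d/4}$ gain. One then shows the \emph{expectation} $\mathbb{E}\big[\dashint_{\{|x|\le\sqrt T\}}u_i^{\mathbb{R}^d}(\cdot,T)\big]$ is of higher order: rewriting it by duality as $\int (a^{\mathbb{R}^d}e_i)\cdot\nabla\big(S^*(T)\chi_{\{|x|\le\sqrt T\}}\big)\,dx$, where $S^*$ is the adjoint semigroup, one observes that on scales larger than the minimal radius $r_*$ the test field $\nabla S^*(T)\chi_{\{|x|\le\sqrt T\}}$ is close to the gradient of a solution of the constant-coefficient ($a_\shom$-)heat equation, against which the mean-zero stationary divergence-form field $a^{\mathbb{R}^d}e_i$ integrates to something of higher order by the usual homogenization cancellation (difference of $a^{\mathbb{R}^d}$-flux and $a_\shom e_i$ being itself a divergence). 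Here the \emph{large-scale regularity theory} enters, in the form of the quenched Lipschitz estimate and the homogenization of the dual semigroup down to scale $r_*(a^{\mathbb{R}^d})$, with $r_*$ having stretched-exponential moments of exponent close to $d$. Averaging variance against higher-order mean yields \eqref{SemigroupDecay2}; the gradient bound \eqref{SemigroupDecay1} then follows from \eqref{SemigroupDecay2} by the Caccioppoli inequality on the cylinder $\{|x|\le\sqrt T\}\times[T/2,T]$, restarting the semigroup from time $T/2$.

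The hard part will be the circularity between the two ingredients: the large-scale regularity theory (existence of $r_*$ with good stochastic integrability, and the quantitative two-scale expansion of the dual semigroup used for the mean estimate) itself relies on a priori decay of the corrector and the semigroup, whereas the decay estimate above is precisely what ultimately delivers the optimal corrector bounds. Resolving this requires running the entire argument as one simultaneous induction over dyadic length and time scales, in which a slightly sub-optimal version of each estimate bootstraps a slightly improved version of the other, the induction closing once the CLT scaling $T^{-d/4}$ is reached. Maintaining the stochastic integrability throughout — so that one ends with a random constant satisfying $\mathbb{E}[\exp(\mathcal{C}(a^{\mathbb{R}^d},T)^{2-\delta}/C(d,\lambda,\delta))]\le 2$ rather than merely finite moments of every order, the exponent $2-\delta$ reflecting the $T^{-d/2}$ decay of the variance and the unavoidable loss in the moment iteration — is the delicate point, and is exactly what is carried out in \cite{GloriaOttoNew}, which we invoke here.
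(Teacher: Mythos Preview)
Your outline is a fair high-level sketch of the machinery in \cite{GloriaOttoNew}, but it is far heavier than what the paper actually does here. The paper does \emph{not} reprove the semigroup decay from scratch: it simply cites \eqref{SemigroupDecay1} as \cite[Corollary~4]{GloriaOttoNew} and then gives a short, self-contained derivation of \eqref{SemigroupDecay2} \emph{from} \eqref{SemigroupDecay1}. The argument is: by the Poincar\'e inequality, \eqref{SemigroupDecay2} follows from \eqref{SemigroupDecay1} once one controls a weighted spatial average $\int u_i^{\mathbb{R}^d}(\cdot,T)\,\sqrt{T}^{-d}\psi(\cdot/\sqrt{T})\,dx$. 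This average is handled by computing its time derivative via the PDE (an integration by parts produces only $\nabla u_i^{\mathbb{R}^d}$, bounded by \eqref{SemigroupDecay1}), comparing the averages at times $T$ and $2T$ (the change of cutoff scale is absorbed by another Poincar\'e), and telescoping over the dyadic sequence $2^kT$ using that the average vanishes as $T\to\infty$.

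Note that you reverse the direction of the implication between the two estimates: you propose to get \eqref{SemigroupDecay1} from \eqref{SemigroupDecay2} via Caccioppoli, whereas the paper goes from \eqref{SemigroupDecay1} to \eqref{SemigroupDecay2} via Poincar\'e plus the average bound. Both directions are legitimate, but the paper's route is the economical one given that \eqref{SemigroupDecay1} is already available in the literature; your route would force you to first establish \eqref{SemigroupDecay2} by the full concentration/large-scale-regularity bootstrap you describe, which is precisely the work the paper avoids by citing \cite{GloriaOttoNew}.
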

Note that the second inequality \eqref{SemigroupDecay2} is actually not contained in \cite[Corollary~4]{GloriaOttoNew}. However, it is an easy consequence of \eqref{SemigroupDecay1} (the proof is provided below).

By $\phi_j^*$ and $u_j^*$ we shall denote the corresponding quantities for the adjoint coefficient field $a^*$, i.\,e.\ $\phi_j^*(\cdot):=\int_0^\infty u_j^*(\cdot,s)\,ds$ with $u_j^*$ being the $L$-periodic solution to
\begin{align*}
\frac{d}{ds} u_j^* &= \nabla \cdot (a^* \nabla u_j^*) &&\text{in }\domain\times [0,\infty),
\\
u_j^*(\cdot,0) &= \nabla \cdot (a^* e_j)&&\text{in }\domain.
\end{align*}
The full space variants $u_j^{*,\mathbb{R}^d}$ satisfy also estimates of the form \eqref{SemigroupDecay1}-\eqref{SemigroupDecay2}, as the conditions (A1)-(A3) are invariant under passing to the adjoint coefficient fields.

We introduce a ``cutoff scale'' $L_K$ as the largest integer power of $2$ not larger than $\frac{L}{16 K \log L}$ for some constant $K\geq 1$ that remains to be chosen. Defining $T_L:=(L_K)^2$, we now compute using the properties \eqref{CorrectorEquationRepeat}, \eqref{Decomposition}, and \eqref{PartialDecomposition}
\begin{align*}
&a^\RVE e_i \cdot e_j
=\dashint_{\domain} a(e_i+\nabla \phi_i) \cdot e_j \,dx
\\&
\stackrel{\eqref{CorrectorEquationRepeat}}{=}\dashint_{\domain} a(e_i+\nabla \phi_i) \cdot (e_j+\nabla \phi_j^*) \,dx
\\&
\stackrel{\eqref{Decomposition}}{=}\dashint_{\domain} a\bigg(e_i+\nabla \int_0^{1} u_i(\cdot,s)\,ds\bigg) \cdot \bigg(e_j+\nabla \int_0^{1} u_j^*(\cdot,s)\,ds\bigg) \,dx
\\&~~~
+\sum_{k=0}^{\log_2 L_K} \dashint_{\domain} a \nabla \int_{4^k}^{4^{k+1}} u_i(\cdot,s)\,ds \cdot \bigg(e_j+\nabla \int_0^{4^k} u_j^*(\cdot,s)\,ds\bigg)  \,dx
\\&~~~
+\sum_{k=0}^{\log_2 L_K} \dashint_{\domain} a \bigg(e_i+\nabla \int_0^{4^k} u_i(\cdot,s)\,ds\bigg) \cdot \nabla \int_{4^k}^{4^{k+1}} u_j^*(\cdot,s)\,ds  \,dx
\\&~~~
+\sum_{k=0}^{\log_2 L_K}  \dashint_{\domain} a\nabla \int_{4^k}^{4^{k+1}} u_i(\cdot,s)\,ds \cdot \nabla \int_{4^k}^{4^{k+1}} u_j^*(\cdot,s)\,ds \,dx
\\&~~~
+\dashint_{\domain} a \nabla \int_{4T_L}^\infty u_i(\cdot,s)\,ds \cdot \bigg(e_j+\nabla \int_0^{4T_L} u_j^*(\cdot,s)\,ds\bigg)  \,dx
\\&~~~
+\dashint_{\domain} a \bigg(e_i+\nabla \int_0^{4T_L} u_i(\cdot,s)\,ds\bigg) \cdot \nabla \int_{4T_L}^\infty u_j^*(\cdot,s)\,ds  \,dx
\\&~~~
+\dashint_{\domain} a\nabla \int_{4T_L}^{\infty} u_i(\cdot,s)\,ds \cdot \nabla \int_{4T_L}^{\infty} u_j^*(\cdot,s)\,ds
\\&
\stackrel{\eqref{PartialDecomposition}}{=}\dashint_{\domain} a\bigg(e_i+\nabla \int_0^{1} u_i(\cdot,s)\,ds\bigg) \cdot \bigg(e_j+\nabla \int_0^{1} u_j^*(\cdot,s)\,ds\bigg) \,dx
\\&~~~
-\sum_{k=0}^{\log_2 L_K}  \dashint_{\domain} \int_{4^k}^{4^{k+1}} u_i(\cdot,s)\,ds~ u_j^*(\cdot,4^k) \,dx
\\&~~~
-\sum_{k=0}^{\log_2 L_K}  \dashint_{\domain} u_i(\cdot,4^k) \int_{4^k}^{4^{k+1}} u_j^*(\cdot,s)\,ds \,dx
\\&~~~
+\sum_{k=0}^{\log_2 L_K}  \dashint_{\domain} a\nabla \int_{4^k}^{4^{k+1}} u_i(\cdot,s)\,ds \cdot \nabla \int_{4^k}^{4^{k+1}} u_j^*(\cdot,s)\,ds \,dx
\\&~~~
-\dashint_{\domain} \int_{4T_L}^\infty u_i(\cdot,s)\,ds \, u_j^*(\cdot,4T_L)  \,dx
\\&~~~
-\dashint_{\domain} u_i(\cdot,4T_L) \int_{4T_L}^\infty u_j^*(\cdot,s)\,ds  \,dx
\\&~~~
+\dashint_{\domain} a\nabla \int_{4T_L}^{\infty} u_i(\cdot,s)\,ds \cdot \nabla \int_{4T_L}^{\infty} u_j^*(\cdot,s)\,ds \,dx.
\end{align*}
We now decompose the integrals into integrals over cubes with side length $\sim 2^k$, resulting in
\begin{align}
\label{aRVERewritten}
&a^\RVE e_i \cdot e_j
\\&
\nonumber
=\sum_{x_0\in \mathbb{Z}^d} \frac{1}{L^d} \dashint_{(x_0+[0,1]^d)\cap \domain} a\bigg(e_i+\nabla \int_0^{1} u_i(\cdot,s)\,ds\bigg) \cdot \bigg(e_j+\nabla \int_0^{1} u_j^*(\cdot,s)\,ds\bigg) \,dx
\\&~~~
\nonumber
-\sum_{k=0}^{\log_2 L_K} \sum_{x_0\in 2^k\mathbb{Z}^d} \frac{1}{L^d} \int_{(x_0+[0,2^k]^d)\cap \domain} \int_{4^k}^{4^{k+1}} u_i(\cdot,s)\,ds~ u_j^*(\cdot,4^k) \,dx
\\&~~~
\nonumber
-\sum_{k=0}^{\log_2 L_K} \sum_{x_0\in 2^k\mathbb{Z}^d}  \frac{1}{L^d} \int_{(x_0+[0,2^k]^d)\cap \domain} u_i(\cdot,4^k) \int_{4^k}^{4^{k+1}} u_j^*(\cdot,s)\,ds \,dx
\\&~~~
\nonumber
+\sum_{k=0}^{\log_2 L_K} \sum_{x_0\in 2^k \mathbb{Z}^d} \frac{1}{L^d} \int_{(x_0+[0,2^k]^d)\cap \domain} a\nabla \int_{4^k}^{4^{k+1}} u_i(\cdot,s)\,ds \cdot \nabla \int_{4^k}^{4^{k+1}} u_j^*(\cdot,s)\,ds \,dx
\\&~~~
\nonumber
-\dashint_{\domain} \int_{4T_L}^\infty u_i(\cdot,s)\,ds \, u_j^*(\cdot,4T_L)  \,dx
\\&~~~
\nonumber
-\dashint_{\domain} u_i(\cdot,4T_L) \int_{4T_L}^\infty u_j^*(\cdot,s)\,ds  \,dx
\\&~~~
\nonumber
+\dashint_{\domain} a\nabla \int_{4T_L}^{\infty} u_i(\cdot,s)\,ds \cdot \nabla \int_{4T_L}^{\infty} u_j^*(\cdot,s)\,ds\,dx.
\end{align}
We now intend to replace $u_i$ and $u_j^*$ in each of these expressions by a proxy with localized dependence. To this aim, for any $k\in \mathbb{N}_0$ and any $x_0\in 2^k \mathbb{Z}^d$, define the coefficient field $a_{k,x_0}$ on the full space $\mathbb{R}^d$ as
\begin{align}
\label{DefinitionLocalizeda}
a_{k,x_0}(x):=
\begin{cases}
a(x)&\text{for }|x-x_0|\leq  \sqrt{K|\log L|} \, 2^{k-1},
\\
\Id&\text{otherwise}.
\end{cases}
\end{align}
Define a corresponding $u_{i,k,x_0}$ as the solution to the equation
\begin{subequations}
\label{DefinitionLocalizedu}
\begin{align}
\frac{d}{dt} u_{i,k,x_0}&=\nabla \cdot (a_{k,x_0} \nabla u_{i,k,x_0}),
\\
u_{i,k,x_0}(\cdot,0)&=\nabla \cdot (a_{k,x_0} e_i),
\end{align}
\end{subequations}
and introduce analogously the function $u_{i,k,x_0}^*$ as the solution to the equation with $a_{k,x_0}$ replaced by $a^*_{k,x_0}$. Note that while $u_i$ and $a$ are defined on $[0,L]^d$ and extended to $\mathbb{R}^d$ by periodicity, both $a_{k,x_0}$ and $u_{i,k,x_0}$ are defined on $\mathbb{R}^d$ and lack any periodicity.

By Lemma~\ref{GaussianPropagationBounds} -- applied with $M:=\frac{1}{2} \sqrt{K |\log L|}$ and $r:=2^k$ -- we have
\begin{align}
\label{L2ErrorBoundLocalModification}
&\dashint_{\{|x-x_0|\leq 2d\cdot 2^{k}\}} |u_i(\cdot,t)-u_{i,k,x_0}(\cdot,t)|^2 \,dx
\\&
\nonumber
\leq C \sqrt{K \log L}^{d/2} \exp(-c K |\log L|) \leq C(d,\lambda,K) L^{-c K}
\end{align}
for any $t\leq 4^{k+1}$ and
\begin{align}
\label{H1ErrorBoundLocalModification}
&\int_{0}^{4^{k+1}} \dashint_{\{|x-x_0|\leq d \cdot 2^k\}} |\nabla u_i-\nabla u_{i,k,x_0}|^2 \,dx\,dt
\\&~~~~~~~~~~~~~~~
\nonumber
\leq C \exp(-c K |\log L|) \leq C(d,\lambda,K) L^{-c K}
\end{align}
and analogous estimates for the difference $u_j^*-u_{j,k,x_9}^*$.

As our probability distribution of coefficient fields $a$ on $[0,L]^d$ is the periodization of a probability distribution of coefficient fields $a^{\mathbb{R}^d}$ on $\mathbb{R}^d$, by definition of a periodization (see (A3$_c$)) for each $x_0\in [0,L)^d$ and any $k\leq \log_2 L_K$ the law of $a|_{x_0+K \log L [-2^k,2^k]^d}$ coincides with the law of $a^{\mathbb{R}^d}|_{x_0+K \log L [-2^k,2^k]^d}$. As a consequence, the law of $u_{i,k,x_0}$ coincides with the law of $u_{i,k,x_0}^{\mathbb{R}^d}$, where  $u_{i,k,x_0}^{\mathbb{R}^d}$ is defined analogously to $u_{i,k,x_0}$ (replacing $a$ in the definition by $a^{\mathbb{R}^d}$).
Therefore, any moment bound on $u_{i,k,x_0}^{\mathbb{R}^d}$ carries over to $u_{i,k,x_0}$. Applying Lemma~\ref{GaussianPropagationBounds} to $u_{i,k,x_0}^{\mathbb{R}^d}$, we obtain estimates analogous to \eqref{L2ErrorBoundLocalModification} and \eqref{H1ErrorBoundLocalModification}. The estimates from Theorem~\ref{TheoremSemigroupDecay} therefore carry over to $u_{i,k,x_0}^{\mathbb{R}^d}$, provided that we choose $K\geq C$: We have for $t\in [4^k,4^{k+1}]$ and $T=4^k$ with $2^k\leq L$
\begin{align*}
\left(\dashint_{\{|x-x_0|\leq d\cdot 2^k\}} |u_{i,k,x_0}^{\mathbb{R}^d}(t)|^2 \,dx\right)^{1/2}
&\leq \mathcal{C}(a^{\mathbb{R}^d},t) \, t^{-1/2-d/4},
\\
\left(\dashint_{T}^{4T} \dashint_{\{|x-x_0|\leq d\cdot 2^k\}} |\nabla u_{i,k,x_0}^{\mathbb{R}^d}(T)|^2 \,dx \,dt\right)^{1/2}
&\leq \mathcal{C}(a^{\mathbb{R}^d},T) \, T^{-1-d/4},
\end{align*}
for some random constants $\mathcal{C}(a^{\mathbb{R}^d},t)$, $\mathcal{C}(a^{\mathbb{R}^d},T)$ with
\begin{align*}
||\mathcal{C}(a^{\mathbb{R}^d},t)||_{\exp^{2-\delta}}&\leq C(d,\lambda,K,\delta),
\\
||\mathcal{C}(a^{\mathbb{R}^d},T)||_{\exp^{2-\delta}}&\leq C(d,\lambda,K,\delta),
\end{align*}
for any $\delta>0$.
By coincidence of laws, we get for $t\in [4^k,4^{k+1}]$ and $T=4^k$
\begin{subequations}
\label{BoundsOnLocalizedu}
\begin{align}
\left(\dashint_{\{|x-x_0|\leq d\cdot 2^k\}} |u_{i,k,x_0}(t)|^2 \,dx\right)^{1/2}
&\leq \mathcal{C}(a,t) \, t^{-1/2-d/4},
\\
\left(\dashint_{T}^{4T} \dashint_{\{|x-x_0|\leq d\cdot 2^k\}} |\nabla u_{i,k,x_0}(T)|^2 \,dx \,dt\right)^{1/2}
&\leq \mathcal{C}(a,T) \, T^{-1-d/4},
\end{align}
\end{subequations}
for random constants $\mathcal{C}$ satisfying
\begin{align*}
||\mathcal{C}(a,t)||_{\exp^{2-\delta}}&\leq C(d,\lambda,K,\delta),
\\
||\mathcal{C}(a,T)||_{\exp^{2-\delta}}&\leq C(d,\lambda,K,\delta),
\end{align*}
for any $\delta>0$. Furthermore, the bound \eqref{BoundHeatEquationIntegratedGradientWeakInitialData} yields an estimate of the form
\begin{align}
\label{BoundsOnLocalizedu2}
\Bigg(\dashint_{\{|x-x_0|\leq d\}} \bigg|e_i+\nabla \int_0^1 u_{i,0,x_0} \,ds\bigg|^2 \,dx\Bigg)^{1/2}
\leq C(d,\lambda).
\end{align}
By \eqref{PartialDecomposition}, its analogue for $u_{i,0,x_0}$, and the definition of $a_{0,x_0}$, we have in $\{|x-x_0|\leq 2d\}$ that $-\nabla \cdot (a\nabla(\int_0^1 u_i(\cdot,s)-u_{i,0,x_0}(\cdot,s) \,ds)) = u_i(\cdot,1)-u_{i,0,x_0}(\cdot,1)$, which implies by the Caccioppoli inequality
\begin{align}
\label{H1ErrorBoundLocalModificationLowest}
&\dashint_{\{|x-x_0|\leq d\}} \bigg|e_i+\nabla \int_0^1 u_i \,ds
-\bigg(e_i+\nabla \int_0^1 u_{i,0,x_0} \,ds\bigg)\bigg|^2 \,dx
\\&
\nonumber
\leq C \dashint_{\{|x-x_0|\leq 2d\}} |u_i(\cdot,1)-u_{i,0,x_0}(\cdot,1)|^2 \,dx
\\&~~~
\nonumber
+C \dashint_{\{|x-x_0|\leq 2d\}} \bigg|\int_0^1 u_i(\cdot,s)-u_{i,0,x_0}(\cdot,s) \,ds\bigg|^2 \,dx
\\&
\nonumber
\leq C \dashint_{\{|x-x_0|\leq 2d\}} |u_i(\cdot,1)-u_{i,0,x_0}(\cdot,1)|^2 \,dx
\\&~~~
\nonumber
+C \dashint_{\{|x-x_0|\leq 2d\}} \int_0^1 | u_i(\cdot,s)-u_{i,0,x_0}(\cdot,s) |^2 \,ds \,dx
\\&
\nonumber
\stackrel{\eqref{L2ErrorBoundLocalModification}}{\leq}
C(K) L^{-c K}.
\end{align}

As a consequence of our definition of $u_{i,k,x_0}$, for the choice
\begin{subequations}
\label{DefinitionXky}
\begin{align}
X_{x_0}^0
:=&
\frac{1}{L^d} \int_{(x_0+[0,1]^d)\cap\domain} a\bigg(e_i+\nabla \int_0^{1} u_{i,0,x_0}(\cdot,s)\,ds\bigg)
\\&~~~~~~~~~~~~~~~~~~~~~~~~~~~~~~~~~~~~
\nonumber
\cdot \bigg(e_j+\nabla \int_0^{1} u_{j,0,x_0}^*(\cdot,s)\,ds\bigg) \,dx,
\\
X_{x_0}^{k+1}
:=&-\frac{1}{L^d} \int_{(x_0+[0,2^k]^d)\cap\domain} \int_{4^k}^{4^{k+1}} u_{i,k,x_0}(\cdot,s)\,ds~ u_{j,k,x_0}^*(\cdot,4^k) \,dx
\\&
\nonumber
-\frac{1}{L^d}\int_{(x_0+[0,2^k]^d)\cap \domain} u_{i,k,x_0}(\cdot,4^k) \int_{4^k}^{4^{k+1}} u_{j,k,x_0}^*(\cdot,s)\,ds \,dx
\\&
\nonumber
+\frac{1}{L^d}\int_{(x_0+[0,2^k]^d)\cap \domain} a\nabla \int_{4^k}^{4^{k+1}} u_{i,k,x_0}(\cdot,s)\,ds
\\&~~~~~~~~~~~~~~~~~~~~~~~~~~~~~~~~~~~~~~
\nonumber
\cdot \nabla \int_{4^k}^{4^{k+1}} u_{j,k,x_0}^*(\cdot,s)\,ds \,dx,
\end{align}
\end{subequations}
for $0\leq k\leq \log_2 L_K$, we see by \eqref{DefinitionLocalizeda} and \eqref{DefinitionLocalizedu} and $\sqrt{K\log L}\geq 1$ that $X_{x_0}^k$ is a random variable which depends only on $a|_{x_0+K \log L [-2^k,2^k]^d}$, i.\,e.\ the first condition of Definition~\ref{ConditionRandomVariable} is satisfied. Furthermore, by \eqref{BoundsOnLocalizedu} and \eqref{BoundsOnLocalizedu2} we obtain for any $0<\gamma<1$ an estimate of the form
\begin{align}
\label{EstimateX}
||X_{y}^k||_{\exp^\gamma} \leq C(d,\lambda,\gamma,K) L^{-d}.
\end{align}
We now intend to replace the terms in the first five terms on the right-hand side of \eqref{aRVERewritten} by the $X_{x_0}^k$ with $0\leq k\leq \log_2 L_K+1$, using the estimates \eqref{L2ErrorBoundLocalModification}, \eqref{H1ErrorBoundLocalModification}, \eqref{H1ErrorBoundLocalModificationLowest}, and H\"older's inequality to bound the arising error: For example, we may estimate
\begin{align*}
&\bigg|
-\frac{1}{L^d} \int_{(x_0+[0,2^k]^d)\cap \domain} u_i(\cdot,4^k) \int_{4^k}^{4^{k+1}} u_j^*(\cdot,s)\,ds \,dx
\\&~~~
-\bigg(-\frac{1}{L^d} \int_{(x_0+[0,2^k]^d)\cap\domain} \int_{4^k}^{4^{k+1}} u_{i,k,x_0}(\cdot,s)\,ds~ u_{j,k,x_0}^*(\cdot,4^k) \,dx\bigg)\bigg|
\\&
\leq
\frac{4^{(k+1)/2}}{L^d} \bigg(\int_{(x_0+[0,2^k]^d)\cap \domain} |u_{i}(\cdot,4^k)|^2 \,dx\bigg)^{1/2}
\\&~~~~
\times
\bigg(\int_{(x_0+[0,2^k]^d)\cap \domain}  \int_{4^k}^{4^{k+1}} |u_{j}^*(\cdot,s)-u_{j,k,x_0}^*(\cdot,s)|^2\,ds \,dx\bigg)^{1/2}
\\&~~~~
+\frac{4^{(k+1)/2}}{L^d} \bigg(\int_{(x_0+[0,2^k]^d)\cap \domain} |u_{i}(\cdot,4^k)-u_{i,k,x_0}(\cdot,4^k)|^2 \,dx\bigg)^{1/2}
\\&~~~~~~~~
\times
\bigg(\int_{(x_0+[0,2^k]^d)\cap \domain}  \int_{4^k}^{4^{k+1}} |u_{j,k,x_0}^*(\cdot,s)|^2\,ds \,dx\bigg)^{1/2}
\\&
\stackrel{\eqref{L2ErrorBoundLocalModification}}{\leq}
\frac{C(d,\lambda,K)}{L^d} \bigg(\int_{(x_0+[0,2^k]^d)\cap \domain} |u_{i,k,x_0}(\cdot,4^k)|^2 \,dx+L^{-cK}\bigg)^{1/2} \cdot (2^k)^{d/2} L^{-cK}
\\&~~~
+\frac{C(d,\lambda,K)}{L^d} \cdot (2^k)^{d/2} L^{-cK} \cdot
\bigg(\int_{(x_0+[0,2^k]^d)\cap \domain}  \int_{4^k}^{4^{k+1}} |u_{j,k,x_0}^*(\cdot,s)|^2\,ds \,dx\bigg)^{1/2}
\end{align*}
where in the last step we have used $4^k \leq CL^2$ and $(2^k)^{d/2}\leq C L^{d/2}$, absorbing these factors in the factor $L^{-cK}$ (possible for $cK\geq 4+2d$). Proceeding analogously for the other terms in \eqref{aRVERewritten}, we deduce
\begin{align*}
&\Bigg|a^\RVE e_i \cdot e_j
-\sum_{x_0\in \mathbb{Z}^d\cap [0,L)^d} X_{x_0}^0
-\sum_{k=1}^{1+\log_2 L_K} \sum_{x_0\in 2^k \mathbb{Z}^d\cap [0,L)^d} X_{x_0}^k
\Bigg|
\\&
\leq C \sum_{x_0\in \mathbb{Z}^d\cap [0,L)^d} \frac{1}{L^d} \bigg(\int_{x_0+[0,1]^d} \bigg|e_i+\nabla \int_{0}^1 u_{i,0,x_0}(\cdot,s) \,ds\bigg|^2
\\&~~~~~~~~~~~~~~~~~~~~~~~~~~~~~~~~~~~~
+ \bigg|e_j+\nabla \int_{0}^1 u_{j,0,x_0}^*(\cdot,s) \,ds\bigg|^2 \,dx+ L^{-cK} \bigg)^{1/2}
L^{-cK}
\\&~~
+C\sum_{k=0}^{\log_2 L_K}
\sum_{x_0\in 2^k \mathbb{Z}^d\cap [0,L)^d} \frac{1}{L^d}
\bigg(\int_{x_0+[0,2^k]^d} |u_{i,k,x_0}(\cdot,4^k)|^2+|u_{j,k,x_0}^*(\cdot,4^k)|^2 \,dx+L^{-cK}\bigg)^{1/2}
\\&~~~~~~~~~~~~~~~~~~~~~~~~~~~~~~~~\times
(2^k)^{d/2} L^{-cK}
\\&~~
+C\sum_{k=0}^{\log_2 L_K}
\sum_{x_0\in 2^k \mathbb{Z}^d\cap [0,L)^d}
\frac{1}{L^d}
\bigg(\int_{x_0+[0,2^k]^d} \int_{4^k}^{4^{k+1}} |u_{i,k,x_0}(\cdot,s)|^2+|u_{j,k,x_0}^*(\cdot,s)|^2 \,ds \,dx\bigg)^{1/2}
\\&~~~~~~~~~~~~~~~~~~~~~~~~~~~~~~~~\times
(2^k)^{d/2} L^{-cK}
\\&~~
+C \sum_{k=0}^{\log_2 L_K}
\sum_{x_0\in 2^k \mathbb{Z}^d\cap [0,L)^d}
\frac{1}{L^d}
\bigg(\int_{x_0+[0,2^k]^d} \int_{4^k}^{4^{k+1}} |\nabla u_{i,k,x_0}(\cdot,s)|^2
\\&~~~~~~~~~~~~~~~~~~~~~~~~~~~~~~~~~~~~~~~~~~~~~~~~~~~
+|\nabla u_{j,k,x_0}^*(\cdot,s)|^2 \,ds \,dx +L^{-cK}\bigg)^{1/2} L^{-cK}
\\&~~
+\dashint_{\domain} \bigg|\int_{4T_L}^\infty u_i(\cdot,s)\,ds\bigg| \, |u_j^*(\cdot,4T_L)| \,dx
\\&~~
+\dashint_{\domain} |u_i(\cdot,4T_L)| \bigg|\int_{4T_L}^\infty u_j^*(\cdot,s)\,ds\bigg|  \,dx
\\&~~
+C \dashint_{\domain} \bigg|\nabla \int_{4T_L}^{\infty} u_i(\cdot,s)\,ds\bigg| \, \bigg|\nabla \int_{4T_L}^{\infty} u_j^*(\cdot,s)\,ds\bigg|\,dx.
\end{align*}
Inserting the estimates \eqref{BoundsOnLocalizedu}, \eqref{BoundsOnLocalizedu2}, we get for some $\mathcal{C}(a)$ with $||\mathcal{C}(a)||_{\exp^\gamma} \leq C(d,\lambda,K,\gamma)$ for any $\gamma\in (0,1)$
\begin{align}
\label{ErrorBoundOnDecomposition}
&\Bigg|a^\RVE e_i \cdot e_j
-\sum_{x_0\in \mathbb{Z}^d\cap [0,L)^d} X_{x_0}^0
-\sum_{k=1}^{1+\log_2 L_K} \sum_{x_0 \in 2^k \mathbb{Z}^d\cap [0,L)^d} X_{x_0}^k
\Bigg|
\\&
\nonumber
\leq C L^{-cK} + \mathcal{C}(a) \sum_{k=0}^{\log_2 L_K} L^{-cK} +  \mathcal{C}(a) \sum_{k=0}^{\log_2 L_K} \sqrt{4^k} L^{-cK}+ \mathcal{C}(a) \sum_{k=0}^{\log_2 L_K} L^{-cK}
\\&~~~~
\nonumber
+\dashint_{\domain} \bigg|\int_{4T_L}^\infty u_i(\cdot,s)\,ds\bigg| \, |u_j^*(\cdot,4T_L)| \,dx
\\&~~~~
\nonumber
+\dashint_{\domain} |u_i(\cdot,4T_L)| \bigg|\int_{4T_L}^\infty u_j^*(\cdot,s)\,ds\bigg|  \,dx
\\&~~~~
\nonumber
+C\dashint_{\domain} \bigg|\nabla \int_{4T_L}^{\infty} u_i(\cdot,s)\,ds\bigg| \, \bigg|\nabla \int_{4T_L}^{\infty} u_j^*(\cdot,s)\,ds\bigg|\,dx.
\end{align}
The bound \eqref{L2ErrorBoundLocalModification} and its equivalent for $u_i^{\mathbb{R}^d}$ and $u_{i,k,x_0}^{\mathbb{R}^d}$ enable us to transfer the bounds in Theorem~\ref{TheoremSemigroupDecay} from $u_i^{\mathbb{R}^d}$ to $u_i$: Recalling that $T_L=(L_K)^2$, we obtain
\begin{align}
\label{DecompositionOfui}
&\dashint_{\domain} |u_i(\cdot,T_L)|^2 \,dx
=\sum_{x_0\in L_K \mathbb{Z}^d} L^{-d} \int_{x_0+[0,L_K]^d\cap \domain} |u_i(\cdot,T_L)|^2 \,dx
\\&
\nonumber
\stackrel{\eqref{L2ErrorBoundLocalModification}}{\leq} C\sum_{x_0\in L_K \mathbb{Z}^d} L^{-d} \bigg(L^{-cK} + \int_{y+[0,L_K]^d\cap \domain} |u_{i,\log_2 L_K,x_0}(\cdot,T_L)|^2 \,dx\bigg)
\end{align}
and
\begin{align*}
&\int_{y+[0,L_K]^d\cap \domain} |u_{i,\log_2 L_K,x_0}^{\mathbb{R}^d}(\cdot,T_L)|^2 \,dx
\\&
\leq
2\int_{y+[0,L_K]^d\cap \domain} |u_i^{\mathbb{R}^d}(\cdot,T_L)|^2 \,dx
+2 C L^{-cK/2}.
\end{align*}
The latter estimate entails in view of Theorem~\ref{TheoremSemigroupDecay} (choosing $K\geq C$ and recalling that $\sqrt{T_L}=L_K\leq \frac{L}{4K \log L}$)
\begin{align*}
&\bigg(\dashint_{y+[0,L_K]^d\cap \domain} |u_{i,\log_2 L_K,x_0}^{\mathbb{R}^d}(\cdot,T_L)|^2 \,dx\bigg)^{1/2}
\\&
\leq \mathcal{C}(a^{\mathbb{R}^d},y,T_L)
T_L^{-1/2-d/4}
\end{align*}
where again $||\mathcal{C}(a^{\mathbb{R}^d},y,T_L)||_{\exp^{2-\delta}}\leq C(d,\lambda,K,\delta)$. By coincidence of the laws of $a|_{x_0+K \log L [-L_K,L_K]^d}$ and $a^{\mathbb{R}^d}|_{x_0+K \log L [-L_K,L_K]^d}$, we get for $K\geq C$ from the previous estimate and \eqref{DecompositionOfui} 
\begin{align}
\label{EstimateuiTL}
&\dashint_{\domain} |u_i(\cdot,T_L)|^2 \,dx
\leq \mathcal{C}(a,T_L) T_L^{-1-d/2}
\end{align}
where $||\mathcal{C}(a,T_L)||_{\exp^\gamma}\leq C(d,\lambda,K,\gamma)$ for any $\gamma<1$. An analogous bound holds for $u_j^*$. Finally, the energy estimate for $u_i$ implies
\begin{align*}
\frac{d}{dt} \dashint_{\domain} |u_i|^2 \,dx \leq  -c \dashint_{\domain} |\nabla u_i|^2 \,dx.
\end{align*}
As the average of $u_i$ over $\domain$ vanishes, the Poincar\'e inequality implies for $T\geq T_L$
\begin{align*}
\frac{d}{dt} \dashint_{\domain} |u_i|^2 \,dx \leq  -\frac{c}{2} \dashint_{\domain} |\nabla u_i|^2 \,dx
-\frac{c}{2 C L^2} \dashint_{\domain} |u_i|^2 \,dx
\end{align*}
and as a consequence
\begin{align*}
&\dashint_{\domain} |u_i(\cdot,T)|^2 \,dx+\int_{\max\{T_L,T/2\}}^T  \dashint_{\domain} |\nabla u_i|^2 \,dx \,dt
\\&
\leq C(d,\lambda) \exp\Big(-\frac{T-T_L}{C(d,\lambda) L^2}\Big) \dashint_{\domain} |u_i(\cdot,T_L)|^2 \,dx.
\end{align*}
Note that this estimate yields in particular
\begin{align*}
\dashint_{\domain} \bigg|\int_{T_L}^\infty \nabla u_i \,dt \bigg|^2 \,dx
&\leq 2 \sum_{l=-\log_2 \frac{L^2}{T_L}}^\infty
\dashint_{\domain} \bigg| 2^{l} \int_{2^l L^2}^{2^{l+1}L^2} \nabla u_i \,dt \bigg|^2 \,dx
\\&
\leq 2 \sum_{l=-\log_2 \frac{L^2}{T_L}}^\infty
\dashint_{\domain} 2^{l} (2^l L^2) \int_{2^l L^2}^{2^{l+1}L^2} |\nabla u_i|^2 \,dt\,dx
\\&
\leq C \sum_{l=-\log_2 \frac{L^2}{T_L}}^\infty
2^{2l} L^2 \exp(-c2^l) \dashint_{\domain} |u_i(\cdot,T_L)|^2 \,dx
\\&
\stackrel{\eqref{EstimateuiTL}}{\leq}
\mathcal{C}(a,T_L) \frac{L^2}{T_L} T_L^{-d/2}\leq \mathcal{C}(a,T_L) (K|\log L|)^{d+2} L^{-d/2}
\end{align*}
where in the last step we have used that $\sqrt{T_L}=L_K$ is the largest power of $2$ with $L_K\leq \frac{L}{4 K \log L}$.

Plugging in these bounds and \eqref{EstimateuiTL} into \eqref{ErrorBoundOnDecomposition}, we get for $K\geq C$
\begin{align}
\label{EstimateaRVEMultilevelDecomposition}
&\Bigg|a^\RVE e_i \cdot e_j
-\sum_{k=0}^{1+\log_2 L_K} \sum_{x_0\in 2^k \mathbb{Z}^d\cap [0,L)^d} X_{x_0}^k
\Bigg|
\\&
\nonumber
\leq C L^{-2d}
+\mathcal{C}(a,T_L) (K|\log L|)^{2d+4} L^{-d}
\end{align}
with $||\mathcal{C}(a,T_L)||_{\exp^\gamma}\leq C(d,\lambda,K,\gamma)$ for any $\gamma<1$.
Choosing $\gamma\in (0,1)$ and $B:=C(d,\lambda,K,\gamma) (4K \log L)^{2+d}$ in Definition~\ref{ConditionRandomVariable}, defining the variable $X_0^{\log_2 L+1}$ (which may depend on $a$ on the full volume $[0,L]^d$) to account for the remaining difference $a^\RVE e_i \cdot e_j-\sum_{k=0}^{1+\log_2 L_K} \sum_{x_0\in 2^k \mathbb{Z}^d\cap [0,L)^d} X_{x_0}^k$, and setting the remaining $X_i^{k}:=0$ for $\log_2 L_K+1<k<\log_2 L+1$, this establishes that $a^\RVE$ may be rewritten as a sum of a family of random variables with multilevel local dependence.

{\bf Part 3: The higher-order statistical quantity.}
Next, we derive the multilevel decomposition of the higher-order quantity in the small ellipticity contrast setting $\mathcal{F}_{2-point}$. To do so, we decompose the solution $v_i$ to \eqref{F2Equation} as
\begin{align}
\label{DecompositionF2}
v_i(\cdot)=\int_0^\infty w_i(\cdot,s) \,ds,
\end{align}
where $w_i$ is defined as the solution to the parabolic PDE
\begin{align*}
\frac{d}{dt} w_i &= \Delta w_i,
\\
w_i(\cdot,0)&=\nabla \cdot (a e_i).
\end{align*}
As before, the representation \eqref{DecompositionF2} follows from the exponential decay of $w_i$, as we have $-\Delta \int_0^T w_i (\cdot,t)\,dt = \nabla \cdot (ae_i)-w_i(\cdot,T)$.

We introduce analogous definitions for $v_j^*$. Again, we may assume without loss of generality that $\varepsilon=1$. We then observe following an argument of Mourrat \cite{MourratNumerical} that by formula \eqref{CommutingSemigroup} below
\begin{align*}
\mathcal{F}_{2-point}(a)&=
\dashint_{[0,L]^d} a\nabla v_i \cdot e_j \,dx
\\&
=\dashint_{[0,L]^d} -\nabla v_i \cdot \nabla v_j^* \,dx
\\&
=\dashint_{[0,L]^d} \int_0^\infty \int_0^\infty -\nabla w_i(\cdot,s_1)\cdot \nabla w_j^*(\cdot,s_2) \,ds_1 \,ds_2 \,dx
\\&
\stackrel{\eqref{CommutingSemigroup}}{=}
\dashint_{[0,L]^d} \int_0^\infty \int_0^\infty -\nabla w_i\Big(\cdot,\frac{s_1+s_2}{2}\Big) \cdot \nabla w_j^*\Big(\cdot,\frac{s_1+s_2}{2}\Big) \,ds_1 \,ds_2 \,dx.
\end{align*}
Next, we deduce
\begin{align*}
\mathcal{F}_{2-point}(a)&
=-\dashint_{[0,L]^d} \int_0^\infty 4s \, \nabla w_i(\cdot,s) \cdot \nabla w_j^*(\cdot,s) \,ds \,dx
\\&
=
-\sum_{x_0\in \mathbb{Z}^d} \frac{1}{L^{d}} \int_0^1 \int_{(x_0+[0,1]^d) \cap \domain} 4s \, \nabla w_i(\cdot,s) \cdot \nabla w_j^*(\cdot,s) \,dx \,ds
\\&~~~
-\sum_{k=1}^{\log_2 L_K}
\sum_{x_0\in \mathbb{Z}^d} \frac{1}{L^{d}} \int_{4^{k-1}}^{4^k} \dashint_{[0,L]^d} 4s \, \nabla w_i(\cdot,s) \cdot \nabla w_j^*(\cdot,s) \,dx \,ds
\\&~~~
-\frac{1}{L^{d}} \int_{T_L}^{\infty} \dashint_{[0,L]^d} 4s \, \nabla w_i(\cdot,s) \cdot \nabla w_j^*(\cdot,s) \,dx \,ds.
\end{align*}
We may now proceed to argue just like in the case of $a^\RVE$. The required decay estimates for the semigroup of the form
\begin{align*}
\bigg(\dashint_{\{|x-x_0|\leq \sqrt{T}\}} |\nabla w_i(\cdot,T)|^2 \,dx\bigg)^{1/2}
\leq \mathcal{C}(a,T,x_0) T^{-1-d/4}
\end{align*}
(with $||\mathcal{C}(a,T,x_0)||_{\exp^2}\leq C(d,\lambda)$) are now a consequence of the explicit heat kernel representation of the solution $w_i$ (as we are now dealing with a constant-coefficient parabolic equation), the finite range of dependence $\varepsilon=1$ of the initial data $w_i(\cdot,0)=\nabla \cdot (ae_i)$, and standard Gaussian concentration estimates (or, alternatively, -- though then with a less strong stretched exponential bound -- the concentration estimates of Lemma~\ref{ConcentrationStretchedExponential}).

In the computation above we have used the simple fact that
\begin{align}
\nonumber
&\dashint_{[0,L]^d} \nabla w_i(\cdot,s_1) \otimes \nabla w_j^*(\cdot,s_2) \,dx
\\&
\nonumber
=\dashint_{[0,L]^d} \nabla w_i\Big(\cdot,\frac{s_1+s_2}{2}\Big) \otimes \nabla w_j^*\Big(\cdot,\frac{s_1+s_2}{2}\Big) \,dx
\\&~~~
\nonumber
-\int_0^1 \frac{d}{d\rho} \dashint_{[0,L]^d} \nabla w_i\Big(\cdot,\frac{(2-\rho)s_1+\rho s_2}{2}\Big) \otimes \nabla w_j^*\Big(\cdot,\frac{\rho s_1+(2-\rho)s_2}{2}\Big) \,dx \,d\rho
\\&
\label{CommutingSemigroup}
=\dashint_{[0,L]^d} \nabla w_i\Big(\cdot,\frac{s_1+s_2}{2}\Big) \otimes \nabla w_j^*\Big(\cdot,\frac{s_1+s_2}{2}\Big) \,dx.
\end{align}
{\bf Part 4: Convergence of the variance.}
Finally, we prove that the rescaled variances $L^d \Var a^\RVE$, $L^d \Var \mathcal{F}_{avg}(a)$, and $L^d \Var \mathcal{F}_{2-point}(a)$ and the covariances $L^d \Cov[a^\RVE,\mathcal{F}_{avg}(a)]$, $L^d \Cov[a^\RVE,\mathcal{F}_{2-point}(a)]$, and $L^d \Cov[\mathcal{F}_{avg}(a),\mathcal{F}_{2-point}(a)]$ converge for $L\rightarrow \infty$. We limit ourselves to proving convergence of the rescaled variance $L^d\Var a^\RVE$; the proofs for the convergence of the other variances and the covariances are analogous. Furthermore, to simplify notation we limit ourselves to proving convergence of the variance for $L=2^n$ for some $n\in \mathbb{N}$; the proof in the general case is similar.

By Lemma~\ref{MultilevelVariableStretchedExponentialBound}, we obtain $\Var a^\RVE \leq C(d,\lambda,K) L^{-d} |\log L|^{C(d)}$. Using \eqref{EstimateaRVEMultilevelDecomposition} and this estimate, we deduce
\begin{align*}
&\bigg|\Var a^\RVE
-\sum_{k=0}^{1+\log_2 L_K} \sum_{\tilde k=0}^{1+\log_2 L_K} \sum_{y\in 2^k \mathbb{Z}^d\cap [0,L)^d}
\sum_{\tilde y\in 2^{\tilde k} \mathbb{Z}^d\cap [0,L)^d}
\Cov[X_{y}^k,X_{\tilde y}^{\tilde k}]\bigg|
\\&
\leq C(d,\lambda,K) |\log L|^C L^{-3d/2}.
\end{align*}
Expanding the sum and using stochastic independence of many of these terms, we may write
\begin{align*}
&\Bigg|\Var a^\RVE
-\sum_{k=0}^{1+\log_2 L_K} \sum_{y\in 2^k \mathbb{Z}^d\cap [0,L)^d} \sum_{\substack{\tilde y\in 2^{k} \mathbb{Z}^d\cap [0,L)^d:\\ |y-\tilde y|_{\operatorname{per}}\leq C K \log L \cdot 2^k}}
\Cov[X_{y}^k,X_{\tilde y}^{k}]
\\&
-\sum_{\tilde k=0}^{1+\log_2 L_K} \sum_{k=\tilde k+1}^{1+\log_2 L_K} \sum_{y\in 2^k \mathbb{Z}^d\cap [0,L)^d}
\sum_{\substack{\tilde y\in 2^{\tilde k} \mathbb{Z}^d\cap [0,L)^d:\\ |y-\tilde y|_{\operatorname{per}}\leq C K \log L \cdot 2^k}}
(\Cov[X_{y}^k,X_{\tilde y}^{\tilde k}]+\Cov[X_{\tilde y}^{\tilde k},X_{y}^k])\Bigg|
\\&
\leq C(d,\lambda,K) |\log L|^C L^{-3d/2}.
\end{align*}
Denote by $X_{y}^{k,\mathbb{R}^d}$ the quantities defined as in \eqref{DefinitionXky} but with $u_{i,k,x_0}$ and $u_{j,k,x_0}^*$ replaced by $u_i^{\mathbb{R}^d}$ and $u_j^{*,\mathbb{R}^d}$, i.\,e.\ for example for $k\geq 0$ and $y\in 2^k \mathbb{Z}^d$
\begin{align*}
X_{y}^{k,\mathbb{R}^d}
:=&-\frac{1}{L^d} \int_{(y+[0,2^k]^d)} \int_{4^k}^{4^{k+1}} u_i^{\mathbb{R}^d}(\cdot,s)\,ds~ u_j^{*,\mathbb{R}^d}(\cdot,4^k) \,dx
\\&
-\frac{1}{L^d}\int_{(y+[0,2^k]^d)} u_i^{\mathbb{R}^d}(\cdot,4^k) \int_{4^k}^{4^{k+1}} u_j^{*,\mathbb{R}^d}(\cdot,s)\,ds \,dx
\\&
+\frac{1}{L^d}\int_{(y+[0,2^k]^d)} a\nabla \int_{4^k}^{4^{k+1}} u_i^{\mathbb{R}^d}(\cdot,s)\,ds
\cdot \nabla \int_{4^k}^{4^{k+1}} u_j^{*,\mathbb{R}^d}(\cdot,s)\,ds \,dx.
\end{align*}
Set $X_{y}^{k,\infty}:=L^d X_{y}^{k,\mathbb{R}^d}$.
Note that $\Cov[X_{y}^{k,\infty},X_{\tilde y}^{\tilde k,\infty}]$ does not depend on $L$ (by definition of $X_y^{k,\mathbb{R}^d}$). By the full-space variants of the estimates \eqref{L2ErrorBoundLocalModification}, \eqref{H1ErrorBoundLocalModification}, and \eqref{H1ErrorBoundLocalModificationLowest} (i.\,e.\ the estimates for the differences $u_i^{\mathbb{R}^d}-u_{i,k,x_0}^{\mathbb{R}^d}$ etc., which are derived in exactly the same way) and \eqref{EstimateX} as well as the equality of laws of (products of the) $u_{i,k,x_0}$ etc.\ and (products of the) $u_{i,k,x_0}^{\mathbb{R}^d}$ etc.\,, we get for $k,\tilde k\leq 1+\log_2 L_K$
\begin{align}
\label{UpperBoundCovDifference}
\big|\Cov[X_{\tilde y}^{\tilde k},X_y^{k}]
-\Cov[X_{\tilde y}^{\tilde k,\mathbb{R}^d},X_y^{k,\mathbb{R}^d}]
\big|
\leq C(d,\lambda,K) L^{-cK}.
\end{align}
By the definition of the $X_y^k$ (see \eqref{DefinitionXky}), the definition of the $u_{i,k,x_0}$, and the stationarity of the probability distribution of $a^{\mathbb{R}^d}$, the covariance $\Cov[X_{y}^{k,\mathbb{R}^d},X_{\tilde y}^{\tilde k,\mathbb{R}^d}]$ depends only on $k$, $\tilde k$, $y-\tilde y$, $L$, and the law of $a^{\mathbb{R}^d}$ (but not on $y$ for fixed $y-\tilde y$). Furthermore, by \eqref{EstimateX} we have $|\Cov[X_{\tilde y}^{\tilde k},X_y^k]|\leq C L^{-2d}$.
This implies by \eqref{UpperBoundCovDifference}
\begin{align*}
&\bigg|
\Var a^\RVE
-\sum_{k=0}^{1+\log_2 L_K} \bigg(\frac{L}{2^k}\bigg)^d \sum_{\substack{\tilde y\in 2^{k} \mathbb{Z}^d\cap [-L/2,L/2)^d: \\ |\tilde y-0|\leq C K \log L \cdot 2^k}}
\Cov[X_{0}^{k,\mathbb{R}^d},X_{\tilde y}^{k,\mathbb{R}^d}]
\\&~~
-\sum_{\tilde k=0}^{1+\log_2 L_K} \sum_{k=\tilde k+1}^{1+\log_2 L_K} \bigg(\frac{L}{2^k}\bigg)^d
\sum_{\substack{\tilde y\in 2^{\tilde k} \mathbb{Z}^d\cap [-L/2,L/2)^d:\\ |\tilde y-0| \leq C K \log L \cdot 2^k}}
(\Cov[X_{0}^{k,\mathbb{R}^d},X_{\tilde y}^{\tilde k,\mathbb{R}^d}]+\Cov[X_{\tilde y}^{\tilde k,\mathbb{R}^d},X_{0}^{k,\mathbb{R}^d}])
\bigg|
\\&
\leq C(d,\lambda,K) |\log L|^C L^{-3d/2}
+\sum_{\tilde k=0}^{1+\log_2 L_K} \sum_{k=\tilde k}^{1+\log_2 L_K} \bigg(\frac{L}{2^k}\bigg)^d \cdot C(d,\lambda,K) L^{-cK}
\\&
\leq C(d,\lambda,K) |\log L|^C L^{-3d/2}
\end{align*}
for $K$ chosen large enough.

The fact that (by stochastic independence) we have $\Cov[L^d X_{\tilde y}^{\tilde k},L^d X_y^{k}]=0$ for $|y-\tilde y|_\per \geq C(d) 2^k K \log L$ and $k\geq \tilde k$ implies together with \eqref{UpperBoundCovDifference} and the definition of $X_y^{k,\infty}$ that (by selecting $K$ large enough and by choosing $L$ to be just small enough for $|y-\tilde y|\geq C(d) 2^k K \log L$ to hold in case $|y-\tilde y|\geq C(d) K 2^k$ and otherwise -- i.\,e.\ for $|y-\tilde y|\leq C(d) K 2^k$ -- appealing to the upper bound \eqref{EstimateX})
\begin{align}
\label{BoundCovariances}
\big|\Cov[X_{\tilde y}^{\tilde k,\infty},X_y^{k,\infty}]\big|
\leq C(d,\lambda,K) \exp\Big(-\frac{|y-\tilde y|}{C(d,\lambda) 2^k}\Big).
\end{align}
As a consequence, we obtain
\begin{align*}
&\bigg|
L^d \Var a^\RVE
-\sum_{k=0}^{1+\log_2 L_K} (2^k)^{-d} \sum_{\tilde y\in 2^k \mathbb{Z}^d}
\Cov[X_{0}^{k,\infty},X_{\tilde y}^{k,\infty}]
\\&~~
-\sum_{\tilde k=0}^{1+\log_2 L_K} \sum_{k=\tilde k+1}^{1+\log_2 L_K} (2^k)^{-d}
\sum_{\tilde y\in 2^{\tilde k} \mathbb{Z}^d}
(\Cov[X_{0}^{k,\infty},X_{\tilde y}^{\tilde k,\infty}]
+\Cov[X_{\tilde y}^{\tilde k,\infty},X_{0}^{k,\infty}])
\bigg|
\\&
\leq C(d,\lambda,K) |\log L|^C L^{-d/2}
\\&~~~~
+ \sum_{\tilde k=0}^{1+\log_2 L_K} \sum_{k=\tilde k}^{1+\log_2 L_K} (2^k)^{-d} \sum_{\substack{\tilde y\in 2^{\tilde k} \mathbb{Z}^d:\\ |\tilde y-0|> C K \log L \cdot 2^k}}
C(d,\lambda,K) \exp\bigg(-\frac{|\tilde y-0|}{C 2^k}\bigg)
\\&
\leq C(d,\lambda,K) |\log L|^C L^{-d/2}
\\&~~~~
+ \sum_{\tilde k=0}^{1+\log_2 L_K} \sum_{k=\tilde k}^{1+\log_2 L_K} (2^k)^{-d} \cdot \bigg(\frac{2^k}{2^{\tilde k}}\bigg)^d
C(d,\lambda,K) \exp(-c K \log L)
\\&
\leq C(d,\lambda,K) |\log L|^C L^{-d/2}.
\end{align*}
This implies
\begin{align*}
&\bigg|
L^d \Var a^\RVE
-\sum_{k=0}^{\infty} (2^k)^{-d} \sum_{\tilde y\in 2^k \mathbb{Z}^d}
\Cov[X_{0}^{k,\infty},X_{\tilde y}^{k,\infty}]
\\&~~
-\sum_{\tilde k=0}^{\infty} \sum_{k=\tilde k+1}^{\infty} (2^k)^{-d}
\sum_{\tilde y\in 2^{\tilde k} \mathbb{Z}^d}
(\Cov[X_{0}^{k,\infty},X_{\tilde y}^{\tilde k,\infty}]
+\Cov[X_{\tilde y}^{\tilde k,\infty},X_{0}^{k,\infty}])
\bigg|
\\&
\leq C(d,\lambda,K) |\log L|^C L^{-d/2}
\\&~~~~~
+2\sum_{\tilde k=0}^{1+\log_2 L_K} \sum_{k=2+\log_2 L_K}^\infty (2^k)^{-d} \Bigg|\sum_{\tilde y\in 2^{\tilde k} \mathbb{Z}^d} \Cov[X_0^{k,\infty},X_y^{\tilde k,\infty}]\Bigg|
\\&~~~~~
+2\sum_{\tilde k=2+\log_2 L_K}^\infty \sum_{k=\tilde k}^\infty (2^k)^{-d} \Bigg|\sum_{\tilde y\in 2^{\tilde k} \mathbb{Z}^d} \Cov[X_0^{k,\infty},X_y^{\tilde k,\infty}]\Bigg|.
\end{align*}
We now distinguish the cases $\tilde y\in [-R_k 2^k,R_k 2^k]^d$ and $\tilde y\notin [-R_k 2^k,R_k 2^k]^d$ for some $R_k$ to be chosen. Using \eqref{BoundCovariances} in the latter case, we get
\begin{align*}
&\bigg|
L^d \Var a^\RVE
-\sum_{k=0}^{\infty} (2^k)^{-d} \sum_{\tilde y\in 2^k \mathbb{Z}^d}
\Cov[X_{0}^{k,\infty},X_{\tilde y}^{k,\infty}]
\\&~~
-\sum_{\tilde k=0}^{\infty} \sum_{k=\tilde k+1}^{\infty} (2^k)^{-d}
\sum_{\tilde y\in 2^{\tilde k} \mathbb{Z}^d}
(\Cov[X_{0}^{k,\infty},X_{\tilde y}^{\tilde k,\infty}]
+\Cov[X_{\tilde y}^{\tilde k,\infty},X_{0}^{k,\infty}])
\bigg|
\\&
\leq C(d,\lambda,K) |\log L|^C L^{-d/2}
\\&~~~~~
+2\sum_{\tilde k=0}^{1+\log_2 L_K} \sum_{k=2+\log_2 L_K}^\infty (2^k)^{-d} \Bigg|\sum_{\tilde y\in 2^{\tilde k} \mathbb{Z}^d\cap [-R_k 2^k,R_k 2^k]^d} \Cov[X_0^{k,\infty},X_y^{\tilde k,\infty}]\Bigg|
\\&~~~~~
+2\sum_{\tilde k=0}^{1+\log_2 L_K} \sum_{k=2+\log_2 L_K}^\infty (2^k)^{-d} \cdot C(d,\lambda,K) \bigg(\frac{2^k}{2^{\tilde k}}\bigg)^d \exp\Big(-\frac{R_k}{C}\Big)
\\&~~~~~
+2\sum_{\tilde k=2+\log_2 L_K}^\infty \sum_{k=\tilde k}^\infty (2^k)^{-d} \Bigg|\sum_{\tilde y\in 2^{\tilde k} \mathbb{Z}^d\cap [-R_k 2^k,R_k 2^k]^d} \Cov[X_0^{k,\infty},X_y^{\tilde k,\infty}]\Bigg|
\\&~~~~~
+2\sum_{\tilde k=2+\log_2 L_K}^\infty \sum_{k=\tilde k}^\infty (2^k)^{-d} \cdot C(d,\lambda,K) \bigg(\frac{2^k}{2^{\tilde k}}\bigg)^d \exp\Big(-\frac{R_k}{C}\Big).
\end{align*}
For $\tilde k \leq k$ and $R 2^k \leq L_K$ we have by Lemma~\ref{MultilevelVariableStretchedExponentialBound} and \eqref{EstimateX}
\begin{align*}
&\Bigg|\Cov\Bigg[X_y^{k},\sum_{\tilde y\in 2^{\tilde k} \mathbb{Z}^d \cap [-R 2^k,R 2^k]^d} X_{\tilde y}^{\tilde k}
\Bigg]\Bigg|
\\&
\leq \sqrt{\big|\Var X_y^{k}\big|} \sqrt{\bigg|\Var \sum_{\tilde y\in 2^{\tilde k} \mathbb{Z}^d \cap [-R 2^k,R 2^k]^d} X_{\tilde y}^{\tilde k}\bigg|}
\\&
\leq C(d,\lambda,K) L^{-2d} \bigg(\frac{R 2^k}{2^{\tilde k}}\bigg)^{d/2} \bigg|\log \frac{R2^k}{2^{\tilde k}}\bigg|^{d/2}
\end{align*}
which entails by \eqref{UpperBoundCovDifference} upon choosing $L^{1/2}=R2^k$
\begin{align*}
&\Bigg|\Cov\Bigg[X_y^{k,\infty},\sum_{\tilde y\in 2^{\tilde k} \mathbb{Z}^d \cap [-R 2^k,R 2^k]^d} X_{\tilde y}^{\tilde k,\infty}
\Bigg]\Bigg|
\\&
\leq C(d,\lambda,K) \bigg(\frac{R 2^k}{2^{\tilde k}}\bigg)^{d/2} |\log (R2^k)|^d.
\end{align*}
As a consequence, choosing $R_k=Sk$ for $S\geq 1$ large enough we get
\begin{align*}
&\bigg|
L^d \Var a^\RVE
-\sum_{k=0}^{\infty} (2^k)^{-d} \sum_{\tilde y\in 2^k \mathbb{Z}^d}
\Cov[X_{0}^{k,\infty},X_{\tilde y}^{k,\infty}]
\\&~~
-\sum_{\tilde k=0}^{\infty} \sum_{k=\tilde k+1}^{\infty} (2^k)^{-d}
\sum_{\tilde y\in 2^{\tilde k} \mathbb{Z}^d}
(\Cov[X_{0}^{k,\infty},X_{\tilde y}^{\tilde k,\infty}]
+\Cov[X_{\tilde y}^{\tilde k,\infty},X_{0}^{k,\infty}])
\bigg|
\\&
\leq C(d,\lambda,K) |\log L|^C L^{-d/2}
\\&~~~~~
+2\sum_{\tilde k=0}^{1+\log_2 L_K} \sum_{k=2+\log_2 L_K}^\infty (2^k)^{-d} \cdot C(d,\lambda,K) \bigg(\frac{R_k 2^k}{2^{\tilde k}}\bigg)^{d/2} |\log (R_k 2^k)|^d
\\&~~~~~
+C(d,\lambda,K) \sum_{k=2+\log_2 L_K}^\infty \exp\Big(-\frac{R_k}{C}\Big)
\\&~~~~~
+2\sum_{\tilde k=2+\log_2 L_K}^\infty \sum_{k=\tilde k}^\infty (2^k)^{-d} \cdot C(d,\lambda,K) \bigg(\frac{R_k 2^k}{2^{\tilde k}}\bigg)^{d/2} |\log (R_k 2^k)|^d
\\&~~~~~
+C(d,\lambda,K) (L_K)^{-d} \sum_{k=0}^\infty \exp\Big(-\frac{R_k}{C}\Big)
\\&
\leq C(d,\lambda,K) |\log L|^C L^{-d/2}
\\&~~~~~
+C(d,\lambda,K) (L_K)^{-d/2} |\log L_K|^C
\\&~~~~~
+C(d,\lambda,K) \exp\Big(-\frac{S\log L_K}{C}\Big)
\\&~~~~~
+C(d,\lambda,K) (L_K)^{-d} |\log L_K|^C
\\&~~~~~
+C(d,\lambda,K) (L_K)^{-d}
\\&
\leq C(d,\lambda,K) |\log L|^C L^{-d/2}.
\end{align*}
In total, we have shown convergence of the rescaled variance $L^d \Var a^\RVE$ towards a limit independent of $L$ with the desired rate.

The proof of the other cases is analogous.
\end{proof}

\begin{proof}[Proof of Theorem~\ref{TheoremSemigroupDecay}]
The estimate \eqref{SemigroupDecay1} is contained in \cite[Corollary~4]{GloriaOttoNew}.
In view of the Poincar\'e inequality the bound \eqref{SemigroupDecay2} is a consequence of \eqref{SemigroupDecay1} and an estimate on a (weighted) average of $u_i^{\mathbb{R}^d}$. Hence, we only need to derive a bound on
\begin{align*}
\int u_i^{\mathbb{R}^d}(\cdot,T) \frac{1}{\sqrt{T}^{d}} \psi\Big(\frac{x}{\sqrt{T}}\Big) \,dx
\end{align*}
for a suitably chosen smooth function $\psi$ supported in $\{|x|\leq 1\}$. To this aim, we compute
\begin{align*}
&\int u_i^{\mathbb{R}^d}(\cdot,T) \frac{1}{\sqrt{T}^{d}} \psi\Big(\frac{x}{\sqrt{T}}\Big) \,dx
\\&
=\int u_i^{\mathbb{R}^d}(\cdot,2T) \frac{1}{\sqrt{T}^{d}} \psi\Big(\frac{x}{\sqrt{T}}\Big) \,dx -\int_T^{2T} \int \frac{1}{\sqrt{T}^{d}} \psi\Big(\frac{x}{\sqrt{T}}\Big) \frac{d}{dt} u_i^{\mathbb{R}^d} \,dx\,dt
\\&
=\int u_i^{\mathbb{R}^d}(\cdot,2T) \frac{1}{\sqrt{2T}^{d}} \psi\Big(\frac{x}{\sqrt{2T}}\Big) \,dx
\\&~~~
+\int u_i^{\mathbb{R}^d}(\cdot,2T) \bigg(\frac{1}{\sqrt{T}^{d}} \psi\Big(\frac{x}{\sqrt{T}}\Big)-\frac{1}{\sqrt{2T}^{d}} \psi\Big(\frac{x}{\sqrt{2T}}\Big)\bigg) \,dx
\\&~~~
-\int_T^{2T} \int \frac{1}{\sqrt{T}^{d+1}} \nabla \psi \Big(\frac{x}{\sqrt{T}}\Big) \cdot a\nabla u_i^{\mathbb{R}^d} \,dx\,dt
\end{align*}
which yields upon applying the Poincar\'e inequality to the second term (note that the second factor in the integral has vanishing average) and using the bound \eqref{SemigroupDecay1}
\begin{align*}
&\Bigg|\int u_i^{\mathbb{R}^d}(\cdot,T) ~ \frac{1}{\sqrt{T}^{d}} \psi\Big(\frac{x}{\sqrt{T}}\Big) \,dx
-\int u_i^{\mathbb{R}^d}(\cdot,2T) \frac{1}{\sqrt{2T}^{d}} \psi\Big(\frac{x}{\sqrt{2T}}\Big) \,dx
\Bigg|
\\&
\leq C(d) \mathcal{C}(a,2T) (2T)^{-1/2-d/4}
+ C(d,\lambda) \int_T^{2T} \mathcal{C}(a,t) t^{-1-d/4} \sqrt{T}^{-1} \,dt
\\&
\leq C(d) \mathcal{C}(a,T) T^{-1/2-d/4}
\end{align*}
Summing over a dyadic sequence of times $2^k T$ and using the fact that almost surely
\begin{align*}
\lim_{T\rightarrow \infty} \int u_i^{\mathbb{R}^d}(\cdot,T) \sqrt{T}^{-d} \psi(x/\sqrt{T}) \,dx=0,
\end{align*}
we infer \eqref{SemigroupDecay2} (upon redefining the constant $\mathcal{C}(a,T)$).
\end{proof}

In the previous proofs, we have made use of the following elementary concentration estimate for sums of random variables with multilevel local dependence.
\begin{lemma}[\cite{FischerMultilevelLocalDependence}, Lemma~9]
\label{MultilevelVariableStretchedExponentialBound}
Consider a probability distribution of uniformly elliptic and bounded coefficient fields $a$ on $\mathbb{R}^d$ or a periodization of such a probability distribution, and suppose that assumptions (A1)-(A3) respectively (A1), (A2), (A3$_a$)-(A3$_c$) are satisfied. Let $X=X(a)$ be a random variable that is approximately a sum of random variables with multilevel local dependence in the sense of Definition~\ref{ConditionRandomVariable}. Then for $\tilde \gamma:=\gamma/(\gamma+1)$ the concentration estimate
\begin{align*}
||X-\mathbb{E}[X]||_{\exp^{\tilde \gamma}} \leq C(d,\gamma,K) B |\log L|^{d/2} L^{-d/2}
\end{align*}
holds true.
\end{lemma}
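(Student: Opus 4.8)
The plan is to run the argument level by level in the multilevel decomposition of Definition~\ref{ConditionRandomVariable}: within a fixed dyadic scale $m$ the summands $X_y^m$ depend on $a$ only in cubes of side $\sim K(\log L)2^m\varepsilon$, so two of them are stochastically independent as soon as their cubes are separated by $\varepsilon$; a colouring argument therefore turns each level into a sum of \emph{independent} centred sub‑Weibull random variables, and the $1+\log_2 L$ levels are then summed as a geometric series. If $X$ is only \emph{approximately} such a sum, I would split off the exact part at the outset and absorb the remainder at the very end, since by estimates of the type \eqref{EstimateaRVEMultilevelDecomposition} its $||\cdot||_{\exp^{\tilde\gamma}}$–norm is of the strictly smaller order $BL^{-d}$.

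First I would write $X-\mathbb{E}[X]=\sum_{m=0}^{1+\log_2 L}S_m$ with $S_m:=\sum_{y}\big(X_y^m-\mathbb{E}[X_y^m]\big)$; since $||\cdot||_{\exp^{\tilde\gamma}}$ is a genuine norm (subadditive via Minkowski's inequality), it suffices to bound each $||S_m||_{\exp^{\tilde\gamma}}$. Next, for fixed $m$, I would colour the index set $2^m\varepsilon\mathbb{Z}^d\cap[0,L\varepsilon)^d$ by the residues of $y/(2^m\varepsilon)$ modulo $Q:=\lceil 2K\log L\rceil+2$ in each coordinate, producing $D=Q^d\le C(d)(K\log L)^d$ classes. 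Two distinct indices in the same class differ by a multiple of $Q$ in some coordinate (after rescaling by $2^m\varepsilon$), so the cubes $y+K\log L\,[-2^m\varepsilon,2^m\varepsilon]^d$ are pairwise disjoint with Euclidean gap $\ge 2^m\varepsilon\ge\varepsilon$; hence by (A3) respectively (A3$_b$), applied to this finite family of pairwise separated regions, the variables $X_y^m$ belonging to one class are \emph{mutually} independent, and each class contains at most $C(d)\big(L/(K2^m\log L)\big)^d$ indices.

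The heart of the matter is the estimate for the partial sum $T_j^m$ over the $j$-th colour class, which is a sum of $k_j\le C(d)\big(L/(K2^m\log L)\big)^d$ independent centred random variables with $||X_y^m-\mathbb{E}[X_y^m]||_{\exp^\gamma}\le 2BL^{-d}$. Here I would invoke the sharp concentration inequality for sums of independent centred sub‑Weibull random variables: for independent centred $\eta_1,\dots,\eta_k$ with $||\eta_i||_{\exp^\gamma}\le b$,
\begin{align*}
\big|\big|\textstyle\sum_{i=1}^{k}\eta_i\big|\big|_{\exp^{\gamma/(\gamma+1)}}\le C(\gamma)\,\sqrt{k}\,b ,
\end{align*}
which follows from the Rosenthal/Latała moment bound $\mathbb{E}\big[|\sum_i\eta_i|^p\big]^{1/p}\le C(\gamma)\big(\sqrt{p\,k}\,b+p^{1+1/\gamma}k^{1/p}\,b\big)$ for $p\ge2$ together with the identity $1/\tilde\gamma=1+1/\gamma$: dividing by $p^{1/\tilde\gamma}$, the first term becomes $p^{1/2-1/\tilde\gamma}\sqrt{k}\,b$ and the second becomes $k^{1/p}b$, and both are $\le C\sqrt{k}\,b$ uniformly in $p\ge1$ (for $1\le p\le2$ one uses the second moment directly). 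Applying this to $T_j^m$ gives $||T_j^m||_{\exp^{\tilde\gamma}}\le C(d,\gamma)(K\log L)^{-d/2}(L/2^m)^{d/2}BL^{-d}$; summing over the $D\le C(d)(K\log L)^d$ classes yields $||S_m||_{\exp^{\tilde\gamma}}\le C(d,\gamma)(K\log L)^{d/2}\,2^{-md/2}\,BL^{-d/2}$, and summing the geometric series over $m=0,\dots,1+\log_2 L$ gives $||X-\mathbb{E}[X]||_{\exp^{\tilde\gamma}}\le C(d,\gamma,K)\,B\,|\log L|^{d/2}L^{-d/2}$, as claimed.

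The main obstacle is the independent-sum step: one must use the \emph{optimal} moment inequality so that the number $k$ of summands enters only as $\sqrt{k}$ and the individual sub‑Weibull tail $p^{1/\gamma}$ is precisely absorbed by the degraded exponent $\tilde\gamma=\gamma/(\gamma+1)$ — a crude triangle inequality over the $k$ summands, or a union bound, would lose a power of $k$ and destroy the target $L^{-d/2}$ scaling. The remaining ingredients (the grid colouring, the geometric summation, and the verification of mutual independence within each colour class from finite range of dependence) are routine.
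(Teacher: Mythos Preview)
Your proposal is correct. The paper does not actually prove this lemma here but defers it to the companion article \cite{FischerMultilevelLocalDependence}; the level-by-level decomposition, the colouring of each level into $O((K\log L)^d)$ subfamilies of mutually independent summands, and the geometric summation over levels that you describe are exactly the natural approach, and the independent-sum concentration step $\big|\big|\sum_{i=1}^k\eta_i\big|\big|_{\exp^{\tilde\gamma}}\le C(\gamma)\sqrt{k}\,b$ you invoke is precisely Lemma~\ref{ConcentrationStretchedExponential} in the paper's appendix (derived there via Burkholder's martingale inequality rather than Rosenthal/Lata{\l}a, with the same $\sqrt{k}$ scaling and the same loss $\gamma\mapsto\tilde\gamma=\gamma/(\gamma+1)$).
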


\section{Failure and Success of the Variance Reduction Approaches}
\label{SectionCounterexample}

We now establish our theorems on the failure and the success of the variance reduction approaches in stochastic homogenization. We start with the counterexample that shows that in general there is no guarantee that the variance reduction techniques provide an effective reduction of the variance, even for ``natural'' choices of the statistical quantity $\mathcal{F}(a)$ like the spatial average $\mathcal{F}_{avg}(a):=\dashint_{[0,L\varepsilon]^d} a \,dx$.

\begin{proof}[Proof of Theorem~\ref{TheoremFailureVarianceReduction}]
Before turning to the main result of Theorem~\ref{TheoremFailureVarianceReduction}, the failure of the spatial average $\mathcal{F}_{avg}(a)$ to explain a fraction of the variance of $a^\RVE$ (inequality \eqref{FailureVarianceReductionCov}), let us first show \eqref{FailureVarianceReduction}.
The estimate \eqref{FailureVarianceReduction} is in fact a consequence of the estimate \eqref{SQSErrorVariance} in the proof of Theorem~\ref{TheoremSQS} in combination with \eqref{DifferenceSystematicError} and the lower bound for the variance of $\mathcal{M}^\delta$ which is a straightforward consequence of the formula \eqref{LimitDistribution} and the definition of $\Varaideal=(1-|\rho|^2)\Var a_{ij}^\RVE$.

Note that the derivation of \eqref{FailureVarianceReduction2} from \eqref{FailureVarianceReductionCov} requires the estimate \eqref{FailureVarianceReduction} under the assumption (A2') instead of (A2). However, the only place where the assumption (A2) entered in our analysis is in Proposition~\ref{PropositionApproximabilityByMultilevel}, where it was used to apply the result of \cite{GloriaOttoNew} on the decay of the semigroup. However, the arguments of \cite{GloriaOttoNew} may be modified to yield the corresponding estimate under the assumption of discrete stationarity (A2').

\begin{figure}
\begin{tikzpicture}[scale=0.1]
\foreach \xa in {0,10,20,30,40}
\foreach \ya in {0,5,10,15,20,25,30,35,40}
{
\draw[fill,violet] (\xa+0,\ya+0) -- (\xa+0,\ya+5) -- (\xa+10,\ya+5) -- (\xa+10,\ya+0);
\foreach \y in {0,1,2,3,4}
  {
   \draw[fill,red] (5+\xa+0,\ya+\y+0) -- (5+\xa+0,\ya+\y+0.5) -- (5+\xa+5,\ya+\y+0.5)
                      -- (5+\xa+5,\ya+\y);
   \draw[fill,blue] (5+\xa+0,\ya+\y+0.5) -- (5+\xa+0,\ya+\y+1.0) -- (5+\xa+5,\ya+\y+1.0)
                       -- (5+\xa+5,\ya+\y+0.5);
  }
}
\end{tikzpicture}
\caption{\label{FigureMicro}A single tile with (second-order laminate) microstructure, as used in the proof of Theorem~\ref{TheoremFailureVarianceReduction}. Blue corresponds to the regions with $a(x)=\lambda \Id$, red to the regions with $a(x)=\Id$, and violet to the regions with $a(x)=\mu \Id$.}
\end{figure}
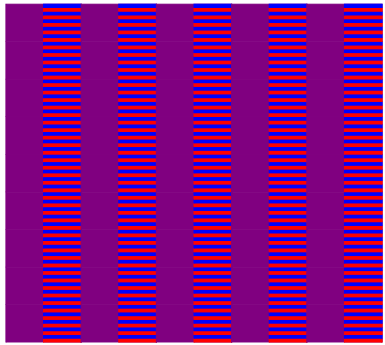
\begin{figure}
\includegraphics[scale=0.5]{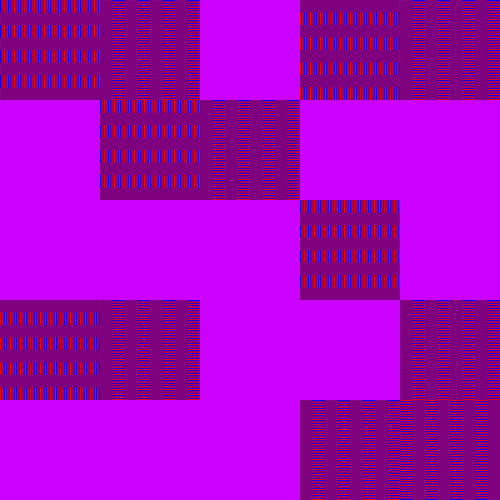}
~~~
\caption{A single realization of the probability distribution of our counterexample (with an exaggerated size of the microstructure in the tiles with microstructure). The tiles with microstructure behave almost like a homogeneous tile with an effective conductivity. Note that the tiles with microstructure are oriented randomly in order to enforce exact isotropy of the (co-)variances $\Var a^\RVE$ and $\Cov[a^\RVE,\dashint a \,dx]$.}
\end{figure}
Let us now turn to the construction of our counterexample featuring the degenerate covariance \eqref{FailureVarianceReductionCov}.
The construction is based on the following ideas:
\begin{itemize}
\item The approximation $a^\RVE$ for the effective coefficient depends in a uniformly continuous way on $a$ as a map $L^\infty([0,L\varepsilon]^d;\mathbb{R}^{d\times d})\rightarrow \mathbb{R}^{d\times d}$, as long as $a$ is uniformly elliptic and bounded.
\item Consider a probability distribution of coefficient fields $a$ for which $a$ is almost surely almost everywhere a multiple of the identity matrix. If in addition the law of $a$ is invariant under reflections of coordinate axes and invariant under exchange of coordinate axes (that is, invariant under diagonal reflections), the covariance
\begin{align*}
\Cov\left[a^\RVE,\dashint_{[0,L\varepsilon]^d} a \,dx\right]
\end{align*}
is a multiple of $\Id\otimes \Id$. For a proof of this fact, see Lemma~\ref{CovScalarLemma} below.
\item Consider the ``periodized random checkerboard'' with the set of tiles $\mathcal{T}:=\{x_0+[0,\varepsilon)^d: x_0\in \varepsilon\mathbb{Z}^d\cap [0,L\varepsilon)^d\}$. On each tile $T\in \mathcal{T}$, choose at random (and independently from the other tiles) $a(x)=\Id$ with probability $0.5$ and $a(x)=\frac{1}{2} \Id$ with probability $0.5$. By Proposition~\ref{PropositionLowerBoundOnCorrelation} and the preceding considerations, for this probability distribution the covariance
\begin{align*}
\Cov\left[a^\RVE,\dashint_{[0,L\varepsilon]^d} a \,dx\right]
\end{align*}
is a positive multiple of $\Id\otimes \Id$; in fact, one has a lower bound of the form $\gtrsim L^{-d}\Id \otimes \Id$.
\item We now consider a ``periodized random checkerboard with microstructure'' with the set of tiles $\mathcal{T}:=[0,\varepsilon)^d+ (\varepsilon\mathbb{Z}^d\cap [0,L\varepsilon)^d)$: Fix some $\tau\ll 1$ with $1/\tau\in 2\mathbb{N}$.  On each tile $T=\varepsilon k +[0,\varepsilon)^d \in \mathcal{T}$, choose at random (and independently from the other tiles) $a_\tau(x)=\sigma \Id$ with probability $0.5$ (where $\sigma>0$ is to be chosen below) and $a_\tau(x)=A_{\tau}((x-\varepsilon k)/\varepsilon)$ with probability $0.5$, where $A_{\tau}:[0,1]^2\rightarrow \mathbb{R}^{2\times 2}$ is the tile described in Figure~\ref{FigureMicro}, rotated and reflected at random (with equal probability for all $8$ orientations and independently on all such tiles).

The probability distribution of $a$ satisfies the same isotropy properties as in the case of the periodized random checkerboard.
Thus, by Lemma~\ref{CovScalarLemma} the covariance
\begin{align*}
\Cov\left[a_\tau^\RVE,\dashint_{[0,L\varepsilon]^d} a_\tau \,dx\right]
\end{align*}
is a multiple of $\Id\otimes \Id$.
\item We shall argue below that for suitable $\sigma,\lambda,\mu>0$ and for $\tau\ll 1$ small enough the covariance
\begin{align*}
\Cov\left[a_\tau^\RVE,\dashint_{[0,L\varepsilon]^d} a_\tau \,dx\right]
\end{align*}
is negative; in fact, one has an upper bound of the form $\lesssim -L^{-d} \Id\otimes \Id$.
\item Linearly interpolating between $a_\tau$ and $a$ -- that is, considering for $\kappa \in [0,1]$ the coefficient field
\begin{align*}
a_{\tau,\kappa}:=(1-\kappa) a + \kappa a_\tau
\end{align*}
defined on the product probability space, i.\,e.\ for independent $a_\tau$ and $a$ --  , we find a probability distribution of coefficient fields $\tilde a$ for which the covariance 
\begin{align*}
\Cov\left[{\tilde a}^\RVE,\dashint_{[0,L\varepsilon]^d} \tilde a \,dx\right]
\end{align*}
vanishes. This is possible by the continuous dependence of $a^\RVE$ and $\dashint_{[0,L\varepsilon]^d} a \,dx$ on $a$ (and hence the continuous dependence on $\kappa \in [0,1]$ in the case of the family $a_{\tau,\kappa}$) and by the fact that for all $\kappa \in [0,1]$ the covariance 
\begin{align*}
\Cov\left[a_{\tau,\kappa}^\RVE,\dashint_{[0,L\varepsilon]^d} a_{\tau,\kappa} \,dx\right]
\end{align*}
is a multiple of $\Id\otimes \Id$ (this latter property holds again by the isotropy properties of the probability distribution and Lemma~\ref{CovScalarLemma} below).
\item For any $\kappa \in (0,1)$ the variances $\Var \dashint_{[0,L\varepsilon]^d} a_{\tau,\kappa}\,dx$ and $\Var a^\RVE_{\tau,\kappa}$ are nondegenerate in the sense $\gtrsim L^{-d}\Id\otimes \Id$. For the spatial average $\dashint_{[0,L\varepsilon]^d} a_{\tau,\kappa}\,dx$ this non-degeneracy is an easy consequence of the formula
\begin{align*}
~~\quad\quad\Var \dashint_{[0,L\varepsilon]^d} a_{\tau,\kappa} \,dx
=(1-\kappa)^2\, \Var \dashint_{[0,L\varepsilon]^d} a \,dx
+\kappa^2\, \Var \dashint_{[0,L\varepsilon]^d} a_{\tau} \,dx
\end{align*}
(which follows from the definition of $a_{\tau,\kappa}$ and the independence of $a$ and $a_\tau$) and the fact that the latter two variances satisfy such a lower bound (note that the spatial average of the coefficient field on a tile with microstructure $A_{\tau}$ does not equal $\sigma \Id$). The non-degeneracy of $\Var a_{\tau,\kappa}^\RVE$ is shown as follows: First, a new coefficient field $a_{\tau,\kappa,\operatorname{eff}}$ is introduced by letting $a_{\tau,\kappa,\operatorname{eff}}=a_{\tau,\kappa}$ on each tile without microstructure but replacing the values of $a_{\tau,\kappa}$ by the effective coefficient from periodic homogenization on each tile with microstructure. Note that $a_{\tau,\kappa,\operatorname{eff}}$ corresponds to a standard random checkerboard. Denote by $a_{\tau,\kappa,\operatorname{eff}}^\RVE$ the approximation for the effective coefficient associated with the coefficient field $a_{\tau,\kappa,\operatorname{eff}}$ (i.\,e.\ the result of formula \eqref{PeriodicaRVE} for the coefficient field $a_{\tau,\kappa,\operatorname{eff}}$). The nondegeneracy of $\Var a_{\tau,\kappa}^\RVE$ now follows from the nondegeneracy $\Var a_{\tau,\kappa,\operatorname{eff},ii}^\RVE \gtrsim L^{-d}$ and the convergence $|a_{\tau,\kappa}^\RVE-a_{\tau,\kappa,\operatorname{eff}}^\RVE|\rightarrow 0$ for $\tau\rightarrow 0$ (uniformly in $\kappa$, see below).

Note that $a_{\tau,\kappa,\operatorname{eff}}^\RVE$ corresponds to a random checkerboard with tiles $(\kappa \sigma + (1-\kappa))\Id$, $\kappa \sigma + (1-\kappa)\cdot \frac{1}{2}\Id$, $\kappa A_{\tau} + (1-\kappa)\Id$, and $\kappa A_{\tau} + (1-\kappa) \cdot \frac{1}{2}\Id$, each tile chosen with probability $\frac{1}{4}$ (and the microscopic tiles rotated and reflected at random). Thus the nondegeneracy of $\Var a_{\tau,\kappa,\operatorname{eff},ii}^\RVE$ for $1\leq i\leq d$ follows from the covariance estimate of Proposition~\ref{PropositionLowerBoundOnCorrelation} and the quantitative upper bound $\Var \dashint_{[0,L\varepsilon]^d} a_{\tau,\kappa,\operatorname{eff}} \,dx\leq C L^{-d}$.
\end{itemize}

To complete the proof, it only remains to establish the negativity of the covariance
\begin{align*}
\Cov\left[a_\tau^\RVE,\dashint_{[0,L\varepsilon]^d} a_\tau \,dx\right]
\end{align*}
for $\tau\ll 1$ small enough and suitable $\sigma$, $\mu$, $\lambda$, as well as the convergence $a_{\tau,\kappa}^\RVE \rightarrow a_{\tau,\kappa,\operatorname{eff}}^\RVE$ for $\tau\rightarrow 0$, uniformly in $\kappa$. The underlying idea for our choice of the tiles in Figure~\ref{FigureMicro} is that we intend to exploit the nonlinear dependence of the effective coefficients in periodic homogenization on the coefficient field, equipping such a tile with an effective coefficient that is unrelated to the spatial average of the coefficient field. Heuristically, by classical results in periodic homogenization we expect the following to happen:
\begin{itemize}
\item Consider our (sub)pattern of periodic horizontal stripes of equal height (i.\,e.\ the red-and-blue subpattern in Figure~\ref{FigureMicro}), in which the coefficient field $a$ alternatingly takes the values $\Id$ and $\lambda \Id$. Then the (large-scale) effective coefficient for this pattern is given by
\begin{align*}
\begin{pmatrix}
\frac{1+\lambda}{2}&0\\0&\frac{2\lambda}{1+\lambda}
\end{pmatrix},
\end{align*}
that is by the arithmetic mean in the horizontal direction and by the harmonic mean in the vertical direction.
\item Consider now the pattern of periodic vertical stripes of equal width, in which the coefficient alternatingly takes the value $\mu \Id$ respectively is given by the pattern of horizontal stripes from the previous step. The effective coefficient for this (second-order laminate) pattern is (at least in the limit of an infinitesimally fine horizontal pattern) given by the arithmetic mean of the effective coefficients in the vertical direction and the harmonic mean of the effective coefficients in the horizontal direction, that is by
\begin{align*}
\begin{pmatrix}
\frac{2\mu(1+\lambda)}{2\mu+1+\lambda}&0\\0&\frac{\lambda}{1+\lambda}+\frac{\mu}{2}.
\end{pmatrix}
\end{align*}
Choosing $\mu:=\frac{3\lambda^2+(1-\lambda)\sqrt{9\lambda^2+14\lambda+9}+2\lambda+3}{4(\lambda+1)}$ -- which is positive for any $\lambda\in (0,1]$ -- , the effective coefficient becomes a multiple of the identity matrix.

Note that the spatial average of the coefficient field on a tile is given by
\begin{align*}
\frac{\mu+\frac{\lambda+1}{2}}{2} \Id.
\end{align*}
\item Consider the coefficient field $a_{\tau,\operatorname{eff}}$ that is obtained from our random checkerboard with microstructure $a_\tau$ by replacing $a_\tau$ on the tiles with microstructure with the effective coefficient $(\frac{\lambda}{1+\lambda}+\frac{\mu}{2})\Id$. The coefficient field $a_{\tau,\operatorname{eff}}$ is now just a usual random checkerboard; by Lemma~\ref{CovScalarLemma} and Proposition~\ref{PropositionLowerBoundOnCorrelation}, the covariance
\begin{align*}
\Cov\left[a_{\tau,\operatorname{eff}}^\RVE,\dashint_{[0,L\varepsilon]^d} a_{\tau,\operatorname{eff}} \,dx\right]
\end{align*}
is a positive multiple of $\Id\otimes \Id$, and we have a lower bound of the form $\geq cL^{-d} \Id \otimes \Id$ for the choice of $\lambda$, $\mu$, and $\tau$ to be made below. Note that $a_{\tau,\operatorname{eff}}$ -- and hence also the preceding covariance -- is actually independent of $\tau$ (we just keep the $\tau$ to emphasize that $a_{\tau,\operatorname{eff}}$ is the coefficient field obtained from $a_\tau$ in the homogenization limit $\tau\rightarrow 0$). We shall prove below that $a_\tau^\RVE$ is (quantitatively) close to $a_{\tau,\operatorname{eff}}^\RVE$ for $\tau\ll 1$ small enough, which implies that
\begin{align*}
\Cov\left[a_\tau^\RVE,\dashint_{[0,L\varepsilon]^d} a_{\tau,\operatorname{eff}} \,dx\right]
\end{align*}
is close to a positive multiple of $\Id\otimes \Id$ (again with a lower bound of the form $\geq c L^{-d} \Id \otimes \Id$).
\item The average $\dashint_{[0,L\varepsilon]^d} a_{\tau} \,dx$ is an affine function of  $\dashint_{[0,L\varepsilon]^d} a_{\tau,\operatorname{eff}} \,dx$:
The coefficient field $a_{\tau,\operatorname{eff}}$ is constant on each tile and may only take the values $\sigma \Id$ or $(\frac{\lambda}{1+\lambda}+\frac{\mu}{2})\Id$. On the tiles on which the value of $a_{\tau,\operatorname{eff}}$ is $\sigma \Id$, $a_\tau$ also takes the constant value $\sigma \Id$. However, on the tiles on which $a_{\tau,\operatorname{eff}}$ is given by $(\frac{\lambda}{1+\lambda}+\frac{\mu}{2})\Id$ (i.\,e.\ on the tiles on which $a_\tau$ features a microstructure), the average of $a_\tau$ is $\frac{2\mu+\lambda+1}{4}\Id$. We thus have
\begin{align*}
~~~~
\dashint_{[0,L\varepsilon]^d} a_{\tau} \,dx
&=\frac{\sharp\operatorname{microtiles}}{L^d} \cdot \frac{2\mu+\lambda+1}{4} \Id + \frac{L^d-\sharp\operatorname{microtiles}}{L^d} \cdot \sigma \Id
\\&
=\sigma \Id+\frac{\sharp\operatorname{microtiles}}{L^d} \cdot \bigg(\frac{2\mu+\lambda+1}{4}-\sigma\bigg) \Id
\end{align*}
and
\begin{align*}
~~~~~~~~~~~~
\dashint_{[0,L\varepsilon]^d} a_{\tau,\operatorname{eff}} \,dx
&=\frac{\sharp\operatorname{microtiles}}{L^d} \cdot \bigg(\frac{\lambda}{1+\lambda}+\frac{\mu}{2}\bigg) \Id + \frac{L^d-\sharp\operatorname{microtiles}}{L^d} \cdot \sigma \Id
\\&
=\sigma\Id +\frac{\sharp\operatorname{microtiles}}{L^d} \cdot \bigg(\frac{\lambda}{1+\lambda}+\frac{\mu}{2}-\sigma\bigg).
\end{align*}
Choosing $\sigma$ such that $\sigma>\frac{\lambda}{1+\lambda}+\frac{\mu}{2}$ but $\sigma<\frac{2\mu+\lambda+1}{4}$
-- which is possible for $\lambda>0$ small enough -- , we obtain a relation of the form
\begin{align*}
\dashint_{[0,L\varepsilon]^d} a_{\tau} \,dx=A\Id - B\dashint_{[0,L\varepsilon]^d} a_{\tau,\operatorname{eff}} \,dx
\end{align*}
for suitable positive constants $A$ and $B$.
Thus, the sign of the covariance flips upon replacing the $a_{\tau,\operatorname{eff}}$ by $a_\tau$ in the spatial average, i.\,e.\ 
\begin{align*}
\Cov\left[a_\tau^\RVE,\dashint_{[0,L\varepsilon]^d} a_{\tau} \,dx\right]
\end{align*}
must be a negative multiple of $\Id \otimes \Id$, with an upper bound of the form $\leq -cL^{-d} \Id\otimes \Id$. 
\end{itemize}
It now only remains to prove two things: We need to show that $a_\tau^\RVE$ is quantitatively close to $a_{\tau,\operatorname{eff}}$ if we choose the width $\tau$ of the vertical stripes and the height $\tau^2$ of the horizontal stripes in the pattern in Figure~\ref{FigureMicro} small enough and we need to establish the corresponding assertion for the interpolated coefficient field $a_{\tau,\kappa,\operatorname{eff}}$. As the latter result is shown similarly -- though with two different microscopic tiles $\kappa A_{\tau}+(1-\kappa)\frac{1}{2}\Id$ and $\kappa A_{\tau}+(1-\kappa)\frac{1}{2}\Id$, depending on whether the random checkerboard $a$ equals $\Id$ or $\frac{1}{2}\Id$ on the tile (and correspondingly, with two sets of homogenization correctors and two characteristic functions $\chi_{microtile1}$ and $\chi_{microtile2}$, see below for this notation) -- , we only provide the proof of the latter result.

For the remainder of the proof, we shall fix without loss of generality $\varepsilon:=1$ to avoid even more cumbersome notation. Again to avoid even more cumbersome notation, we only give the proof in the case that all tiles with microstructure have the same orientation as in Figure~\ref{FigureMicro}.

To see this quantitative closeness, we construct an approximate homogenization corrector $\phi_{i,\operatorname{appr}}$ for $a_\tau^\RVE$. To this aim, let $\phi_{i,\operatorname{eff}}$ be the homogenization corrector associated with the coefficient field $a_{\tau,\operatorname{eff}}$, that is let $\phi_{i,\operatorname{eff}}$ solve
\begin{align*}
-\nabla \cdot (a_{\tau,\operatorname{eff}}(e_i+\nabla \phi_{i,\operatorname{eff}}))=0
\end{align*}
on $[0,L]^2$ with periodic boundary conditions. We now intend to build the approximate homogenization corrector $\phi_{i,\operatorname{appr}}$ for $a_\tau^\RVE$ by a nested two-scale expansion, using the homogenization correctors for the periodic laminate microstructures.

By Meyer's estimate, there exists $p>2$ with
\begin{align}
\label{HigherIntegrabilityPhi}
\dashint_{[0,L]^2} |\nabla \phi_{i,\operatorname{eff}}|^p \,dx \leq C(d,\lambda).
\end{align}
Furthermore, $a_{\tau,\operatorname{eff}}$ is constant on each tile $k + [0,1)^2$, which implies on each tile $T=k + [0,1)^2$ (with $k\in \mathbb{Z}^2$) for each $x\in T$ by regularity theory for constant coefficient equations 
\begin{align}
\nonumber
|\nabla^2 \phi_{i,\operatorname{eff}}(x)|
&\leq \frac{C}{\dist(x,\partial T)} \left(\dashint_{\{|y-x|\leq \dist(x,\partial T)/2\}} |e_i+\nabla \phi_{i,\operatorname{eff}}|^2 \,dy\right)^{1/2}
\\&
\label{RegularityBound}
\leq \frac{C}{\dist(x,\partial T)^{1+d/2}} \left(\dashint_{T} |e_i+\nabla \phi_{i,\operatorname{eff}}|^2 \,dy\right)^{1/2}.
\end{align}
Let $\rho_\delta$ denote a standard mollifier. The $L^p$ estimate and the estimate on $\nabla^2 \phi_{i,\operatorname{eff}}$ imply (for notational convenience we extend $\phi_{i,\operatorname{eff}}$ by periodicity)
\begin{align}
\label{MollificationPhiEffEstimate}
\dashint_{[0,L]^2} \big|\nabla \phi_{i,\operatorname{eff}}-\nabla (\rho_{\delta} \ast \phi_{i,\operatorname{eff}})\big|^{(p+2)/2} \,dx \leq C \delta^\alpha
\end{align}
for some $\alpha>0$ (for a proof of this estimate, split the domain into a
neighborhood of size $\delta^{1/5}$ of the tile boundaries $\partial T$, on which one uses the H\"older inequality and the $L^p$ bound on $\nabla \phi_{i,\operatorname{eff}}$ in \eqref{HigherIntegrabilityPhi}, and the interior $\{x\in T:\dist(x,\partial T)\geq \delta^{1/5}\}$, where one applies the regularity estimate \eqref{RegularityBound}).

Let $\phi_{i,h}$ denote the $2$-periodic homogenization corrector for the coefficient field $a_h(x,y)$ associated with the pattern of horizontal stripes in Figure~\ref{FigureMicro} (i.\,e.\ let $a_h(x,y)=a_h(y)$ take alternatingly on intervals of length $1$ the values $\Id$ and $\lambda\Id$). Note that $\phi_{1,h}\equiv 0$ and that $\phi_{2,h}$ is explicitly given by
\begin{align*}
\phi_{2,h}(x,y)= \frac{1}{\dashint_0^{2} \frac{1}{e_2\cdot a_h(x,\tilde y)e_2} \,d\tilde y} \int_0^y \frac{1}{e_2\cdot a_h(x,\tilde y)e_2} \,d\tilde y-y.
\end{align*}
We shall frequently use the uniform bound on the gradient $|\nabla \phi_{i,h}|\leq C$ derived easily from this formula.

Let $\phi_{i,v}$ denote the $2$-periodic homogenization corrector associated with the pattern of vertical stripes of width $1$, in which the coefficient field $a_v(x,y)=a_v(x)$ alternatingly takes the values $\mu \Id$ and
\begin{align*}
\begin{pmatrix}
\frac{1+\lambda}{2}&0\\0&\frac{2\lambda}{1+\lambda}
\end{pmatrix}.
\end{align*}
Note that we have $\phi_{2,v}\equiv 0$ and that $\phi_{1,v}$ is given explicitly by
\begin{align*}
\phi_{1,v}(x,y)=\frac{1}{\dashint_0^{2} \frac{1}{e_1\cdot a_v(\tilde x,y)e_1} \,d\tilde x} \int_0^x \frac{1}{e_1 \cdot a_v(\tilde x,y)e_1 } \,d\tilde x - x.
\end{align*}
We shall again frequently use the uniform bound on the gradient $|\nabla \phi_{i,v}|\leq C$.

We define the vector potential for the flux correction $\sigma_{h,ijk}$, skew-symmetric in its last two indices, as $\sigma_{h,212}:=0$ and
\begin{align}
\label{SigmaH}
\sigma_{h,112} := \int_0^y (a_h(\tilde y)-a_{h,\operatorname{eff}})e_1 \cdot e_1 \,d\tilde y.
\end{align}
Note that with this definition $\sigma_{h,ijk}$ satisfies $\nabla \cdot \sigma_{h,i} = a_h (e_i+\nabla \phi_{i,h})-a_{h,\operatorname{eff}}e_i$, as one checks by a case-by-case analysis.

Similarly, we define $\sigma_{v,ijk}$, skew-symmetric in its last two indices, as $\sigma_{v,121}:=0$ and
\begin{align}
\label{SigmaV}
\sigma_{v,221} := \int_0^x (a_v(\tilde x)-a_{v,\operatorname{eff}})e_2 \cdot e_2 \,d\tilde x
\end{align}
which then satisfies $\nabla \cdot \sigma_{v,i} = a_v (e_i+\nabla \phi_{i,v})-a_{v,\operatorname{eff}}e_i$.

Let us denote the indicator function of the tiles with microstructure by $\chi_{microtile}$ (i.\,e.\ $\chi_{microtile}$ is $1$ on all tiles $k+[0,1)^d\subset [0,L)^d$ with microstructure and $0$ on the other tiles). Similarly, we denote by $\chi_{vmicrostripe}$ the indicator functions of all vertical stripes that according to Figure~\ref{FigureMicro} contain a micropattern of horizontal stripes.
We then build our approximate correctors as
\begin{align*}
\phi_{i,\operatorname{appr},1}:=
\rho_{\delta_0} \ast \phi_{i,\operatorname{eff}}
+(\rho_{\tau \delta_1} \ast \chi_{microtile}) \sum_j (\delta_{ij}+\partial_j (\rho_{\delta_0} \ast \phi_{i,\operatorname{eff}})) \Big( \rho_{\delta_1 \tau} \ast \tau \phi_{j,v}\Big(\frac{\cdot}{\tau}\Big)\Big)
\end{align*}
and
\begin{align*}
\phi_{i,\operatorname{appr},2}:=
\phi_{i,\operatorname{appr},1}+
(\rho_{\tau^2 \delta_2} \ast \chi_{vmicrostripe}) \sum_k (\partial_k \phi_{i,\operatorname{appr},1}
+\delta_{ik}) \tau^2 \phi_{k,h}\Big(\frac{\cdot}{\tau^2}\Big).
\end{align*}
We observe that $\phi_{i,\operatorname{appr},1}$ satisfies the estimate
\begin{align}
\label{Phi1UpperBound}
|\nabla \phi_{i,\operatorname{appr},1}|
\leq \bigg(\frac{C}{\min\{1,\delta_1\}}+\frac{C\tau}{\delta_0}\bigg) (\rho_{2\delta_0} \ast |\nabla \phi_{i,\operatorname{eff}}|+1).
\end{align}
We also have the bound
\begin{align}
\label{Phi2UpperBound}
|\nabla \phi_{i,\operatorname{appr},2}|
&\leq \frac{C}{\min\{1,\delta_2\}} (|\nabla \phi_{i,\operatorname{appr},1}|+1)
\\&
\nonumber
+\bigg(C+\frac{C\tau^2}{\delta_0}+\frac{C\tau}{\delta_1^2}+\frac{C\tau^3}{\delta_0^2}+\frac{C\tau}{\delta_1}\bigg)
(\rho_{2\delta_0} \ast |\nabla \phi_{i,\operatorname{eff}}|+1).
\end{align}
Furthermore, if we are at least $\tau \delta_1$ away from the tile boundaries and the boundaries of the vertical stripes (note that $\rho_{\delta_1 \tau} \ast \nabla \phi_{j,v}(\cdot/\tau)$ is then equal to $\nabla \phi_{j,v}(\cdot/\tau)$ as the latter quantity is constant in each stripe; note also that then $\rho_{\tau\delta_1} \ast \chi_{microtile}$ is locally constant $=0$ or $=1$ and that we have a uniform bound on $\nabla \phi_{j,v}$), we have by \eqref{RegularityBound} on each tile $T=k + [0,1)^2$, $k\in \mathbb{Z}^d\cap [0,L)^d$,
\begin{align}
\label{TwoScaleExpansionHorizontal}
&\bigg|e_i+\nabla \phi_{i,\operatorname{appr},1} - \sum_j (e_j+\chi_{microtile} \nabla \phi_{j,v}(\cdot/\tau)) (\delta_{ij}+\partial_j \phi_{i,\operatorname{eff}}) \bigg|
\\&
\nonumber
\leq \frac{C}{\dist(\cdot,\partial T)^2} \bigg(\dashint_T |e_i+\nabla \phi_{i,\operatorname{eff}}|^2 \,dx\bigg)^{1/2} \big(\delta_0 +\tau\big).
\end{align}
If we are at least $\tau \delta_1$ away from the tile boundaries and the boundaries of the vertical stripes and at least $\tau^2 \delta_2$ away from the boundary of the horizontal stripes, we get (note that $\rho_{\delta_2 \tau^2} \ast \nabla \phi_{k,h}(\cdot/\tau^2)$ is then equal to $\nabla \phi_{k,h}(\cdot/\tau^2)$ as the latter quantity is constant in each small horizontal stripe; note also that then $\rho_{\tau^2 \delta_2} \ast \chi_{hmicrostripe}$ is locally constant $=0$ or $=1$ and that we have a uniform bound on $\nabla \phi_{k,h}$)
\begin{align*}
&\bigg|e_i+\nabla \phi_{i,\operatorname{appr},2}
\\&~~~
- \sum_{k} (e_k+\chi_{vmicrostripe}\nabla \phi_{k,h}(\cdot/\tau^2)) \sum_j \big(\delta_{jk}+\chi_{microtile}\partial_k \phi_{j,v}(\cdot/\tau)\big) (\delta_{ij}+\partial_j \phi_{i,\operatorname{eff}}) \bigg|
\\&
\stackrel{\eqref{TwoScaleExpansionHorizontal}}{\leq} C \bigg|e_i+\nabla \phi_{i,\operatorname{appr},1} - \sum_j (e_j+\chi_{microtile} \nabla \phi_{j,v}(\cdot/\tau)) (\delta_{ij}+\partial_j \phi_{i,\operatorname{eff}}) \bigg|
\\&~~~~~
+C\tau^2 |\nabla^2 \phi_{i,\operatorname{appr},1}|
+\frac{C}{\dist(\cdot,\partial T)^2} \bigg(\dashint_T |e_i+\nabla \phi_{i,\operatorname{eff}}|^2 \,dx\bigg)^{1/2} \big(\delta_0 +\tau\big)
\\&
\stackrel{\eqref{TwoScaleExpansionHorizontal},\eqref{RegularityBound}}{\leq}
\frac{C}{\dist(\cdot,\partial T)^2} \bigg(\dashint_T |e_i+\nabla \phi_{i,\operatorname{eff}}|^2 \,dx\bigg)^{1/2} \Big(\delta_0 + \tau + \tau^2 + \frac{\tau^3}{\delta_0}\Big).
\end{align*}
Using the fact that by Meyers inequality we have for some $p=p(\lambda)>2$
\begin{align*}
\dashint_{[0,L]^2} |e_i+\nabla \phi_{i,\operatorname{eff}}|^p \,dx \leq C(d,\lambda),
\end{align*}
we obtain by choosing $\delta_0$, $\delta_1$, and $\delta_2$ as appropriate powers of $\tau$ and using \eqref{Phi2UpperBound}
\begin{align}
\nonumber
&\dashint_{[0,L\varepsilon]^d} \bigg|e_i+\nabla \phi_{i,\operatorname{appr},2}
- \sum_{k} (e_k+\chi_{vmicrostripe}\nabla \phi_{k,h}(\cdot/\tau^2))
\\&~~~~~~~~~~~~~~~~~~~~~~~~~~~~~~~~~~~~\times
\nonumber
\sum_j \big(\delta_{jk}+\chi_{microtile}\partial_k \phi_{j,v}(\cdot/\tau)\big) (\delta_{ij}+\partial_j \phi_{i,\operatorname{eff}}) \bigg|^2 \,dx
\\&~~~~~~~~~~~~~~
\leq C(d,\lambda) \tau^\eta
\label{ErrorMultipleTwoScale}
\end{align}
for some $\eta>0$.

Having bounded the error in the gradient, we next estimate the error in the flux. In an analogous fashion to the definition of $a_{\tau,\operatorname{eff}}$ as the effective coefficient from periodic homogenization on each tile, we define $a_{\tau,\operatorname{veff}}$ as equal to $a_{\tau,\operatorname{eff}}=a_\tau$ on the tiles without microstructure and equal to the effective coefficient from periodic homogenization on each vertical stripe of width $\tau$ on each tile with microstructure.
Recalling the definitions \eqref{SigmaH} and \eqref{SigmaV}, we may rewrite the error in the flux in a pointwise way as
\begin{align}
\nonumber
&a_\tau \sum_{k} (e_k+\chi_{vmicrostripe}\nabla \phi_{k,h}(\cdot/\tau^2)) \sum_j \big(\delta_{jk}+\chi_{microtile}\partial_k \phi_{j,v}(\cdot/\tau)\big) (\delta_{ij}+\partial_j \phi_{i,\operatorname{eff}})
\\&
\nonumber
-a_{\tau,\operatorname{eff}} (e_i+\nabla \phi_{i,\operatorname{eff}})
\\&
\nonumber
=\sum_j \Big(a_\tau \sum_{k} (e_k+\chi_{vmicrostripe}\nabla \phi_{k,h}(\cdot/\tau^2)) - a_{\tau,\operatorname{veff}} e_k\Big)
\\&~~~~~~~~~~~\times
\nonumber
\big(\delta_{jk}+\chi_{microtile}\partial_k \phi_{j,v}(\cdot/\tau)\big) (\delta_{ij}+\partial_j \phi_{i,\operatorname{eff}})
\\&~~~
\nonumber
+\sum_j \Big(a_{\tau,\operatorname{veff}}\big(e_j+\chi_{microtile}\nabla \phi_{j,v}(\cdot/\tau)\big)-a_{\tau,\operatorname{eff}}e_j\Big) (\delta_{ij}+\partial_j \phi_{i,\operatorname{eff}})
\\&
\label{EquationFluxRewritten}
=\chi_{vmicrostripe} \sum_k (\nabla \cdot (\tau^2\sigma_{h,k}(\cdot/\tau^2))) \sum_j \big(\delta_{jk}+\chi_{microtile}\partial_k \phi_{j,v}(\cdot/\tau)\big) (\delta_{ij}+\partial_j \phi_{i,\operatorname{eff}})
\\&~~~
\nonumber
+\chi_{microtile} \sum_j (\nabla \cdot (\tau \sigma_{v,j}(\cdot/\tau))) (\delta_{ij}+\partial_j \phi_{i,\operatorname{eff}}).
\end{align}
Thus, having choosen $\delta_0$, $\delta_1$, and $\delta_2$ as suitable powers of $\tau$, we obtain by \eqref{ErrorMultipleTwoScale}, \eqref{RegularityBound}, and \eqref{HigherIntegrabilityPhi}
\begin{align*}
&\bigg|\dashint_{[0,L]^2} a_\tau (e_i+\nabla \phi_{i,\operatorname{appr},2}) \,dx
-\dashint_{[0,L]^2} a_{\tau,\operatorname{eff}} (e_i+\nabla \phi_{i,\operatorname{eff}}) \,dx\bigg|
\\&
\leq C(d,\lambda) \tau^\eta.
\end{align*}
It now only remains to show that $\nabla \phi_{i,\operatorname{appr},2}$ is a good approximation for $\nabla \phi_i$. To do so, we consider the difference $\phi_i-\phi_{i,\operatorname{appr},2}$ and observe that it satisfies the PDE
\begin{align*}
&-\nabla \cdot (a_\tau (\nabla \phi_i - \nabla \phi_{i,\operatorname{appr},2}))
\\&
=\nabla \cdot (a_\tau (e_i+\nabla \phi_{i,\operatorname{appr},2}))
\\&
=\nabla \cdot (a_\tau (e_i+\nabla \phi_{i,\operatorname{appr},2})
-a_{\tau,\operatorname{eff}}(e_i+\nabla \phi_{i,\operatorname{eff}})).
\end{align*}
We now replace the divergence-form right-hand side using \eqref{ErrorMultipleTwoScale}
\begin{align*}
-\nabla \cdot (a_\tau (\nabla &\phi_i - \nabla \phi_{i,\operatorname{appr},2})
=\nabla \cdot g
\\&~~~
+ \nabla \cdot \bigg(a_\tau \sum_{k} (e_k+\chi_{vmicrostripe}\nabla \phi_{k,h}(\cdot/\tau^2))
\\&~~~~~~~~~~~~~~~~~~~~~~~~~\times
\sum_j \big(\delta_{jk}+\chi_{microtile}\partial_k \phi_{j,v}(\cdot/\tau)\big) (\delta_{ij}+\partial_j \phi_{i,\operatorname{eff}})
\\&~~~~~~~~~~~~~~~~
-a_{\tau,\operatorname{eff}} (e_i+\nabla \phi_{i,\operatorname{eff}})\bigg)
\end{align*}
for some $g$ with $\dashint_{[0,L]^2} |g|^2 \leq C \tau^\eta$ (recall that $\delta_1$ and $\delta_2$ have been chosen as a suitable small powers of $\tau$ and recall also the uniform $L^p$ bound for $\nabla \phi_{i,\operatorname{eff}}$ in \eqref{HigherIntegrabilityPhi}).
This expression in turn may be rewritten by \eqref{MollificationPhiEffEstimate} and \eqref{EquationFluxRewritten} for any $\beta>0$ small enough as
\begin{align*}
&-\nabla \cdot (a_\tau (\nabla \phi_i - \nabla \phi_{i,\operatorname{appr},2}))
\\&
=\nabla \cdot \tilde g
+ \nabla \cdot \bigg((\rho_{\tau^{1+\beta}}\ast \chi_{vmicrostripe}) \sum_k (\nabla \cdot (\tau^2\sigma_{h,k}(\cdot/\tau^2)))
\\&~~~~~~~~~~~~~~~~~~~~~~~~~~~
\times\sum_j \big(\delta_{jk}+\rho_{\tau^{1+\beta}}\ast \chi_{microtile}\partial_k \phi_{j,v}(\cdot/\tau)\big) (\delta_{ij}+\rho_{\tau^\beta}\ast \partial_j \phi_{i,\operatorname{eff}})
\\&~~~~~~~~~~~~~~~~~~~~~
+(\rho_{\tau^{\beta}}\ast \chi_{microtile}) \sum_j (\nabla \cdot (\tau \sigma_{v,j}(\cdot/\tau))) (\delta_{ij}+\rho_{\tau^\beta}\ast \partial_j \phi_{i,\operatorname{eff}})\bigg).
\end{align*}
for some $\tilde g$ with $\dashint_{[0,L]^2} |\tilde g|^2 \leq C \tau^{delta(\beta)}$. 

Using the skew-symmetry of $\sigma_{v,i}$ and $\sigma_{h,i}$, we obtain
\begin{align*}
&-\nabla \cdot (a_\tau (\nabla \phi_i - \nabla \phi_{i,\operatorname{appr},2}))
\\&
=\nabla \cdot \tilde g
\\&~~
+ \sum_k (\nabla \cdot (\tau^2\sigma_{h,k}(\cdot/\tau^2)))
\\&~~~~
\cdot \nabla \sum_j (\rho_{\tau^{1+\beta}}\ast \chi_{vmicrostripe}) \big(\delta_{jk}+\rho_{\tau^{1+\beta}}\ast \chi_{microtile}\partial_k \phi_{j,v}(\cdot/\tau)\big) (\delta_{ij}+\rho_{\tau^\beta}\ast \partial_j \phi_{i,\operatorname{eff}})
\\&~~
+\sum_j (\nabla \cdot (\tau \sigma_{v,j}(\cdot/\tau))) \cdot \nabla \big((\rho_{\tau^{\beta}}\ast \chi_{microtile}) (\delta_{ij}+\rho_{\tau^\beta}\ast \partial_j \phi_{i,\operatorname{eff}})\big).
\end{align*}
Using again the skew-symmetry of $\sigma_{v,i}$ and $\sigma_{h,i}$, we get
\begin{align*}
&-\nabla \cdot (a_\tau (\nabla \phi_i - \nabla \phi_{i,\operatorname{appr},2}))
\\&
=\nabla \cdot \tilde g
\\&~~~
-\nabla \cdot \bigg( \sum_k  \tau^2\sigma_{h,k}(\cdot/\tau^2)
\cdot \nabla \sum_j (\rho_{\tau^{1+\beta}}\ast \chi_{vmicrostripe})
\\&~~~~~~~~~~~~~~~~~~~~~~~~~~
\times \big(\delta_{jk}+\rho_{\tau^{1+\beta}}\ast\chi_{microtile}\partial_k \phi_{j,v}(\cdot/\tau)\big) (\delta_{ij}+\rho_{\tau^{\beta}}\ast\partial_j \phi_{i,\operatorname{eff}})\bigg)
\\&~~~
-\nabla \cdot \bigg(\sum_j \tau \sigma_{v,j}(\cdot/\tau) \nabla \big((\rho_{\tau^{\beta}}\ast \chi_{microtile}) (\delta_{ij}+\rho_{\tau^{\beta}}\ast\partial_j \phi_{i,\operatorname{eff}})\big)\bigg).
\end{align*}
Choosing $\beta>0$ small enough, we finally end up with
\begin{align*}
-\nabla \cdot (a_\tau (\nabla \phi_i - \nabla \phi_{i,\operatorname{appr},2}))
= \nabla \cdot \hat g
\end{align*}
with $\dashint_{[0,L]^d} |\hat g|^2 \leq C \tau^{\tilde \nu}$ for some $\tilde \nu>0$. A standard energy estimate now implies
\begin{align*}
\dashint_{[0,L)^d} |\nabla \phi_i-\nabla \phi_{i,\operatorname{appr},2}|^2 \,dx
\leq C\tau^{\tilde \nu}.
\end{align*}
\end{proof}

\begin{lemma}
\label{CovScalarLemma}
Consider a probability distribution of coefficient fields $a$ subject to the conditions (A1), (A2), and (A3$_a$)-(A3$_c$). Suppose in addition that $a$ is almost surely almost everywhere a multiple of the identity matrix. If in addition the law of $a$ is invariant under reflections of coordinate axes (i.\,e.\ maps of the form $x\mapsto (x_1,\ldots,-x_i,\ldots,x_d)$) and invariant under exchange of coordinate axes (i.\,e.\ maps of the form $x\mapsto (x_1,\ldots,x_{i-1},x_j,x_{i+1},\ldots,x_{j-1},x_i,x_{j+1},\ldots,x_d)$), the covariance
\begin{align*}
\Cov\left[a^\RVE,\dashint_{[0,L\varepsilon]^d} a \,dx\right]
\end{align*}
is a multiple of $\Id\otimes \Id$.
\end{lemma}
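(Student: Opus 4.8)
The plan is to exploit the finite symmetry group $G\subset O(d)$ of signed permutation matrices (the hyperoctahedral group), which is generated exactly by the coordinate reflections $x\mapsto(x_{1},\dots,-x_{i},\dots,x_{d})$ and the coordinate exchanges $x\mapsto(\dots,x_{j},\dots,x_{i},\dots)$ appearing in the hypothesis. For $R\in G$ define the pushforward coefficient field $(R\star a)(x):=R\,a(R^{T}x)\,R^{T}$. I would work throughout on the flat torus $\mathbb{T}:=\mathbb{R}^{d}/(L\varepsilon\mathbb{Z}^{d})$, on which the periodic corrector problem and the cell formula \eqref{PeriodicaRVE} are intrinsically defined and on which every $R\in G$ acts — after composing with a suitable translation by a vector of $L\varepsilon\mathbb{Z}^{d}$, which is immaterial by the stationarity (A2) — as a measure-preserving linear automorphism; in particular $R^{T}\mathbb{T}=\mathbb{T}$. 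The first step is the equivariance of the two quantities of interest: changing variables $y=R^{T}x$ in the corrector equation shows that if $\phi_{\xi}$ denotes the $L\varepsilon$-periodic corrector for $a$ in direction $\xi$, then $x\mapsto\phi_{R^{T}\xi}(R^{T}x)$ is the $L\varepsilon$-periodic corrector for $R\star a$ in direction $\xi$, because $(R\star a)(x)\big(\xi+\nabla[\phi_{R^{T}\xi}(R^{T}\cdot)](x)\big)=R\,F(R^{T}x)$ with $F:=a\,(R^{T}\xi+\nabla\phi_{R^{T}\xi})$ a periodic divergence-free field and $\nabla\cdot(R\,F(R^{T}\cdot))=(\nabla\cdot F)(R^{T}\cdot)=0$. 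Inserting this into the cell formula and using $R^{T}\mathbb{T}=\mathbb{T}$ gives
\[
a^{\RVE}(R\star a)=R\,a^{\RVE}(a)\,R^{T},\qquad
\dashint_{[0,L\varepsilon]^{d}}(R\star a)\,dx=R\Big(\dashint_{[0,L\varepsilon]^{d}}a\,dx\Big)R^{T}.
\]
Since by hypothesis $a$ is almost surely almost everywhere a scalar multiple of the identity, $\dashint_{[0,L\varepsilon]^{d}}a\,dx=\alpha\,\Id$ for a scalar random variable $\alpha$, so the second identity degenerates to $\dashint_{[0,L\varepsilon]^{d}}(R\star a)\,dx=\alpha\,\Id$.

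Second, I would invoke the invariance of the law. As $a$ is scalar-valued, $R\star a$ is simply the pullback of $a$ under $R^{T}$, and since $G$ is closed under transposition the hypothesis (invariance of the law of $a$ under coordinate reflections and exchanges) yields $R\star a\stackrel{d}{=}a$ for every $R\in G$; note that $R\star a$ trivially again satisfies (A1), (A2), (A3$_{a}$)--(A3$_{c}$). Combining this with the equivariance above, the joint law of $\big(a^{\RVE},\dashint_{[0,L\varepsilon]^{d}}a\,dx\big)=(a^{\RVE},\alpha\,\Id)$ coincides with that of $\big(R\,a^{\RVE}R^{T},\alpha\,\Id\big)$ for every $R\in G$. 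Writing $D\in\mathbb{R}^{d\times d}$ for the matrix $D_{ij}:=\Cov[a^{\RVE}_{ij},\alpha]$, so that the tensor $\Cov\big[a^{\RVE},\dashint_{[0,L\varepsilon]^{d}}a\,dx\big]$ has entries $D_{ij}\,\delta_{kl}$, taking covariances in this identity of laws yields the constraint
\[
R\,D\,R^{T}=D\qquad\text{for every }R\in G.
\]
Hence it suffices to show that the only $d\times d$ matrices fixed in this sense by the whole hyperoctahedral group are the multiples of $\Id$; this then forces $\Cov\big[a^{\RVE},\dashint_{[0,L\varepsilon]^{d}}a\,dx\big]=c\,\Id\otimes\Id$.

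The final step is elementary and I would carry it out directly: conjugating $D$ by the permutation matrix of the transposition $(i\,j)$ and using $R\,D\,R^{T}=D$ forces $D_{ii}=D_{jj}$ for all $i,j$ as well as equality of all off-diagonal entries of $D$; conjugating instead by $\mathrm{diag}(1,\dots,1,-1,1,\dots,1)$ (the reflection in the $i$-th axis) sends $D_{im}$ to $-D_{im}$ for $m\neq i$, forcing every off-diagonal entry to vanish. Thus $D=c\,\Id$ and the claim follows.

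I expect the only point requiring genuine care to be the bookkeeping on the torus: one must check that each generator of $G$, composed with an appropriate lattice translation, maps the periodicity cell onto itself, that this translation is immaterial by stationarity, and that the change of variables in the corrector problem is legitimate precisely because that problem and the cell formula live intrinsically on $\mathbb{T}$. The equivariance computation itself and the concluding linear algebra are routine.
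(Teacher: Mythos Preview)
Your proposal is correct and follows essentially the same approach as the paper: both arguments first observe that $\dashint a\,dx$ is almost surely a scalar multiple of $\Id$ (so the covariance tensor has the form $D\otimes\Id$), and then use the symmetry of the law under reflections and coordinate exchanges to force $D$ to be a multiple of $\Id$, with reflections killing the off-diagonal entries and transpositions equalizing the diagonal ones. Your presentation is slightly more systematic in that you establish the equivariance $a^{\RVE}(R\star a)=R\,a^{\RVE}(a)\,R^{T}$ once for the entire hyperoctahedral group, whereas the paper writes out the transformation of the corrector explicitly only for reflections and leaves the transposition case to the reader; but the underlying mechanism and the final linear algebra are identical.
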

\begin{proof}
For such a probability distribution of coefficient fields $a$, the spatial average $\dashint_{[0,L\varepsilon]^d} a \,dx$ is almost surely a multiple of the identity matrix, which entails that
\begin{align*}
\Cov\left[a^\RVE,\dashint_{[0,L\varepsilon]^d} a \,dx\right]
=B \otimes \Id
\end{align*}
for some $B\in \mathbb{R}^{d\times d}$.

The matrix $B$ must also be a multiple of the identity matrix: Under reflection of the $i$-th coordinate, by the corrector equation \eqref{EquationCorrector} and the fact that $a$ is pointwise a multiple of the identity matrix we have that the $i$-th corrector for the reflected coefficient field $\hat a(x)=a(x_1,\ldots,-x_i,\ldots,x_d)$ is given by $\hat \phi_i(x)=-\phi_i(x_1,\ldots,-x_i,\ldots,x_d)$. Thus, the off-diagonal entries of $a^\RVE$ which are given by (for $i\neq j$, using also that $a(x)=a_{scalar}(x) \Id$)
\begin{align*}
a^\RVE e_i \cdot e_j = \dashint_{[0,L\varepsilon]^d} a(e_i+\nabla \phi_i)\cdot e_j \,dx
= \dashint_{[0,L\varepsilon]^d} a_{scalar}(x) (e_j\cdot \nabla)\phi_i(x) \,dx
\end{align*}
switch sign under such reflections, while the average $\dashint_{[0,L\varepsilon]^d} a \,dx$ remains invariant. As our probability distribution is invariant under reflections, the off-diagonal entries of $B$ must be zero. Similarly, as our probability distribution is invariant under exchange of coordinates, all diagonal entries of $B$ must coincide; therefore the covariance must be a multiple of $\Id\otimes \Id$.
\end{proof}

We now turn to the proof of our theorem on successful variance reduction for random coefficient fields that are obtained by applying a ``monotone'' functions to a collection of iid random variables.
\begin{proof}[Proof of Proposition~\ref{PropositionLowerBoundOnCorrelation}]
Without loss of generality (by rescaling), we may consider the case $\varepsilon=1$.

Given any $\xi \in \mathbb{R}^d$, the $L$-periodic correctors associated with two $L$-periodic coefficient fields $a$ and $\tilde a$ are given as the solutions to the PDEs
\begin{align}
\label{SolutionStartCoefficient}
-\nabla \cdot (a\nabla \phi^{L,a}_\xi) = \nabla \cdot (a\xi)
\end{align}
and
\begin{align*}
-\nabla \cdot (\tilde a\nabla \phi^{L,\tilde a}_\xi) = \nabla \cdot (\tilde a\xi).
\end{align*}
Define $\phi^{L,(1-s)a+s\tilde a}_\xi$ as the $L$-periodic solution to
\begin{align}
\label{SolutionInterpolatedCoefficient}
-\nabla \cdot (((1-s)a+s\tilde a)\nabla \phi^{L,(1-s)a+s\tilde a}_\xi) = \nabla \cdot (((1-s)a+s\tilde a)\xi).
\end{align}
Setting
\begin{align*}
a^{\RVE,s}\xi \cdot \xi := \dashint_{[0,L]^d} ((1-s)a+s\tilde a)(\xi+\nabla \phi^{L,(1-s)a+s\tilde a}_\xi) \cdot \xi \,dx
\end{align*}
we then obtain
\begin{align*}
&\frac{d}{ds} a^{\RVE,s}\xi \cdot \xi
\\&
=
\frac{d}{ds}
\dashint_{\domain} ((1-s)a+s\tilde a)(\xi+\nabla \phi_\xi^{L,(1-s)a+s\tilde a})\cdot \xi \,dx
\\&
\stackrel{\eqref{SolutionInterpolatedCoefficient}}{=}
\frac{d}{ds}
\dashint_{\domain} ((1-s)a+s\tilde a)(\xi+\nabla \phi_\xi^{L,(1-s)a+s\tilde a})\cdot (\xi+\nabla \phi_\xi^{L,(1-s)a+s\tilde a}) \,dx
\\&
=
\dashint_{\domain} (\tilde a-a)(\xi+\nabla \phi^{L,(1-s)a+s\tilde a}_\xi) \cdot (\xi+\nabla \phi^{L,(1-s)a+s\tilde a}_\xi) \,dx
\\&~~~
+2\dashint_{\domain} ((1-s)a+s\tilde a)\nabla \frac{d}{ds} \phi^{L,(1-s)a+s\tilde a}_\xi \cdot (\xi+\nabla \phi^{L,(1-s)a+s\tilde a}_\xi) \,dx
\\&
\stackrel{\eqref{SolutionInterpolatedCoefficient}}{=}\dashint_{\domain} (\tilde a-a)(\xi+\nabla \phi^{L,(1-s)a+s\tilde a}_\xi) \cdot (\xi+\nabla \phi^{L,(1-s)a+s\tilde a}_\xi) \,dx.
\end{align*}
Given two coefficient fields $a$ and $\tilde a$ with $a-\tilde a\geq 0$, we therefore have the estimate
\begin{align}
\label{LowerBoundExchangea}
&a^{\RVE,a}\xi \cdot \xi
-a^{\RVE,\tilde a}\xi \cdot \xi
\\&
\nonumber
\geq
\int_0^1 \dashint_{\domain} (a-\tilde a)(\xi+\nabla \phi^{L,(1-s)a+s\tilde a}_\xi) \cdot (\xi+\nabla \phi^{L,(1-s)a+s\tilde a}_\xi) \,dx \,ds.
\end{align}
We now would like to derive a lower bound for the term on the right-hand side. We have by \eqref{SolutionStartCoefficient} and \eqref{SolutionInterpolatedCoefficient}
\begin{align*}
-\nabla \cdot \big(((1-s)a+s\tilde a)(\nabla \phi^{L,(1-s)a+s\tilde a}_\xi-\nabla \phi^{L,a}_\xi)\big) = \nabla \cdot (s(\tilde a-a)(\xi+\nabla \phi^{L,a}_\xi)).
\end{align*}
Testing this PDE by the solution (note that $(1-s)a+s\tilde a$ is $\lambda$-uniformly elliptic) yields
\begin{align*}
&\dashint_{\domain} \lambda |\nabla \phi^{L,a}_\xi-\nabla \phi^{L,(1-s)a+s\tilde a}_\xi|^2 \,dx
\\&
\leq s \dashint_{\domain} (\tilde a-a)(\xi+\nabla \phi_\xi^{L,a})\cdot \big(\nabla \phi^{L,a}_\xi-\nabla \phi^{L,(1-s)a+s\tilde a}_\xi \big) \,dx
\end{align*}
and therefore by Young's inequality (note that the matrix $a-\tilde a$ is symmetric and by (A1) bounded by $\frac{1}{\lambda}$ in the natural matrix norm)
\begin{align*}
\dashint_{\domain} |\nabla \phi^{L,a}_\xi-\nabla \phi^{L,(1-s)a+s\tilde a}_\xi|^2 \,dx
\leq \frac{s^2}{\lambda^4} \dashint_{\domain} (a-\tilde a)(\xi+\nabla \phi_\xi^{L,a})\cdot (\xi+\nabla \phi_\xi^{L,a}) \,dx.
\end{align*}
In particular, we obtain by \eqref{LowerBoundExchangea} (and the analogous version of the previous estimate for $\phi_\xi^{L,\tilde a}$ instead of $\phi_\xi^{L,a}$) and $a\geq \tilde a$
\begin{align*}
&a^{\RVE,a}\xi \cdot \xi
-a^{\RVE,\tilde a}\xi \cdot \xi
\\&
\geq
\int_0^{\lambda^2/2} \frac{1}{2} \dashint_{\domain} (a-\tilde a)(\xi+\nabla \phi^{L,a}_\xi) \cdot (\xi+\nabla \phi^{L,a}_\xi) \,dx
\\&~~~~~~~~~~~
- 2\dashint_{\domain} (a-\tilde a)(\nabla \phi^{L,a}_\xi-\nabla \phi^{L,(1-s)a+s\tilde a}_\xi) \cdot (\nabla \phi^{L,a}_\xi-\nabla \phi^{L,(1-s)a+s\tilde a}_\xi) \,dx  \,ds
\\&~~~
+\int_{1-\lambda^2/2}^1 \frac{1}{2} \dashint_{\domain} (a-\tilde a)(\xi+\nabla \phi^{L,\tilde a}_\xi) \cdot (\xi+\nabla \phi^{L,\tilde a}_\xi) \,dx
\\&~~~~~~~~~~~
- 2\dashint_{\domain} (a-\tilde a)(\nabla \phi^{L,\tilde a}_\xi-\nabla \phi^{L,(1-s)a+s\tilde a}_\xi) \cdot (\nabla \phi^{L,\tilde a}_\xi-\nabla \phi^{L,(1-s)a+s\tilde a}_\xi) \,dx  \,ds
\\&
\geq
\int_0^{\lambda^2/2} \left(\frac{1}{2}-\frac{2s^2}{\lambda^4}\right) \dashint_{\domain} (a-\tilde a)(\xi+\nabla \phi^{L,a}_\xi) \cdot (\xi+\nabla \phi^{L,a}_\xi) \,dx \,ds
\\&~~~
+\int_{1-\lambda^2/2}^1 \left(\frac{1}{2}-\frac{2(1-s)^2}{\lambda^4}\right) \dashint_{\domain} (a-\tilde a)(\xi+\nabla \phi^{L,\tilde a}_\xi) \cdot (\xi+\nabla \phi^{L,\tilde a}_\xi) \,dx \,ds
\\&
\geq
\frac{\lambda^2}{8} \dashint_{\domain} (a-\tilde a)(\xi+\nabla \phi^{L,a}_\xi) \cdot (\xi+\nabla \phi^{L,a}_\xi) \,dx
\\&~~~
+\frac{\lambda^2}{8} \dashint_{\domain} (a-\tilde a)(\xi+\nabla \phi^{L,\tilde a}_\xi) \cdot (\xi+\nabla \phi^{L,a}_\xi) \,dx.
\end{align*}
This entails
\begin{align*}
&\left(a^{\RVE,a}\xi \cdot \xi
-a^{\RVE,\tilde a}\xi \cdot \xi\right)
\left(\dashint_\domain a \xi \cdot \xi \,dx
-\dashint_\domain \tilde a \xi \cdot \xi \,dx \right)
\\&
\geq
\frac{\lambda^2}{8} \left(\dashint_\domain (a-\tilde a)\xi\cdot \xi \,dx\right) \dashint_{\domain}  (a-\tilde a)(\xi+\nabla \phi^{L,a}_\xi) \cdot (\xi+\nabla \phi^{L,a}_\xi) \,dx
\\&~~~
+\frac{\lambda^2}{8} \left(\dashint_\domain (a-\tilde a)\xi\cdot \xi \,dx\right) \dashint_{\domain} (a-\tilde a)(\xi+\nabla \phi^{L,\tilde a}_\xi) \cdot (\xi+\nabla \phi^{L,\tilde a}_\xi) \,dx
\end{align*}
The estimate \eqref{LowerBoundCovSecondVersion} from Lemma~\ref{CovarianceEstimateFromBelow} implies
\begin{align*}
&\Cov\left[a^\RVE \xi \cdot \xi, \dashint_\domain a\xi\cdot \xi \,dx\right]
\\&
\geq
\frac{\lambda^2}{16} L^{-d} \mathbb{E}\Bigg[\sum_{k\in \mathbb{Z}^d\cap [0,L)^d} 
\sqrt{\dashint_\domain \big|(a(\Gamma)-a(\Delta_{k,\tilde \Gamma_k}\Gamma))\xi\cdot \xi\big| \,dx}
\\&~~~~~~~~~~~~~~~~~~~~~~~\times
\sqrt{\dashint_{\domain} \big|(a(\Gamma)-a(\Delta_{k,\tilde \Gamma_k}\Gamma))(\xi+\nabla \phi^{L,a}_\xi) \cdot (\xi+\nabla \phi^{L,a}_\xi) \big| \,dx}\Bigg]^2
\\&
\geq
\frac{\lambda^2}{16} L^{-d} \mathbb{E}\Bigg[\sum_{k\in \mathbb{Z}^d\cap [0,L)^d} L^{-d}
\sqrt{\int_\domain \big|(a-a(\Delta_{k,\tilde \Gamma_k}\Gamma))\xi\cdot \xi\big| \,dx}
\\&~~~~~~~~~~~~~~~~~~~~~~~~~~~~~\times
(2K)^{-d/2} \int_{\domain} \big|(a(\Gamma)-a(\Delta_{k,\tilde \Gamma_k}\Gamma))^{1/2}(\xi+\nabla \phi^{L,a}_\xi)\big| \,dx\Bigg]^2,
\end{align*}
where in the last step we have used the H\"older inequality and the fact that $a(x,\Gamma)-a(x,\Delta_{k,\tilde \Gamma_k}\Gamma)$ is only nonzero for $|x-k|\leq K$.

By our assumption \eqref{QuantifiedMonotonicity} we infer
\begin{align*}
&\Cov\left[a^\RVE \xi \cdot \xi, \dashint_\domain a\xi\cdot \xi \,dx\right]
\\&
\geq
\frac{\lambda^2}{16} L^{-d} \mathbb{E}\bigg[
L^{-d} (2K)^{-d/2} \int_{\domain} \nu \big|\xi+\nabla \phi^{L,a}_\xi\big| \,dx\bigg]^2
\\&
\geq
\frac{\lambda^2}{16} L^{-d} (2K)^{-d} \nu^2 \mathbb{E}\bigg[
 \bigg|\dashint_{\domain} \xi+\nabla \phi^{L,a}_\xi \,dx\bigg|\bigg]^2
\\&
\geq
\frac{\lambda^2}{16} L^{-d} (2K)^{-d} \nu^2 |\xi|^2.
\end{align*}
To conclude our proof, by
\begin{align*}
\rho_{a^\RVE\xi\cdot \xi,\mathcal{F}(a)} = \frac{\Cov[a^\RVE \xi\cdot \xi,\mathcal{F}(a)]}{\sqrt{\Var a^\RVE \xi\cdot \xi}\sqrt{\Var \mathcal{F}(a)}}
\end{align*}
it suffices to bound $\Var a^\RVE \xi\cdot \xi$ and $\Var \mathcal{F}(a)$ by $C(d,\lambda,K) L^{-d} |\xi|^2$. A corresponding bound for $\Var a^\RVE \xi \cdot \xi$ is provided e.\,g.\ by the methods of Gloria and Otto \cite{GloriaOttoNew}. To estimate $\Var \mathcal{F}(a)$, we simply apply \eqref{UpperBoundCov}, which yields
\begin{align*}
&\Var \mathcal{F}(a)
\\&
\leq \frac{1}{2} \sum_{k\in \mathbb{Z}^d\cap [0,L)^d} \mathbb{E}\bigg[\bigg(\dashint_{[0,L)^d} (a(x,\Gamma)-a(x,\Delta_{k,\tilde \Gamma(k)}\Gamma))\xi \cdot \xi \,dx\bigg)^2\bigg]
\\&
\leq \frac{1}{2} \sum_{k\in \mathbb{Z}^d\cap [0,L)^d} \mathbb{E}\Big[|\xi|^2 L^{-2d} (2K)^{2d} \Big]
\\&
\leq (2K)^{2d} |\xi|^2 L^{-d}.
\end{align*}
\end{proof}
In the previous proof, we have used the following standard estimate for covariances of nonlinear functions of a finite number of independent random variables.
\begin{lemma}
\label{CovarianceEstimateFromBelow}
Let $f:[0,1]^N \rightarrow \mathbb{R}$, $g:[0,1]^N \rightarrow \mathbb{R}$ be two functions that are monotonous with respect to each of their arguments. Let $X_i:\Omega\rightarrow [0,1]$, $1\leq i\leq N$, and $Y_i:\Omega\rightarrow [0,1]$, $1\leq i\leq N$, be $2N$ independent identically distributed random variables. Define
\begin{align*}
&h_n(X,x,y):=
\\&~~~~~~
|f(X_1,\ldots,X_{n-1},x,X_{n+1},\ldots,X_N)-f(X_1,\ldots,X_{n-1},y,X_{n+1},\ldots,X_N)|
\\&~~~~~~\times|g(X_1,\ldots,X_{n-1},x,X_{n+1},\ldots,X_N)-g(X_1,\ldots,X_{n-1},y,X_{n+1},\ldots,X_N)|
\end{align*}
and
\begin{align*}
&H_n(X,x,y):=
\\&~~~~
\frac{1}{2}|f(X_1,\ldots,X_{n-1},x,X_{n+1},\ldots,X_N)-f(X_1,\ldots,X_{n-1},y,X_{n+1},\ldots,X_N)|^2
\\&~~~~+\frac{1}{2}|g(X_1,\ldots,X_{n-1},x,X_{n+1},\ldots,X_N)-g(X_1,\ldots,X_{n-1},y,X_{n+1},\ldots,X_N)|^2.
\end{align*}
Then
\begin{align}
\label{LowerBoundCov}
\operatorname{Cov}[f(X),g(X)] \geq \frac{1}{2}\sum_{n=1}^N \mathbb{E}\Big[\sqrt{h_n(X,X_n,Y_n)}\Big]^2
\end{align}
and by Jensen's inequality
\begin{align}
\label{LowerBoundCovSecondVersion}
\operatorname{Cov}[f(X),g(X)] \geq \frac{1}{2}N^{-1} \mathbb{E}\bigg[\sum_{n=1}^N \sqrt{h_n(X,X_n,Y_n)}\bigg]^2.
\end{align}
Furthermore, we have
\begin{align}
\label{UpperBoundCov}
\operatorname{Cov}[f(X),g(X)] \leq \frac{1}{2}\sum_{n=1}^N \mathbb{E}\big[H_n(X,X_n,Y_n)\big].
\end{align}
\end{lemma}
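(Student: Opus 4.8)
The plan is to run a Doob (martingale) decomposition of $f(X)$ and $g(X)$ along the coordinates. Setting $\mathcal{F}_n:=\sigma(X_1,\dots,X_n)$ and $\Delta_n f:=\mathbb{E}[f(X)\mid\mathcal{F}_n]-\mathbb{E}[f(X)\mid\mathcal{F}_{n-1}]$ (and likewise $\Delta_n g$), the increments are martingale differences, hence pairwise orthogonal in $L^2$, so that
\begin{align*}
\operatorname{Cov}[f(X),g(X)]=\sum_{n=1}^N\mathbb{E}\big[\Delta_n f\,\Delta_n g\big].
\end{align*}
Everything then reduces to bounding the single term $\mathbb{E}[\Delta_n f\,\Delta_n g]$ from below (for \eqref{LowerBoundCov}) and from above (for \eqref{UpperBoundCov}). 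Throughout I would write $X^{(n)}$ for the configuration $(X_1,\dots,X_{n-1},Y_n,X_{n+1},\dots,X_N)$, i.e.\ $X$ with its $n$-th entry resampled by the independent copy $Y_n$, and note the identity $\Delta_n f=\tilde f(X_n)-\mathbb{E}[\tilde f(X_n)\mid\mathcal{F}_{n-1}]$, where $\tilde f(x_n):=\mathbb{E}[f(X)\mid\mathcal{F}_{n-1},X_n=x_n]$ is, conditionally on $X_1,\dots,X_{n-1}$, a one-variable function inheriting the monotonicity of $f$ in coordinate $n$.

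For the lower bound I would first reduce, without loss of generality, to the case that $f$ and $g$ are nondecreasing in each argument: on a coordinate where both decrease one relabels $X_n\mapsto 1-X_n$, $Y_n\mapsto 1-Y_n$, which leaves the joint law of $(X,Y)$ and the quantity $h_n$ unchanged — it is exactly the hypothesis ``$f,g$ monotone in the same direction in each argument'' that makes this possible. Conditioning on $\mathcal{F}_{n-1}$ one then has
\begin{align*}
\mathbb{E}[\Delta_n f\,\Delta_n g\mid\mathcal{F}_{n-1}]
&=\operatorname{Cov}\big[\tilde f(X_n),\tilde g(X_n)\mid\mathcal{F}_{n-1}\big]\\
&=\tfrac12\,\mathbb{E}\big[(\tilde f(X_n)-\tilde f(Y_n))(\tilde g(X_n)-\tilde g(Y_n))\mid\mathcal{F}_{n-1}\big],
\end{align*}
the second line being the elementary symmetrization identity for the covariance, and the product on the right being nonnegative since $\tilde f,\tilde g$ are comonotone (both increments carry the sign $\operatorname{sign}(X_n-Y_n)$). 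The crucial observation is that monotonicity of $f$ in coordinate $n$ holds \emph{for every fixed value of the remaining coordinates}, so that averaging out $X_{n+1},\dots,X_N$ destroys no cancellation and, with $\mathcal{G}_n:=\sigma(X_1,\dots,X_n,Y_n)$,
\begin{align*}
|\tilde f(X_n)-\tilde f(Y_n)|=\mathbb{E}\big[|f(X)-f(X^{(n)})|\,\big|\,\mathcal{G}_n\big],
\end{align*}
and similarly for $g$. Multiplying the two and applying a conditional Cauchy–Schwarz inequality to $\sqrt{|f(X)-f(X^{(n)})|}$ and $\sqrt{|g(X)-g(X^{(n)})|}$ yields $|\tilde f(X_n)-\tilde f(Y_n)|\,|\tilde g(X_n)-\tilde g(Y_n)|\ge \mathbb{E}[\sqrt{h_n(X,X_n,Y_n)}\mid\mathcal{G}_n]^2$; repeated use of Jensen's inequality (to take conditional expectations out of the square and then to remove the conditioning altogether) gives $\mathbb{E}[\Delta_n f\,\Delta_n g]\ge\tfrac12\,\mathbb{E}[\sqrt{h_n(X,X_n,Y_n)}]^2$. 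Summing over $n$ proves \eqref{LowerBoundCov}, and \eqref{LowerBoundCovSecondVersion} follows from it by Cauchy–Schwarz (equivalently, Jensen) in the index $n$.

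For the upper bound \eqref{UpperBoundCov} no monotonicity is needed: from $\mathbb{E}[\Delta_n f\,\Delta_n g]\le\tfrac12\mathbb{E}[(\Delta_n f)^2]+\tfrac12\mathbb{E}[(\Delta_n g)^2]$ together with the Efron–Stein-type estimate $\mathbb{E}[(\Delta_n f)^2]\le\tfrac12\mathbb{E}[(f(X)-f(X^{(n)}))^2]$ (again $\tilde f(X_n)-\tilde f(Y_n)=\mathbb{E}[f(X)-f(X^{(n)})\mid\mathcal{G}_n]$ followed by conditional Jensen and the symmetrization identity for the variance) one obtains $\mathbb{E}[\Delta_n f\,\Delta_n g]\le\tfrac12\mathbb{E}[H_n(X,X_n,Y_n)]$, and summation concludes.

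The only genuinely delicate point is the sign bookkeeping in the lower bound: one must exploit that monotonicity in a single coordinate is uniform in the other coordinates, so that passing from the full increments $f(X)-f(X^{(n)})$ to their conditional averages $\tilde f(X_n)-\tilde f(Y_n)$ does not degrade the cancellation; and it is precisely the conditional Cauchy–Schwarz step — forced by the fact that $h_n$ is a product of two \emph{separate} increments whereas the covariance only controls their ``geometric'' combination — that produces $\sqrt{h_n}$ rather than $h_n$ in \eqref{LowerBoundCov}. Everything else is routine martingale and Jensen/Cauchy–Schwarz manipulation.
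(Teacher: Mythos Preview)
Your argument is correct. It uses the same ingredients as the paper --- a coordinate-by-coordinate decomposition, the symmetrization identity $\operatorname{Cov}[U,V]=\tfrac12\mathbb{E}[(U-U')(V-V')]$, monotonicity to pass absolute values through an expectation, Cauchy--Schwarz, and a final Jensen step --- but organizes them differently. The paper telescopes by successively replacing $X_k$ by $Y_k$ in the \emph{argument of $g$} (a Lindeberg-type hybrid), so that at step $n$ the increments of $f$ and $g$ share the future variables $X_{n+1},\dots,X_N$ but have independent pasts $(X_1,\dots,X_{n-1})$ versus $(Y_1,\dots,Y_{n-1})$, and Cauchy--Schwarz is applied in those past variables. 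You instead use the Doob/Efron--Stein decomposition, so both increments share the past $X_1,\dots,X_{n-1}$ and the future is integrated out; your conditional Cauchy--Schwarz is applied in the future variables $X_{n+1},\dots,X_N$. Both routes land on the same bound after the closing Jensen inequality. Your packaging has the minor advantage that the upper bound \eqref{UpperBoundCov} is immediately recognized as Efron--Stein and visibly requires no monotonicity; the paper's packaging is slightly more self-contained. One small remark: the lemma as stated only says ``monotonous in each argument'', and both your proof and the paper's silently need $f$ and $g$ to be monotone in the \emph{same} direction in each coordinate (otherwise the conclusion is false); you flagged this correctly.
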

\begin{proof}
The proof proceeds similarly to the proof of the standard form of this lemma which provides the weaker assertion $\operatorname{Cov}[f(X),g(X)]\geq 0$; see for example \cite[page 24]{LiuMonteCarloStrategies} or \cite[Lemma 2.1]{BlancCostaouecLeBrisLegoll}.

We have by the identity of laws of $(X_1,\ldots,X_{n-1},Y_n,Y_1,\ldots,Y_{n-1},X_n)$ and $(X_1,\ldots,X_n,Y_1,\ldots,Y_n)$ (which allows us to swap $X_n$ and $Y_n$ in the expectations below)
\begin{align*}
&\mathbb{E}\big[f(X_1,\ldots,X_{n-1},X_{n},\ldots,X_N)g(Y_1,\ldots,Y_{n-1},X_{n},\ldots,X_N)\big]
\\&
=\frac{1}{2}\mathbb{E}\Big[\big(f(X_1,\ldots,X_{n-1},X_n,X_{n+1},\ldots,X_N)-f(X_1,\ldots,X_{n-1},Y_n,X_{n+1},\ldots,X_N)\big)
\\&~~~~~~~~~~~\times
\big(g(Y_1,\ldots,Y_{n-1},X_{n},X_{n+1},\ldots,X_N)-g(Y_1,\ldots,Y_{n-1},Y_{n},X_{n+1},\ldots,X_N)\big)\Big]
\\&~~~
+\mathbb{E}\big[f(X_1,\ldots,X_n,X_{n+1},\ldots,X_N)g(Y_1,\ldots,Y_n,X_{n+1},\ldots,X_N)\big].
\end{align*}
By the independence of the $X_i$ and the $Y_i$, we infer
\begin{align*}
&\mathbb{E}\big[f(X_1,\ldots,X_{n-1},X_{n},\ldots,X_N)g(Y_1,\ldots,Y_{n-1},X_{n},\ldots,X_N)\big]
\\&
=\frac{1}{2}\mathbb{E}\bigg[\int_{[0,1]^{n-1}} f(x,X_n,X_{n+1},\ldots,X_N)-f(x,Y_n,X_{n+1},\ldots,X_N) \,d\mathbb{P}_{(X_1,\ldots,X_{n-1})}(x)
\\&~~~~~~~~~~~\times
\int_{[0,1]^{n-1}} g(y,X_n,X_{n+1},\ldots,X_N)-g(y,Y_n,X_{n+1},\ldots,X_N) \,d\mathbb{P}_{(Y_1,\ldots,Y_{n-1})}(y)\bigg]
\\&~~~
+\mathbb{E}\big[f(X_1,\ldots,X_n,X_{n+1},\ldots,X_N)g(Y_1,\ldots,Y_n,X_{n+1},\ldots,X_N)\big].
\end{align*}
As both $f$ and $g$ are increasing functions in each of their arguments, the integrands in this formula are either nonnegative (for $X_n\geq Y_n$) or nonpositive (for $X_n\leq Y_n$). Thus, we have
\begin{align}
\nonumber
&\mathbb{E}\big[f(X_1,\ldots,X_{n-1},X_{n},\ldots,X_N)g(Y_1,\ldots,Y_{n-1},X_{n},\ldots,X_N)\big]
\\&
\label{CovEqual}
=\frac{1}{2}\mathbb{E}\bigg[\int_{[0,1]^{n-1}} |f(x,X_n,X_{n+1},\ldots,X_N)-f(x,Y_n,X_{n+1},\ldots,X_N)| \,d\mathbb{P}_{(X_1,\ldots,X_{n-1})}(x)
\\&~~~~~~~~~~~\times
\nonumber
\int_{[0,1]^{n-1}} |g(y,X_n,X_{n+1},\ldots,X_N)-g(y,Y_n,X_{n+1},\ldots,X_N)| \,d\mathbb{P}_{(Y_1,\ldots,Y_{n-1})}(y)\bigg]
\\&~~~\nonumber
+\mathbb{E}\big[f(X_1,\ldots,X_n,X_{n+1},\ldots,X_N)g(Y_1,\ldots,Y_n,X_{n+1},\ldots,X_N)\big]
\end{align}
and therefore by H\"older's inequality
\begin{align*}
&\mathbb{E}\big[f(X_1,\ldots,X_{n-1},X_{n},\ldots,X_N)g(Y_1,\ldots,Y_{n-1},X_{n},\ldots,X_N)\big]
\\&
\geq \frac{1}{2}\mathbb{E}\bigg[\sqrt{|f(X_1,\ldots,X_{n-1},X_n,X_{n+1},\ldots,X_N)-f(X_1,\ldots,X_{n-1},Y_n,X_{n+1},\ldots,X_N)|}
\\&~~~~~~~~~~~\times
\sqrt{|g(X_1,\ldots,X_{n-1},X_n,X_{n+1},\ldots,X_N)-g(X_1,\ldots,X_{n-1},Y_n,X_{n+1},\ldots,X_N)|}\bigg]^2
\\&~~~
+\mathbb{E}\big[f(X_1,\ldots,X_n,X_{n+1},\ldots,X_N)g(Y_1,\ldots,Y_n,X_{n+1},\ldots,X_N)\big].
\end{align*}
Taking the sum of these formulas for $n=1,\ldots,N$, we infer
\begin{align*}
&\mathbb{E}\big[f(X)g(X)\big]
\geq \frac{1}{2} \sum_{n=1}^N \mathbb{E}\Big[\sqrt{h_n(X,X_n,Y_n)}\Big]^2
+\mathbb{E}\big[f(X)g(Y)\big],
\end{align*}
which establishes the desired lower bound \eqref{LowerBoundCov} for the covariance.

To obtain \eqref{UpperBoundCov}, we apply Young's inequality and subsequently Jensen's inequality to \eqref{CovEqual}, which yields
\begin{align*}
&\mathbb{E}\big[f(X_1,\ldots,X_{n-1},X_{n},\ldots,X_N)g(Y_1,\ldots,Y_{n-1},X_{n},\ldots,X_N)\big]
\\&
\leq\frac{1}{2}\mathbb{E}\bigg[\frac{1}{2}\int_{[0,1]^{n-1}} |f(x,X_n,X_{n+1},\ldots,X_N)-f(x,Y_n,X_{n+1},\ldots,X_N)|^2 \,d\mathbb{P}_{(X_1,\ldots,X_{n-1})}(x)
\\&~~~~~~~~~~~
+\frac{1}{2}\int_{[0,1]^{n-1}} |g(y,X_n,X_{n+1},\ldots,X_N)-g(y,Y_n,X_{n+1},\ldots,X_N)|^2 \,d\mathbb{P}_{(Y_1,\ldots,Y_{n-1})}(y)\bigg]
\\&~~~
+\mathbb{E}\big[f(X_1,\ldots,X_n,X_{n+1},\ldots,X_N)g(Y_1,\ldots,Y_n,X_{n+1},\ldots,X_N)\big].
\end{align*}
This is equivalent to
\begin{align*}
&\mathbb{E}\big[f(X_1,\ldots,X_{n-1},X_{n},\ldots,X_N)g(Y_1,\ldots,Y_{n-1},X_{n},\ldots,X_N)\big]
\\&
\leq \frac{1}{2}\mathbb{E}\bigg[\frac{1}{2} |f(X_1,\ldots,X_{n-1},X_n,X_{n+1},\ldots,X_N)-f(X_1,\ldots,X_{n-1},Y_n,X_{n+1},\ldots,X_N)|^2
\\&~~~~~~~~~~~
+\frac{1}{2} |g(X_1,\ldots,X_{n-1},X_n,X_{n+1},\ldots,X_N)-g(X_1,\ldots,X_{n-1},Y_n,X_{n+1},\ldots,X_N)|^2 \bigg]
\\&~~~
+\mathbb{E}\big[f(X_1,\ldots,X_n,X_{n+1},\ldots,X_N)g(Y_1,\ldots,Y_n,X_{n+1},\ldots,X_N)\big].
\end{align*}
Taking the sum with respect to $n$ entails
\begin{align*}
&\mathbb{E}\big[f(X)g(X)\big]
\leq \frac{1}{2} \sum_{n=1}^N \mathbb{E}[H_n(X,X_n,Y_n)]
+\mathbb{E}\big[f(X)g(Y)\big],
\end{align*}
which establishes the upper bound \eqref{UpperBoundCov} for the covariance.
\end{proof}

\appendix

\section{Gaussian propagation bounds for parabolic PDEs}

We now collect some elementary energy and propagation estimates for second-order linear parabolic equations. By a \emph{nongrowing weak solution} $u$ to the equation $\partial_t u = \nabla \cdot (a\nabla u)$ with initial data $u(\cdot,0)=g$, we understand a function $u\in L^2_{loc}(\mathbb{R}^d\times [0,\infty))$ with $\nabla u\in L^2_{loc}(\mathbb{R}^d\times [0,\infty))$ satisfying the usual weak formulation of the PDE with test functions in $C^\infty_{cpt}(\mathbb{R}^d\times [0,\infty))$ and additionally the estimate
\begin{align*}
\sup_{r\geq 0} \int_0^T \dashint_{\{|x|\leq r\}} |u|^2 \,dx\,dt<\infty
\end{align*}
for any $T>0$. Note that for initial data $u(\cdot,0)=\nabla \cdot b$ for some vector field $b\in L^\infty(\mathbb{R}^d;\mathbb{R}^d)$, the initial data is incorporated into the weak formulation in a weak form, i.\,e.\ as
\begin{align*}
-\int_0^\infty \int_{\mathbb{R}^d} u \partial_t \eta \,dx\,dt
=-\int_{\mathbb{R}^d} b\cdot \nabla \eta \,dx
+\int_0^\infty \int_{\mathbb{R}^d} a\nabla u \cdot \nabla \eta \,dx\,dt.
\end{align*}
Many of our computations in the next sections will be formal, but can be justified by the appropriate standard approximation arguments. Note also that the estimates which we shall prove ensure the \emph{existence} of such nongrowing weak solutions for merely $b\in L^\infty(\mathbb{R}^d;\mathbb{R}^d)$, as they ensure that one may construct a solution by constructing solutions with the initial data $b$ truncated outside of some large ball $\{|x|\leq R\}$ (in which case the standard existence theorems apply) and then passing to the limit $R\rightarrow \infty$.

\begin{lemma}
\label{GaussianPropagationBounds}
Let $a$ be a uniformly elliptic and bounded coefficient field on $\mathbb{R}^d$. For $r\geq 0$ and $M\geq 5d$, define the coefficient field
\begin{align*}
a_{r,M}(x):=
\begin{cases}
a(x)&\text{for }|x|\leq M r,
\\
\Id&\text{otherwise}.
\end{cases}
\end{align*}
Consider the unique nongrowing weak solutions $u_i$ and $u_{i,r,M}$ to the equations
\begin{align*}
\frac{d}{dt} u_i&=\nabla \cdot (a \nabla u_i),
\\
u_i(\cdot,0)&=\nabla \cdot (a e_i),
\end{align*}
and
\begin{align*}
\frac{d}{dt} u_{i,r,M}&=\nabla \cdot (a_{r,M} \nabla u_{i,r,M}),
\\
u_{i,r,M}(\cdot,0)&=\nabla \cdot (a_{r,M} e_i).
\end{align*}
Then we have
\begin{align}
\label{L2ErrorBound}
\dashint_{\{|x|\leq 2dr\}} |u_i(\cdot,t)-u_{i,r,M}(\cdot,t)|^2 \,dx
\leq \frac{C M^{d/2}}{t}\exp\bigg(-c\frac{M^2 r^2}{t}\bigg)
\end{align}
for any $t\leq 16 M^2 r^2$ and
\begin{align}
\label{H1ErrorBound}
\int_{0}^{16 r^2} \dashint_{\{|x|\leq dr\}} |\nabla u_i-\nabla u_{i,r,M}|^2 \,dx\,dt
\leq \frac{C}{r^2} \exp(-cM^2).
\end{align}
\end{lemma}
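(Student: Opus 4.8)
The plan is to read Lemma~\ref{GaussianPropagationBounds} as a quantitative finite--speed--of--propagation (Davies--Gaffney) estimate for the parabolic operator $\partial_t-\nabla\cdot(a\nabla\,\cdot\,)$, obtained by the exponential--weight energy method. First I would record the equation for the difference $w:=u_i-u_{i,r,M}$. Since $a_{r,M}$ coincides with $a$ on $\{|x|\le Mr\}$, the initial data $\nabla\cdot(ae_i)$ and $\nabla\cdot(a_{r,M}e_i)$ differ only through the bounded field $G_0:=(a-a_{r,M})e_i$, which satisfies $|G_0|\le 2/\lambda$ and is supported in $\{|x|\ge Mr\}$; and $w$ solves
\[
\partial_t w=\nabla\cdot(a\nabla w)+\nabla\cdot F,\qquad w(\cdot,0)=\nabla\cdot G_0,
\]
with source field $F:=(a-a_{r,M})\nabla u_{i,r,M}$, again supported in $\{|x|\ge Mr\}$ and controlled in $L^2_tL^2_{loc}$ by the standard local energy estimate for $u_{i,r,M}$, which gives $\int_0^\infty\dashint_{\{|y-x_0|\le 1\}}|\nabla u_{i,r,M}|^2\,dy\,dt\lesssim\lambda^{-2}$ uniformly in $x_0$. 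Thus the whole discrepancy between the two problems lives in the far region $\{|x|\ge Mr\}$, and everything reduces to showing that the flow transports it into $\{|x|\le 2dr\}$, respectively $\{|x|\le dr\}$, only with Gaussian--tail strength; the hypothesis $M\ge 5d$ is exactly what guarantees these inner balls sit well inside $\{|x|\le Mr\}$.

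For the transport I would fix $\alpha>0$ and use the bounded $1$--Lipschitz weight $\rho(x):=Mr-|x|$, which satisfies $\rho\ge\tfrac35 Mr$ on $\{|x|\le 2dr\}$ (using $M\ge 5d$) while $\rho\le 0$, indeed $\rho\to-\infty$, on $\{|x|\ge Mr\}\supset\supp F\cup\supp G_0$. Testing the equation for $w$ against $e^{2\alpha\rho}w$, using $a\ge\lambda$, $|a|\le\lambda^{-1}$, $|\nabla\rho|\le 1$ and Young's inequality yields the differential inequality
\[
\frac{d}{dt}\int e^{2\alpha\rho}|w|^2+\tfrac{\lambda}{2}\int e^{2\alpha\rho}|\nabla w|^2\le C(\lambda)\alpha^2\int e^{2\alpha\rho}|w|^2+C(\lambda)\int e^{2\alpha\rho}|F|^2,
\]
the last term being harmless because $\rho\le 0$ on $\supp F$. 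Since the initial datum is only a distributional divergence ($G_0\notin L^2(\mathbb R^d)$) I would not integrate from $t=0$, but combine the above with the \emph{weighted} parabolic smoothing bound $\|e^{\alpha\rho}w(\cdot,t)\|_{L^2}^2\lesssim t^{-1}e^{C(\lambda)\alpha^2t}\big(\|e^{\alpha\rho}G_0\|_{L^2}^2+\int_0^t\!\int e^{2\alpha\rho}|F|^2\big)$, which is the weighted version of the estimate for $e^{s\nabla\cdot(a\nabla\cdot)}$ applied to a divergence and follows from the differential inequality by the usual ``multiply by $t$'' trick (or by Duhamel). Here $\|e^{\alpha\rho}G_0\|_{L^2}^2\le 4\lambda^{-2}\int_{\{|x|\ge Mr\}}e^{2\alpha(Mr-|x|)}\,dx\le C_d\big((Mr)^{d-1}\alpha^{-1}+\alpha^{-d}\big)$, and the weighted source integral is estimated in the same way using the uniform local energy bound on $\nabla u_{i,r,M}$.

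It then remains to restrict to the inner ball and optimize in $\alpha$. On $\{|x|\le 2dr\}$ one has $e^{\frac65\alpha Mr}\dashint_{\{|x|\le 2dr\}}|w(\cdot,t)|^2\le C r^{-d}\|e^{\alpha\rho}w(\cdot,t)\|_{L^2}^2$; inserting the weighted smoothing bound and choosing $\alpha:=\theta Mr/t$ with $\theta=\theta(\lambda,d)$ small turns $e^{C\alpha^2t-\frac65\alpha Mr}$ into $e^{-cM^2r^2/t}$, while the polynomial prefactor, upon using $t\le 16M^2r^2$, collapses to $\le CM^{d/2}t^{-1}$ after routine bookkeeping (the contributions $(Mr)^{d-1}\alpha^{-1}/(r^dt)$ and $\alpha^{-d}/(r^dt)$ are both $\lesssim M^{d/2}/t$ in this regime once the exponential factor has absorbed the remaining powers of $M$). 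This gives \eqref{L2ErrorBound}. For \eqref{H1ErrorBound} I would instead integrate the gradient term $\tfrac{\lambda}{2}\int e^{2\alpha\rho}|\nabla w|^2$ from the first differential inequality over $t\in[0,16r^2]$, bound $e^{C\alpha^2t}\le e^{16C\alpha^2r^2}$ on this interval, and now choose $\alpha\sim M/r$; the same inner--ball restriction produces the factor $e^{C\alpha^2r^2-c\alpha Mr}\sim e^{-cM^2}$ together with a prefactor of order $r^{-2}$, which is the claimed estimate. All formal steps (the solutions being only nongrowing weak solutions, the distributional initial datum, differentiating the weighted energy) are justified by the truncation of $G_0$ and $F$ outside large balls and a passage to the limit, as in the introductory remarks of this appendix.

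\textbf{Main obstacle.} The one genuinely delicate point is the interaction between the distributional initial datum $\nabla\cdot(ae_i)$ and the exponential localization: one must produce the $t^{-1/2}$ parabolic--smoothing gain \emph{with} the weight $e^{\alpha\rho}$ present, and with constants that, after optimizing $\alpha\sim Mr/t$, leave a clean $M^{d/2}t^{-1}$ prefactor rather than an unwanted higher power of $M$ or $r$. Once that weighted smoothing estimate is in place, the remainder is a standard Gronwall--and--optimize computation.
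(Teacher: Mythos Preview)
Your Davies--Gaffney strategy is a natural alternative to the paper's duality argument, but there is a genuine gap in the treatment of the source term. You assert that $F=(a-a_{r,M})\nabla u_{i,r,M}$ is controlled ``in $L^2_tL^2_{loc}$ by the standard local energy estimate for $u_{i,r,M}$, which gives $\int_0^\infty\dashint_{\{|y-x_0|\le 1\}}|\nabla u_{i,r,M}|^2\,dy\,dt\lesssim\lambda^{-2}$ uniformly in $x_0$''. This estimate is \emph{false}. Take for instance $a=2\,\Id$: then $a_{r,M}e_i$ has a jump across the sphere $\{|x|=Mr\}$, so $u_{i,r,M}(\cdot,0)=\nabla\cdot(a_{r,M}e_i)$ carries a surface measure there, and for small $s$ one has $|\nabla u_{i,r,M}(\cdot,s)|\sim s^{-1}$ in a tubular neighbourhood of width $\sqrt{s}$ around that sphere. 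Hence $\int_{\{||x|-Mr|\le\sqrt{s}\}}|\nabla u_{i,r,M}(\cdot,s)|^2\,dx\sim (Mr)^{d-1}s^{-3/2}$, which is not integrable at $s=0$. Since $F$ is supported precisely in $\{|x|>Mr\}$ and your weight satisfies $e^{2\alpha\rho}\approx 1$ near the interface, the quantity $\int_0^t\int e^{2\alpha\rho}|F|^2$ appearing in your smoothing bound is infinite, and the bound as stated cannot hold. A related issue is that Duhamel with smoothing for the inhomogeneous part produces $\int_0^t(t-s)^{-1/2}\|e^{\alpha\rho}F(s)\|_{L^2}\,ds$ rather than $\big(\int_0^t\int e^{2\alpha\rho}|F|^2\big)^{1/2}$; these are not interchangeable, and only the former has a chance of being finite here (it would require the sharper bound $\|e^{\alpha\rho}F(s)\|_{L^2}\lesssim s^{-3/4}(Mr)^{(d-1)/2}$ coming from the Gaussian off--support decay of $\nabla u_{i,r,M}$, not the uniform local energy estimate you invoke).

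The paper sidesteps this difficulty entirely by going to the dual problem: it tests against $\psi\in L^2$ supported in the inner ball and introduces the backward solutions $v_\psi$, $v_{\psi,r,M}$ with \emph{final} data $\psi$. Then $\int(u_i-u_{i,r,M})(\cdot,T)\psi=-\int a e_i\cdot\nabla v_\psi(\cdot,0)+\int a_{r,M}e_i\cdot\nabla v_{\psi,r,M}(\cdot,0)$, and the point is that $v_\psi,v_{\psi,r,M}$ have had time $T$ to smooth out from $L^2$ data, so $\nabla v_\psi(\cdot,0)$ and $\nabla v_{\psi,r,M}(\cdot,0)$ are controlled by Lemma~\ref{ParabolicPDERegularity} with the right Gaussian weight. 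The difference $v_\psi-v_{\psi,r,M}$ is then handled by Duhamel, but now the source involves $\nabla v_{\psi,r,M}(\cdot,t_0)$, which is in $L^2$ uniformly (with bound $(T-t_0)^{-1/2}\|\psi\|_{L^2}$), so no short--time singularity arises. In short, duality moves the singular initial datum to the side where it appears only as an $L^\infty$ vector field against a smooth gradient, which is exactly what your direct energy approach fails to arrange.
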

\begin{proof}
For an arbitrary function $\psi\in L^2(\mathbb{R}^d)$ supported in $\{|x|\leq 2dr\}$ and any $T\in [0,16M^2 r^2]$, consider the solutions $v_\psi$ and $v_{\psi,r,M}$ to the dual equations
\begin{align*}
-\frac{d}{dt}v_{\psi} &= \nabla \cdot (a^*\nabla v_\psi),
\\
v_\psi(\cdot,T)&=\psi.
\end{align*}
and
\begin{align*}
-\frac{d}{dt}v_{\psi,r,M} &= \nabla \cdot (a_{r,M}^*\nabla v_{\psi,r,M}),
\\
v_{\psi,r,M}(\cdot,T)&=\psi.
\end{align*}
We then have
\begin{align*}
&\int_{\mathbb{R}^d} (u_i-u_{i,r,M})(\cdot,T) \psi \,dx
\\&
=\int_{\mathbb{R}^d} u_i(\cdot,0)v_{\psi}(\cdot,0)-u_{i,r,M}(\cdot,0) v_{\psi,r,M}(\cdot,0) \,dx
\\&~~~~
+\int_0^T \frac{d}{dt}\int_{\mathbb{R}^d} u_i v_{\psi}-u_{i,r,M} v_{\psi,r,M} \,dx \,dt
\\&
=-\int_{\mathbb{R}^d} a e_i \cdot \nabla v_\psi(\cdot,0)-a_{r,M}e_i \cdot \nabla v_{\psi,r,M}(\cdot,0) \,dx+0
\\&
\leq
C\int_{\{|x|\geq \frac{M}{2}r\}} |\nabla v_\psi(\cdot,0)|+|\nabla v_{\psi,r,M}(\cdot,0)| \,dx
\\&~~~~~
+C\int_{\{|x|\leq \frac{M}{2}r\}}|\nabla (v_\psi(\cdot,0)-v_{\psi,r,M}(\cdot,0))| \,dx
\\&
\leq
C\left(\int_{\{|x|\geq \frac{M}{2}r\}} \left(\frac{|x|}{Mr}\right)^{2d} \big(|\nabla v_\psi(\cdot,0)|^2+|\nabla v_{\psi,r,M}(\cdot,0)|^2\big) \,dx\right)^{1/2} (Mr)^{d/2}
\\&~~~~~
+C\left(\int_{\{|x|\leq \frac{M}{2}r\}}|\nabla (v_\psi(\cdot,0)-v_{\psi,r,M}(\cdot,0))|^2 \,dx\right)^{1/2} (Mr)^{d/2}.
\end{align*}
The penultimate term may be estimated by Lemma~\ref{ParabolicPDERegularity} (applied to the backward-in-time equations for $v_\psi$ and $v_{\psi,r,M}$ and breaking up the ``initial'' condition $\psi$ into pieces supported on scale $\sqrt{T}$ if necessary), resulting in the bound (note that $2dr\leq \frac{Mr}{4}$)
\begin{align*}
&\bigg|\int_{\mathbb{R}^d} (u_i-u_{i,r,M})(\cdot,T) \psi \,dx\bigg|
\\&
\leq
C\sum_{x_0\in \frac{1}{d}\sqrt{T}\mathbb{Z}^d \cap \{|x|\leq 2dr\}}
\bigg(\int_{\{|x|\geq \frac{M}{2}r\}} \left(\frac{|x|}{Mr}\right)^{2d} \cdot \sqrt{T}^d \exp\Big(-\frac{|x-x_0|^2}{C T}\Big)
\\&~~~~~~~~~~~~~~~~~~~~~~~~~~~~~~~~~~~~
\times C(d,\lambda) \dashint_{\{|x-x_0|\leq \sqrt{T}\}} |\psi(\tilde x)|^2 \,d\tilde x \cdot T^{-1} \,dx\bigg)^{1/2} (Mr)^{d/2}
\\&~~~~~
+C\left(\int_{\{|x|\leq \frac{M}{2}r\}}|\nabla (v_\psi(\cdot,0)-v_{\psi,r,M}(\cdot,0))|^2 \,dx\right)^{1/2} (Mr)^{d/2}
\end{align*}
and therefore by $\sqrt{T}\leq 4 M r$
\begin{align}
\nonumber
&\bigg|\int_{\mathbb{R}^d} (u_i-u_{i,r,M})(\cdot,T) \psi \,dx\bigg|
\\&
\leq
\nonumber
C\Big(\frac{4Mr}{\sqrt{T}}\Big)^d\times \exp\Big(-\frac{M^2r^2}{CT}\Big) T^{-1/2} \bigg(\int |\psi|^2 \,dx\bigg)^{1/2} \times (Mr)^{d/2}
\\&~~~~~
\label{EquationDifferenceuuk}
+C\left(\int_{\{|x|\leq \frac{M}{2}r\}}|\nabla (v_\psi(\cdot,0)-v_{\psi,r,M}(\cdot,0))|^2 \,dx\right)^{1/2} (Mr)^{d/2}.
\end{align}
An estimate for the last term on the right-hand side of \eqref{EquationDifferenceuuk} can be obtained as follows: Observe that
\begin{align*}
-\frac{d}{dt}(v_\psi-v_{\psi,r,M}) &= \nabla \cdot (a^*\nabla (v_\psi-v_{\psi,r,M})) + \nabla \cdot ((a^*-a_{r,M}^*)\nabla v_{\psi,r,M}),
\\
(v_\psi-v_{\psi,r,M})(\cdot,T) &= 0.
\end{align*}
We rewrite $(v_\psi-v_{\psi,r,M})(\cdot,0)$ as $(v_\psi-v_{\psi,r,M})(\cdot,0)=\int_0^T w_t(\cdot,0) \,dt$ with $w_{t_0}$ being the solution to the equation
\begin{align*}
-\frac{d}{dt} w_{t_0} &= \nabla \cdot (a^* \nabla w_{t_0}),
\\
w_{t_0} (\cdot,t_0) &= \nabla \cdot ((a^*-a_{r,M}^*)\nabla v_{\psi,r,M}(\cdot,t_0)).
\end{align*}
Considering the estimate \eqref{BoundHeatEquationGradientWeakInitialData} centered at $x_0$ (instead of $0$) and integrating over the set $\{|x_0|\leq \frac{M}{2}r\}$ and applying it to the backward-in-time equation for $w_{t_0}$, we obtain using also the condition $t_0 \leq T\leq C M^2 r^2$
\begin{align*}
&\int_{\{|x|\leq \frac{M}{2}r\}} |\nabla w_{t_0}(\cdot,0)|^2 \,dx
\\&
\leq C(d,\lambda) t_0^{-2} \int_{\{|x_0|\leq \frac{M}{2}r\}} \int  |(a^*-a_{r,M}^*)\nabla v_{\psi,r,M}|^2(x) \cdot t_0^{-d/2} \exp\bigg(-\frac{|x-x_0|^2}{C t_0}\bigg)  \,dx \,dx_0
\\&
\leq C(d,\lambda) t_0^{-2} \int_{\{|x_0|\leq \frac{M}{2}r\}} \int_{\{|x|\geq Mr\}} |\nabla v_{\psi,r,M}(x)|^2 \cdot t_0^{-d/2} \exp\bigg(-\frac{|x-x_0|^2}{C t_0}\bigg)  \,dx \,dx_0
\\&
\leq C(d,\lambda) t_0^{-2} \exp\bigg(-\frac{M^2 r^2}{2C t_0}\bigg) \int  |\nabla v_{\psi,r,M}(x)|^2  \cdot \exp\bigg(-\frac{|x|^2}{4C t_0}\bigg)  \,dx.
\end{align*}
Lemma~\ref{ParabolicPDERegularity} (applied to $v_{\psi,r,M}$) implies by breaking up the ``initial'' condition $\psi$ into contributions supported on balls of size $\sqrt{T-t_0}$
\begin{align*}
\int_{\mathbb{R}^d} |\nabla v_{\psi,r,M}(x)|^2  \,dx
\leq \frac{C}{T-t_0} \int_{\mathbb{R}^d} |\psi|^2 \,dx.
\end{align*}
Combining the previous two estimates, we deduce
\begin{align*}
&\int_{\{|x|\leq \frac{M}{2}r\}} |\nabla w_{t_0}(\cdot,0)|^2 \,dx
\leq C(d,\lambda) t_0^{-2}  \exp\bigg(-\frac{M^2 r^2}{2C t_0}\bigg) (T-t_0)^{-1} \int |\psi(x)|^2 \,dx.
\end{align*}
Taking the square root and integrating with respect to $t_0$, this entails
\begin{align*}
&\bigg(\int_{\{|x|\leq \frac{M}{2}r\}} |\nabla (v_\psi-v_{\psi,r,M})|^2 \,dx\bigg)^{1/2}
\\&
\leq C(d,\lambda) \int_0^T t_0^{-1} (T-t_0)^{-1/2} \exp\bigg(-\frac{M^2 r^2}{C t_0}\bigg) \,dt_0 \cdot \bigg(\int |\psi(x)|^2 \,dx\bigg)^{1/2}
\\&
\leq C(d,\lambda) \bigg(\frac{T^{1/2}}{M^2r^2} + \frac{1}{T^{1/2}} \bigg) \exp\bigg(-\frac{M^2 r^2}{C T}\bigg) \bigg(\int |\psi(x)|^2 \,dx\bigg)^{1/2}.
\end{align*}
Using $T\leq C M^2 r^2$ and plugging in this bound into \eqref{EquationDifferenceuuk}, we get by $M\geq 5d$
\begin{align*}
&\bigg|\int_{\mathbb{R}^d} (u_i-u_{i,r,M})(\cdot,T) \psi \,dx\bigg|
\\&
\leq
C(d,\lambda) T^{-1/2} \exp\bigg(-\frac{M^2 r^2}{C T}\bigg) \cdot \bigg(\int |\psi|^2 \,dx\bigg)^{1/2} (Mr)^{d/2}.
\end{align*}
Passing to the supremum over all $\psi$ supported in $\{|x|\leq 2dr\}$ with $\int |\psi|^2 \,dx\leq 1$, we deduce our bound \eqref{L2ErrorBound}.

Now choose a cutoff $\eta$ with $\eta\equiv 1$ in $\{|x|\leq dr\}$ and $\eta \equiv 0$ outside of $\{|x|\leq 2dr\}$. For any $t\leq 16 r^2$, we obtain by testing the equation for the difference $u_i-u_{i,r,M}$ with $(u_i-u_{i,r,M})\eta^2$
\begin{align*}
&\int_t^{2t} \int \eta^2 \lambda |\nabla (u_i-u_{i,r,M})|^2 \,dx \,d\tilde t
\\&
\leq \int_{\{|x|\leq 2dr\}} \eta^2 |u_i(\cdot,t)-u_{i,r,M}(\cdot,t)|^2 \,dx
\\&~~~~
+C\int_t^{2t} \int |\nabla \eta|^2 |u_i(\cdot,t)-u_{i,r,M}(\cdot,t)|^2 \,dx \,d\tilde t
\\&
\leq \int_{\{|x|\leq 2dr\}} |u_i(\cdot,t)-u_{i,r,M}(\cdot,t)|^2 \,dx
\\&~~~~
+C\int_t^{2t} \int_{\{|x|\leq 2dr\}} \frac{C}{r^2} |u_i(\cdot,t)-u_{i,r,M}(\cdot,t)|^2 \,dx \,d\tilde t.
\end{align*}
Using our bound \eqref{L2ErrorBound} and $t\leq 16r^2$, we get
\begin{align*}
&\int_t^{2t} \dashint_{\{|x|\leq dr\}} |\nabla (u_i-u_{i,r,M})|^2 \,dx \,d\tilde t
\leq C(d,\lambda) \frac{C M^{d/2}}{t}\exp\bigg(-c\frac{M^2 r^2}{t}\bigg).
\end{align*}
Taking the sum over all $t=2^k$ for $2^k\leq T$, we deduce our desired estimate \eqref{H1ErrorBound}.
\end{proof}

\begin{lemma}
\label{DeterministicBoundsParabolic}
Let $a\in L^\infty(\mathbb{R}^d;\mathbb{R}^{d\times d})$ be a uniformly elliptic and bounded coefficient field in the sense of (A1). Let $b\in L^\infty(\mathbb{R}^d;\mathbb{R}^d)$ be a bounded vector field. Then the unique nongrowing weak solution $w$ to the equation
\begin{align*}
\frac{d}{dt} w &= \nabla \cdot (a\nabla w),
\\
w(\cdot,0) &= \nabla \cdot b,
\end{align*}
satisfies for any $T>0$ the estimate
\begin{align}
\label{BoundHeatEquationWeakInitialData}
&\bigg(\dashint_{\{|x|\leq \sqrt{T}\}} |w(\cdot,T)|^2 \,dx\bigg)^{1/2}
\\&~~~~~~
\nonumber
\leq C(d,\lambda) T^{-1/2} \bigg(\int  |b(x)|^2 \cdot T^{-d/2} \exp\bigg(-\frac{|x|^2}{C T}\bigg)  \,dx\bigg)^{1/2}.
\end{align}
Furthermore, we have the bounds
\begin{align}
\label{BoundHeatEquationIntegratedGradientWeakInitialData}
\dashint_{\{|x|\leq 1\}} \bigg|\nabla \int_{0}^1 w(\cdot,t) \,dt\bigg|^2 \,dx
\leq C(d,\lambda) ||b||_{L^\infty}^2
\end{align}
and
\begin{align}
\label{BoundHeatEquationGradientWeakInitialData}
&\bigg(\dashint_{\{|x|\leq \sqrt{T}\}} |\nabla w(\cdot,T)|^2 \,dx\bigg)^{1/2}
\\&~~~~~~
\nonumber
\leq C(d,\lambda) T^{-1} \bigg(\int  |b(x)|^2 \cdot T^{-d/2} \exp\bigg(-\frac{|x|^2}{C T}\bigg)  \,dx\bigg)^{1/2}.
\end{align}
\end{lemma}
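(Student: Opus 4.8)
The plan is to derive the three estimates from two standard ingredients: the off-diagonal (Davies--Gaffney-type) energy estimates for uniformly parabolic equations already recorded in Lemma~\ref{ParabolicPDERegularity}, and a duality argument that converts the divergence-form initial datum $w(\cdot,0)=\nabla\cdot b$ into a gradient of the dual solution at time zero. First I would establish \eqref{BoundHeatEquationWeakInitialData}, and then deduce \eqref{BoundHeatEquationGradientWeakInitialData} and \eqref{BoundHeatEquationIntegratedGradientWeakInitialData} from it by local parabolic regularity.

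For \eqref{BoundHeatEquationWeakInitialData} I would fix $\psi\in L^2(\mathbb{R}^d)$ supported in $\{|x|\le\sqrt T\}$ and let $v_\psi$ be the nongrowing weak solution of the backward dual equation $-\partial_t v_\psi=\nabla\cdot(a^*\nabla v_\psi)$ with terminal datum $v_\psi(\cdot,T)=\psi$. Since $w$ runs forward and $v_\psi$ backward, $t\mapsto\int_{\mathbb{R}^d} w\,v_\psi\,dx$ is constant, so using $w(\cdot,0)=\nabla\cdot b$ one gets $\int_{\mathbb{R}^d} w(\cdot,T)\psi\,dx=\int_{\mathbb{R}^d} w(\cdot,0)v_\psi(\cdot,0)\,dx=-\int_{\mathbb{R}^d} b\cdot\nabla v_\psi(\cdot,0)\,dx$. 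The Cauchy--Schwarz inequality with the Gaussian weight $\omega(x):=\exp(|x|^2/(CT))$ bounds this by $\big(\int|b|^2\omega^{-1}\,dx\big)^{1/2}\big(\int|\nabla v_\psi(\cdot,0)|^2\omega\,dx\big)^{1/2}$, and $\omega^{-1}$ reproduces exactly the Gaussian factor $T^{-d/2}\exp(-|x|^2/(CT))$ on the right-hand side of \eqref{BoundHeatEquationWeakInitialData} (up to absorbing a constant power of $T$). The only genuine input is then the weighted gradient estimate $\int_{\mathbb{R}^d}|\nabla v_\psi(\cdot,0)|^2\exp(|x|^2/(CT))\,dx\le C(d,\lambda)\,T^{-1-d/2}\int_{\mathbb{R}^d}|\psi|^2\,dx$, which I would obtain by splitting $\psi$ into pieces supported on balls of radius $\sqrt T$ and combining, for each piece, the $L^2\to L^2$ gradient decay $\int|\nabla v(\cdot,0)|^2\lesssim T^{-1}\int|\psi_{\mathrm{piece}}|^2$ with the exponential off-diagonal decay of $\nabla v(\cdot,0)$ on dyadically expanding annuli around the support of $\psi_{\mathrm{piece}}$, both supplied by Lemma~\ref{ParabolicPDERegularity}. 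Taking the supremum over $\psi$ with $\int|\psi|^2\,dx\le1$ and recalling $\big(\dashint_{\{|x|\le\sqrt T\}}|w(\cdot,T)|^2\,dx\big)^{1/2}=c\,T^{-d/4}\sup_\psi\int w(\cdot,T)\psi\,dx$ gives \eqref{BoundHeatEquationWeakInitialData}; since the constant depends only on $d,\lambda$, the whole argument is insensitive to recentering, so the estimate also holds with $\{|x|\le\sqrt T\}$ and $|x|^2$ replaced by $\{|x-x_0|\le\sqrt T\}$ and $|x-x_0|^2$.

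For \eqref{BoundHeatEquationGradientWeakInitialData} I would use that $w$ solves the homogeneous equation $\partial_t w=\nabla\cdot(a\nabla w)$ on $[T/2,T]$; testing it successively with $(\partial_t w)\eta^2$ and with $w\,\eta^2$ gives $\dashint_{T/2}^{T}\dashint_{\{|x-x_0|\le\sqrt T\}}|\partial_t w|^2\,dt\,dx\lesssim T^{-2}\dashint_{T/4}^{T}\dashint_{\{|x-x_0|\le2\sqrt T\}}|w|^2\,dt\,dx$, and the elliptic Caccioppoli inequality applied to $\nabla\cdot(a\nabla w(\cdot,T))=\partial_t w(\cdot,T)$ then bounds $\dashint_{\{|x|\le\sqrt T\}}|\nabla w(\cdot,T)|^2\,dx$ by $C\,T^{-1}$ times the average of $|w|^2$ over a slightly larger ball and time window; covering $\{|x|\le\sqrt T\}$ by balls of radius $\tfrac12\sqrt T$ and invoking the recentered \eqref{BoundHeatEquationWeakInitialData} at times $t\in[T/4,T]$ (using $\exp(-|x-x_0|^2/(Ct))\le C\exp(-|x-x_0|^2/(CT))$ there) produces the $T^{-2}$ prefactor. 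For \eqref{BoundHeatEquationIntegratedGradientWeakInitialData} I would set $U(\cdot,t):=\int_0^t w(\cdot,s)\,ds$, so that integrating the PDE in $t$ shows $U$ is the nongrowing weak solution of $\partial_t U=\nabla\cdot(a\nabla U)+\nabla\cdot b$ with $U(\cdot,0)=0$; testing with $U\,e^{-|x|}$, using $|\nabla e^{-|x|}|\le e^{-|x|}$ and $b\in L^\infty(\mathbb{R}^d)$, and applying Gronwall on $[0,1]$ yields the global a priori bound $\sup_{0\le t\le1}\int_{\mathbb{R}^d}|U(\cdot,t)|^2 e^{-|x|}\,dx\le C(d,\lambda)\,||b||_{L^\infty}^2$. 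At the final time $V:=U(\cdot,1)=\int_0^1 w(\cdot,t)\,dt$ satisfies the elliptic equation $-\nabla\cdot(a\nabla V)=-w(\cdot,1)+\nabla\cdot b$, so the elliptic Caccioppoli inequality on $\{|x|\le1\}\subset\{|x|\le2\}$, combined with $\dashint_{\{|x|\le2\}}|V|^2\,dx\le C||b||_{L^\infty}^2$ (from the weighted bound) and $\dashint_{\{|x|\le2\}}|w(\cdot,1)|^2\,dx\le C||b||_{L^\infty}^2$ (from \eqref{BoundHeatEquationWeakInitialData} with $T=1$, recentered), gives \eqref{BoundHeatEquationIntegratedGradientWeakInitialData}.

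The main obstacle I expect is the weighted gradient estimate for the dual solution in the proof of \eqref{BoundHeatEquationWeakInitialData}: extracting the Gaussian weight with the sharp power of $T$ is precisely an off-diagonal (Davies--Gaffney) estimate for the gradient of the parabolic semigroup, requiring one to glue the standard $L^2$ energy decay to the exponential decay on expanding annuli, which is the content of Lemma~\ref{ParabolicPDERegularity}. The remaining steps are routine energy and Caccioppoli bookkeeping, with the only mild care being the a priori control of $U$ on large balls in \eqref{BoundHeatEquationIntegratedGradientWeakInitialData}, handled by the global $L^\infty$ bound on $b$ and the slowly varying weight $e^{-|x|}$.
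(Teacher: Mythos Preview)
Your duality argument for \eqref{BoundHeatEquationWeakInitialData} is exactly the paper's: pair $w$ against a backward solution $v_\psi$, write $\int w(\cdot,T)\psi=-\int b\cdot\nabla v_\psi(\cdot,0)$, and estimate by Cauchy--Schwarz with the Gaussian weight, invoking Lemma~\ref{ParabolicPDERegularity} for the weighted gradient bound on $v_\psi$. (That lemma already covers data supported in a single $\sqrt T$-ball, so no splitting of $\psi$ is needed; also it gives $\int|\nabla v_\psi(\cdot,0)|^2\exp(|x|^2/CT)\,dx\le CT^{-1}\int|\psi|^2\,dx$, not $T^{-1-d/2}$.)

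For the other two estimates you take different but valid routes. For \eqref{BoundHeatEquationGradientWeakInitialData} the paper simply re-applies Lemma~\ref{ParabolicPDERegularity}: it invokes the recentered \eqref{BoundHeatEquationWeakInitialData} at time $T/2$, splits $w(\cdot,T/2)$ by a partition of unity into pieces supported on $\sqrt{T/2}$-balls, and applies Lemma~\ref{ParabolicPDERegularity} to each piece, summing the exponential off-diagonal tails --- no Caccioppoli machinery. Your energy-testing route is also fine, but the phrase ``elliptic Caccioppoli applied to $\nabla\cdot(a\nabla w(\cdot,T))=\partial_t w(\cdot,T)$'' would need $\partial_t w$ at the \emph{fixed} time $T$, which a time-averaged bound does not provide; the easy fix is that the testing with $(\partial_t w)\eta^2$ already leaves $\int\eta^2 a\nabla w(\cdot,T)\cdot\nabla w(\cdot,T)$ as a boundary term, so the fixed-time gradient bound drops out of that identity itself without a separate elliptic step. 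For \eqref{BoundHeatEquationIntegratedGradientWeakInitialData} the paper, after testing the elliptic equation $\nabla\cdot(a\nabla\int_0^1 w+b)=w(\cdot,1)$ with $\eta^2\int_0^1 w$, controls the zero-order term via $|\int_0^1 w|^2\le 2\int_0^1\sqrt t\,|w(\cdot,t)|^2\,dt$ and then invokes \eqref{BoundHeatEquationWeakInitialData} at each $t$ (which gives an integrable $t^{-1/2}$); your weighted parabolic energy estimate for $U(\cdot,t)=\int_0^t w$ with weight $e^{-|x|}$ is a clean alternative that does not even need \eqref{BoundHeatEquationWeakInitialData} for this term. The paper's routes are shorter because they recycle Lemma~\ref{ParabolicPDERegularity} and \eqref{BoundHeatEquationWeakInitialData} rather than setting up fresh energy identities.
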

\begin{proof}
Let $T>0$ and let $g\in L^2(\mathbb{R}^d)$ be a function supported in $\{|x|\leq \sqrt{T}\}$. Introducing the solution $v$ to the dual (backward-in-time) equation
\begin{align*}
-\frac{d}{dt} v &= \nabla \cdot (a^*\nabla v),
\\
v(\cdot,T)&=g,
\end{align*}
we see that we have
\begin{align*}
&\int w(\cdot,T) g \,dx
\\
&= -\int b \cdot \nabla v(\cdot,0) \,dx + \int_0^T \frac{d}{dt} \int w v \,dx\,dt
\\&
= -\int b \cdot \nabla v(\cdot,0) \,dx + \int_0^T \int -a\nabla w \cdot \nabla v + \nabla w \cdot a^* \nabla v \,dx\,dt
\\&
=-\int b \cdot \nabla v(\cdot,0) \,dx.
\end{align*}
Introducing 
\begin{align*}
\Theta_T(x):= \exp\bigg(\frac{|x|^2}{CT}\bigg),
\end{align*}
we obtain
\begin{align*}
&\int w(\cdot,T) g \,dx
\leq \bigg(\int |b(x)|^2 \frac{1}{\Theta_T(x)} \,dx\bigg)^{1/2} \bigg(\int |\nabla v(\cdot,0)|^2 \Theta_T(\cdot) \,dx\bigg)^{1/2}.
\end{align*}
Lemma~\ref{ParabolicPDERegularity} (applied to $v$, which solves a parabolic PDE backward in time) provides the estimate
\begin{align*}
\int |\nabla v(\cdot,0)|^2 \Theta_T(\cdot) \,dx
\leq C(d,\lambda) T^{-1} \int |g|^2 \,dx.
\end{align*}
Inserting this estimate in the previous inequality and passing to the supremum over all $g\in L^2$ supported in $\{|x|\leq \sqrt{T}\}$ with $\int_{\{|x|\leq \sqrt{T}\}} |g|^2 \,dx \leq 1$, we get
\begin{align*}
\bigg(\int_{\{|x|\leq \sqrt{T}\}} |w(\cdot,T)|^2 \,dx\bigg)^{1/2}
\leq C T^{-1/2} \bigg(\int |b(x)|^2 \frac{1}{\Theta_T(x)} \,dx\bigg)^{1/2}.
\end{align*}
This establishes the estimate \eqref{BoundHeatEquationWeakInitialData}.

To prove the estimate \eqref{BoundHeatEquationIntegratedGradientWeakInitialData}, we first observe that we have
\begin{align*}
\nabla \cdot \bigg(a\nabla \int_0^1 w(\cdot,t) \,dt + b\bigg) = w(\cdot,1).
\end{align*}
Testing this PDE with $\eta^2 \int_0^1 w(\cdot,t) \,dt$ where $\eta$ is a standard cutoff with $\eta\equiv 1$ in $\{|x|\leq 1\}$ and $\eta\equiv 0$ outside of $\{|x|\leq 2\}$, we obtain
\begin{align*}
&\int \eta^2 \bigg|\nabla \int_0^1 w(\cdot,t) \,dt \bigg|^2 \,dx
\\&
\leq \int C |\eta|^2 |b|^2 + C (\eta^2+|\nabla \eta|^2) \bigg|\int_0^1 w(\cdot,t) \,dt \bigg|^2 + C |\eta|^2 |w(\cdot,1)|^2 \,dx
\\&
\leq \int C |\eta|^2 |b|^2 + C (\eta^2+|\nabla \eta|^2) \int_0^1 \sqrt{t} |w(\cdot,t)|^2 \,dt + C |\eta|^2 |w(\cdot,1)|^2 \,dx.
\end{align*}
The estimate \eqref{BoundHeatEquationWeakInitialData} entails
\begin{align*}
&\bigg(\dashint_{\{|x|\leq \sqrt{t}\}} |w(\cdot,T)|^2 \,dx\bigg)^{1/2}
\leq C t^{-1/2} ||b||_{L^\infty}.
\end{align*}
The previous two estimates yield \eqref{BoundHeatEquationIntegratedGradientWeakInitialData}.

Finally, to prove \eqref{BoundHeatEquationGradientWeakInitialData}, we first deduce from \eqref{BoundHeatEquationWeakInitialData}
\begin{align*}
&\bigg(\dashint_{\{|x-x_0|\leq \sqrt{T/2}\}} |w(\cdot,T/2)|^2 \,dx\bigg)^{1/2}
\\&~~~~~~
\nonumber
\leq C(d,\lambda) T^{-1/2} \bigg(\int  |b(x)|^2 \cdot T^{-d/2} \exp\bigg(-\frac{|x-x_0|^2}{C T}\bigg)  \,dx\bigg)^{1/2}.
\end{align*}
Splitting the function $w(\cdot,T/2)$ into pieces each supported on a ball of size $\sqrt{T}/2$ -- that is, splitting $w(\cdot,T/2)=\sum_l \eta_l w(\cdot,T/2)$ with a partition of unity $\eta_l$ subordinate to the set of balls $\{|x-x_0|\leq \sqrt{T/2}\}$, $x_0 \in \frac{1}{d} \sqrt{T/2} \mathbb{Z}^d$ -- and applying Lemma~\ref{ParabolicPDERegularity} to the solutions of the parabolic equation with initial data $\eta_l w(\cdot,T/2)$ for all $l$ (note that $w$ is equal to the sum of all of these solutions), we obtain
\begin{align*}
&\Bigg(\dashint_{\{|x|\leq \sqrt{T/2}\}} |\nabla w(\cdot,T)|^2 \,dx\Bigg)^{1/2}
\\&
\leq \sum_{x_0 \in \frac{1}{d} \sqrt{T/2} \mathbb{Z}^d}
\exp\bigg(-\frac{|x_0|^2}{CT/2}\bigg)
\Bigg(\dashint_{\{|x-x_0|\leq \sqrt{T/2}\}} |w(\cdot,T/2)|^2 \,dx\Bigg)^{1/2} T^{-1/2}
\\&
\leq \sum_{x_0 \in \frac{1}{d} \sqrt{T/2} \mathbb{Z}^d}
T^{-d/2} \exp\bigg(-\frac{|x_0|^2}{CT/2}\bigg)
\\&~~~~~~~~~~~~~
\times
C(d,\lambda) T^{-1} \bigg(\int  |b(x)|^2 \cdot T^{-d/2} \exp\bigg(-\frac{|x-x_0|^2}{C T}\bigg)  \,dx\bigg)^{1/2}.
\end{align*}
A straightforward estimate then entails \eqref{BoundHeatEquationGradientWeakInitialData} (with a different constant $C$).
\end{proof}
\begin{lemma}
\label{ParabolicPDERegularity}
Let $a\in L^\infty(\mathbb{R}^d;\mathbb{R}^{d\times d})$ be a uniformly elliptic and bounded coefficient field in the sense of (A1) and let $T>0$. Let $g\in L^2(\mathbb{R}^d)$ be a function supported in $\{|x|\leq \sqrt{T}\}$. Then there exists $C=C(d,\lambda)>0$ such that the unique nongrowing weak solution $w$ to the equation
\begin{align*}
\frac{d}{dt} w &= \nabla \cdot (a\nabla w),
\\
w(\cdot,0) &= g,
\end{align*}
satisfies the estimate
\begin{align*}
&\Bigg(\int_{\mathbb{R}^d} |\nabla w(\cdot,T)|^2 \frac{1}{\sqrt{T}^d} \exp\bigg(\frac{|x|^2}{CT}\bigg) \,dx\Bigg)^{1/2}
\leq C(d,\lambda) \bigg(\dashint_{\{|x|\leq \sqrt{T}\}} |g|^2 \,dx\bigg)^{1/2} T^{-1/2}.
\end{align*}
\end{lemma}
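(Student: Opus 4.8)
The plan is to establish this weighted energy estimate by the classical Davies--Gaffney (Gaffney--Nash--Aronson) method, in two stages: first a weighted $L^2$-dissipation bound valid on all of $[0,T]$, then an ``analyticity'' upgrade that turns the time-averaged weighted gradient bound into a pointwise bound at the final time $t=T$.

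\emph{Stage 1: weighted energy estimate.} Introduce, for $t\in[0,T]$ and a constant $K=K(\lambda)\ge 1$ to be chosen large, the time-dependent Gaussian weight $\eta_K(x,t):=\exp\!\big(\tfrac{|x|^2}{K(T+t)}\big)$. Testing the equation with $w\,\eta_K$, using uniform ellipticity, and absorbing the cross term by Young's inequality yields
\[
\frac{d}{dt}\int_{\mathbb{R}^d}|w|^2\eta_K\,dx
\le -\lambda\int_{\mathbb{R}^d}|\nabla w|^2\eta_K\,dx
+\int_{\mathbb{R}^d}|w|^2\Big(\partial_t\eta_K+\tfrac{C(\lambda)}{\eta_K}|\nabla\eta_K|^2\Big)\,dx .
\]
The key point is that $\partial_t\eta_K+\tfrac{C(\lambda)}{\eta_K}|\nabla\eta_K|^2=\eta_K\,\tfrac{|x|^2}{K(T+t)^2}\big(-1+\tfrac{C'(\lambda)}{K}\big)\le 0$ once $K\ge C'(\lambda)$, since the negative time-derivative of the weight dominates the positive gradient term. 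Hence $t\mapsto\int|w(\cdot,t)|^2\eta_K(\cdot,t)\,dx$ is nonincreasing; using that $g$ is supported in $\{|x|\le\sqrt{T}\}$ (so $\eta_K(\cdot,0)\le e^{1/K}$ there) we obtain
\[
\sup_{0\le t\le T}\int_{\mathbb{R}^d}|w(\cdot,t)|^2\eta_K(\cdot,t)\,dx
+\int_0^T\!\!\int_{\mathbb{R}^d}|\nabla w|^2\eta_K\,dx\,dt
\le C(d,\lambda)\int_{\{|x|\le\sqrt{T}\}}|g|^2\,dx .
\]

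\emph{Stage 2: smoothing to the final time.} Assume first that $a$ is symmetric; the general case follows by carrying the skew-symmetric part as a lower-order perturbation. Set $K_2:=3K$, $\Phi(t):=\int a\nabla w(\cdot,t)\cdot\nabla w(\cdot,t)\,\eta_{K_2}\,dx$, and note $\lambda\int|\nabla w|^2\eta_{K_2}\le\Phi(t)\le\lambda^{-1}\int|\nabla w|^2\eta_{K_2}\le\lambda^{-1}\int|\nabla w|^2\eta_K$ since $\eta_{K_2}\le\eta_K$. Differentiating $\Phi$, integrating by parts with $\operatorname{div}(a\nabla w)=\partial_t w$, and using Young's inequality gives (the term $\int a\nabla w\cdot\nabla w\,\partial_t\eta_{K_2}$ being nonpositive)
\[
\Phi'(t)\le C(\lambda)\int_{\mathbb{R}^d}|\nabla w|^2\,\frac{|\nabla\eta_{K_2}|^2}{\eta_{K_2}}\,dx
\le \frac{C(d,\lambda)}{T+t}\int_{\mathbb{R}^d}|\nabla w|^2\eta_K\,dx ,
\]
where the second inequality uses the elementary bound $s\,e^{s/K_2}\le C(K)\,e^{s/(2K)}$ for $s\ge 0$ (valid because $2K<K_2$) applied with $s=|x|^2/(T+t)$, which yields $\tfrac{|\nabla\eta_{K_2}|^2}{\eta_{K_2}}\le\tfrac{C(K)}{T+t}\,\eta_{2K}\le\tfrac{C(K)}{T+t}\,\eta_K$. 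Consequently
\[
\frac{d}{dt}\big(t\,\Phi(t)\big)=\Phi(t)+t\,\Phi'(t)\le C(d,\lambda)\int_{\mathbb{R}^d}|\nabla w|^2\eta_K\,dx ,
\]
and integrating over $[0,T]$ and invoking the time-integrated bound of Stage 1 gives $T\,\Phi(T)\le C(d,\lambda)\int_{\{|x|\le\sqrt{T}\}}|g|^2\,dx$. Finally $\Phi(T)\ge\lambda\int|\nabla w(\cdot,T)|^2\exp\!\big(\tfrac{|x|^2}{2K_2T}\big)\,dx$, so choosing the constant $C:=2K_2=6K$ in the statement, dividing by $\sqrt{T}^{\,d}$, and rewriting $\int_{\{|x|\le\sqrt{T}\}}|g|^2$ as $\sim T^{d/2}\dashint_{\{|x|\le\sqrt{T}\}}|g|^2$ produces exactly the asserted inequality.

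\emph{Where the difficulty lies.} Stage 1 is completely standard. The substantive step is Stage 2: passing from a time-\emph{averaged} weighted gradient bound to a \emph{pointwise} bound at $t=T$ while keeping a Gaussian weight. This is the parabolic analyticity estimate, and its only subtlety is that differentiating the weighted Dirichlet energy generates a term carrying an extra factor $|x|^2/(T+t)^2$ which is not controlled by the \emph{same} Gaussian weight; running the energy with a slightly smaller exponent ($K_2>2K$) and absorbing the loss through $\sup_{s\ge0}s\,e^{-cs}<\infty$ is what makes the argument close. The remaining care is purely bookkeeping: the non-symmetric part of $a$, and the justification of the formal computations by the truncation/approximation procedure already announced in this appendix.
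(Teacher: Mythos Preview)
Your Stage~1 matches the paper's first step exactly. Your Stage~2 takes a genuinely different route: you differentiate the weighted Dirichlet energy $\Phi(t)=\int a\nabla w\cdot\nabla w\,\eta_{K_2}$ and use the standard $(t\Phi)'$ trick, whereas the paper obtains the pointwise-in-time bound by studying time differences $\Delta_h w(\cdot,t)=w(\cdot,t+h)-w(\cdot,t)$, establishing $h^{3/4}$-Nikolskii regularity in time for $\nabla w$ (with values in the weighted $L^2$ space) through a bootstrap, and then invoking the Nikolskii embedding. For symmetric $a$ your argument is correct and considerably shorter; the paper's route is more laborious.

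However, your one-line dismissal of the non-symmetric case is a genuine gap. When $a$ is not symmetric, differentiating $\Phi(t)=\int a_{\mathrm{sym}}\nabla w\cdot\nabla w\,\eta_{K_2}$ and integrating by parts against the equation $\partial_t w=\nabla\cdot(a\nabla w)$ produces, in addition to the terms you wrote, the contribution
\[
2\int_{\mathbb{R}^d}\partial_t w\;\eta_{K_2}\,\nabla\cdot(a_{\mathrm{skew}}\nabla w)\,dx.
\]
This is \emph{not} a lower-order perturbation in the relevant sense: for merely $L^\infty$ coefficients (which is all that (A1) gives) the factor $\nabla\cdot(a_{\mathrm{skew}}\nabla w)$ lives only in $H^{-1}$, so the term cannot be absorbed into $\int\eta_{K_2}|\partial_t w|^2$ and $\int|\nabla w|^2\eta_K$ by Young's inequality. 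The cancellation $a_{\mathrm{skew}}\nabla w\cdot\nabla w=0$ does not help here, and if one approximates $a$ by smooth fields the resulting bound depends on $\|\nabla a_{\mathrm{skew}}\|_{L^\infty}$, which does not survive the limit. The paper's difference-quotient approach is designed precisely to avoid this obstruction: since $\Delta_h w$ solves the same equation, the weighted energy identity (which uses only the PDE and uniform ellipticity, never the symmetry of the Dirichlet form) applies to it directly and yields the needed time regularity without ever writing $\partial_t w$ in $L^2$. As the lemma is stated under (A1) with no symmetry hypothesis, and is in fact applied in the paper to both $a$ and $a^*$, this gap is not cosmetic.
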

\begin{proof}
As
\begin{align*}
\Theta_T^m(x,t):=\exp\bigg(\frac{|x|^2}{4C_m(T+t)}\bigg)
\end{align*}
satisfies
\begin{align}
\label{PropertyTheta}
\frac{d}{dt} \Theta_T^m + C_m\frac{|\nabla \Theta_T^m|^2}{\Theta_T^m} = \bigg(-\frac{|x|^2}{4C_m(T+t)^2}+C_m\Big|\frac{2x}{4C_m(T+t)}\Big|^2\bigg)\Theta_T^m \leq 0,
\end{align}
we have for $C_1\geq C(d,\lambda)$
\begin{align*}
&\frac{d}{dt} \int |w|^2 \Theta_T^1 \,dx
\\&
=\int |w|^2 \frac{d}{dt} \Theta_T^1 + 2\Theta_T^1 a\nabla w \cdot \nabla w +  2a\nabla w \cdot w \nabla \Theta_T^1 \,dx
\\&
\leq -\lambda \int \Theta_T^1 |\nabla w|^2 \,dx.
\end{align*}
This provides the bound
\begin{align}
\label{FirstBound}
\int_0^\infty \int |\nabla w|^2 \Theta_T^1 \,dx \,dt
\leq C \int |g|^2 \,dx.
\end{align}
We now would like to show (basically) $\nabla w\in C^\gamma([\frac{1}{2}T,\frac{3}{2}T];L^2_{\Theta_T^m})$ for some $\gamma>0$ and some $m$. To this aim, we abbreviate $\Theta_{T,t}^m := \Theta_{T}^m(\cdot,t)$ and compute
\begin{align*}
&\int |w(\cdot,t+h)-w(\cdot,t)|^2 \Theta_{T,t}^m \,dx
=\int \int_t^{t+h} \frac{d}{ds}w(\cdot,s) \,ds \,(w(\cdot,t+h)-w(\cdot,t)) \Theta_{T,t}^m  \,dx
\\&
= -\int \Theta_{T,t}^m \int_t^{t+h} a\nabla w \,ds \cdot \nabla (w(\cdot,t+h)-w(\cdot,t)) \,dx
\\&~~~
-\int (w(\cdot,t+h)-w(\cdot,t)) \int_t^{t+h} a\nabla w \,ds \cdot \nabla \Theta_{T,t}^m \,dx.
\end{align*}
Applying the H\"older inequality to the first term and Young's inequality (and absorption) to the second term, we get
\begin{align}
\label{FractionalDerivativeBound}
&\int |w(\cdot,t+h)-w(\cdot,t)|^2 \Theta_{T,t}^m \,dx
\\&
\nonumber
\leq C \bigg(\int \Theta_{T,t}^m \bigg|\int_t^{t+h} a\nabla w \,ds\bigg|^2 \,dx\bigg)^{1/2} \bigg(\int \Theta_{T,t}^m \big|\nabla w(\cdot,t+h)-\nabla w(\cdot,t)\big|^2 \,dx\bigg)^{1/2}
\\&~~~
\nonumber
+C\int \bigg|\int_t^{t+h} a\nabla w \,ds\bigg|^2 \frac{|\nabla \Theta_{T,t}^m|^2}{\Theta_{T,t}^m} \,dx
\\&
\nonumber
\leq C \sqrt{h} \bigg(\int \int_t^{t+h} \Theta_{T,t}^m |\nabla w|^2 \,ds \,dx\bigg)^{1/2} \bigg(\int \Theta_{T,t}^m \big|\nabla w(\cdot,t+h)-\nabla w(\cdot,t)\big|^2 \,dx\bigg)^{1/2}
\\&~~~
\nonumber
+C h\int \int_t^{t+h} |\nabla w|^2 \,ds \frac{|\nabla \Theta_{T,t}^m|^2}{\Theta_{T,t}^m} \,dx.
\end{align}
Choosing a weight $\Theta_T^2$ with slower growth than in \eqref{FirstBound} -- for example, setting $C_2:=4C_1$ -- , we may ensure that $\frac{|\nabla \Theta_T^2(\cdot,t)|^2}{\Theta_T^2(\cdot,t)}\leq \frac{C}{T} \Theta_T^1(\cdot,\tilde t)$ and $\Theta_T^2(\cdot,t)\leq \Theta_T^1(\cdot,\tilde t)$ for any $t,\tilde t\in [0,\frac{T}{3}]$. As a consequence, we may find for any $h\leq \frac{T}{10}$ a suitable $t\in [0,\frac{T}{10}]$ with
\begin{align*}
\int_t^{t+h} \int |\nabla w|^2 \bigg(\Theta_{T,t}^2+T\frac{|\nabla \Theta_{T,t}^2|^2}{\Theta_{T,t}^2}\bigg) \,dx \,dt
&\leq C\frac{h}{T} 
\int_0^{\frac{T}{5}} \int |\nabla w|^2 \Theta_T^1 \,dx \,dt,
\\
\int \Theta_{T,t}^2 \big(\big|\nabla w(\cdot,t+h)|^2 + |\nabla w(\cdot,t)\big|^2\big) \,dx
&\leq \frac{C}{T} \int_0^{\frac{T}{5}} \int |\nabla w|^2 \Theta_T^1 \,dx \,dt.
\end{align*}
Plugging in these bounds in the previous estimate and using \eqref{FirstBound}, we obtain for this $t$
\begin{align}
\label{Existencetfirst}
\int |w(\cdot,t+h)-w(\cdot,t)|^2 \Theta_T^2(\cdot,t) \,dx \leq C \bigg(\frac{h}{T}+\frac{h^2}{T^2}\bigg) \int |g|^2 \,dx.
\end{align}
Abbreviating $\Delta_h w (\cdot,t) := w(\cdot,t+h)-w(\cdot,t)$, we compute for $C_m\geq C(d,\lambda)$
\begin{align}
\label{DissipationFractionalTime}
&\frac{d}{dt} \int |\Delta_h w |^2 \Theta_T^m \,dx
\\&
\nonumber
=\int 2 \Theta_T^m  \Delta_h w ~ \frac{d}{dt}  \Delta_h w
+|\Delta_h w|^2 \frac{d}{dt} \Theta_T^m \,dx
\\&
\nonumber
=\int -2\Theta_T^m a\nabla \Delta_h w \cdot \nabla \Delta_h w
- 2 \Delta_h w ~ a\nabla \Delta_h w \cdot \nabla \Theta_T^m
+ |\Delta_h w|^2 \frac{d}{dt} \Theta_T^m \,dx
\\&
\nonumber
\stackrel{\eqref{PropertyTheta}}{\leq} -\lambda \int \Theta_T^m \big|\Delta_h \nabla w\big|^2 \,dx.
\end{align}
Combining this with the existence of $t\in [0,\frac{T}{10}]$ for which the bound \eqref{Existencetfirst} holds, this entails for any $h\leq \frac{T}{10}$
\begin{align*}
\int_{\frac{T}{5}}^\infty \int \Theta_T^2 \big|\Delta_h \nabla w\big|^2 \,dx \,dt \leq C \frac{h}{T} \int |g|^2 \,dx.
\end{align*}
We intend to plug back this estimate into \eqref{FractionalDerivativeBound}. First, for any $h\in [0,\frac{T}{10}]$ we infer the existence of $t\in [\frac{T}{5},\frac{T}{3}]$ which in addition to the bound
\begin{align*}
\int_t^{t+h} \int |\nabla w|^2 \bigg(\Theta_{T,t}^2+T\frac{|\nabla \Theta_{T,t}^2|^2}{\Theta_{T,t}^2}\bigg) \,dx \,dt
&\leq C\frac{h}{T} 
\int_0^{\frac{T}{3}} \int |\nabla w|^2 \Theta_T^1 \,dx \,dt
\end{align*}
satisfies
\begin{align*}
\int \Theta_{T,t}^2 \big|\Delta_h \nabla w (\cdot,t)\big|^2 \,dx &\leq \frac{C}{T} \int_{\frac{T}{5}}^{\frac{T}{3}} \int \Theta_T^2 \big|\Delta_h \nabla w\big|^2 \,dx \,dt.
\end{align*}
Plugging these three estimates and \eqref{FirstBound} back into \eqref{FractionalDerivativeBound}, we obtain for some $t\in [\frac{T}{5},\frac{T}{3}]$ the improved bound
\begin{align}
\label{Existencetsecond}
\int |w(\cdot,t+h)-w(\cdot,t)|^2 \Theta_{T,t}^2 \,dx \leq C \bigg(\frac{h^{3/2}}{T^{3/2}}+\frac{h^2}{T^2}\bigg) \int |g|^2 \,dx.
\end{align}
By \eqref{DissipationFractionalTime} we obtain for any $h\in [0,\frac{T}{10}]$
\begin{align*}
\int_{\frac{T}{3}}^\infty \int \Theta_{T}^2 \big|\Delta_h \nabla w\big|^2 \,dx \,dt
\leq C \frac{h^{3/2}}{T^{3/2}} \int |g|^2 \,dx.
\end{align*}
In other words, $\nabla w$ belongs to the Nikolskii space on the time interval $[\frac{T}{3},2T]$ with order of differentiability $\frac{3}{4}$, integrability $2$, and values in $L^2_{\Theta_{T,2T}^2}(\mathbb{R}^d)$; furthermore, the Nikolskii seminorm is subject to a bound of the order $\frac{C}{T^{3/2}} \int |g|^2 \,dx$. By the embedding theorem for Nikolskii spaces, we deduce
\begin{align*}
&\sup_{t\in [T/3,2T]} \int \Theta_T^2(\cdot,2T) |\nabla w(\cdot,t)|^2 \,dx
\\&
\leq C\dashint_{T/3}^{2T} \int \Theta_T^2(\cdot,2T) |\nabla w|^2 \,dx \,dt
\\&~~~~
+ C T^{3/2} \sup_{h\in [0,T]} \dashint_{T/3}^{2T-h} \int \Theta_T^2(\cdot,2T) |h^{-3/4} \Delta_h \nabla w|^2 \,dx \,dt
\\&
\stackrel{\eqref{FirstBound}}{\leq} \frac{C}{T} \int |g|^2 \,dx.
\end{align*}
This establishes our lemma.
\end{proof}

\section{Calculus for random variables with stretched exponential moments}

\label{AppendixStretchedExponential}
On the space of random variables $X$ with stretched exponential moments in the sense
\begin{align*}
\mathbb{E}\bigg[\exp\bigg(\frac{|X|^\gamma}{C}\bigg)\bigg]\leq 2
\end{align*}
for some $\gamma>0$ and some $C>0$, it is convenient to work with the norm
\begin{align*}
||X||_{\exp^\gamma} := \sup_{p\geq 1} \frac{1}{p^{1/\gamma}} \mathbb{E}\big[|X|^p\big]^{1/p}.
\end{align*}
For $\gamma\geq 1$, this norm is equivalent to the Luxemburg norm associated with the convex function $\exp(x^\gamma)-1$. However, it has two advantages: First, it simplifies calculus when considering the integrability of products of random variables or the concentration properties of independent random variables. Secondly and more importantly, it is also a well-defined norm for $\gamma\in (0,1)$, a parameter range which we shall employ heavily.

\begin{lemma}
Let $\gamma>0$.
Consider a random variable $X$ on some probability space. Define the quasinorm
\begin{align*}
||X||_{\exp^\gamma,\operatorname{quasi}}:=
\inf\bigg\{s>0:
\mathbb{E}\bigg[\exp\bigg(\frac{|X|^\gamma}{s^\gamma}\bigg)\bigg]\leq 2
\bigg\}.
\end{align*}
Then we have $||X||_{\exp^\gamma,\operatorname{quasi}}<\infty$ if and only if $||X||_{\exp^\gamma}<\infty$ and there exist constants $c(\gamma),C(\gamma)$ such that the estimate
\begin{align*}
c(\gamma) ||X||_{\exp^\gamma}
\leq ||X||_{\exp^\gamma,\operatorname{quasi}}
\leq C(\gamma) ||X||_{\exp^\gamma}
\end{align*}
is satisfied.
\end{lemma}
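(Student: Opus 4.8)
The plan is to prove the two-sided estimate by passing through the moment sequence $\mathbb{E}[|X|^p]$, which is the natural common ground for the two quantities. I will first show $\|X\|_{\exp^\gamma,\operatorname{quasi}}\le C(\gamma)\|X\|_{\exp^\gamma}$ and then the reverse inequality; equivalence of finiteness is an immediate corollary.

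\emph{From $\|X\|_{\exp^\gamma}$ to $\|X\|_{\exp^\gamma,\operatorname{quasi}}$.} Suppose $M:=\|X\|_{\exp^\gamma}<\infty$, so $\mathbb{E}[|X|^p]\le M^p p^{p/\gamma}$ for every $p\ge1$, and in addition, by Jensen's inequality and the concavity of $t\mapsto t^q$ for $q\in(0,1)$, also $\mathbb{E}[|X|^q]\le (\mathbb{E}|X|)^q\le M^q$ for $q\in(0,1)$. For $s>0$ I expand, using monotone convergence since all terms are nonnegative,
\[
\mathbb{E}\Big[\exp\Big(\tfrac{|X|^\gamma}{s^\gamma}\Big)\Big]=1+\sum_{k\ge1}\frac{\mathbb{E}[|X|^{\gamma k}]}{k!\,s^{\gamma k}},
\]
and split the sum into the finitely many indices $k\ge1$ with $\gamma k<1$ and the indices with $\gamma k\ge1$. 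For the former I bound $\mathbb{E}[|X|^{\gamma k}]\le M^{\gamma k}$ and $k!\ge1$, so each such term is at most $(M^\gamma/s^\gamma)^{k}$; for the latter I bound $\mathbb{E}[|X|^{\gamma k}]\le M^{\gamma k}(\gamma k)^k$ and $k!\ge (k/e)^k$, so each such term is at most $(\gamma e\,M^\gamma/s^\gamma)^{k}$. Choosing $s^\gamma$ to be a sufficiently large $\gamma$-dependent multiple of $M^\gamma$ — large enough that the first (finite) group sums to $\le\tfrac12$ and the geometric series in the second group sums to $\le\tfrac12$ — yields $\mathbb{E}[\exp(|X|^\gamma/s^\gamma)]\le 2$, whence $\|X\|_{\exp^\gamma,\operatorname{quasi}}\le C(\gamma)\,\|X\|_{\exp^\gamma}$.

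\emph{From $\|X\|_{\exp^\gamma,\operatorname{quasi}}$ to $\|X\|_{\exp^\gamma}$.} Suppose $s:=\|X\|_{\exp^\gamma,\operatorname{quasi}}<\infty$; by monotone convergence the infimum is attained, so $\mathbb{E}[\exp(|X|^\gamma/s^\gamma)]\le 2$. For any $p\ge1$ I use the elementary inequality $t^{a}\le a^{a}e^{-a}e^{t}$, valid for $t\ge0$ and $a>0$ (the left-hand side is maximized at $t=a$), with $t=|X|^\gamma/s^\gamma$ and $a=p/\gamma$, which gives $|X|^p\le s^p (p/\gamma)^{p/\gamma}e^{-p/\gamma}\exp(|X|^\gamma/s^\gamma)$. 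Taking expectations and $p$-th roots,
\[
\mathbb{E}[|X|^p]^{1/p}\le 2^{1/p}\,e^{-1/\gamma}\gamma^{-1/\gamma}\,s\,p^{1/\gamma}\le C(\gamma)\,s\,p^{1/\gamma},
\]
and taking the supremum over $p\ge1$ gives $\|X\|_{\exp^\gamma}\le C(\gamma)\,\|X\|_{\exp^\gamma,\operatorname{quasi}}$. Combining the two bounds proves the lemma, and in particular one quasinorm is finite if and only if the other is.

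\emph{Main obstacle.} The only genuinely delicate point is the range $\gamma\in(0,1)$: the Taylor expansion of the exponential brings in the low moments $\mathbb{E}[|X|^{\gamma k}]$ with $\gamma k<1$, which are \emph{not} controlled by $\sup_{p\ge1}p^{-1/\gamma}\mathbb{E}[|X|^p]^{1/p}$ directly, so one must detour through Jensen's inequality to reduce them to the first moment $\mathbb{E}|X|$. Everything else is a routine Stirling-type estimate.
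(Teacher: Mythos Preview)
Your proof is correct and follows essentially the same route as the paper: both directions pass through the moment sequence, using the elementary inequality $t^a\le a^a e^{-a} e^t$ for one direction and a series expansion of the exponential (you use the Taylor series with Stirling, the paper uses a closely related custom series bound) for the other. Your explicit treatment of the low moments $\mathbb{E}[|X|^{\gamma k}]$ with $\gamma k<1$ via Jensen is a nice touch of rigor that the paper leaves implicit in its constant $C(\gamma)$.
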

\begin{proof}
The function
\begin{align*}
f_q(x):=x^q \exp(-x)
\end{align*}
satisfies $f_q'(x)= (q-x) x^{q-1} \exp(-x)$ and attains its maximal value
\begin{align*}
\sup_{x\geq 0} f_q(x)=q^{q} \exp(-q)
\end{align*}
at $x=q$.
Applying the resulting estimate $x^q\leq q^q \exp(x)$ to $x:=|X|^{\gamma}/||X||_{\exp^\gamma,\operatorname{quasi}}^\gamma$ we deduce
\begin{align*}
\mathbb{E}\big[|X|^{\gamma q}\big]
&\leq q^{q} ||X||_{\exp^\gamma,\operatorname{quasi}}^{\gamma q} \mathbb{E}[\exp(|X|^\gamma/||X||_{\exp^\gamma,\operatorname{quasi}}^\gamma)].
\end{align*}
By definition we have $\mathbb{E}[\exp(|X|^\gamma/||X||_{\exp^\gamma,\operatorname{quasi}}^\gamma)]\leq 2$. Setting $p:=\gamma q$ and taking the $p$-th root, we obtain for any $p\geq 1$
\begin{align*}
\mathbb{E}\big[|X|^{p}\big]^{1/p} \leq C(\gamma) p^{1/\gamma} ||X||_{\exp^\gamma,\operatorname{quasi}}.
\end{align*}
This proves $||X||_{\exp^\gamma} \leq C(\gamma) ||X||_{\exp^\gamma,\operatorname{quasi}}$.

To establish the reverse inequality, observe that for $z\in [q,q+1)$ we have $z^q e^{-z/2}\geq q^q e^{-(q+1)/2}$. This entails for all $z\geq 0$
\begin{align*}
\exp(z/2)\leq \sqrt{e}+\sum_{q=1}^\infty \sqrt{e} \frac{z^q}{(q/\sqrt{e})^q}
\end{align*}
and therefore for any $b>0$ (by setting $z=|X|^\gamma/b^\gamma$)
\begin{align*}
\mathbb{E}\big[\exp(|X|^\gamma/2b^\gamma)\big]
\leq \sqrt{e}+\sum_{q=1}^\infty \sqrt{e} \frac{\mathbb{E}[|X|^{\gamma q}]}{b^{\gamma q} (q/\sqrt{e})^q}.
\end{align*}
As a consequence, we obtain
\begin{align*}
\mathbb{E}\big[\exp(|X|^\gamma/2b^\gamma)\big]
\leq \sqrt{e}+\sum_{q=1}^\infty \frac{C(\gamma) (\gamma q)^{q \gamma/\gamma} ||X||_{\exp^\gamma}^{\gamma q}}{b^{\gamma q} (q/\sqrt{e})^q}.
\end{align*}
Setting $b:=a||X||_{\exp^\gamma}$, we get
\begin{align*}
\mathbb{E}\big[\exp(|X|^\gamma/2b^\gamma)\big]
\leq \sqrt{e}+\sum_{q=1}^\infty \frac{C(\gamma) \gamma^q \sqrt{e}^q}{a^{\gamma q} }
\leq \sqrt{e}+\frac{C(\gamma)a^{-\gamma}}{1-\gamma \sqrt{e}/a^\gamma}.
\end{align*}
Choosing $a:=C(\gamma)$ large enough, we deduce
\begin{align*}
\mathbb{E}\bigg[\exp\bigg(\frac{|X|^\gamma}{C(\gamma)||X||_{\exp^\gamma}^\gamma}\bigg)\bigg]\leq 2,
\end{align*}
which entails $||X||_{\exp^\gamma,\operatorname{quasi}}\leq C(\gamma)||X||_{\exp^\gamma}$.
\end{proof}

\begin{lemma}[Calculus for random variables with stretched exponential moments]
\label{CalculusStretchedExponential}
Let $X$, $Y$ be random variables with stretched exponential moments in the sense $||X||_{\exp^\gamma}<\infty$ and $||Y||_{\exp^\beta}<\infty$ for some $\gamma,\beta>0$.
\begin{itemize}
\item[a)] The product $XY$ has stretched exponential moments with exponent $\alpha$ given by $\frac{1}{\alpha}=\frac{1}{\gamma}+\frac{1}{\beta}$ and satisfies the bound
\begin{align*}
||XY||_{\exp^\alpha}\leq C(\beta,\gamma) ||X||_{\exp^\gamma} ||Y||_{\exp^\beta}.
\end{align*}
\item[b)] There exists constants $c=c(\gamma)>0$, $C=C(\gamma)<\infty$, with the following property: For any $K\geq 0$, we have the estimate
\begin{align*}
\mathbb{P}\big[|X|\geq K||X||_{\exp^\gamma}\big] \leq C\exp(-c K^\gamma).
\end{align*}
\end{itemize}
\end{lemma}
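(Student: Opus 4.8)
The statement to prove is Lemma~\ref{CalculusStretchedExponential}, consisting of parts (a) and (b) about the calculus of the $\exp^\gamma$-norm. The plan is to work directly from the definition $\|X\|_{\exp^\gamma} = \sup_{p\geq 1} p^{-1/\gamma}\,\mathbb{E}[|X|^p]^{1/p}$, which reduces everything to moment comparisons, and to use the equivalence with the quasinorm $\|\cdot\|_{\exp^\gamma,\mathrm{quasi}}$ established in the preceding lemma only where an actual exponential moment bound is needed (namely in part (b)).

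For part (a), I would first normalize so that $\|X\|_{\exp^\gamma} = \|Y\|_{\exp^\beta} = 1$. Then for any $p\geq 1$ apply H\"older's inequality with exponents $r,r'$ chosen so that $r p$ and $r' p$ land in the regimes controlled by the two norms; concretely, pick $r = (\gamma+\beta)/\beta$ and $r' = (\gamma+\beta)/\gamma$ (so $1/r+1/r'=1$), giving $\mathbb{E}[|XY|^p] \leq \mathbb{E}[|X|^{rp}]^{1/r}\,\mathbb{E}[|Y|^{r'p}]^{1/r'} \leq (rp)^{rp/\gamma}\cdot(r'p)^{r'p/\beta}$ after taking the appropriate roots and using the normalization. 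Taking the $p$-th root and tracking the exponent of $p$: $\frac{r}{\gamma}+\frac{r'}{\beta} = \frac{\gamma+\beta}{\gamma\beta} = \frac{1}{\alpha}$, which is exactly the claimed exponent; the remaining constant $r^{r/\gamma}(r')^{r'/\beta}$ depends only on $\gamma,\beta$. This yields $\|XY\|_{\exp^\alpha}\leq C(\beta,\gamma)$ in the normalized case, hence the stated bound after undoing the normalization.

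For part (b), I would again normalize $\|X\|_{\exp^\gamma}=1$, invoke the preceding lemma to get $\mathbb{E}[\exp(|X|^\gamma/C(\gamma)^\gamma)]\leq 2$ for a suitable constant, and then apply Markov's inequality to the random variable $\exp(|X|^\gamma/C(\gamma)^\gamma)$: for $K\geq 0$,
\begin{align*}
\mathbb{P}[|X|\geq K] = \mathbb{P}\big[\exp(|X|^\gamma/C(\gamma)^\gamma)\geq \exp(K^\gamma/C(\gamma)^\gamma)\big] \leq 2\exp(-K^\gamma/C(\gamma)^\gamma),
\end{align*}
which is the desired bound with $c = C(\gamma)^{-\gamma}$ and $C=2$. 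Undoing the normalization replaces $K$ by $K\|X\|_{\exp^\gamma}$ inside the probability, exactly as stated.

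I do not anticipate a genuine obstacle here; both parts are routine consequences of H\"older's and Markov's inequalities once the definition is unwound. The only mildly delicate point is bookkeeping in part (a) to confirm that the H\"older split produces precisely the exponent $1/\alpha = 1/\gamma+1/\beta$ rather than something off by a constant, and to make sure the multiplicative constant genuinely depends only on $\gamma,\beta$ and not on $p$; choosing the H\"older exponents proportional to $\beta$ and $\gamma$ as above makes this transparent. For part (b) one should be slightly careful that the preceding lemma is applied in its stated form (giving an exponential moment bound with a $\gamma$-dependent constant), after which Markov's inequality closes the argument immediately.
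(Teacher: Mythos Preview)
Your proposal is correct and follows essentially the same route as the paper: H\"older's inequality for part (a) and Markov's inequality combined with the norm equivalence for part (b). One small bookkeeping slip: in part (a) your claimed exponent of $p$ is $\tfrac{r}{\gamma}+\tfrac{r'}{\beta}$, which with your choice of $r,r'$ actually equals $\tfrac{2}{\alpha}$, not $\tfrac{1}{\alpha}$; the correct accounting (after taking the $1/r$ and $1/r'$ roots from H\"older and then the $p$-th root) gives exponent $\tfrac{1}{\gamma}+\tfrac{1}{\beta}=\tfrac{1}{\alpha}$ regardless of the H\"older split, so the argument still goes through---the paper simply uses the split $(2,2)$.
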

\begin{proof}
For the first assertion, we estimate for any $p\geq 1$ by H\"older's inequality
\begin{align*}
\frac{1}{p^{1/\alpha}} \mathbb{E}\big[|XY|^p\big]^{1/p}
&=\frac{1}{p^{1/\gamma+1/\beta}} \mathbb{E}\big[|XY|^p\big]^{1/p}
\leq \frac{1}{p^{1/\gamma+1/\beta}} \mathbb{E}\big[|X|^{2p}\big]^{1/2p}\mathbb{E}\big[|Y|^{2p}\big]^{1/2p}
\\&
\leq 2^{1/\gamma+1/\beta} ||X||_{\exp^\gamma} ||Y||_{\exp^\beta}.
\end{align*}
This establishes the first assertion.

For the second assertion, we estimate for any $p\geq 1$ and any $K> 0$
\begin{align*}
\mathbb{P}\big[|X|\geq K||X||_{\exp^\gamma,\operatorname{quasi}}\big]
\leq \frac{\mathbb{E}\Big[\exp \Big(\Big(\frac{|X|}{||X||_{\exp^\gamma,\operatorname{quasi}}}\Big)^\gamma \Big)\Big]}{\exp(K^\gamma)}
\leq 2\exp(-K^\gamma).
\end{align*}
Using the fact that $||X||_{\exp^\gamma,\operatorname{quasi}}\leq C(\gamma) ||X||_{\exp^\gamma}$, the second assertion follows upon redefining $K$.
\end{proof}

For independent random variables with stretched exponential moments, a standard argument via an inequality by Burkholder \cite{Burkholder} provides a simple concentration estimate.
\begin{lemma}
\label{ConcentrationStretchedExponential}
Let $X_1,\ldots,X_M$ be independent random variables with vanishing expectation and uniformly bounded stretched exponential moments
\begin{align*}
||X_m||_{\exp^{\gamma_0}}\leq b
\end{align*}
for some $\gamma_0>0$ and some $b>0$. Then the sum
\begin{align*}
X:=\sum_{m=1}^M X_m
\end{align*}
has uniformly bounded stretched exponential moments
\begin{align*}
||X||_{\exp^{\tilde \gamma}} \leq C(\gamma_0) \sqrt{M} b
\end{align*}
for $\tilde \gamma := \gamma_0/(\gamma_0+1)$.
\end{lemma}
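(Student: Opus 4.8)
The plan is to follow exactly the route hinted at in the statement: combine \emph{Burkholder's martingale inequality} with an elementary subadditivity estimate for the square function and the moment bound encoded in the $\|\cdot\|_{\exp^{\gamma_0}}$ quasinorm (Appendix~\ref{AppendixStretchedExponential}). The point of passing through Burkholder's inequality is that it reduces control of $\sum_{m} X_m$ to control of $\big(\sum_{m} X_m^2\big)^{1/2}$, and the latter scales like $\sqrt{M}$ by a crude triangle-inequality argument in $L^{p/2}$; the extra polynomial factor in $p$ picked up along the way is precisely what degrades the stretched exponent from $\gamma_0$ to $\tilde\gamma=\gamma_0/(\gamma_0+1)$.

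In detail, first, for $p\ge 2$, I would apply Burkholder's inequality \cite{Burkholder} to the martingale of partial sums of the independent, centered $X_m$, in the form
\begin{align*}
\mathbb{E}\Big[\big|\textstyle\sum_{m=1}^{M} X_m\big|^{p}\Big]^{1/p}
\le C\,p\,\mathbb{E}\Big[\big(\textstyle\sum_{m=1}^{M} X_m^{2}\big)^{p/2}\Big]^{1/p}
\end{align*}
(a merely polynomial-in-$p$ constant is all that is needed here; the classical sharp $\sqrt{p}$-growth of the Burkholder constant would only improve the outcome). Second, since $p/2\ge 1$, the triangle inequality in $L^{p/2}$ gives $\mathbb{E}[(\sum_m X_m^2)^{p/2}]^{2/p}\le \sum_m \mathbb{E}[|X_m|^p]^{2/p}\le M\max_m \mathbb{E}[|X_m|^p]^{2/p}$. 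Third, by the definition of the quasinorm one has $\mathbb{E}[|X_m|^p]^{1/p}\le p^{1/\gamma_0}\|X_m\|_{\exp^{\gamma_0}}\le p^{1/\gamma_0}b$ for every $p\ge 1$. Combining these three facts yields, for $p\ge 2$,
\begin{align*}
\mathbb{E}\Big[\big|\textstyle\sum_{m} X_m\big|^{p}\Big]^{1/p}
\le C\sqrt{M}\,b\,p^{1+1/\gamma_0}
= C\sqrt{M}\,b\,p^{1/\tilde\gamma}.
\end{align*}
For the remaining range $p\in[1,2]$ I would simply use monotonicity of $L^p$-norms on the probability space together with independence and $\mathbb{E}[X_m]=0$, namely $\mathbb{E}[|\sum_m X_m|^p]^{1/p}\le \mathbb{E}[(\sum_m X_m)^2]^{1/2}=(\sum_m \mathbb{E}[X_m^2])^{1/2}\le 2^{1/\gamma_0}b\sqrt{M}\le C(\gamma_0)\sqrt{M}\,b\,p^{1/\tilde\gamma}$, where the bound $\mathbb{E}[X_m^2]^{1/2}\le 2^{1/\gamma_0}b$ again comes from the definition of $\|\cdot\|_{\exp^{\gamma_0}}$. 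Taking the supremum over $p\ge 1$ of $p^{-1/\tilde\gamma}\mathbb{E}[|\sum_m X_m|^p]^{1/p}$ then gives the claim $\|X\|_{\exp^{\tilde\gamma}}\le C(\gamma_0)\sqrt{M}\,b$.

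The only genuinely substantive input is Burkholder's inequality, so the ``hard part'' here is essentially bookkeeping: confirming that the constant in Burkholder's inequality depends on $p$ at most polynomially (which is classical), and checking the arithmetic of exponents, i.e. that $\tfrac12+\tfrac1{\gamma_0}\le 1+\tfrac1{\gamma_0}=\tfrac1{\tilde\gamma}$, so that the $p^{1/2}$-loss from the square function and the $p^{1/\gamma_0}$-loss from the moment bound together fit under $p^{1/\tilde\gamma}$. I do not expect any obstacle of real depth; everything beyond the citation of Burkholder's inequality is the calculus for the $\exp^\gamma$-quasinorm already set up in Lemma~\ref{CalculusStretchedExponential} and the surrounding material in Appendix~\ref{AppendixStretchedExponential}.
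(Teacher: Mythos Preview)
Your proposal is correct and follows essentially the same route as the paper: Burkholder's inequality with linear-in-$p$ constant, a triangle/power-mean estimate to turn the square function into $\sqrt{M}\max_m\mathbb{E}[|X_m|^p]^{1/p}$, and the definition of $\|\cdot\|_{\exp^{\gamma_0}}$ to collect the $p^{1/\gamma_0}$ factor. The only cosmetic differences are that the paper works with even integers $p=2k$ and extends to all $p\ge 1$ by H\"older at the end (rather than treating $p\in[1,2]$ separately via $L^p\le L^2$), and uses Jensen's inequality $(\sum_m |X_m|^2)^k\le M^{k-1}\sum_m|X_m|^{2k}$ in place of your triangle inequality in $L^{p/2}$.
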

\begin{proof}
The discrete-time stochastic process
\begin{align*}
m\mapsto \sum_{\tilde m=1}^m X_{\tilde m}
\end{align*}
is a square-integrable martingale. An estimate by Burkholder \cite[Theorem~3.2]{Burkholder} -- applied for ``timestep'' $m:=M$ -- yields for any $k\in \mathbb{N}$
\begin{align*}
\mathbb{E}\big[|X|^{2k}\big]^{1/2k}
\leq C\cdot 2k \mathbb{E}\bigg[\bigg|\sum_{m=1}^M |X_m|^2 \bigg|^k\bigg]^{1/2k}.
\end{align*}
This entails
\begin{align}
\label{ConcentrationBound}
\mathbb{E}\big[|X|^{2k}\big]^{1/2k}
\leq C \cdot 2k \sqrt{M} \, \bigg(\frac{1}{M}\sum_{m=1}^M \mathbb{E}\big[|X_m|^{2k}\big]\bigg)^{1/2k}
\end{align}
and therefore
\begin{align*}
\mathbb{E}\big[|X|^{2k}\big]^{1/2k}
&\leq C\cdot 2k \sqrt{M} \, \bigg(\frac{1}{M}\sum_{m=1}^M (2k)^{2k/\gamma_0} ||X_m||_{\exp^{\gamma_0}}^{2k}\bigg)^{1/2k}
\\&
\leq C \sqrt{M} (2k)^{1+1/\gamma_0} b.
\end{align*}
We infer
\begin{align*}
(2k)^{-1/\gamma_0-1}
\mathbb{E}\big[|X|^{2k}\big]^{1/2k}
\leq C \sqrt{M} b
\end{align*}
for any $k\in \mathbb{N}$ which by H\"older's inequality entails
\begin{align*}
p^{-1/\gamma_0-1}
\mathbb{E}\big[|X|^{p}\big]^{1/p}
\leq C(\gamma_0) \sqrt{M} b
\end{align*}
for any $p\geq 1$.
\end{proof}

\bibliographystyle{abbrv}
\bibliography{stochastic_homogenization}

\end{document}